\newtheorem{thmA}{Theorem}
\newtheorem{thm}{Theorem}
\newtheorem{cor}[thm]{Corollary}
\newtheorem{prop}[thm]{Proposition}
\newtheorem{lem}[thm]{Lemma}
\newtheorem{definition}[thm]{Definition}
\newtheorem{remark}[thm]{Remark}
\newtheorem{example}[thm]{Example}
\newtheorem{theorem}{Theorem}
\newtheorem{corollary}[theorem]{Corollary}
\newcommand{\OM}[1]{\Omega^1_{#1}}
\DeclareMathOperator{\elm}{elm}
\DeclareMathOperator{\tr}{tr}
\DeclareMathOperator{\length}{length}
\DeclareMathOperator{\Res}{Res}
\def\O{\mathcal O}
\def\CON{\mathfrak{Con}}
\def\BUN{\mathfrak{Bun}}
\def\OM0{\mathbf{\Omega}_0}
\title[Irregular parabolic bundles]{Moduli space of irregular rank two parabolic bundles over the Riemann sphere 
and its compactification}
\author[A. Komyo, F. Loray and M.-H.\ Saito]{Arata Komyo \and Frank Loray \and Masa-Hiko Saito}
\address{Center for Mathematical and Data Sciences, Kobe Univ.,
1-1 Rokkodai-cho, Nada-ku, Kobe, 657-8501, Japan}
\email{akomyo@math.kobe-u.ac.jp}
\address{Univ Rennes, CNRS, IRMAR - UMR 6625, F-35000 Rennes, France}
\email{frank.loray@univ-rennes1.fr}
\address{Department of Mathematics, Graduate School of Science, Kobe University, 1-1 Rokkodai-cho, Nada-ku, Kobe, 657-8501, Japan and 
\newline
\quad Faculty of Business Administration, Kobe Gakuin University, 1-1-3, Minatojima, Chuo-ku, Kobe, 650-8586, Japan}
\email{mhsaito@ba.kobegakuin.ac.jp}
\subjclass[2010]{Primary~14H60, Secondary~14D20~14Q10.}
\keywords{Parabolic bundle, refined parabolic bundle, 
projective line,
elementary transformation,  
moduli space,
weak del Pezzo surface}
\thanks{The first author is supported by JSPS KAKENHI: 
Grant Numbers JP17H06127 and JP19K14506.
The second author is supported by CNRS,  ANR-16-CE40-0008 project ``Foliage'' and Henri Lebesgue Center, program ANR-11-LABX-0020-0.
The third author is supported by JSPS KAKENHI: 
Grant Number JP17H06127, JP22H00094.}
\date{\today}
\numberwithin{equation}{section}
\begin{document}

\maketitle

\begin{abstract}
In this paper, we study
rank 2 (quasi) parabolic bundles over the Riemann sphere with an effective divisor
and these moduli spaces.
First we consider a criterium when a parabolic bundle admits 
a unramified irregular singular parabolic connection.
Second, to give a good compactification of 
the moduli space of semistable parabolic bundles,
we introduce a generalization of parabolic bundles,
which is called refined parabolic bundles.
Third, we discuss a stability condition of refined parabolic bundles and
define elementary transformations of the refined parabolic bundles.
Finally, we describe the moduli spaces of refined parabolic bundles
when the dimensions of the moduli spaces are two.
These are related to geometry of some weak del Pezzo surfaces.
\end{abstract}

\section{Introduction}

In this paper, we investigate the moduli space of rank 2 
(quasi) parabolic bundles over the Riemann sphere with an effective divisor $D$
and extend some results obtained in \cite{LS}
 when the effective divisor $D$ is reduced.
Let $r$ and $\nu$ be positive integers and $C$ be a smooth projective curve over $\mathbb{C}$.
Fix points $t_1,t_2,\ldots,t_{\nu}$ on $C$,
and set $D = n_1 [t_1] + \cdots + n_{\nu} [t_{\nu}]$.
In general, 
a parabolic bundle $(E,\bold{l})$ over $(C,D)$
is a pair where
 $E$ is a rank $r$ vector bundle over $C$,
 $\bold{l}=\{l^{j}_i\}^{0\leq j \leq r-1}_{1 \leq i \leq \nu}$, 
 which are filtrations 
$E|_{n_i [t_i]} =l^0_i \supset l^1_i \supset \cdots \supset l^{r-1}_i \supset l^r_i =0$ 
by free $\mathcal{O}_{n_i [t_i]}$-modules 
such that $l^j_i/l^{j+1}_i \cong \mathcal{O}_{n_i [t_i]}$ for any $i$, $j$.

Parabolic (vector) bundles over an irreducible smooth complex projective curve with 
a reduced effective divisor 
were introduced in Mehta--Seshadri \cite{MS}.
To construct a good moduli space, 
a stability condition for the parabolic weights $\boldsymbol{\alpha}$
was introduced.
The moduli spaces of 
$\boldsymbol{\alpha}$-semistable parabolic bundles were constructed
in \cite{MS}, \cite{Bh1}, and \cite{BB}.
The effect on the moduli space of varying the parabolic weights was studied in 
\cite{BH}, \cite{Bauer}, \cite{Mukai}, and \cite{MY}.
This effect is interesting from the point of view of 
the birational geometry (in the sense of Mori's program).
A generalization of parabolic bundles was studied in \cite{Bh2} and \cite{Bh3}.  
Here the (generalized) parabolic bundles are defined over 
an irreducible smooth complex projective curve with 
an effective divisor, which is not necessary reduced.
A stability condition of such parabolic bundles was introduced and 
the moduli spaces of 
$\boldsymbol{\alpha}$-semistable parabolic bundles were constructed in \cite{Bh3}.
Finally a more general notion of parabolic bundles  
was studied by Yokogawa (\cite{Yoko1} and \cite{Yoko2}).

Usually, a parabolic bundle consists of a vector bundle $E$, filtrations 
$\bold{l}=\{l^j_i\}^{0\leq j \leq r-1}_{1 \leq i \leq \nu}$,
and parabolic weights $\boldsymbol{\alpha}$. 
But in this paper, we call a pair $(E,\bold{l})$ parabolic bundle
and call the filtration $\bold{l}$ a parabolic structure.
Parabolic structures are important 
for study of geometry of isomonodromic deformations 
(or Painlev\'e type equations). 
For example, parabolic connections were studied in \cite{IIS}, \cite{IIS2}, and \cite{Ina}. 
A parabolic connection is a pair of a connection 
and a parabolic structure, which is compatible with the connection. 
When the effective divisor is reduced,
a stability condition for the parabolic weights $\boldsymbol{\alpha}$
was introduced in \cite{IIS} and \cite{Ina}.
Now we fix generic $\boldsymbol{\alpha}$ 
so that $\boldsymbol{\alpha}$-stable = $\boldsymbol{\alpha}$-semistable.
The moduli spaces of $\boldsymbol{\alpha}$-stable parabolic connections
were constructed in \cite{IIS} and \cite{Ina}.
The cases where the underlying effective divisors are not necessary reduced 
were studied in \cite{IS}.
In the cases,
as in \cite{IS}, we call the parabolic connections
{\it unramified irregular singular parabolic connections}. 
A stability condition for the parabolic weights $\boldsymbol{\alpha}$
was also introduced and
the moduli spaces of $\boldsymbol{\alpha}$-stable 
unramified irregular singular parabolic connections
were constructed in \cite{IS}.
These moduli spaces play important roles to prove
the geometric Painlev\'e property of 
the isomonodromic deformations of these connections (\cite{IIS}, \cite{Ina}, and \cite{IS}).
To study geometry of the moduli spaces of parabolic connections
(or unramified irregular singular parabolic connections),
the moduli spaces of parabolic bundles are useful. 
Indeed, we have a correspondence from 
(unramified irregular singular) parabolic connections
to parabolic bundles by forgetting the connections.
Remark that the corresponding parabolic bundles are not necessary 
$\boldsymbol{\alpha}$-stable if
(unramified irregular singular) parabolic connections
are $\boldsymbol{\alpha}$-stable.
But, 
this correspondence induces maps from 
Zariski open subsets on the moduli spaces of connections
to the moduli spaces of $\boldsymbol{\alpha}$-stable parabolic bundles.
We may expect that these maps have nice properties
(for example, these are affine bundles and Lagrangian fibrations).

In this paper, we impose that the ranks of underlying vector bundles
are 2
and base curves are the projective line $\mathbb{P}^1$.
That is, 
our parabolic bundle $(E,\bold{l})$ 
is a pair where
 $E$ is a rank 2 vector bundle over $\mathbb{P}^1$,
 $\bold{l}=\{l_i\}_{1 \leq i \leq \nu}$, and
$l_{ i }\subset E|_{n_i [t_i]}$ is a free $\mathcal{O}_{n_i [t_i]}$-submodule of
length $n_i$ for each $i$.
So we will consider only special situations.
But 
isomonodromic deformations in this situation are interesting. 
Indeed, these isomonodromic deformations
correspond to 
\begin{itemize}
\item Painlev\'e equation $P_{VI}$ when $D=[t_1]+[t_2]+[t_3] +[t_4]$;
\item Garnier systems when $\deg(D) \geq 5$ and 
$D$ is reduced;
\item Painlev\'e equations $P_{V}, P_{IV}, P_{III(D_6^{(1)})}$ and $P_{II}$ 
when  $D=[t_1]+[t_2]+2[t_3]$,
 $D=[t_1]+3[t_2]$,
  $D=2[t_1]+2[t_2]$, and
   $D=4[t_1]$, respectively; and
\item some irregular Garnier systems
when $\deg(D) \geq 5$ and $D$ is not reduced. 
\end{itemize}
On the other hand, the special situation was studied 
in the point of view of the geometric Langlands problem
\cite{AL}, \cite{AF}, and \cite{DP}.

\subsection{When the effective divisor is reduced}

Our purpose is to extend some result on parabolic bundles 
in the cases where $D$ is reduced.
Now we recall some results on parabolic bundles with a reduced effective divisor 
$D=[t_1]+\cdots+[t_{\nu}]$,
which are concerned with the contents of the present paper.
Let $\Lambda=(\lambda_i^+,\lambda^-_i)_{1\leq i \leq \nu}$
be a tuple of complex numbers such that 
$\sum_{i=1}^{\nu} (\lambda_i^++\lambda^-_i) +d =0$.
Here we impose $\sum_{i=1}^{\nu} \lambda_i^{\epsilon_i} \not\in \mathbb{Z} $
for any choice $(\epsilon_i)_{1\leq i \leq \nu} \in \{ +,-\}$.
We say $\Lambda$ is generic if $\Lambda$ satisfies this condition.
We define a $\Lambda$-parabolic connection as
a parabolic connection such that 
the two residual eigenvalues of the connection are $\lambda_i^+$ and $\lambda_i^-$ for each $t_i$ 
and the parabolic structure satisfies the compatibility condition with the connection and $\lambda_i^+$
(for details, see the paragraph after the proof of Lemma \ref{Prop:GenericIrred}
with $n_i =1$ for any $i$).
We say a parabolic bundle is $\Lambda$-flat if it admits a 
$\Lambda$-parabolic connection.
We have a criterium of $\Lambda$-flatness given by \cite[Proposition 3]{AL}: 
\begin{equation}\label{eq:2021.12.30.21.35}
\mathrm{End} (E,\bold{l})=\mathbb{C}
\, \Longleftrightarrow \, 
\text{$(E,\bold{l})$ is $\Lambda$-flat}
\, \Longleftrightarrow \, 
\text{$(E,\bold{l})$ is undecomposable}.
\end{equation}
Let $\mathfrak{Bun}(D,d)$ be the
coarse moduli space of undecomposable parabolic bundles of degree $d$ over $(\mathbb{P}^1,D)$.
By the flatness criterium, the image of the moduli space of 
$\Lambda$-parabolic connections
under the forgetful map is just $\mathfrak{Bun}(D,d)$. 
However, as observed in \cite{AL}, \cite{AL2} and \cite{LS}, 
the coarse moduli space $\mathfrak{Bun}(D,d)$ is a non Hausdorff topological space, 
or a nonseparated scheme.
We introduce a stability condition with respect to weights 
$\boldsymbol{w}\in (w_i) \in [ 0,1]^{\nu}$ (as in \cite[Definition 2.2]{LS}).
The moduli space $\mathfrak{Bun}_{\boldsymbol{w}}(D,d)$ 
of $\boldsymbol{w}$-semistable parabolic bundles of degree $d$
is a normal irreducible projective variety.
The open subset of $\boldsymbol{w}$-stable bundles is smooth.
For generic weights $\boldsymbol{w}$, 
$\boldsymbol{w}$-semistable = $\boldsymbol{w}$-stable 
and we get a smooth projective variety.
If $\mathfrak{Bun}_{\boldsymbol{w}}(D,d)$ is not empty,
the dimension of $\mathfrak{Bun}_{\boldsymbol{w}}(D,d)$ is $n-3$.
On the other hand, we have a following fact given by \cite[Proposition 3.4]{LS}:
\begin{equation}\label{eq:2021.12.31.11.56}
\text{$(E,\bold{l})$ is undecomposable}
\, \Longleftrightarrow \, 
\text{$\boldsymbol{w}$-stable for a convenient choice of weights $\boldsymbol{w}$}.
\end{equation}
As a consequence, 
the coarse moduli space $\mathfrak{Bun}(D,d)$ of undecomposable parabolic bundles 
of degree $d$ is covered by 
those smooth projective varieties $\mathfrak{Bun}_{\boldsymbol{w}}(D,d)$ 
when $\boldsymbol{w}$ runs.
When $\deg(D)=4$ or $\deg(D)=5$, 
the dimension of $\mathfrak{Bun}_{\boldsymbol{w}}(D,d)$ is $1$ or $2$,
respectively. 
In these cases,
we can give explicit description of $\mathfrak{Bun}_{\boldsymbol{w}}(D,d)$ 
for some weights $\boldsymbol{w}$.
We fix a real number $w \in [0,1]$.
We impose that
weights $\boldsymbol{w}$ satisfy 
the condition $w_1=w_2=\cdots=w_{\nu}=w$.
We call weights which satisfy this condition {\it democratic weights}.
Now,
we assume that $\deg(D)=4$ and $d=-1$.
Then we have 
\begin{itemize}
\item $\mathfrak{Bun}_{\boldsymbol{w}}(D,-1)$ is empty when $0<w <\frac{1}{4},
\frac{3}{4}<w <1$;
\item $\mathfrak{Bun}_{\boldsymbol{w}}(D,-1)=\mathbb{P}^1$ when $\frac{1}{4}<w <\frac{1}{2}$;
\item $\mathfrak{Bun}_{\boldsymbol{w}}(D,-1)=\mathbb{P}^1$ when $\frac{1}{2}<w <\frac{3}{4}$.
\end{itemize}
The coarse moduli space
$\mathfrak{Bun}(D,d)$ is constructed by gluing 
$\mathfrak{Bun}_{\boldsymbol{w}}(D,-1)\cong \mathbb{P}^1$ ($\frac{1}{4}<w <\frac{1}{2}$)
and 
$\mathfrak{Bun}_{\boldsymbol{w}}(D,-1)\cong \mathbb{P}^1$ ($\frac{1}{2}<w <\frac{3}{4}$)
outside $t_1, \ldots , t_4$ (for instance, see \cite[Section 3.7]{LS}).
Next, we assume that $\deg(D)=5$ and $d=-1$.
Then we have the following description of the moduli spaces 
$\mathfrak{Bun}_{\boldsymbol{w}}(D,-1)$:
\begin{itemize}
\item $\mathfrak{Bun}_{\boldsymbol{w}}(D,-1)$ is empty when $0<w <\frac{1}{5}$;
\item $\mathfrak{Bun}_{\boldsymbol{w}}(D,-1)=\mathbb{P}^2$ when $\frac{1}{5}<w <\frac{1}{3}$;
\item $\mathfrak{Bun}_{\boldsymbol{w}}(D,-1)$ is a del Pezzo surface of degree 4 
when $\frac{1}{3}<w <\frac{3}{5}$;
\item $\mathfrak{Bun}_{\boldsymbol{w}}(D,-1)$ is a del Pezzo surface of degree 5 
when $\frac{3}{5}<w <1$.
\end{itemize}
(See \cite[Section 6.1]{LS} and \cite[Proposition 3.11]{DP}).
When $d$ is even, 
descriptions of the moduli spaces are given by \cite[Theorem 3.2]{DP}.
To cover the coarse moduli space $\mathfrak{Bun}(D,-1)$,
thinking only democratic weights is not enough.
By applying elementary transformations on 
$\mathfrak{Bun}_{\boldsymbol{w}}(D,-1)$ with democratic weights $\boldsymbol{w}$,
we can cover the coarse moduli space $\mathfrak{Bun}(D,-1)$ by 
the moduli spaces $\mathfrak{Bun}_{\boldsymbol{w}}(D,-1)$.
Remark that the stability of parabolic bundles changes by 
applying the elementary transformations.
If $w_1=w_2=\cdots=w_{\nu}=\frac{1}{2}$,
the stability of parabolic bundles is preserved by 
applying the elementary transformations.
So some elementary transformations induce 
automorphisms on the moduli space $\mathfrak{Bun}_{\boldsymbol{w}}(D,d)$ with $w=\frac{1}{2}$.
In \cite{AFKM}, a modular interpretation 
for the automorphism group of the moduli space $\mathfrak{Bun}_{\boldsymbol{w}}(D,d)$
with any $\deg(D)$ and with any $\deg(E)$ is given by this point of view.

\subsection{Results}

Now we describe results of the present paper.
The effective divisor $D$ is not necessary reduced.
First we will discuss a counterpart of \eqref{eq:2021.12.30.21.35}.
Let $\Lambda=(\lambda^+,\lambda^-)$ where $\lambda^{\pm}\in\Omega^1(D)/\Omega^1$.
As in the reduced divisor cases, we impose that 
$\Lambda$ satisfies some conditions.  
We define a $\Lambda$-unramified irregular singular parabolic connection
as an unramified irregular singular parabolic connection 
such that 
the diagonalization of the principal parts of 
the connection at $D$
is a diagonal matrix whose diagonal entries are $\lambda^+$ and $\lambda^-$
and 
the parabolic structure satisfies the compatibility condition with the connection and
$\lambda^+$ 
(for details, see the paragraph after the proof of Lemma \ref{Prop:GenericIrred}).
We say a parabolic bundle is $\Lambda$-{\it flat} if it admits a 
$\Lambda$-unramified irregular singular parabolic connection.
If $\deg(D)=4$ and $\deg(E)$ is odd, then 
$$
\text{ $\mathrm{End}(E,\bold{l})=\mathbb C$ }
\Longleftrightarrow
\text{ $(E,\bold{l})$ is $\Lambda$-flat}
$$
which is proved in \cite[Proposition 4.8]{AF}.
We can define undecomposability for parabolic bundles 
when the effective divisor $D$ is not necessary reduced.
If $(E,\bold{l})$ is $\Lambda$-flat, then $(E,\bold{l})$ is undecomposable.
But the converse proposition is not true even when $\deg(D)=4$ and $\deg(E)$ is odd.
So
in addition to undecomposability, we assume the {\it admissibleness} 
for parabolic bundles, which is defined in 
Definition \ref{eq:2021.12.31.11.48} below.
Using some material of  \cite[Section 4]{AF} and ideas of \cite[Proposition 3]{AL}, 
we can show that  
\begin{equation}\label{2022.2.23.17.26}
\text{$(E,\bold{l})$ is simple}
\, \Longrightarrow \, 
\text{$(E,\bold{l})$ is $\Lambda$-flat}
\, \Longrightarrow \, 
\text{$(E,\bold{l})$ is undecomposable and admissible}.
\end{equation}
This is a counterpart of \eqref{eq:2021.12.30.21.35}.
Here we say $(E,\bold{l})$ is {\it simple} if 
$\mathrm{End} (E,\bold{l})=\mathbb{C}$.
Counterexamples of the converse of the first implication of \eqref{2022.2.23.17.26}
are described in
Proposition \ref{prop:2021.11.30.22.07_0} and  
Proposition \ref{prop:2021.11.30.22.07} below.
Counterexamples of the converse of the second implication of \eqref{2022.2.23.17.26}
also are described in
Proposition \ref{prop:2021.11.30.22.07_0} and  
Proposition \ref{prop:2021.11.30.22.07} (see also Example \ref{2021.1.12.12.45}).
If $\deg(D)=4$ and $\deg(E)$ is odd, then we have
\begin{equation}\label{2022.2.23.17.32}
\text{$(E,\bold{l})$ is undecomposable and admissible}
\, \Longrightarrow \, 
\mathrm{End} (E,\bold{l})=\mathbb{C}.
\end{equation}
The implications \eqref{2022.2.23.17.26} and \eqref{2022.2.23.17.32}
 are described in Proposition \ref{Prop:FlatnessImplications} below.
These are the first results.

Second we will discuss a counterpart of \eqref{eq:2021.12.31.11.56}.
We can introduce a stability condition with respect to weights 
$\boldsymbol{w}\in (w_i) \in [ 0,1]^{\nu}$, which is defined in Section \ref{section:StabilityOfRefined} below.
Let $\mathfrak{Bun}_{\boldsymbol{w}}(D,d)$ be 
the moduli space of $\boldsymbol{w}$-semistable parabolic bundles.
However $\mathfrak{Bun}_{\boldsymbol{w}}(D,d)$
is not projective in general,
since we impose that each $l_i$ is free in 
the definition of parabolic bundles.
So, to construct a good compactification, we will define {\it refined parabolic bundles},
which are generalization of parabolic bundles.
Let $l_{ i , \bullet} = \{ l_{i,k}\}_{1 \leq k \leq n_i}$ be a filtration 
$E|_{n_i [t_i]} \supset l_{i,n_i} \supset l_{i,n_i-1} \supset \cdots \supset 
l_{i,1} \supset 0$
of $\mathcal{O}_{n_i [t_i]}$-modules such that the length of $l_{i,k}$ is $k$.
We call $l_{ i , \bullet} = \{ l_{i,k}\}_{1 \leq k \leq n_i}$ a refined parabolic structure at $t_i$.
If $l_{i,n_i}$ is free, then the notion of refined parabolic structures coincides with 
the notion of parabolic structures.
Refined parabolic bundles are vector bundles with refined parabolic structures.
For weights 
$$\boldsymbol{w} 
= (\boldsymbol{w}_1,\ldots , \boldsymbol{w}_{\nu}), \quad 
\boldsymbol{w}_i= (w_{i,n_i},\ldots ,w_{i,1}) \in [0,1]^{n_i} \quad (1\leq i\leq \nu),
$$
where
$0 \leq w_{i,n_i} \leq \cdots  \leq w_{i,1} \leq 1$,
we can define $\boldsymbol{w}$-stability and 
$\boldsymbol{w}$-semistability of a refined parabolic bundle
(Definition \ref{2022.3.15.21.27} below).

\begin{remark}
A compactification of 
the moduli space of $\boldsymbol{\alpha}$-stable $\Lambda$-parabolic connections 
is constructed
in \cite[Proposition 5.6]{IIS}. 
This compactification is 
the moduli space of $\boldsymbol{\alpha}$-stable 
$\Lambda$-$\phi$-parabolic connections.
When $D$ is reduced, $\deg(D)=4$ and $\mathrm{rank}(E)=2$,
the compactification with the boundary divisor is isomorphic to
the Okamoto--Painlev\'e pair of $P_{VI}$ (\cite{IIS2}).
Here the notion of Okamoto-Painlev\'e pairs 
was introduced in \cite{STT}.
In \cite{Miya},
a compactification of 
the moduli space of $\boldsymbol{\alpha}$-stable 
$\Lambda$-unramified irregular singular parabolic connections
was studied when 
$D$ is not reduced, $\deg(D)=4$ and $\mathrm{rank}(E)=2$.
However, 
the moduli space of $\boldsymbol{\alpha}$-stable 
$\Lambda$-$\phi$-unramified irregular singular parabolic connections
is projective but is not smooth.
So, in \cite{Miya}, Miyazaki introduced refined parabolic structures,
which are same as our refined parabolic structures.
By considering refined parabolic structures, 
we have smooth compactifications,
which are isomorphic to 
the Okamoto--Painlev\'e pairs of $P_V$, $P_{IV}$, $P_{III(D_6^{(1)})}$, and $P_{II}$. 
\end{remark}

\begin{remark}
The notion of the refined parabolic bundles is contained in 
the notion of the parabolic bundles 
in Yokogawa's papers \cite{Yoko1} and \cite{Yoko2}.
\end{remark}

We can define undecomposable refined parabolic bundles
and admissible refined parabolic bundles, naturally.

\begin{remark}
We have defined the $\Lambda$-flatness 
for (ordinary) parabolic bundles.
On the other hand, we do not define the $\Lambda$-flatness
for refined parabolic bundles,
since unramified irregular singular parabolic connections
have (ordinary) parabolic structures.
\end{remark}

To give a counterpart of \eqref{eq:2021.12.31.11.56},
the conditions undecomposableness and admissibleness
are not sufficient.
That is, there exist 
undecomposable and admissible refined parabolic bundles
which is not $\boldsymbol{w}$-stable
for any weights $\boldsymbol{w}$ (Example \ref{2021.1.12.12.45} below).

\begin{remark}
Moreover, there exist examples of $\Lambda$-flat parabolic bundles,
which are $\boldsymbol{w}$-unstable for any weights $\boldsymbol{w}$.
\end{remark}

So we will define {\it tame} refined parabolic bundles
in Definition \ref{eq:2021.12.31.13.46}.
We will check that 
\begin{equation*}\label{eq:2021.12.31.16.07}
\text{$(E,\bold{l})$ is undecomposable and tame}
\, \Longrightarrow \, 
\text{$(E,\bold{l})$ is admissible}
\end{equation*}
(described in Proposition \ref{prop:2021.12.31.13.50} below).
Then we have a counterpart of \eqref{eq:2021.12.31.11.56} as follows:

\begin{thmA}[{Theorem \ref{thm:2021.10.27.13.44} and Corollary \ref{cor:2022.2.24.0.20}}]\label{2022_9_29_12_23}
Let $(E,\bold{l})$ be a refined parabolic bundle of degree $d$ over $(\mathbb{P}^1,D)$.
We have the following equivalence relation:
\begin{equation*}\label{eq:2021.12.31.16.07_2}
\text{$(E,\bold{l})$ is undecomposable and tame}
\, \Longleftrightarrow \, 
\text{$\boldsymbol{w}$-stable for a convenient choice of weights $\boldsymbol{w}$.}
\end{equation*}
Let $(E,\bold{l})$ be a
parabolic bundle of degree $d$ over $(\mathbb{P}^1,D)$. 
Then
\begin{equation*}
\text{$(E,\bold{l})$ is simple}
\, \Longleftrightarrow \, 
\text{$\boldsymbol{w}$-stable for a convenient choice of weights $\boldsymbol{w}$.}
\end{equation*}
\end{thmA}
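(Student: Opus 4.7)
The plan is to prove the refined equivalence first and then deduce the parabolic statement from it by combining it with the chain \eqref{2022.2.23.17.26} and Proposition \ref{prop:2021.12.31.13.50}. For the direction ($\Leftarrow$) in the refined case, suppose $(E,\bold{l})$ is $\boldsymbol{w}$-stable for some weight vector $\boldsymbol{w}$. If $(E,\bold{l})=(E',\bold{l}')\oplus(E'',\bold{l}'')$ decomposed non-trivially, the two factors would have reduced $\boldsymbol{w}$-slopes averaging to that of $(E,\bold{l})$, so one of them would violate the strict stability inequality; hence $(E,\bold{l})$ must be undecomposable. To rule out non-tameness, I would read off from Definition \ref{eq:2021.12.31.13.46} that the failure of tameness manifests itself as a canonical sub-refined-parabolic-line-bundle $L\subset E$ whose induced numerical data force the slope inequality $\mu_{\boldsymbol{w}}(L,\bold{l}|_L) \ge \mu_{\boldsymbol{w}}(E,\bold{l})$ for every admissible $\boldsymbol{w}$, contradicting $\boldsymbol{w}$-stability.

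The core of the argument is the reverse implication ($\Rightarrow$). Given an undecomposable and tame refined parabolic bundle, I would enumerate the destabilizing candidates: since $E=\O(a)\oplus\O(b)$ on $\mathbb{P}^1$, the proper line subbundles $L\subset E$ are parametrized by their degree together with an element of a projective space of embeddings, and for each $L$ the induced refined parabolic structure $(L,\bold{l}|_L)$ is determined by the intersection pattern $\{\length(L|_{n_i[t_i]}\cap l_{i,k})\}_{i,k}$. Each such $(L,\bold{l}|_L)$ produces one strict linear inequality on $\boldsymbol{w}\in\prod_i[0,1]^{n_i}$. The task is to show the resulting (finite after grouping by combinatorial type) system has a non-empty feasible region inside the weight polytope. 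Undecomposability rules out the case where some inequality becomes a forced equality (that would produce a direct summand), and tameness exactly excludes the degenerate combinatorial types whose inequality is unsatisfiable for any $\boldsymbol{w}$. A convexity argument on the weight polytope, following the strategy of \cite[Proposition 3.4]{LS} but refined to accommodate non-free filtrations, then produces an open set of weights witnessing strict stability.

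For the second assertion on (ordinary) parabolic bundles, the implication ($\Leftarrow$) is the standard argument that $\boldsymbol{w}$-stability implies simplicity: any non-scalar endomorphism of $(E,\bold{l})$ would have an eigenvalue and the corresponding eigen-sub-parabolic-line-bundle would satisfy $\mu_{\boldsymbol{w}} = \mu_{\boldsymbol{w}}(E,\bold{l})$, contradicting strict stability. For the implication ($\Rightarrow$), assume $(E,\bold{l})$ simple; by \eqref{2022.2.23.17.26} it is undecomposable and admissible. I would then verify that in the ordinary (as opposed to refined) setting the admissibility coming from simplicity is enough to invoke Proposition \ref{prop:2021.12.31.13.50} or its proof in reverse, so that $(E,\bold{l})$, viewed as a refined parabolic bundle with free filtrations, is undecomposable and tame, and the refined equivalence just proved yields a weight $\boldsymbol{w}$ for which $(E,\bold{l})$ is $\boldsymbol{w}$-stable.

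The main obstacle will be the combinatorial/convex step inside the refined $(\Rightarrow)$ implication: packaging all sub-refined-parabolic-line-bundles into a manageable collection of inequalities and exhibiting an explicit open region of weights. The subtlety is that, unlike the reduced case of \cite{LS}, the filtrations $l_{i,\bullet}$ have length $n_i>1$ at some points, so a sub-line-bundle interacts with each $l_{i,k}$ in finer ways, producing many more inequalities; tameness is the precise technical condition that prevents these extra inequalities from collapsing the feasible polytope, and pinning this down rigorously is what the proof ultimately hinges on.
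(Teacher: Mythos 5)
Your overall architecture (prove the refined equivalence, then specialize to ordinary parabolic bundles) matches the paper's, and your sketch of the easy direction --- decomposability gives two complementary subbundles whose stability indices sum to zero, and failure of tameness gives a subbundle $L$ with $\mathrm{Stab}_{\boldsymbol{w}}(L)\le \deg E-2\deg L+\sum_{i\in I_L^+}N_i(L)\le 0$ for every admissible $\boldsymbol{w}$ --- is essentially Proposition \ref{prop:2021.12.3.12.50_1}. But there are two genuine gaps. First, the entire content of the hard direction is deferred to an unspecified ``convexity argument on the weight polytope'': you assert that tameness is exactly what keeps the feasible region nonempty, but you never exhibit a feasible point, and this is where all the work lies. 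The paper does not argue by convexity at all; it constructs explicit weights, and the construction splits into two structurally different cases. When $E=\O(d_1)\oplus\O(d_2)$ with $d_1<d_2$ (Proposition \ref{prop:2021.12.3.12.50_2}) the destabilizing subbundle is unique and one can take a two-valued weight supported on $K_{i,\mathrm{max}}(\O(d_2))$ versus its complement, with the feasibility of the defining inequalities coming precisely from tameness. When $E=\O(d)\oplus\O(d)$ (Proposition \ref{prop:2021.12.3.12.50_3}) there is a whole $\P^1$ of maximal subbundles, no canonical choice of $L$, and the paper must select auxiliary subbundles $L_1,L_2$ and indices $i_1,i_2,i_3$ (using undecomposability to guarantee a position where both $L_1$ and $L_2$ miss the parabolic structure) and then run seven subcases. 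Your proposal gives no indication of how to handle the balanced case, and a naive ``one inequality per combinatorial type'' count does not obviously close up there.

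Second, your route to ``simple $\Rightarrow$ $\boldsymbol{w}$-stable for some $\boldsymbol{w}$'' is broken as stated: you propose to pass from ``undecomposable and admissible'' (obtained from \eqref{2022.2.23.17.26}) to ``undecomposable and tame'' by running Proposition \ref{prop:2021.12.31.13.50} in reverse. That implication is false: Example \ref{2021.1.12.12.45}, built on Lemma \ref{2022.2.2.22.32}, exhibits parabolic bundles that are undecomposable and admissible yet not stable for any weights, hence not tame. Admissibility is strictly weaker than tameness even for undecomposable bundles. The correct argument (Proposition \ref{prop:2022.2.23.20.13}) uses simplicity directly: if $(E,\bold{l})$ is not tame one produces a destabilizing $L_0$, decomposes $E=L\oplus L_0$, and uses the local condition \eqref{2022.2.23.20.43} to build a nonzero nilpotent endomorphism $N=\begin{pmatrix}0&0\\ f&0\end{pmatrix}$ preserving $\bold{l}$, contradicting simplicity. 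Your converse for this part (stable $\Rightarrow$ simple via kernel/image of a nilpotent endomorphism) is recoverable, but the forward direction needs the nilpotent construction, not admissibility.
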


Third, we will discuss elementary transformations 
of refined parabolic bundles.
We will define elementary transformations of 
refined parabolic bundles in Section \ref{section:2021.12.31.15.32}
and we will show some properties of the elementary transformations
in Section \ref{section:2021.12.31.15.34}.
For example, we will discuss a relation between 
the elementary transformations and 
the stability condition (Proposition \ref{prop:2021.10.21.15.22}).
 
Fourth, we will describe 
the moduli space of $\boldsymbol{w}$-stable refined parabolic bundles
when $\deg(D)=5$, $\deg(E)=1$,
and $\boldsymbol{w}$ is generic democratic weights.
Let $\overline{\mathfrak{Bun}}_{\boldsymbol{w}}(D,d)$ be 
the moduli space of $\boldsymbol{w}$-stable refined parabolic bundles of degree $d$. 
Here $\boldsymbol{w}$-semistable $=$ $\boldsymbol{w}$-stable,
since $\boldsymbol{w}$ is generic.
In this case, the dimension of $\overline{\mathfrak{Bun}}_{\boldsymbol{w}}(D,d)$
is $2$.
Since $\deg(D)=5$, the effective divisor $D$ is one of the following 
effective divisors:
\begin{equation*}
\begin{aligned}
D_{2111}:=2[t_1] +[t_2]+[t_3] +[t_4], & &
D_{221}:=2[t_1] +2[t_2]+[t_3], &&
D_{311}:=3[t_1] +[t_2]+[t_3],\\
D_{32}:=3[t_1] +2[t_2],&&
D_{41}:=4[t_1] +[t_2], && 
D_{5}:=5[t_1].
\end{aligned}
\end{equation*}
For these cases, by Theorem A, we can make a list of refined parabolic bundles 
which are $\boldsymbol{w}$-stable for a convenient choice of weights $\boldsymbol{w}$.
This list is described in the appendix.
We will define types of the refined parabolic bundles by some numerical data.
This type is related to behavior of the stability index when weights are varied.
By Proposition \ref{prop:2021.7.28.14.36} below,
we have that $\overline{\mathfrak{Bun}}_{\boldsymbol{w}}(D,1)=\mathbb{P}^2$
when $\frac{1}{5}<w <\frac{1}{3}$.
By this fact and the relation between the types and behavior of the stability index,
we have the following theorem.

\begin{thmA}[{Section \ref{section:2021.12.31.15.25}}]\label{2022.2.2.23.11}
Assume that $\deg(D) = 5$, $\deg(E) =1$, and weight $\boldsymbol{w} = (\boldsymbol{w}_i)_i$
satisfies $w_{i,1}=w_{i,2}=\cdots =w_{i,n_i}=w$ for any $i$ $(0<w<1)$, 
that is, $\boldsymbol{w}$ are democratic weights.

\begin{itemize}

\item[(i)]
We have the following explicit descriptions of $\overline{\mathfrak{Bun}}_{\boldsymbol{w}}(D,1)$:
\begin{itemize}
\item $\overline{\mathfrak{Bun}}_{\boldsymbol{w}}(D,1)$ is empty 
when $0<w <\frac{1}{5}$;
\item $\overline{\mathfrak{Bun}}_{\boldsymbol{w}}(D,1)=\mathbb{P}^2$ 
when $\frac{1}{5}<w <\frac{1}{3}$;
\item $\overline{\mathfrak{Bun}}_{\boldsymbol{w}}(D,1)$ is 
a weak del Pezzo surface of degree 4 
when $\frac{1}{3}<w <\frac{3}{5}$;
\item $\overline{\mathfrak{Bun}}_{\boldsymbol{w}}(D,1)$ is 
a weak del Pezzo surface of degree 5 
when $\frac{3}{5}<w <1$.
\end{itemize}
Let $\Pi_{-2}(D)$ and $\Pi'_{-2}(D)$ be the effective divisors consisting of all $(-2)$-curves on 
$\overline{\mathfrak{Bun}}_{\boldsymbol{w}}(D,1)$
when $\frac{1}{3}<w <\frac{3}{5}$ 
and $\frac{3}{5}<w <1$,
respectively.
Then the configurations of $\Pi_{-2}(D)$ and $\Pi_{-2}'(D)$ are same,
and the configuration
is $A_1$, $2A_1$, $A_2$, $A_2+A_1$, $A_3$ and $A_4$
when $D=D_{2111}$, $D_{221}$, $D_{311}$, $D_{32}$, $D_{41}$, and $D_{5}$,
respectively.

\item[(ii)]
If $(E,\{ l_{i,\bullet }\}_{i} )$ be a refined parabolic bundle
corresponding to a point on $\Pi_{-2}(D)$ or $\Pi'_{-2}(D)$,
then there exists $i$ $(1\leq i\leq \nu)$ such that $l_{i,n_i}$ is not free,
that is, $l_{i,\bullet }$ is not a parabolic structure for this $i$.

\item[(iii)]
If $(E,\{ l_{i,\bullet }\}_{i} )$ be a refined parabolic bundle
corresponding 
to a point on $\overline{\mathfrak{Bun}}_{\boldsymbol{w}}(D,1)$ 
$(\frac{1}{5}<w <\frac{1}{3})$,
$\overline{\mathfrak{Bun}}_{\boldsymbol{w}}(D,1) \setminus \Pi_{-2}(D)$ 
$(\frac{1}{3}<w <\frac{3}{5})$, or 
$\overline{\mathfrak{Bun}}_{\boldsymbol{w}}(D,1)\setminus \Pi_{-2}'(D)$ 
$(\frac{3}{5}<w <1)$,
then $l_{i,n_i}$ is free for any $i$, and
$(E,\{ l_{i,n_i}\}_i )$ is a simple parabolic bundle.
In particular, $(E,\{ l_{i,n_i}\}_i )$ is a $\Lambda$-flat for generic $\Lambda$.
\end{itemize}
\end{thmA}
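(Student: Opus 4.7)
The plan is to start from the explicit description $\overline{\mathfrak{Bun}}_{\boldsymbol{w}}(D,1) \cong \mathbb{P}^2$ for $1/5 < w < 1/3$ provided by Proposition \ref{prop:2021.7.28.14.36}, and to obtain the remaining parts of (i) by wall-crossing analysis across the two critical democratic weights $w = 1/3$ and $w = 3/5$. The emptiness for $0 < w < 1/5$ is immediate upon evaluating the $\boldsymbol{w}$-parabolic degree on any line subbundle of maximal degree: for democratic weights in this range the stability inequality fails universally. The main external input for the nontrivial ranges is the appendix's classification (justified by Theorem A) of refined parabolic bundles that become $\boldsymbol{w}$-stable for some democratic $w$, together with the behavior of their stability indices as $w$ varies.

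At each wall $w_0 \in \{1/3,\,3/5\}$, I would first enumerate, using the appendix, the flip loci given by refined parabolic bundles admitting a line subbundle whose parabolic $w_0$-slope equals $(\deg E)/2$. For democratic weights, variation of GIT replaces each such flip locus by a projective bundle over a lower-dimensional moduli component, effecting a sequence of smooth blow-ups and blow-downs. Tracking these, the net effect across $w = 1/3$ is the blow-up of $\mathbb{P}^2$ at four (possibly infinitely near) points, giving $K^2 = 4$, a weak del Pezzo surface of degree $4$; across $w = 3/5$ one contracts a single $(-1)$-curve, giving a weak del Pezzo surface of degree $5$. The surfaces are only \emph{weak} del Pezzo, rather than genuine del Pezzo, precisely when some of the blow-up centers become infinitely near.

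The configuration of $(-2)$-curves in part (i) is then obtained by identifying when blow-up centers merge. From the appendix it follows that two centers coincide (or become infinitely near) exactly when two of the local data at $D$ are concentrated at a single $t_i$ with $n_i \geq 2$; the local interaction between the refined filtration and $D$ dictates an $A_k$-chain of $(-2)$-curves in the dual graph. A case-by-case check over the six partitions of $5$ then yields the configurations $A_1,\,2A_1,\,A_2,\,A_2+A_1,\,A_3,\,A_4$ for $D = D_{2111},\,D_{221},\,D_{311},\,D_{32},\,D_{41},\,D_5$ respectively. The agreement of the configurations on the two ranges $1/3 < w < 3/5$ and $3/5 < w < 1$ follows because the $w = 1/2$ elementary transformations of Section \ref{section:2021.12.31.15.34} relate the corresponding moduli spaces isomorphically and preserve $(-2)$-curves.

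For parts (ii) and (iii), the locus where some $l_{i,n_i}$ fails to be free is a closed subscheme; comparing with the appendix classification one checks that its connected components are exactly the strata supported on $\Pi_{-2}(D)$ (respectively $\Pi'_{-2}(D)$), proving (ii). On the complement, every $l_{i,n_i}$ is free, so $(E,\{l_{i,n_i}\}_i)$ is an ordinary parabolic bundle; the generic democratic $\boldsymbol{w}$-stability, upgraded through Theorem A, forces this parabolic bundle to be simple, and \eqref{2022.2.23.17.26} then yields $\Lambda$-flatness for generic $\Lambda$. The main obstacle throughout is the third paragraph: matching the coalescence pattern of blow-up centers to the prescribed ADE types requires careful bookkeeping of the numerical types in the appendix under each partition $5 = n_1 + \cdots + n_\nu$, and this is where the irregular (non-reduced) nature of $D$ manifests most decisively.
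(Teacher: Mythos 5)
Your overall strategy coincides with the paper's: start from $\overline{\mathfrak{Bun}}_{\boldsymbol{w}}(D,1)\cong\mathbb{P}^2$ for $\frac15<w<\frac13$ (Proposition \ref{prop:2021.7.28.14.36}), use the appendix classification of special bundles to locate the walls at $w=\frac13$ and $w=\frac35$, blow up across the first wall and contract a $(-1)$-curve across the second. But there are three concrete problems. First, a numerical slip that matters: a weak del Pezzo surface of degree $4$ is $\mathbb{P}^2$ blown up at \emph{five} (possibly infinitely near) points, since $K^2=9-5=4$; your ``four points, giving $K^2=4$'' is internally inconsistent, and the paper indeed performs a chain of five blow-ups $X_5(D)\to\cdots\to\mathbb{P}^2$ (e.g.\ for $D_{2111}$ the centers are $P^{(0)}_{[t_1]}\succ_C P_{[t_1]}$, $P_{[t_2]}$, $P_{[t_3]}$, $P_{[t_4]}$). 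Second, the step you defer to ``careful bookkeeping'' is the mathematical core of the configuration statement: the reason the $(-2)$-locus over a point $t_i$ with $n_i\ge 2$ is an $A_{n_i-1}$ chain is Corollary \ref{2022.1.15.10.27}, which shows that the refined parabolic structures with fixed non-free $l_{i,n_i}$ of type $(1^{n_i-2},2^1)$ are parametrized by a chain of $n_i-1$ projective lines (indexed by the standard tableaux $T_1,\dots,T_{n_i-1}$), and these are exactly the type-D special bundles $E^{(1)}_{[t_i]},\dots,E^{(n_i-1)}_{[t_i]}$ that become stable for $w>\frac13$; a generic appeal to VGIT flip structure does not by itself produce this chain, nor its position as the exceptional curves over the infinitely near centers lying on the conic. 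Third, your justification that $\Pi_{-2}(D)$ and $\Pi'_{-2}(D)$ agree via the $w=\frac12$ elementary transformations does not work: $\elm^-_{n_{i_0}[t_{i_0}]}$ sends the democratic weight $w$ to the weight with $1-w$ at $t_{i_0}$ only (non-democratic unless $w=\frac12$) and shifts the degree by $n_{i_0}$, so it does not identify $\overline{\mathfrak{Bun}}_{\boldsymbol{w}}(D,1)$ for $w\in(\frac13,\frac35)$ with the space for $w\in(\frac35,1)$. The correct and simpler reason is that the passage $X_5(D)\to X_5'(D)$ contracts the strict transform of the conic $C$, a $(-1)$-curve disjoint from all the $(-2)$-curves $E^{(k)}_{[t_i]}$ with $k\ge1$, so the configuration is unchanged.

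Parts (ii) and (iii) are handled as in the paper: the appendix list identifies the locus where some $l_{i,n_i}$ fails to be free with $\Pi_{-2}(D)$, and on the complement Corollary \ref{cor:2022.2.24.0.20} gives simplicity of $(E,\{l_{i,n_i}\}_i)$ while Proposition \ref{Prop:FlatnessImplications} gives $\Lambda$-flatness. Those arguments are fine once part (i) is repaired as above.
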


These weak del Pezzo surfaces are listed in \cite[Section 8.6.3]{Dolga}.
Descriptions of the corresponding isomonodromic deformations 
are given by Kimura \cite{Kimura} (see also \cite{KNS}).
The corresponding isomonodromic deformations
 are the following:
\begin{center}
  \begin{tabular}{|c|c|c|}
    \hline
 the divisor    & $\Pi_{-2}$ 
    &  the isomonodromic deformations\\
    \hline\hline 
    $D_{2111}$ &$A_1 $ & $H(1,1,1,2)$  \\
    \hline
        $D_{221}$ &$2A_1 $ & $H(1,2,2)$  \\
    \hline
        $D_{311}$ &$A_2 $ & $H(1,1,3)$  \\
    \hline
        $D_{32}$ &$A_2 +A_1 $ & $H(2,3)$  \\
    \hline
        $D_{41}$ &$A_3 $ & $H(1,4)$  \\
    \hline
        $D_{5}$ &$A_4 $ & $H(5)$  \\
    \hline
  \end{tabular}
\end{center}
Here the notations in the last column are in \cite[pp.39--40]{Kimura}.

Fifth, we will
recover the geometry of the weak del Pezzo surfaces
of degree 4 from the modular point of view.
When weights $\boldsymbol{w}$ are the democratic weights with $w=\frac{1}{2}$,
the moduli spaces of refined parabolic bundles are 
weak del Pezzo surfaces of degree 4.
We may reinterpret the negative curves on these surfaces as
the loci of special refined parabolic bundles.
Since weights $\boldsymbol{w}$ are the democratic weights with $w=\frac{1}{2}$,
we may show that the stability of refined parabolic bundles is preserved by 
applying the elementary transformations (Section \ref{section:2021.12.31.15.42}).
So some elementary transformations induce 
automorphisms on the moduli space 
$\overline{\mathfrak{Bun}}_{\boldsymbol{w}}(D,d)$ with $w=\frac{1}{2}$.
We will reconstruct automorphisms of the weak del Pezzo surfaces of degree 4
via these elementary transformations of refined parabolic bundles.

The organization of this paper is as follows.
In Section \ref{section:ParaBun},
we will define parabolic bundles over $(\mathbb{P}^1,D)$,
where $D$ is not necessary reduced.
Next, in Section \ref{section:2021.12.31.15.53}, we will
show the implications \eqref{2022.2.23.17.26} and
\eqref{2022.2.23.17.32}, 
and will give a flatness criterium.
In Section \ref{section:2023.2.24.12.47}, we will give examples of the flatness criterium.
In Section \ref{section:2021.12.31.15.59}, 
we will define refined parabolic bundles
and will discuss combinatorial structure of refined parabolic structures at a multiple point.
This structure is related to Young tableaus.
In Section \ref{section:StabilityOfRefined}, we will define a stability condition 
of refined parabolic bundles,
and we will show Theorem A.
In Section \ref{section:2021.12.31.15.32}, 
we will define elementary transformations of refined parabolic bundles.
In Section \ref{section:2021.12.31.15.34},
we will show some properties of the 
elementary transformations of refined parabolic bundles.
In Section \ref{section:2021.12.31.15.25},
we will describe the moduli spaces when $\deg(D)=5$, $\deg(E)=1$,
and weights $\boldsymbol{w}$ are democratic weights. 
In Section \ref{section:2021.12.31.15.25_1},
we will
recover the geometry of the weak del Pezzo surfaces
of degree 4 from the modular point of view.
In the appendix, 
we will give a list of special refined parabolic bundles,
which appear when $\deg(D)=5$ and $\deg(E)=1$.


\subsection*{Acknowledgments}
The authors would like to warmly thank Professor Roman Fedorov for illuminating
discussion helping us to 
understand the flatness criterium in Proposition \ref{Prop:FlatnessImplications}.
They also thank the referee for useful remarks.


\section{Parabolic bundles}\label{section:ParaBun}

Let $D$ be an effective divisor on $\mathbb P^1$, defined by 
$$D=\sum_{i=1}^\nu n_i[t_i],\ \ \ \deg(D)=\sum_{i=1}^\nu n_i=:n.$$
Take a generator $f_i$ of the maximal ideal of $\O_{\mathbb{P}^1,t_i}$.
Set 
$\O_{n_i [t_i]} := \O_{\mathbb{P}^1,t_i}/(f_i^{n_i})$.
For a vector bundle $E$ on $\mathbb{P}^1$,
we denote by $E|_{n_i[t_i]}$ the tensor product $E \otimes \O_{n_i [t_i]}$.
Let $I= \{ 1,2,\ldots,\nu \}$ be the set of indexes of $D$.
Take $\Lambda=(\lambda^+,\lambda^-)$, with $\lambda^{\pm}\in\Omega^1(D)/\Omega^1$.
Set $\lambda_i^{\pm}:=\lambda^{\pm}\vert_{n_i[t_i]} 
\in \Omega^1(D)/\Omega^1 \otimes \O_{n_i [t_i]}$
for each $i \in I$.
We assume (as in \cite[Section 2.1]{AF}):
\begin{itemize}
\item[(a)] the order of pole of $\lambda_i^+-\lambda_i^-$ is precisely $n_i$,
\item[(b)] $\sum_i \Res_{t_i} (\lambda_i^++\lambda_i^-)=d\in\mathbb Z$,
\item[(c)] $\sum_i \Res_{t_i}\lambda_i^{\pm}\not\in\mathbb Z$ whatever are the signs $\pm$,
\item[(d)] if $n_i=1$ then $ \Res_{t_i}\lambda_i^+-  \Res_{t_i}\lambda_i^-\not\in\mathbb Z$.
\end{itemize}

The relation in (b) is called Fuchs' relation
and the condition (d) is called non resonant condition.
Let $E$ be a rank 2 vector bundle on $\mathbb{P}^1$
and $\nabla \colon E \rightarrow E \otimes \Omega^1(D)$
be a meromorphic connection
with polar divisor $D$.

\begin{definition}\label{2022_9_29_12_06}
We say $(E,\nabla)$ is {\rm $\Lambda$-connection} 
if its principal part at each pole $n_i[t_i]$ is given by 
$$
d+\begin{pmatrix}\lambda_i^+&0\\ 0&\lambda_i^-\end{pmatrix}
\colon E|_{n_i[t_i]} \longrightarrow E|_{n_i[t_i]} \otimes \Omega^1(D)/\Omega^1
$$
up to holomorphic gauge transformation.
We call $\Lambda$ the {\rm formal} (or {\rm spectral}) {\rm data} of the meromorphic connection.
\end{definition}
If $n_i \geq 2$, the condition (a) implies that  a $\Lambda$-connection $(E,\nabla)$ has unramified irregular singular point at $t_i$.   
The degree of $E$ is $d$ by (b).
The following is \cite[Lemma 4.6]{AF}:

\begin{prop}\label{Prop:GenericIrred}
Let $\Lambda$ satisfying {\rm(a),(b),(c),(d)} as above.
Then, any $\Lambda$-connection $(E,\nabla)$ is irreducible.
\end{prop}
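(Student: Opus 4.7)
My plan is to argue by contradiction. Suppose $(E,\nabla)$ admits a $\nabla$-stable line subbundle $L \hookrightarrow E$. Then $\nabla$ induces a meromorphic connection $\nabla_L$ on $L$ whose polar divisor is supported on $D$. The idea is to show that at each $t_i$ the line $L$ must locally coincide with one of the two ``eigenlines'' cut out by the diagonalization of the principal part, and then to derive an integer from the residue theorem applied to $\nabla_L$ which violates condition (c).

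First I would handle the local analysis at each $t_i$. By Definition \ref{2022_9_29_12_06}, the principal part of $\nabla$ at $n_i[t_i]$ is, in a suitable holomorphic frame, $d+\diag(\lambda_i^+,\lambda_i^-)$. When $n_i \geq 2$, condition (a) (that $\lambda_i^+-\lambda_i^-$ has a pole of order exactly $n_i$) means the leading term of the principal part has distinct eigenvalues, and so the unramified irregular formal decomposition theorem gives a unique $\nabla$-stable splitting of $E|_{n_i[t_i]}$ into two rank-one $\O_{n_i[t_i]}$-submodules. The restriction $L|_{n_i[t_i]}$, being $\nabla$-stable, must coincide with one of these two submodules. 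When $n_i=1$, condition (d) (non-resonance) ensures the formal connection is semisimple, and again any $\nabla$-stable line matches one of the two eigenlines. In either case, denote by $\epsilon_i \in \{+,-\}$ the sign of the eigenline selected by $L$ at $t_i$, so that the induced $\nabla_L$ has principal part $\lambda_i^{\epsilon_i}$ at $t_i$.

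Now I would apply the residue theorem on $\mathbb{P}^1$ to $\nabla_L$. For a meromorphic connection on a line bundle $L$ over $\mathbb{P}^1$, the sum of residues equals $-\deg(L)$, an integer. By the previous step,
\[
\sum_{i=1}^{\nu} \Res_{t_i} \lambda_i^{\epsilon_i} \;=\; -\deg(L) \;\in\; \mathbb Z.
\]
But this directly contradicts condition (c), which forbids $\sum_i \Res_{t_i}\lambda_i^{\pm}$ from being an integer for any choice of signs. Therefore no $\nabla$-stable line subbundle exists, and $(E,\nabla)$ is irreducible.

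The only subtle step is the local claim that $L|_{n_i[t_i]}$ must be one of the formal eigenlines; the regular non-resonant case ($n_i=1$) is standard, while for $n_i\geq 2$ one really needs (a) to guarantee distinct eigenvalues for the leading term of the principal part so that the unramified formal decomposition is available on $\O_{n_i[t_i]}$-modules. Once this is in hand, the rest is a one-line residue computation.
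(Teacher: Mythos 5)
Your proposal is correct and follows essentially the same route as the paper: restrict $\nabla$ to the invariant line subbundle, observe that its principal part at each $t_i$ must be one of $\lambda_i^{\pm}$, and derive a contradiction with (c) from Fuchs' relation. The paper states this in two sentences, taking the eigenline identification for granted, whereas you spell out the formal-decomposition justification (using (a) for $n_i\geq 2$ and (d) for $n_i=1$) — a worthwhile elaboration but not a different argument.
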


\begin{proof}If $L\subset E$ is a $\nabla$-invariant line bundle, then $\nabla\vert_{L}:L\to L\otimes\Omega^1(D)$ 
is a connection with principal parts $\lambda_i^{\pm}$ at each $t_i$. Therefore, Fuchs' relation for $\nabla\vert_{L}$
yields a contradiction with generic assumption (c) on residual eigenvalues.
\end{proof}

Let $(E,\nabla)$ be a $\Lambda$-connection.
By the condition of $\Lambda$,
there is a unique free $\O_{n_i [t_i]}$-submodule $l_i \subset E|_{n_i[t_i]}$
of length $n_i$ for each $i$
by the compatible with $\nabla$ in the following sense:
$\nabla (s)- \lambda_i^+s$ is regular at $t_i$
for any $i$ and every section $s$ of $E$ in a neighborhood of $t_i$
such that $s|_{n_i [t_i]} \in l_i$.
Let $\bold{l} = \{ l_i \}_{i \in I}$ be the set of the free $\O_{n_i [t_i]}$-submodule $l_i \subset E|_{n_i[t_i]}$
of length $n_i$
given by a $\Lambda$-connection for each $i$.
We call the triple $(E, \nabla , \bold{l})$ 
{\it unramified irregular singular $\Lambda$-parabolic connection} 
(see \cite[Definition 2.1]{IS}).
After fixing convenient weights $\boldsymbol{\alpha}$,
the moduli space of $\boldsymbol{\alpha}$-stable
unramified irregular singular $\Lambda$-parabolic connections of rank $2$ on $(\mathbb{P}^1,D)$
forms a smooth quasi-projective scheme of dimension $2(n-3)$ (see \cite[Section 2]{IS}).
In our case,
by Proposition \ref{Prop:GenericIrred}, 
unramified irregular singular $\Lambda$-parabolic connections
induced by $\Lambda$-connections are irreducible.
So these unramified irregular singular $\Lambda$-parabolic connections
satisfy the stability condition in \cite[Definition 2.2]{IS}.
So we identify $\boldsymbol{\alpha}$-stable
unramified irregular singular $\Lambda$-parabolic connections
with $\Lambda$-connections.
Let $\CON_{\Lambda}(D)$ be the moduli space of $\Lambda$-connections.
In this paper, we are concerned with forgetting the connections from 
unramified irregular singular $\Lambda$-parabolic connections.

\begin{definition}\label{2022.2.23.22.18}
We say $(E, \bold{l} )$ (where $\bold{l} = \{ l_{ i} \}_{ i \in I  }$) is
a {\rm parabolic bundle of rank $2$ and of degree $d$ 
over $(\mathbb{P}^1,D)$}
if 
$E$ is a rank 2 vector bundle on $\mathbb{P}^1$ with $\deg(E)=d$ 
and $l_{ i } \subset E|_{n_i [t_i]}$ is a free $\mathcal{O}_{n_i [t_i]}$-submodule of
length $n_i$ for each $i$.
We call $l_{ i}$ a {\rm parabolic structure at $t_i$}.
\end{definition}

\subsection{Flatness criterium}\label{section:2021.12.31.15.53}

We say a parabolic bundle $(E, \bold{l})$ is {\it $\Lambda$-flat} if 
there exists a connection on $E$ compatible with parabolic structure $\bold{l}$ 
and formal data $\Lambda$.
We say that a parabolic bundle
$(E, \{ l_{ i } \}_{ i \in I  } )$ is {\it decomposable} if
there exists a decomposition $E =L_1 \oplus L_2$ such that
$l_{i} = l^{(1)}_{i}$ or $l_{i} = l^{(2)}_{i}$ for any $i\in I$,
where we set $l^{(1)}_{i} := l_{i}\cap (L_1|_{n_i[t_i]})$
and $l^{(2)}_{i} := l_{i}\cap (L_2|_{n_i[t_i]})$.
We say that $(E, \{ l_{ i} \}_{ i \in I  } )$ is {\it undecomposable} if
$(E, \{ l_{ i } \}_{ i \in I } )$ is not decomposable.
If $D$ is a reduced divisor, 
then
$$
\text{$\mathrm{End}(E,\bold{l})=\mathbb C$ } \Longleftrightarrow 
\text{ $(E,\bold{l})$ is $\Lambda$-flat } \Longleftrightarrow 
\text{ $(E,\bold{l})$ is undecomposable}
$$
(see \cite[Proposition 3]{AL} and \cite[Proposition 3.1]{LS}).
The identification of these three conditions is not necessarily true for any curves.
So for reduced divisor cases,
the target of the forgetting map from $\CON_{\Lambda}(D)$ is 
the moduli space of undecomposable parabolic bundles.
Here $\CON_{\Lambda} (D)$
is the moduli space of $\Lambda$-connections.
For $n=4$ (that is, $D=t_1+\cdots+t_4$), 
the moduli space of rank 2 undecomposable parabolic bundles with degree $0$ (or $1$)
can be obtained by glueing two copies of $\mathbb{P}^1$ outside $t_1,\ldots ,t_4$
(see \cite[Section 2.3]{AF}).
For $n=5$ (that is, $D=t_1+\cdots+t_5$), 
Loray--Saito have provided a precise description of
the moduli space of rank 2 undecomposable parabolic bundles with degree $-1$.
This moduli space is closely related to the geometry of degree 4 del Pezzo surfaces.

Now we consider cases where $D$ is not necessary reduced. 
It is proved in \cite[Proposition 4.8]{AF} that, 
if $n=4$ (Painlev\'e case) and $\deg(E)$ is odd, then 
$$
\text{ $\mathrm{End}(E,\bold{l})=\mathbb C$ }
\Longleftrightarrow
\text{ $(E,\bold{l})$ is $\Lambda$-flat}.
$$
We will check the following claim: 
If $(E,\bold{l})$ is $\Lambda$-flat, then $(E,\bold{l})$ is undecomposable.
But the converse proposition is not true even when $\deg(D)=4$ and $\deg(E)$ is odd.
So
in addition to undecomposability, we assume the {\it admissibleness} defined below.

Remind that, on $\mathbb P^1$, any vector bundle is decomposable by Birkhoff:
$$E\simeq\O(d_1)\oplus\O(d_2)\hskip2cm\text{with}\hskip2cm d_1\le d_2\ \text{unique}.$$
The subbundle $\O(d_1)\hookrightarrow E$ is not unique, but if $d_1<d_2$, then $\O(d_2)\hookrightarrow E$ 
is unique. When $d_1=d_2$, the two subbundles of the decomposition can be arbitrarily choosen in 
a $1$-parameter family. In any case, the (possible) factor $L=\O(d_2)$ of the decomposition is characterized 
by $\deg(E)\le 2\deg (L)$.
\begin{definition}\label{eq:2021.12.31.11.48}
We will say that $(E,\bold{l})$ is {\rm admissible} if 
the following condition is satisfied:
\begin{equation}
\forall L\subset E,\ \  \deg(E)\le 2\deg (L)\ \ 
\Longrightarrow\ \ \sum_{i \in I} \length( l_i \cap L|_{n_i[t_i]})\le n+\deg(E)-2\deg(L)-2.
\end{equation} 
\end{definition}
Using some material of  \cite[Section 4]{AF} and ideas of \cite[Proposition 3]{AL}, we can prove:

\begin{prop}\label{Prop:FlatnessImplications} 
Let $\Lambda$ satisfying {\rm(a),(b),(c),(d)} as above,
and $(E,\bold{l})$ a parabolic bundle of degree $d$.
Then, we have implications {\rm (i) $\Rightarrow$ (ii) $\Rightarrow$ (iii)} where:
\begin{itemize}
\item[(i)] $\mathrm{End}(E,\bold{l})=\mathbb C$, 
\item[(ii)] $(E,\bold{l})$ is $\Lambda$-flat,
\item[(iii)] $(E,\bold{l})$ is undecomposable and admissible.
\end{itemize}
Moreover, for $n=4$ and $\deg(E)$ odd, these are equivalent.
\end{prop}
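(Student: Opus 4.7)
The plan is to prove (i) $\Rightarrow$ (ii) by an obstruction-theoretic argument generalizing \cite[Proposition 3]{AL} to the irregular setting, then (ii) $\Rightarrow$ (iii) by exploiting the irreducibility output of Proposition \ref{Prop:GenericIrred} together with a local analysis of principal parts against the parabolic structure, and finally invoke \cite[Proposition 4.8]{AF} for the reverse implication when $n=4$ and $\deg(E)$ is odd.

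For (i) $\Rightarrow$ (ii), I first fix a rank-one connection on $\det E$ with principal part $\lambda_i^+ + \lambda_i^-$ at each $t_i$, which exists by Fuchs' relation (b); the question reduces to the existence of a $\Lambda$-connection on $(E,\bold{l})$ inducing this determinant connection. Local $\Lambda$-connections at each $t_i$ exist thanks to the normal-form theorem for unramified irregular singular points, using (a) and (d). The global obstruction to gluing them into a $\Lambda$-connection on $(E,\bold{l})$ is then a class in $H^1$ of the trace-free parabolic endomorphism sheaf $\mathcal{E}nd^{par}_0(E,\bold{l})$, and Serre duality identifies this space with the dual of $H^0\bigl(\mathcal{E}nd^{par}_0(E,\bold{l})\bigr)$, which vanishes under hypothesis (i). The main technical work will lie in setting up the correct parabolic obstruction sheaf in the presence of higher-order poles and checking that Serre duality produces the right identification; the essential ingredient to import from \cite{AF} (Section 4) is the precise description of principal parts in terms of the parabolic filtration.

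For (ii) $\Rightarrow$ (iii), let $\nabla$ be a $\Lambda$-connection on $(E,\bold{l})$, which is irreducible by Proposition \ref{Prop:GenericIrred}. Undecomposability is shown by contradiction: a decomposition $(E,\bold{l}) = (L_1,\bold{l}^{(1)}) \oplus (L_2,\bold{l}^{(2)})$ forces $l_i = L_{\epsilon_i}|_{n_i[t_i]}$ for some $\epsilon_i \in \{1,2\}$ on length grounds, and this sublattice must then coincide with the $\lambda_i^+$-eigenspace of the semisimple principal part of $\nabla$ at $t_i$ (semisimplicity coming from condition (a)). A short computation in a local gauge diagonalizing the principal part shows that the block $\nabla_1$ of $\nabla$, viewed as a connection on the line bundle $L_1$, has principal part $\lambda_i^+$ whenever $\epsilon_i=1$ and $\lambda_i^-$ whenever $\epsilon_i=2$; Fuchs' relation applied to $\nabla_1$ then reads $\sum_i \Res_{t_i}\lambda_i^{\sigma_i} = \deg L_1 \in \mathbb{Z}$ for a choice of signs $\sigma_i \in \{+,-\}$, directly contradicting hypothesis (c). For admissibility, let $L \subset E$ be a line subbundle with $\deg E \le 2\deg L$; irreducibility of $\nabla$ makes the second fundamental form $\phi \colon L \to (E/L)\otimes\Omega^1(D)$ nonzero. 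Taking a local section $s$ of $L$ with $s|_{n_i[t_i]}$ generating $l_i \cap L|_{n_i[t_i]}$ and exploiting the compatibility $\nabla s - \lambda_i^+ s \in E\otimes\Omega^1$ shows that $\phi$ vanishes at $t_i$ to order $\length(l_i \cap L|_{n_i[t_i]})$. Hence $\phi$ is a nonzero global section of a line bundle of degree
\[
\deg E - 2\deg L + n - 2 - \sum_{i\in I} \length(l_i \cap L|_{n_i[t_i]}),
\]
whose non-negativity is precisely the admissibility inequality.

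The converse implication when $n=4$ and $\deg(E)$ is odd is \cite[Proposition 4.8]{AF}: a direct classification of parabolic bundles in the four-point Painlev\'e setting shows that every undecomposable admissible $(E,\bold{l})$ satisfies $\mathrm{End}(E,\bold{l}) = \mathbb{C}$. I expect the Serre-duality identification in (i) $\Rightarrow$ (ii) to be the main obstacle; the other implications reduce to bookkeeping around the principal-part normal form or to a direct quotation.
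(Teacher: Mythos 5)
Your implications (i) $\Rightarrow$ (ii) and (ii) $\Rightarrow$ (iii) follow the paper's route closely. For (ii) $\Rightarrow$ (iii) the argument is identical: Fuchs' relation plus hypothesis (c) kills a decomposition, and the second fundamental form $L\to (E/L)\otimes\Omega^1(D)$, nonzero by Proposition \ref{Prop:GenericIrred}, gives the admissibility bound. For (i) $\Rightarrow$ (ii) you take a mild variant: you normalize the determinant connection first, so the obstruction cocycle is trace-free and pairs only with trace-free parabolic endomorphisms, which vanish under (i); the paper instead keeps the full cocycle in $H^1(\mathbb P^1,\mathcal Higgs(E,\bold{l}))$ and disposes of the pairing with $A=\mathrm{Id}_E$ by an explicit residue computation against an auxiliary global meromorphic connection $\nabla^0$, using (b). Both work; note only that your obstruction class lives in $H^1$ of the (trace-free, strongly parabolic) \emph{Higgs} sheaf $\mathcal End_0\otimes\Omega^1(D)$ with nilpotent principal parts, not of $\mathcal End^{par}_0$ itself --- it is that Higgs sheaf whose Serre dual is $H^0(\mathcal End^{par}_0)$.

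The genuine gap is the final equivalence for $n=4$ and $\deg(E)$ odd. You delegate it entirely to \cite[Proposition 4.8]{AF}, but that result (as the paper itself quotes it) is the equivalence (i) $\Leftrightarrow$ (ii), i.e.\ $\mathrm{End}(E,\bold{l})=\mathbb C$ iff $(E,\bold{l})$ is $\Lambda$-flat. What is needed to close the circle is (iii) $\Rightarrow$ (i), and this cannot be in \cite{AF}: the notion of admissibility is introduced in the present paper precisely because undecomposability alone does \emph{not} imply flatness even when $n=4$ and $\deg(E)$ is odd. Your sketch never uses admissibility in this direction, which is a sign the step is missing. The paper's argument is: by undecomposability any endomorphism is $c\cdot\mathrm{Id}_E+N$ with $N$ nilpotent; if $N\neq 0$ with $L_0=\ker(N)=\mathrm{image}(N)$, writing $N=\begin{pmatrix}0&0\\ f&0\end{pmatrix}$ in a splitting $E=L\oplus L_0$, the invariance of the parabolic structure forces $f$ to vanish at $t_i$ to order $n_i-2m_i$ where $m_i=\length(l_i\cap L_0|_{n_i[t_i]})$, whence $n-2\sum_i m_i\le\deg(f)\le 2\deg(L_0)-\deg(E)$; combined with the admissibility bound $\sum_i m_i\le n+\deg(E)-2\deg(L_0)-2$ this yields $n\ge (2\deg(L_0)-\deg(E))+4$, impossible for $n=4$ when $\deg(E)$ is odd since $2\deg(L_0)-\deg(E)$ is then a nonnegative odd integer. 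You need to supply this (or an equivalent) argument rather than cite it.
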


\begin{proof}
(ii) $\Rightarrow$ (iii). Let $\nabla$ be a connection on $E$ compatible with the parabolic structure $\bold{l}$ 
and formal data $\Lambda$.
Assume, by contradiction, that $(E,\bold{l})=(L_1,D_1)\oplus(L_2,D_2)$, meaning that the vector bundle splits as 
$E=L_1\oplus L_2$ for line bundles $L_i$, and parabolic directions are distributed on $L_i$ over $D_i$, $i=1,2$,
where $D=D_1+D_2$
with $D_1,D_2$ having disjoint support. Then, with respect to that decomposition, the connection $\nabla$ writes 
$$\begin{pmatrix}\nabla_1 & \theta_{1,2}\\ \theta_{2,1} & \nabla_2\end{pmatrix}$$
where $\theta_{i,j}\in\mathrm{Hom}(L_j,L_i\otimes\Omega^1(D))$ and $\nabla_i:L_i\to L_i\otimes\Omega^1(D)$
a connection. Then, the principal part at $t_i$ writes:
$$\text{either} \hskip1cm \begin{pmatrix}\lambda_i^+ & *\\ 0 & \lambda_i^-\end{pmatrix},
\hskip2cm \text{or} \hskip1cm \begin{pmatrix}\lambda_i^- & 0\\ * & \lambda_i^+\end{pmatrix}.$$
Therefore, Fuchs' relation for $L_1$ (or $L_2$) yields a contradiction with generic assumption (c) on residual eigenvalues.

On the other hand, let $L\subset E$ be an arbitrary proper subbundle of $E$.
We consider the composition map
$$L\hookrightarrow E\stackrel{\nabla}{\rightarrow} E\otimes\Omega^1(D)\twoheadrightarrow (E/L)\otimes\Omega^1(D).$$
This map $\varphi_L$ is $\O$-linear and it is not the zero map, otherwise $L$ would be $\nabla$-invariant, 
therefore contradicting Proposition \ref{Prop:GenericIrred}. We deduce that
$$m:=\deg(E)-2\deg(L)+n-2\ge0.$$
Moreover, $m$ is the number of zeroes of $\varphi_L$ (counted with multiplicities).
If $l_i \cap L|_{n_i[t_i]}\neq 0$, then $l_i \cap L|_{n_i[t_i]}$ contributes for zeroes of $\varphi_L$, 
and we therefore deduce
$$\sum_{i \in I}\length(l_i\cap L|_{n_i[t_i]})\le \deg(E)-2\deg(L)+n-2$$
which implies, when applied to the case $\deg(E)\le 2\deg(L)$, that  $(E,\bold{l})$ is admissible.

(i) $\Rightarrow$ (ii). Let $(E,\bold{l})$ be as in (i). We can locally trivialize $E\vert_{U_\alpha}\equiv\mathbb C^2$
with parabolic structure normalized as $\begin{pmatrix}1\\ 0\end{pmatrix}$, so that we can locally define 
a $\Lambda$-connection compatible with the parabolic structure on $U_\alpha$ by
$$\nabla_\alpha=d+\begin{pmatrix}\lambda_\alpha^+&0\\ 0&\lambda_\alpha^-\end{pmatrix}$$
where $\lambda_\alpha^{\pm}$ is a $1$-form on $U_\alpha$ with principal par $\lambda^\pm\vert_{U_\alpha}$.
On intersections $U_\alpha\cap U_\beta$, the difference $\nabla_\alpha-\nabla_\beta=\theta_{\alpha\beta}$
is a section of the sheaf of parabolic Higgs fields $\mathcal Higgs(E,\bold{l})$ which is the subsheaf of
$\mathcal End(E,\bold{l})\otimes\Omega^1(D)$ whose principal parts are nilpotent. In above trivialization,
we have 
$$\theta_{\alpha\beta}=\begin{pmatrix}0&*\\ 0&0\end{pmatrix}+\text{holomorphic}.$$
This defines an element of $\{\theta_{\alpha\beta}\}\in H^1(\mathbb P^1,\mathcal Higgs(E,\bold{l}))$
and the existence of a global $\Lambda$-connection $\nabla$ compatible with the parabolic structure
is equivalent to the vanishing of this cocycle: $\nabla_{U_\alpha}=\nabla_\alpha-\theta_\alpha$ with 
$\theta_\alpha-\theta_\beta=\theta_{\alpha\beta}$.

In order to prove that $\{\theta_{\alpha\beta}\}\equiv0$, we are going to use Serre duality, i.e. there is a perfect pairing
$$\begin{matrix}
H^1(\mathbb P^1,\mathcal Higgs(E,\bold{l}))\times H^0(\mathbb P^1,\mathcal End(E,\bold{l}))
&\to& H^1(\mathbb P^1,\Omega^1)&\stackrel{\sim}{\to}&\mathbb C\\
\left( \{\theta_{\alpha\beta}\},A\right)&\mapsto&\left\{\omega_{\alpha\beta}:=\tr(\theta_{\alpha\beta}\cdot A)\right\}&&
\end{matrix}$$
where $\mathcal End(E,\bold{l})$ is the sheaf of parabolic endomorphisms. 
If $\{\omega_{\alpha\beta}\}\equiv0$ for all $A$, then we can conclude that
$\{\theta_{\alpha\beta}\}\equiv0$ and therefore that there exists a 
global unramified irregular singular $\Lambda$-parabolic connection
on $(E,\bold{l})$. By assumption (i), each $A$ takes the form $c\cdot \mathrm{Id}_E$, and by linearity, 
it is therefore enough to test the vanishing of 
$\{\omega_{\alpha\beta}\}$ for $A=\mathrm{Id}_E$.

Fix any global meromorphic connection $\nabla^0$ on $E$; for instance, one can pull-back the trivial 
connection $d$ from any birational trivialization $E\dashrightarrow \mathbb C^2$. It obviously satisfies 
Fuchs' relation $\sum\Res\nabla^0=-\deg(E)$. Now consider the following meromorphic resolution 
of the cocycle 
$$\theta_{\alpha\beta}=(\nabla_\alpha-\nabla^0)-(\nabla_\beta-\nabla^0);$$
therefore, $\{\omega_{\alpha\beta}\}$ can also be viewed as the meromorphic \v{C}ech coboundary 
$$\{\omega_{\alpha\beta}\}=\partial\left\{\omega_\alpha:=\tr\left((\nabla_\alpha-\nabla^0)\cdot A\right)\right\}.$$
Then we have: 
$$\{\omega_{\alpha\beta}\}\sim0\hskip1cm \Longleftrightarrow\hskip1cm
\sum\Res(\omega_\alpha)=0$$
(note that residue does not depend on the choice of $\alpha$ as residue of $\nabla_\alpha-\nabla_\beta$ is nilpotent).
Indeed, the vanishing of residue is equivalent to the existence of a global meromorphic $1$-form $\omega$
such that $\omega-\omega_\alpha$ is holomorphic, i.e. such that $\{\omega_{\alpha\beta}\}$ is a holomorphic 
\v{C}ech coboundary:
$$\{\omega_{\alpha\beta}\}=\partial\{\omega_\alpha-\omega\}$$
meaning that $\{\omega_{\alpha\beta}\}\sim0$ in $H^1(\mathbb P^1,\Omega^1)$.
If $A=\mathrm{Id}_E$, then we have by Fuchs' relation for $\nabla^0$ and assumption (b) for $\Lambda$, we have
$$\sum\Res(\omega_{\alpha})=\sum\Res(\tr(\nabla_\alpha))-\sum\Res(\tr(\nabla^0))=0.$$

(iii) $\Rightarrow$ (i) in the case $n=4$ and $\deg(E)$ odd. Let $A$ be an endomorphism of the parabolic bundle $(E,\bold{l})$. 
Since eigenvalues of $A\vert_{E_x}$ are holomorphic functions of $x\in X$, they are constant $c_1,c_2\in\mathbb C$.
If $c_1\not=c_2$, then the corresponding 
eigenspaces define a decomposition $E=L_1\oplus L_2$ and parabolic structure, since it is $A$-invariant, 
must be distributed in $L_1$ and $L_2$; this implies that $(E,\bold{l})$ is decomposable, contradiction.
We therefore have $c_1=c_2=:c$, and any endomorphism of the parabolic bundle takes the form 
$A=c\cdot \mathrm{Id}_E+N$ with $N$ nilpotent. Clearly, $c\cdot \mathrm{Id}_E\in\mathrm{End}(E,\bold{l})$,
and (i) is equivalent to say that there is no non-zero nilpotent endomorphism in $\mathrm{End}(E,\bold{l})$.

Assume, by contradiction, that $N$ is a non-zero nilpotent element of $\mathrm{End}(E,\bold{l})$.
Let $L_0=\ker(N)=\mathrm{image}(N)$. Since $N$ is not trivial, it induces a non trivial morphism $E/L_0\to L_0$. 
This implies that $\deg(E)-2\deg(L_0)\le0$. In particular, we have a decomposition $E=L\oplus L_0$
with $\deg(L)\le\deg(L_0)$ and, through that decomposition, the nilpotent endomorphism $N$ writes:
$$N=\begin{pmatrix}0&0\\f&0\end{pmatrix}$$
where $f=f(x)$ is a polynomial of degree $2\deg(L_0)-\deg(E)$. 
Locally at $n_i[t_i]$, the parabolic structure is given, through the splitting and local coordinate $z=x-t_i$, by  
$$\begin{pmatrix}z^{m_i}g(z)\\1\end{pmatrix}$$
where $m_i= \sum_{i\in I}\mathrm{length} ( l_i \cap L_0\vert_{n_i[t_i]})$, 
and $g(0)\not=0$. The action of $N$ on the parabolic writes
$$\begin{pmatrix}0&0\\f&0\end{pmatrix}\cdot\begin{pmatrix}z^{m_i}g\\ 1\end{pmatrix}=\begin{pmatrix}0\\z^{m_i}fg\end{pmatrix}$$
so that $N$ preserves the parabolic structure if, and only if, $f$ (and therefore $N$) vanish at $t_i$ at order $n_i-2m_i$.
Indeed, we have
\begin{equation}\label{2022.2.23.20.43}
\det\begin{pmatrix}z^{m_i}g & 0\\ 1& z^{m_i}fg\end{pmatrix}=z^{2m_i}fg^2=0\ \mod\ z^{n_i}\ \ \ \text{iff}\ \ \ 
f=z^{n_i-2m_i}\tilde f
\end{equation}
and the induced action of $N$ is by multiplication
$$\begin{pmatrix}0&0\\f&0\end{pmatrix}\cdot\begin{pmatrix}z^{m_i}g\\ 1\end{pmatrix}=z^{m_i}fg\cdot \begin{pmatrix}z^{m_i}g\\ 1\end{pmatrix}\ \mod\ z^{n_i}.$$
We promptly deduce that 
\begin{equation}\label{2022.2.23.23.50}
n-2
\sum_{i \in I} \length( l_i \cap L_0|_{n_i[t_i]})\le\deg(f)\le 2\deg(L_0)-\deg(E).
\end{equation}
On the other hand, from (iii), $(E,\bold{l})$ is admissible and we have that 
$$
\sum_{i \in I} \length( l_i \cap L_0|_{n_i[t_i]})\le n+\deg(E)-2\deg(L_0)-2.$$
We deduce 
$$\frac{n+\deg(E)-2\deg(L_0)}{2}\le
\sum_{i \in I} \length( l_i \cap L_0|_{n_i[t_i]})\le n+\deg(E)-2\deg(L_0)-2.$$
This implies, by comparing left and right hand-sides, that
$$n\ge\underbrace{2\deg(L_0)-\deg(E)}_{\ge 0}+ 4$$
which is impossible when $n=4$ and $\deg(E)$ odd.
Therefore, $N=0$ and $\mathrm{End}(E,\bold{l})=\mathbb C$.
\end{proof}

The undecomposability and admissibleness are 
independent of $\Lambda$. 
So, when $D$ is reduced or when $n=4$ and $\deg (E)$ is odd,
the $\Lambda$-flatness is independent of $\Lambda$.
But, in general, the $\Lambda$-flatness  
depends on $\Lambda$.
Indeed, we have the following proposition.

\begin{prop}[{\cite[Corollary 4.4]{AF}}]\label{Prop:FlatnessCriterium}
Let $\Lambda$ satisfying {\rm(a),(b),(c),(d)} as above,
and $(E,\bold{l})$ a parabolic bundle of degree $d$. 
Assume $(E,\bold{l})$ is undecomposable. 
Then $(E,\bold{l})$ is $\Lambda$-flat if, and only if, for any nilpotent endomorphism
$N$ of $(E,\bold{l})$, we have
\begin{equation}\label{eq:FlatnessCriterium}
\Res(N\vert\bold{l}\cdot (\lambda^+-\lambda^-))=0
\end{equation}
where $N\vert\bold{l}$ denotes the action induced by $N$ on the parabolic structure,
and $\Res$ is the sum of residues.
\end{prop}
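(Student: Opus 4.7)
The plan is to refine the Čech cohomology and Serre duality argument from the proof of (i) $\Rightarrow$ (ii) in Proposition \ref{Prop:FlatnessImplications}. On a trivializing cover $\{U_\alpha\}$ of $\mathbb P^1$ for $(E,\bold{l})$, one constructs local $\Lambda$-connections $\nabla_\alpha=d+\mathrm{diag}(\lambda^+_\alpha,\lambda^-_\alpha)$ compatible with the parabolic structure, and the obstruction to gluing them into a global $\Lambda$-connection is the cocycle $\{\theta_{\alpha\beta}:=\nabla_\alpha-\nabla_\beta\}$ in $H^1(\mathbb P^1, \mathcal Higgs(E,\bold{l}))$. By Serre duality, this class vanishes if and only if
\[
\sum_{i\in I}\Res_{t_i}\tr\bigl((\nabla_\alpha-\nabla^0)\cdot A\bigr)=0 \qquad \text{for every } A\in H^0(\mathbb P^1, \mathcal End(E,\bold{l})),
\]
where $\nabla^0$ is any fixed global meromorphic connection on $E$; different choices of $\nabla^0$ differ by an $\mathcal O$-linear term whose residue sum vanishes by the residue theorem, so the pairing is intrinsic.

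Next I would exploit the undecomposability of $(E,\bold{l})$. The opening argument of (iii) $\Rightarrow$ (i) in Proposition \ref{Prop:FlatnessImplications}, which relies solely on undecomposability, shows that every $A\in\mathrm{End}(E,\bold{l})$ decomposes uniquely as $A=c\cdot\mathrm{Id}_E+N$ with $c\in\mathbb C$ and $N$ nilpotent in $\mathrm{End}(E,\bold{l})$. By linearity the Serre pairing splits into two cases. For $A=\mathrm{Id}_E$, the required vanishing $\sum\Res_{t_i}\tr(\nabla_\alpha-\nabla^0)=0$ holds automatically by Fuchs' relation for $\nabla^0$ together with condition (b) for $\Lambda$, exactly as verified in the earlier proof. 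Hence $(E,\bold{l})$ is $\Lambda$-flat if and only if the pairing against every nilpotent $N\in\mathrm{End}(E,\bold{l})$ vanishes.

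The remaining step is to identify this Čech residue with the intrinsic quantity $\Res(N\vert\bold{l}\cdot(\lambda^+-\lambda^-))$. Using the independence of the pairing on $\nabla^0$, I would choose $\nabla^0$ to be holomorphic in neighborhoods of all $t_i$ (possible since one can place its poles away from $\mathrm{supp}(D)$). Then, in a local frame at $t_i$ where $l_i$ is spanned by $\begin{pmatrix}1\\0\end{pmatrix}$ and $\nabla_\alpha=d+\mathrm{diag}(\lambda^+_\alpha,\lambda^-_\alpha)$, the condition that $N$ preserves $l_i$ modulo $f_i^{n_i}$ forces the $(2,1)$-entry of $N$ to be divisible by $f_i^{n_i}$, while the $(1,1)$-entry reduces mod $f_i^{n_i}$ to the multiplier $N\vert\bold{l}\in\mathcal O_{n_i[t_i]}$. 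Since $\tr(N)=0$ globally, the $(2,2)$-entry is the negative of the $(1,1)$-entry, so the principal part at $t_i$ of $\tr((\nabla_\alpha-\nabla^0)\cdot N)$ reduces modulo $\Omega^1$ to $(\lambda^+-\lambda^-)\cdot(N\vert\bold{l})$, and summing over $i\in I$ yields \eqref{eq:FlatnessCriterium}. The main obstacle is precisely this last step: performing the local-coordinate bookkeeping carefully enough to confirm that, with $\nabla^0$ chosen holomorphic at each $t_i$, only the intrinsic datum $(\lambda^+-\lambda^-,\,N\vert\bold{l})$ survives in the residue.
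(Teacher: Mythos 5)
Your proposal is correct and follows the same Serre-duality skeleton as the paper's own proof, which is itself presented as a continuation of the argument for Proposition \ref{Prop:FlatnessImplications}: the \v{C}ech obstruction in $H^1(\mathbb P^1,\mathcal Higgs(E,\bold{l}))$, the reduction to nilpotent $A=N$ using undecomposability and Fuchs' relation together with assumption (b), and the identification of the pairing with a sum of residues. The only real difference is the endgame. The paper writes $N=\begin{pmatrix}0&0\\ f&0\end{pmatrix}$ in a Birkhoff splitting $E=L\oplus L_0$, takes $\nabla^0$ diagonal in that splitting (so that $\tr(\Theta^0 N)\equiv 0$ identically), and gauges the parabolic direction to $\begin{pmatrix}0\\ 1\end{pmatrix}$; you instead stay in the frame adapted to $\bold{l}$ in which $\nabla_\alpha$ is diagonal, use $\tr(N)=0$ and the invariance of $l_i$ to read off $N\vert\bold{l}$ from the $(1,1)$-entry of $N$, and suppress the $\nabla^0$-term at the $t_i$ by choosing $\nabla^0$ holomorphic there. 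Your local computation at the $t_i$ is sound: the residue of $a\,(\lambda^+-\lambda^-)$ depends only on $a\bmod f_i^{n_i}$, i.e.\ on $N\vert\bold{l}$, because $\lambda^+-\lambda^-$ has pole order $n_i$. The one point to tighten is that the Serre pairing is the sum of residues of $\omega_\alpha=\tr\left((\nabla_\alpha-\nabla^0)N\right)$ over \emph{all} of its poles, including the poles of $\nabla^0$ away from the support of $D$, where $\tr(\Theta^0 N)$ need not be residue-free for an arbitrary $\nabla^0$; as written, your formula only accounts for the residues at the $t_i$. This is repaired in one line inside your own setup: take $\nabla^0$ to be a direct sum of connections on the Birkhoff factors $L\oplus L_0$ with poles off the support of $D$. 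Then $\tr(\Theta^0 N)\equiv 0$ because $N$ is strictly triangular in that splitting, so only the $t_i$ contribute and the total reduces to \eqref{eq:FlatnessCriterium}, as required.
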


\begin{proof}We rephrase and continue the part [(iii) $\Rightarrow$ (i)] of the proof of Proposition \ref{Prop:FlatnessImplications}.
The obstruction to the existence of a connection is defined by an element 
$\{\nabla_{\alpha}-\nabla_\beta\}\in H^1(\mathbb P^1,\mathcal Higgs(E,\bold{l}))$. By Serre duality, it is zero 
if, and only if $\Res(\tr\left(\nabla_\alpha-\nabla^0)\cdot A\right)=0$ for all endomorphism $A$ of $(E,\bold{l})$;
here $\nabla^0$ is a global meromorphic connection that we have fixed {\it a priori}.
Since $(E,\bold{l})$ is undecomposable, any endomorphism takes the form $A=c\cdot \mathrm{Id}_E+N$ with $N$ nilpotent.
We already know that the condition is satisfied for $A=\mathrm{Id}_E$ because of Fuchs' relation and assumption (b) for $\Lambda$;
by linearity, it is enough to consider $A=N$ nilpotent. A non zero nilpotent endomorphism takes the form
$$N=\begin{pmatrix}0&0\\f&0\end{pmatrix}$$
in a decomposition $E=L\oplus L_0$ with $\deg(L)\le\deg(L_0)$.
In fact, we can choose $\nabla^0$ such that it has one single pole with residue 
$$\begin{pmatrix}k_1 & 0\\ 0 & k_2\end{pmatrix}\frac{dz}{z}$$
so that the contribution of $\nabla^0$ in $\Res(\tr\left(\nabla_\alpha-\nabla^0)\cdot A\right)$ is zero.
Let us compute the contribution of $\nabla_\alpha$. Let the parabolic structure be defined by 
$$\begin{pmatrix}g\\ 1\end{pmatrix}$$
(e.g. in local coordinate near a pole). Since it is $N$-invariant, we have that $fg^2$ restricts to zero on $D$ (or $n_i[t_i]$), that we note $fg^2\sim 0$, and
$$\begin{pmatrix}0&0\\f&0\end{pmatrix}\cdot\begin{pmatrix}g\\ 1\end{pmatrix}=\begin{pmatrix}0\\ fg\end{pmatrix}\sim fg \begin{pmatrix}g\\ 1\end{pmatrix}$$
so that the induced action by (nilpotent) multiplication $N\vert_D$ is the multiplication by $fg$. 
For instance, if $g$ vanishes at $t_i$ with multiplicity $m_i$,
then $f$ vanishes with multiplicity $n_i-2m_i$, and $N\vert_D=fg$ vanishes with multiplicity $n_i-m_i$.
By gauge transformation
$$ \begin{pmatrix} 1 & -g  \\ 0 & 1  \end{pmatrix} \begin{pmatrix} 0 & 0  \\ f & 0  \end{pmatrix} \begin{pmatrix} 1 & g  \\ 0 & 1  \end{pmatrix}\sim \begin{pmatrix} -fg & 0 \\ f & fg \end{pmatrix},$$
we can assume
$$ \bold{l}=\begin{pmatrix} 0 \\ 1 \end{pmatrix}\hskip1cm
\text{and therefore}\hskip1cm
\nabla_\alpha=\begin{pmatrix} \lambda^- & 0 \\ * & \lambda^+ \end{pmatrix}.$$
Then, we finally get 
$$(\nabla_\alpha-\nabla^0)\cdot N=\begin{pmatrix} -fg\lambda^- & 0 \\ * & fg\lambda^+ \end{pmatrix}$$
so that
$$\Res(\tr\left(\nabla_\alpha-\nabla^0)\cdot N\right)=\Res(fg(\lambda^+-\lambda^-))=\Res(N\vert\bold{l}\cdot (\lambda^+-\lambda^-))$$
must be zero for any nilpotent endomorphism $N$ of  $(E,\bold{l})$.
\end{proof}

\subsection{Examples}\label{section:2023.2.24.12.47}

We will give counterexamples of the implication [(ii) $\Rightarrow$ (i)]
and the implication [(iii) $\Rightarrow$ (ii)],
which are converses
of the implications in Proposition \ref{Prop:FlatnessImplications}.
To give counterexamples, we will use the criterium of Proposition \ref{Prop:FlatnessCriterium}.
First, we consider an example of this criterium.

\begin{example}\label{ex:parab221}
Assume $n=5$ with divisor $2[0]+2[1]+[\infty]$, $E=\O\oplus\O(1)$ and 
the parabolic structure $\bold{l}$
intersect the destabilizing line bundle $\O(1)$ only at $[0]+[1]$ (without multiplicity). 
In other words, the parabolic structure is given by
$$\begin{pmatrix} c_0 x\\ 1\end{pmatrix}\ \mod\ x^2, 
\ \ \ \begin{pmatrix} c_1 (x-1)\\ 1\end{pmatrix}\ \mod\ (x-1)^2,
\ \ \ \text{and}\ \ \ \begin{pmatrix} 1 \\ 0 \end{pmatrix}\ \text{at}\ \infty.$$
Then $(E,\bold{l})$ is undecomposable provided that $(c_0,c_1)\not=(0,0)$. 
On the other hand, there is (up to homothecy) one non trivial nilpotent endomorphism
$$N=\begin{pmatrix} 0& 0\\ 1 & 0
\end{pmatrix}$$
that preserves the parabolic structure. It induces the multiplication by $c_0x$ and $c_1(x-1)$ 
on parabolic directions over $x=0$ and $x=1$ respectively.
Up to bundle isomorphism, we see that only the projective variable $[c_0:c_1]\in\mathbb P^1$ makes sense.
Criterium of Proposition \ref{Prop:FlatnessCriterium} writes
$$\Res(c_0x(\lambda_0^+-\lambda_0^-)+c_1(x-1)(\lambda_1^+-\lambda_1^-))=0.$$
Recall that dominant terms of $\lambda^+-\lambda^-$ are non zero at $x=0$ and $x=1$, given say by $a_0\frac{dx}{x^2}$
and $a_1\frac{dx}{(x-1)^2}$ respectively. Flatness condition writes
$$c_0a_0+c_1a_1=0$$
which determines the parabolic bundle uniquely.  
\end{example}

Now we consider the case where $n=\deg(D)=4$ and $\deg(E)$ is even.
We give counterexamples of the implication [(ii) $\Rightarrow$ (i)]
and the implication [(iii) $\Rightarrow$ (ii)].

\begin{prop}\label{prop:2021.11.30.22.07_0}
Let $n=\deg(D)=4$,  $\Lambda$ 
as above, $(E,\bold{l})$ a parabolic bundle, 
$\deg(E)$ any even number. 
\begin{itemize}
\item Assume that $(E,\bold{l})$ is undecomposable and admissible.
If $(E,\bold{l})$ is not simple,
then $E=L\otimes(\O\oplus\O)$ and
\begin{itemize}
\item[(i)] $D=2[t_1]+2[t_2]$ and parabolic structure writes
$$\begin{pmatrix} c_1 (x-t_1)\\ 1\end{pmatrix}\ \ \ \text{and}\ \ \ \begin{pmatrix} c_2 (x-t_2)\\ 1\end{pmatrix}$$
where $[c_1:c_2]\in\mathbb P^1$;
\item[(ii)] $D=4[t_1]$ and parabolic structure writes
$$\begin{pmatrix} c_1 (x-t)^2+c_2(x-t)^3\\ 1\end{pmatrix}$$
where $[c_1:c_2]\in\mathbb P^1$.
\end{itemize}

\item The only $\Lambda$-flat parabolic bundles
that are not simple are the following ones: $E=L\otimes(\O\oplus\O)$ and
\begin{itemize}
\item $D=2[t_1]+2[t_2]$ and parabolic structure writes as in (i)
where $[c_1:c_2]\in\mathbb P^1$ is a point determined by $\Lambda$;
\item $D=4[t_1]$ and parabolic structure writes as in (ii)
where $[c_1:c_2]\in\mathbb P^1$ is a point determined by $\Lambda$.
\end{itemize}
\end{itemize}
\end{prop}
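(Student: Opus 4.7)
The plan is to run the chain of inequalities from the proof of [(iii) $\Rightarrow$ (i)] of Proposition~\ref{Prop:FlatnessImplications} backwards, treating the existence of a non-scalar parabolic endomorphism as a rigid constraint rather than as an impossibility. Assume $(E,\bold{l})$ is undecomposable, admissible and non-simple. The argument at the end of that proof produces a non-zero nilpotent endomorphism $N$ of $(E,\bold{l})$, and the line subbundle $L_0=\ker(N)=\mathrm{image}(N)$ fits by Birkhoff into a decomposition $E=L\oplus L_0$ with $\deg(L)\le\deg(L_0)$, in which
\[
N=\begin{pmatrix} 0 & 0 \\ f & 0\end{pmatrix},\qquad \deg(f)=2\deg(L_0)-\deg(E).
\]

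Next I would combine the two key inequalities of that proof. Local preservation of the parabolic by $N$ at each $t_i$ (equation \eqref{2022.2.23.20.43}) gives $\deg(f)\ge\sum_i\max(n_i-2m_i,0)\ge n-2\sum_i m_i$, while admissibility applied to $L_0$ yields $\sum_i m_i\le n+\deg(E)-2\deg(L_0)-2$. Chaining these with $n=4$ forces $2\deg(L_0)-\deg(E)\le 0$, hence $\deg(L_0)=\deg(L)=\deg(E)/2$ and $E\cong L\otimes(\O\oplus\O)$ with $L=\O(\deg(E)/2)$, together with $\deg(f)=0$, so $f$ is a non-zero constant which we normalize to $1$. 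The two estimates then collapse to $\sum_i m_i=2$ and $n_i=2m_i$ for every $i$, and since $\sum_i n_i=4$ the only admissible partitions are $D=2[t_1]+2[t_2]$ (with $m_1=m_2=1$) and $D=4[t_1]$ (with $m_1=2$). In the trivialization induced by $E=L\oplus L_0$, writing $l_i=\mathcal O_{n_i[t_i]}\cdot(g_i,1)^T$, the condition $m_i=n_i/2$ forces $g_i$ to vanish to order exactly $n_i/2$ at $t_i$, yielding the normal forms (i) and (ii). The only remaining gauge freedom is simultaneous scalar rescaling of the generators (induced by the diagonal scalar automorphisms of $L\oplus L_0$ compatible with $N$), so the intrinsic moduli parameter is the projective class $[c_1:c_2]\in\mathbb P^1$.

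For the second part I would feed each of the two normal forms into the flatness criterium of Proposition~\ref{Prop:FlatnessCriterium}. Since $f\equiv 1$, the induced action $N\vert\bold{l}$ at $t_i$ is multiplication by $g_i$, so the condition $\Res(N\vert\bold{l}\cdot(\lambda^+-\lambda^-))=0$ becomes a single $\mathbb C$-linear homogeneous equation in $(c_1,c_2)$. Extracting Taylor coefficients at the poles, in case (i) one gets $a_1c_1+a_2c_2=0$, where $a_i$ is the leading coefficient of the principal part of $\lambda^+-\lambda^-$ at $t_i$, and in case (ii) one gets $b_3c_1+b_4c_2=0$, where $b_3,b_4$ are the two relevant Taylor coefficients of $\lambda^+-\lambda^-$ at $t_1$ appearing in the residue of $g_1\cdot(\lambda^+-\lambda^-)$. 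Condition (a) on $\Lambda$ guarantees that in both cases the top coefficient is non-zero, so the equation is non-degenerate and cuts out a unique point $[c_1:c_2]\in\mathbb P^1$ determined by $\Lambda$; this gives the second bullet, and matches the analogous computation performed in Example~\ref{ex:parab221}.

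The main obstacle I anticipate is the combinatorial bookkeeping in the middle paragraph: one must verify carefully that the coupled inequalities really force $m_i=n_i/2$ for \emph{every} $i$ (so that no partition with an odd part can occur), and that the residual automorphism group of $(E,\bold{l})$ really collapses the two-parameter family $(c_1,c_2)$ to a single $\mathbb P^1$ without further hidden identifications. The rest is a routine combination of the results already established in Propositions \ref{Prop:FlatnessImplications} and \ref{Prop:FlatnessCriterium}.
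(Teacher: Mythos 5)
Your proposal is correct and follows essentially the same route as the paper: it reruns the inequality chain from the end of the proof of Proposition~\ref{Prop:FlatnessImplications} (local invariance of the parabolic under the nilpotent $N$ versus admissibility of $L_0$) to force $2\deg(L_0)=\deg(E)$, $f$ constant, $\sum_i m_i=2$ and $n_i=2m_i$, whence the two divisor types and normal forms, and then applies the criterium of Proposition~\ref{Prop:FlatnessCriterium} as in Example~\ref{ex:parab221} to pin down $[c_1:c_2]$. The only differences are cosmetic: you spell out the residue computations and the gauge analysis of the $\mathbb P^1$-parameter slightly more explicitly than the paper does.
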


\begin{proof}
Setting $k=\max_L(2\deg(L)-\deg(E))$, 
$L_0$ be maximizing and $m=\sum_{i \in I} \length( l_i \cap L_0|_{n_i[t_i]})$. 
Then, end of proof of Proposition \ref{Prop:FlatnessImplications} yields
$$\frac{n-k}{2}\le m\le n-k-2    \hskip2cm\text{so that in particular}\hskip1cm
k+4\le n.$$
It follows, when $n=4$ that $k=0$ and $m=2$. Since $k=0$, then $E=L\otimes(\O\oplus\O)$. 
On the other hand, assuming $(E,\bold{l})$ is not simple, we know that there is a non trivial nilpotent 
endomorphism which writes
$$N=\begin{pmatrix} 0&0\\1&0\end{pmatrix}$$
and $L_0$ is the invariant subspace. We deduce that all parabolics intersect $L_0$,
and at each pole $t_i$, we must have $n_i-2 m_i\le0$; otherwise, parabolic structure would not be invariant.
Together with above inequalities, we get $n_i=2m_i$ at each $t_i$, 
whence the only two possibilities of the statement.
In each case, writing down the criterium of Proposition \ref{Prop:FlatnessCriterium}, as in Example \ref{ex:parab221},
we get the restriction on $(c_1:c_2)$.
\end{proof}

Now we consider the case where $n=\deg(D)=5$ and $\deg(E)$ is odd.

\begin{lem}\label{2022.2.2.22.32}
Let $n=\deg(D)=5$ and
$(E,\bold{l})$ a parabolic bundle with odd degree.
Assume that $(E,\bold{l})$ is undecomposable and admissible.
If $(E,\bold{l})$ is not simple,
then $E=L\otimes(\O\oplus\O(1))$ and
\begin{itemize}
\item[(i)] $D=2[t_1]+2[t_2]+[t_3]$ and parabolic structure writes
$$\begin{pmatrix} c_1 (x-t_1)\\ 1\end{pmatrix},\ \ \ \begin{pmatrix} c_2 (x-t_2)\\ 1\end{pmatrix}\ \ \ \text{and}\ \ \ \begin{pmatrix} 1\\0\end{pmatrix}$$
where $[c_1:c_2]\in\mathbb P^1$ satisfies $c_1c_2 \neq 0$;
\item[(ii)] $D=3[t_1]+2[t_2]$ and parabolic structure writes
$$\begin{pmatrix} c_1(x-t_1)+c_2(x-t_1)^2\\ 1\end{pmatrix}\ \ \ \text{and}\ \ \ \begin{pmatrix} c_3 (x-t_2)\\ 1\end{pmatrix}$$
where $[c_1:c_2:c_3]\in\mathbb P^2$ satisfies $c_1c_3 \neq 0$;
\item[(iii)] $D=4[t_1]+[t_2]$ and parabolic structure writes
$$\begin{pmatrix} c_1 (x-t_1)^2+c_2(x-t_1)^3\\ 1\end{pmatrix}\ \ \ \text{and}\ \ \ \begin{pmatrix} 1\\ 0\end{pmatrix}$$
where $[c_1:c_2]\in\mathbb P^1$ satisfies $c_1 \neq 0$;
\item[(iv)] $D=5[t_1]$ and parabolic structure writes
$$\begin{pmatrix} c_1 (x-t_1)^2+c_2(x-t_1)^3+c_3(x-t_1)^4\\ 1\end{pmatrix}$$
where $[c_1:c_2:c_3]\in\mathbb P^2$ satisfies $c_1 \neq 0$.
\end{itemize}
In particular, when $D=2t_1+t_2+t_3+t_4$ or $D=3t_1+t_2+t_3$, 
we have that
\begin{equation*}
\mathrm{End} (E,\bold{l})=\mathbb{C}
\, \Longleftrightarrow \, 
\text{$(E,\bold{l})$ is $\Lambda$-flat}
\, \Longleftrightarrow \, 
\text{$(E,\bold{l})$ is undecomposable and admissible}.
\end{equation*}
\end{lem}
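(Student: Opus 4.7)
The plan is to adapt the argument of Proposition~\ref{prop:2021.11.30.22.07_0} to the five-pole setting, extracting enough numerical constraints to enumerate all configurations by hand. Assume $(E,\bold{l})$ is undecomposable, admissible, and not simple. Then, as in the proof of the implication (iii)$\Rightarrow$(i) of Proposition~\ref{Prop:FlatnessImplications}, there is a non-zero nilpotent endomorphism $N$ of $(E,\bold{l})$ with $L_0:=\ker(N)=\mathrm{image}(N)$. Since $\deg(E)$ is odd, the Birkhoff splitting $E=\O(d_1)\oplus\O(d_2)$ has $d_1<d_2$, so $L_0=\O(d_2)$ is the unique line subbundle of maximal degree. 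Writing $E=L\oplus L_0$, the endomorphism takes the form $N=\bigl(\begin{smallmatrix}0&0\\ f&0\end{smallmatrix}\bigr)$ with $f$ a polynomial of degree $k:=2\deg(L_0)-\deg(E)\ge 0$.

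The two inequalities
$$\frac{n-k}{2}\le m:=\sum_{i\in I}\length(l_i\cap L_0|_{n_i[t_i]})\le n-k-2$$
from that proof, applied with $n=5$, give $k\le 1$; since $k$ must be odd, we obtain $k=1$ and $m=2$, whence $E=L\otimes(\O\oplus\O(1))$, and after tensoring by $L^{-1}$ we may assume $E=\O\oplus\O(1)$ with $\deg(f)=1$. Setting $m_i:=\length(l_i\cap L_0|_{n_i[t_i]})$, the local analysis there shows that if $m_i<n_i$ then the parabolic can be normalized to $\bigl(\begin{smallmatrix}z^{m_i}g_i\\ 1\end{smallmatrix}\bigr)$ with $g_i(0)\ne 0$ and $f$ must vanish at $t_i$ to order exactly $n_i-2m_i\ge 0$, while if $m_i=n_i$ the parabolic is $\bigl(\begin{smallmatrix}0\\ 1\end{smallmatrix}\bigr)$ and no local constraint on $f$ is imposed. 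Combined with $\sum_i m_i=2$ and $\deg(f)=1$, this yields a tight set of numerical constraints on the tuple $(m_i)$.

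A direct case-by-case check of the six divisors $D_{2111},D_{221},D_{311},D_{32},D_{41},D_5$ under these constraints pins down the allowed configurations: $D_{2111}$ and $D_{311}$ admit no solution at all, whereas $D_{221}$ forces $(m_1,m_2,m_3)=(1,1,0)$, $D_{32}$ forces $(m_1,m_2)=(1,1)$, $D_{41}$ forces $(m_1,m_2)=(2,0)$, and $D_5$ forces $m_1=2$. Writing out the parabolic data in the normalized splitting then yields the normal forms (i)--(iv). The open conditions $c_1c_2\ne 0$, $c_1c_3\ne 0$, $c_1\ne 0$ appearing in the statement translate admissibility (excluding the vanishings that would push some $m_i$ beyond $n_i/2$ and so violate $m\le 2$) and undecomposability (which prevents all parabolic directions from being compatibly distributed between $L$ and $L_0$).

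The final ``In particular'' assertion is then immediate: for $D=D_{2111}$ or $D_{311}$ the enumeration above rules out every non-simple undecomposable admissible parabolic bundle, so undecomposability and admissibility already force simplicity, and coupled with the chain (i)$\Rightarrow$(ii)$\Rightarrow$(iii) of Proposition~\ref{Prop:FlatnessImplications} this closes the three-way equivalence. The main delicate point will be the local bookkeeping at the poles with $m_i=n_i$, where the $N$-invariance obstruction at $t_i$ disappears and one must be careful not to over- or undercount zeroes of $f$, together with the verification that the stated open conditions on the $c_i$ exactly capture admissibility and undecomposability.
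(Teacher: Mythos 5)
Your proposal follows essentially the same route as the paper's proof: the inequalities $\frac{n-k}{2}\le m\le n-k-2$ extracted from the end of the proof of Proposition \ref{Prop:FlatnessImplications} together with the oddness of $\deg(E)$ force $k=1$ and $m=2$, and the requirement that the nilpotent's entry $f$ (a nonzero polynomial of degree at most $1$) vanish at each $t_i$ to order at least $n_i-2m_i$ then gives exactly the paper's case enumeration, ruling out $D_{2111}$ and $D_{311}$ and producing the four normal forms (i)--(iv) and the concluding equivalence. The only nitpicks are that $f$ has degree \emph{at most} $k$ (not exactly $k$) and the vanishing order at $t_i$ is \emph{at least} (not exactly) $n_i-2m_i$; neither affects your counting, since equality is forced a posteriori once $m=2$.
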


\begin{proof}
Setting $k=\max_L(2\deg(L)-\deg(E))$, 
$L_0$ be maximizing and $m=\sum_{i \in I} \length( l_i \cap L_0|_{n_i[t_i]})$. 
Since $(E,\bold{l})$ is not simple,
we have $k+4 \leq n$ as in the proof of the previous proposition.
By this inequality,
we have $k=0$ or $k=1$.
Since $E$ has odd degree, $k\neq 0$.
So we have $k=1$.
This means that $E=L\otimes(\O\oplus\O(1))$.
Since $(E,\bold{l})$ is not simple, we know that there is a non trivial nilpotent 
endomorphism which writes
$$N=\begin{pmatrix} 0&0\\f (x)&0\end{pmatrix}$$
and $L_0$ is the invariant subspace.
Here $f (x)$ is a polynomial in $x$ with $\deg(f) \leq 1$.
For one of indexes in $I$, 
$m_i$ satisfies a condition $n_i-2 m_i\leq 1$, 
and, for other indexes, $m_i$ satisfies a condition $n_i-2 m_i\leq 0$.
In particular $  \sum m_i \geq 2$.
Since $(E,\bold{l})$ is admissible,
we have $ \sum m_i \leq 2$.
Then for one of indexes in $I$, $n_i-2 m_i= 1$, 
and, for other indexes, $n_i-2 m_i= 0$.
There exist the only four possibilities of the statement.
\end{proof}

Now we give counterexamples when $n=\deg(D)=5$ and $\deg(E)$ is odd.

\begin{prop}\label{prop:2021.11.30.22.07}
Let $n=\deg(D)=5$, $\Lambda$ 
as above,
and $(E,\bold{l})$ a parabolic bundle with odd degree. 
Then, the only $\Lambda$-flat parabolic bundles
that are not simple are the following ones: $E=L\otimes(\O\oplus\O(1))$ and
\begin{itemize}
\item $D=2[t_1]+2[t_2]+[t_3]$ and parabolic structure writes
as in (i) of Lemma \ref{2022.2.2.22.32}
where $[c_1:c_2]$ is a point determined by $\Lambda$;
\item $D=3[t_1]+2[t_2]$ and parabolic structure writes
as in (ii) of Lemma \ref{2022.2.2.22.32}
where $[c_1:c_2:c_3]$ 
 lies along some line determined by $\Lambda$;
\item $D=4[t_1]+[t_2]$ and parabolic structure writes
as in (iii) of Lemma \ref{2022.2.2.22.32}
where $[c_1:c_2]$ is a point determined by $\Lambda$;
\item $D=5[t_1]$ and parabolic structure writes
as in (iv) of Lemma \ref{2022.2.2.22.32}
where $[c_1:c_2:c_3]$ 
 lies along some line determined by $\Lambda$.
\end{itemize}
\end{prop}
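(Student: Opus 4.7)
The plan is to combine Proposition \ref{Prop:FlatnessImplications} with Lemma \ref{2022.2.2.22.32} to reduce the question to four explicit families of parabolic bundles, and then to apply the flatness criterion of Proposition \ref{Prop:FlatnessCriterium} in each family to extract the constraint on the $c_i$'s. Indeed, any $\Lambda$-flat $(E,\bold{l})$ is undecomposable and admissible by Proposition \ref{Prop:FlatnessImplications}, so if it is also non-simple then Lemma \ref{2022.2.2.22.32} forces $E=L\otimes(\O\oplus\O(1))$ and puts the parabolic structure in exactly one of the four normal forms (i)--(iv). So it only remains, in each of these four cases, to determine the additional restriction on $[c_1{:}c_2]$ or $[c_1{:}c_2{:}c_3]$ imposed by \eqref{eq:FlatnessCriterium}.

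In each case I would first describe the space of nilpotent endomorphisms of $(E,\bold{l})$. Following the last part of the proof of Proposition \ref{Prop:FlatnessImplications}, any such $N$ has the form $N=\begin{pmatrix}0 & 0\\ f & 0\end{pmatrix}$ in the splitting $E=L\oplus L_0$ with $L_0=L\otimes\O(1)$, where $f\in H^0(\mathbb P^1,\O(1))$ has degree at most $2\deg(L_0)-\deg(E)=1$. The requirement that $N$ preserve $\bold{l}$ at each $t_i$ imposes a vanishing condition on $f$ at the poles where the parabolic lies entirely in $L$ (namely $f(t_3)=0$ in (i), $f(t_2)=0$ in (iii), nothing extra in (ii) and (iv) besides $f(t_1)=0$ coming from the high multiplicity of intersection with $L_0$ there), leaving in each case a one-parameter family of $N$'s up to scaling.

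Next, I would compute the induced multiplier $N\vert\bold{l}$ at each pole, working modulo $(x-t_i)^{n_i}$ exactly as in the derivation of \eqref{2022.2.23.20.43} and as illustrated in Example \ref{ex:parab221}. Pairing these multipliers with the leading coefficients of $\lambda^+-\lambda^-$ at each $t_i$ and taking the residue converts \eqref{eq:FlatnessCriterium} into a single linear equation in the $c_i$'s. In cases (i) and (iii), this linear equation involves both free parameters nontrivially (with coefficients determined by $\Lambda$ and by $t_i-t_j$), and therefore cuts out a unique point of $\mathbb P^1$. In cases (ii) and (iv), one finds that the equation depends only on $(c_1,c_3)$, respectively only on $(c_1,c_2)$, because the $(x-t_i)^{n_i-1}$ coefficient of $N\vert\bold{l}$ pairs only with the top-order polar part of $\lambda^+-\lambda^-$ at $t_1$; hence the solution set is a line in $\mathbb P^2$.

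The main technical obstacle is the bookkeeping at the higher-order poles in (ii), (iii), (iv): one must carefully expand $f\cdot v$ modulo $(x-t_i)^{n_i}$ to recover the scalar $h(x)$ with $Nv\equiv h(x)v$, and then match the correct powers of $(x-t_i)$ against the appropriate polar coefficients of $\lambda^+-\lambda^-$ when computing the residue. Once this is done in one case (say (ii), which already contains all the new features beyond Example \ref{ex:parab221}), the remaining cases are strictly analogous. The non-vanishing conditions $c_1c_2\neq0$, $c_1c_3\neq 0$, $c_1\neq 0$ from Lemma \ref{2022.2.2.22.32} are preserved throughout, so the point/line produced in each case is nonempty, confirming the statement.
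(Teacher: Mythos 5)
Your proposal is correct and follows essentially the same route as the paper's proof, which simply invokes Proposition \ref{Prop:FlatnessImplications} to reduce to the normal forms of Lemma \ref{2022.2.2.22.32} and then writes down the criterium of Proposition \ref{Prop:FlatnessCriterium} in each case, as in Example \ref{ex:parab221}. Your additional bookkeeping (the vanishing orders $n_i-2m_i$ forcing $f=c(x-t_3)$, $c(x-t_1)$, $c(x-t_2)$, $c(x-t_1)$ in cases (i)--(iv), and the observation that the resulting linear residue equation omits $c_2$ in (ii) and $c_3$ in (iv), yielding a line rather than a point) matches what the paper leaves implicit.
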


\begin{proof}
By Proposition \ref{Prop:FlatnessImplications}, we have that 
$(E,\bold{l})$ is undecomposable and admissible. 
Then,
the parabolic structure write as in Lemma \ref{2022.2.2.22.32}.
In each case, writing down the criterium of Proposition \ref{Prop:FlatnessCriterium}, 
as in Example \ref{ex:parab221}.
\end{proof}

\section{Refined parabolic bundles}\label{section:2021.12.31.15.59}

In the previous section, we have defined parabolic bundles of rank 2 and of degree $d$ 
over $(\mathbb{P}^1,D)$.
In this section, we will consider a generalization of parabolic bundles,
which will be called {\it refined parabolic bundles}.

\subsection{Cases where $D$ is a reduced effective divisor}\label{sect:2021.12.2.15.19}

Here we assume that $D$ is a reduced effective divisor.
In \cite{LS}, the moduli space of undecomposable parabolic bundles of rank 2 and of degree $d$
was studied.
Since now we assume that $D$ is reduced,
this moduli space coincides with
the moduli space of $\Lambda$-flat parabolic bundles of rank 2 and of degree $d$.
We denote by $\BUN_{\Lambda}(D,d)$
the moduli space of $\Lambda$-flat parabolic bundles with degree $d$.
This moduli space is the image of the forgetting map:
$$
\begin{aligned}
\CON_{\Lambda} (D) &\longrightarrow \BUN_{\Lambda} (D,d) \\
(E,\nabla)&\longmapsto (E,\bold{l}).
\end{aligned}
$$
Here $\CON_{\Lambda} (D)$
is the moduli space of $\Lambda$-connections.
This forgetting map is important for understanding the moduli space $\CON_{\Lambda} (D)$.
But the moduli space $\BUN_{\Lambda} (D,d)$ is a non-separated scheme. 
To get a good moduli space, we have to impose a stability condition with respect to 
$\boldsymbol{w} = (w_i) \in [0,1]^n$.
We denoted by $\BUN_{\boldsymbol{w}}(D,d)$ 
the moduli space of
$\boldsymbol{w}$-semistable parabolic bundles of rank 2 and of degree $d$.
The moduli space $\BUN_{\boldsymbol{w}}(D,d)$
is a normal irreducible projective variety.
The open subset of $\boldsymbol{w}$-stable parabolic bundles is smooth.
For generic weights, we have that $\boldsymbol{w}$-semistable = $\boldsymbol{w}$-stable.
So $\BUN_{\boldsymbol{w}}(D,d)$ is a smooth projective variety.
An important fact is that 
$(E, \{ l_{ i} \}_{i \in I  } )$ 
is stable for a convenient choice of weights $\boldsymbol{w}$
if, and only if, it is undecomposable
(see \cite[Proposition 3.4]{LS}).
So the moduli space $\BUN_{\Lambda} (D,d)$ is covered by 
$\BUN_{\boldsymbol{w}}(D,d)$ for some $\boldsymbol{w}$.

\subsection{Definition of refined parabolic bundles}

We try to extend the story in Section \ref{sect:2021.12.2.15.19} 
for cases where $D$ are not necessary reduced.
When $D$ is not a reduced divisor,
there are some problems:  
(1) undecomposable parabolic bundles are not necessary $\Lambda$-flat and 
are not necessary $\boldsymbol{w}$-stable for some weights $\boldsymbol{w}$; 
(2) $\BUN_{\boldsymbol{w}}(D,d)$ is not necessary projective 
for generic weights $\boldsymbol{w}$. 
(Remark that we impose that $l_i$ is free). 
Here the stability condition for parabolic bundles when 
$D$ is not necessary reduced is defined in Section \ref{section:StabilityOfRefined} below.

For the problem (1),
in Section \ref{sect:2021.12.2.15.32} and Section \ref{2022.3.1.11.33} below,
we will discuss on a necessary and sufficient condition
for the condition that 
there exist weights $\boldsymbol{w}$ such that 
a parabolic bundle is $\boldsymbol{w}$-stable.
The present section is concerned with the problem (2),
that is, constructing a smooth compactification of $\BUN_{\boldsymbol{w}}(D,d)$.
To take a good compactification, 
we consider a generalization of parabolic bundles as follows.

\begin{definition}
We say $(E, \{ l_{ i , \bullet} \}_{  i \in I  } )$ is
a {\rm refined parabolic bundle} of rank $2$ and of degree $d$ if 
$E$ is a rank 2 vector bundle on $\mathbb{P}^1$ with $\deg(E)=d$ 
and $l_{ i , \bullet} = \{ l_{i,k}\}_{1 \leq k \leq n_i}$ is a filtration 
$$
E|_{n_i [t_i]} \supset l_{i,n_i} \supset l_{i,n_i-1} \supset \cdots \supset 
l_{i,1} \supset 0 
$$
of $\mathcal{O}_{n_i [t_i]}$-modules where the length of $l_{i,k}$ is $k$.
We call this filtration a {\rm refined parabolic structure}.
\end{definition}

Let $(E, \{ l_{ i} \}_{ i \in I  } )$ be
a parabolic bundle of rank $2$ and of degree $d$.
For $(E, \{ l_{ i} \}_{ i \in I  } )$, 
we can define a refined parabolic bundle of rank $2$ and of degree $d$
as follows:
We define a filtration $l_{ i,\bullet }$ as 
$$
l_{ i,\bullet } \colon E|_{n_i [t_i]} \supset
l_i \supset f_i \cdot l_i \supset f_i^2\cdot l_i \supset \cdots \supset
f_i^{n_i-1}\cdot l_{i} \supset 0
$$
for each $i$.
Here $f_i$ is a generator of the maximal ideal of $\mathcal{O}_{\mathbb{P}^1,t_i}$.
Then we have a refined parabolic bundle $(E, \{ l_{ i , \bullet} \}_{  i \in I  } )$.
Conversely if a refined parabolic structure 
$E|_{n_i [t_i]} \supset l_{i,n_i} \supset l_{i,n_i-1} \supset \cdots \supset 
l_{i,1} \supset 0 $ where $l_{i,n_i}$ is free, 
then the refined parabolic structure coincides with 
the refined parabolic structure induced by the parabolic structure $l_{i,n_i}$.
So refined parabolic bundles are 
generalization of parabolic bundles.
In Section \ref{section:StabilityOfRefined} below, 
we will define a stability condition for refined parabolic bundles. 
First of all, in present section, we discuss 
refined parabolic structures at a multiple point.
We will describe a combinatorial structure on 
a refined parabolic structure at a multiple point.

\subsection{Refined parabolic structure at a multiple point}

Let $E$ be a rank $2$ vector bundle on $\mathbb{P}^1$.
We take a point $t$ on $\mathbb{P}^1$
and a positive integer $n$ such that $n>1$.
First, we will define the {\it type} of 
an $\O_{n[t]}$-submodule of $E|_{n[t]}$.
We consider a filtration of sheaves
$$
E \supset E(-[t]) \supset E(-2[t]) \supset \cdots \supset  E(-(n-1)[t]) \supset E(-n[t]) .
$$
Note that $E|_{n[t]} = E/E(-n[t])$.
Set $V_j := E(-(n-j+1)[t])/E(-n[t])$ 
for $j=1,2,\ldots,n+1$.
Then we have a filtration 
$$
E|_{n[t]}=V_{n+1}
 \supset V_{n} \supset V_{n-1} \supset \cdots \supset V_2 \supset V_1= 0
$$
of $\O_{n[t]}$-modules.

\begin{definition}
We fix a positive integer $k$ where $1\leq k \leq n$.
Let $l$ be an $\O_{n[t]}$-submodule of $E|_{n[t]}$ with $\mathrm{length}(l)=k$.
We define a tuple of integers 
$\lambda :=(\lambda_{n},\lambda_{n-1},\ldots,\lambda_{1})$ by
$$
\lambda_{j}:=  \mathrm{length} 
\left((V_{j+1}\cap {l} ) /  (V_{j} \cap {l} )  \right)
$$
for $j=1,2,\ldots,n$.
We call the tuple
$\lambda$ the {\rm type of the $\O_{n[t]}$-submodule $l$
with $\mathrm{length}(l)=k$}.
\end{definition}

We have the equality $\sum_{j=1}^{n} \lambda_j=k$.
The inequalities $\lambda_j\geq 0$ and
$$
\lambda_j \leq 
\mathrm{length} 
(V_{j+1} / V_{j}  ) =2
$$
follow for $j=1,2,\ldots,n$.
Let $f$ be a generator of the maximal ideal of $\mathcal{O}_{\mathbb{P}^1,t}$.
The map from $V_{j+1}$ to $V_{j}$ defined by multiply by $f$ 
induces an injective map
$$
\xymatrix@C=55pt{
(V_{j+1}\cap {l}) / (V_{j} \cap {l})
 \ar[r]^-{\text{multiply by $f$}}  
 & (V_{j}\cap {l}) /  (V_{j-1} \cap {l})
}
$$
for $j=2,3,\ldots n$.
Then we have the following inequality 
$$
\lambda_{j}=  \mathrm{length} 
((V_{j+1}\cap {l}) / (V_{j} \cap {l}) ) 
\leq \mathrm{length} 
( (V_{j}\cap {l}) /  (V_{j-1} \cap {l}))
=\lambda_{j-1}  
$$
for $j=2,3,\ldots n$.
For a tuple of integers $\lambda =(\lambda_{n},\lambda_{n-1},\ldots,\lambda_{1})$,
we set
\begin{equation}\label{eq:2021.8.9.16.43}
\begin{aligned}
a_1(\lambda ) :=&\ \# \{ j \mid j\in \{ 1,2,\ldots, n \} ,\lambda_j =1 \} \quad \text{and} \\
a_2(\lambda ) :=&\  \# \{ j \mid j\in \{ 1,2,\ldots, n \} ,\lambda_j =2 \}.
\end{aligned}
\end{equation}

\begin{prop}
If a tuple of integers $\lambda = (\lambda_{n},\lambda_{n-1},\ldots,\lambda_{1})$ 
satisfies the condition
\begin{equation}\label{eq:typeofRPS}
0\leq \lambda_{n} \leq \lambda_{n-1} \leq \cdots \leq \lambda_{1} \leq 2
\quad  \text{and} \quad
\sum_{j=1}^{n} \lambda_j=k,
\end{equation}
then there exists an $\O_{n[t]}$-submodule of $E|_{n[t]}$
with $\mathrm{length}(l)=k$ whose type is $\lambda$.
\end{prop}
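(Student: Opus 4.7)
The plan is to produce the desired submodule by an explicit construction in a local trivialization. Since $E$ is a rank $2$ vector bundle on $\mathbb{P}^1$, I can choose a neighborhood of $t$ on which $E$ is trivial, and fix a basis $(e_1,e_2)$ so that $E|_{n[t]}$ becomes the free $\mathcal{O}_{n[t]}$-module with basis $e_1, e_2$. In this trivialization, the filtration $V_j = E(-(n-j+1)[t])/E(-n[t])$ becomes $V_j = \mathcal{O}_{n[t]}\langle f^{n-j+1}e_1,\ f^{n-j+1}e_2\rangle$ (with $V_{n+1}=E|_{n[t]}$ and $V_1=0$).

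The monotonicity $0\le\lambda_n\le\cdots\le\lambda_1\le 2$ together with $\sum\lambda_j=k$ means that $\lambda$ is completely determined by the two integers $a_2=a_2(\lambda)$ and $a_1=a_1(\lambda)$ defined in \eqref{eq:2021.8.9.16.43}: namely $\lambda_j=2$ for $1\le j\le a_2$, $\lambda_j=1$ for $a_2< j\le a_1+a_2$, and $\lambda_j=0$ otherwise, with $2a_2+a_1=k$. Motivated by this bookkeeping, I would set
\[
l := \mathcal{O}_{n[t]}\cdot f^{n-a_1-a_2} e_1 \;+\; \mathcal{O}_{n[t]}\cdot f^{n-a_2} e_2,
\]
which is an $\mathcal{O}_{n[t]}$-submodule of $E|_{n[t]}$ of length $(a_1+a_2)+a_2=k$.

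It then remains to verify that this $l$ has type exactly $\lambda$. A direct computation gives
\[
\length(V_{j+1}\cap l)=\min(a_1+a_2,\,j)+\min(a_2,\,j)
\]
for $1\le j\le n$, so that $\lambda_j = \length(V_{j+1}\cap l) - \length(V_j\cap l)$ equals $2$ precisely when $j\le a_2$, equals $1$ precisely when $a_2<j\le a_1+a_2$, and is zero otherwise, matching the prescribed $\lambda$. The only subtle point I expect is making the length computation for $V_{j+1}\cap l$ rigorous: one must argue that the generators $f^{\max(0,a_1+a_2-j)}\cdot f^{n-a_1-a_2}e_1$ and $f^{\max(0,a_2-j)}\cdot f^{n-a_2}e_2$ indeed generate $V_{j+1}\cap l$ as an $\mathcal{O}_{n[t]}$-module, which is a routine check using the flatness of each summand and the fact that $e_1$ and $e_2$ are independent over $\mathcal{O}_{n[t]}$.
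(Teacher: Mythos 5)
Your construction is correct and is essentially the same as the paper's: the paper also takes $l$ to be generated by $f^{n-a_1-a_2}\boldsymbol{v}$ and $f^{n-a_2}\boldsymbol{v}'$ for two vectors with independent leading terms, of which your choice $\boldsymbol{v}=e_1$, $\boldsymbol{v}'=e_2$ in a local trivialization is a special case. Your explicit length computation $\length(V_{j+1}\cap l)=\min(a_1+a_2,j)+\min(a_2,j)$ is a valid (and slightly more detailed) verification of the type than the one given in the paper.
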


\begin{proof}
We define $\boldsymbol{v}$ and $\boldsymbol{v}'$ as 
$$
\boldsymbol{v} = 
f^{n-a_1(\lambda)-a_2(\lambda)}
\begin{pmatrix}
v_1 \\ v_2
\end{pmatrix} \in \O_{n[t]} \oplus \O_{n[t]}
 \quad 
\boldsymbol{v}' = 
f^{n-a_2(\lambda)}
\begin{pmatrix}
v'_1 \\ v'_2
\end{pmatrix} \in \O_{n[t]} \oplus \O_{n[t]}
$$
so that the image of $\boldsymbol{v}$ under the quotient map
 $$ 
 (f^{n-a_1(\lambda)-a_2(\lambda)})^{\oplus 2}
\longrightarrow 
 ((f^{n-a_1(\lambda)-a_2(\lambda)})
/(f^{n-a_1(\lambda)-a_2(\lambda)+1}))^{\oplus 2}
\cong \mathbb{C}^2
$$
and the image of 
$\boldsymbol{v}'$ under the quotient map
$$
(f^{n-a_2(\lambda)})^{\oplus 2}
\longrightarrow 
((f^{n-a_2(\lambda)})/(f^{n-a_2(\lambda)+1}))^{\oplus 2}
\cong \mathbb{C}^2
$$ 
are linearly independent.
Let $l$ be the $\O_{n[t]}$-submodule of $\O_{n[t]} \oplus \O_{n[t]}$
generated by $\boldsymbol{v}$ and $\boldsymbol{v}'$.
The elements 
$f^{a_1(\lambda)} \boldsymbol{v}$ and $\boldsymbol{v}'$
are linearly independent in 
$((f^{n-a_2(\lambda)})^{\oplus 2}\cap l )/((f^{n-a_2(\lambda)+1})^{\oplus 2}\cap l) $.
In other words, 
$$
\mathrm{length} \big( ((f^{n-a_2(\lambda)})^{\oplus 2}\cap l )/((f^{n-a_2(\lambda)+1})^{\oplus 2}\cap l)
 \big) =2.
$$
If we take an isomorphism $E|_{n[t]} \cong \O_{n[t]} \oplus \O_{n[t]}$,
then $l$ induces a refined parabolic structure on $E|_{n[t]}$ 
via this isomorphism.
The type of this refined parabolic structure is $\lambda$.
\end{proof}

Let $\lambda = (\lambda_{n},\lambda_{n-1},\ldots,\lambda_{1})$
be the type of an $\O_{n[t]}$-submodule of $E|_{n[t]}$.
The type $\lambda$ is a partition of the positive integer $k$:
$$
k= \overbrace{0 + \cdots +0
+\underbrace{ 1 + \cdots +1}_{a_1(\lambda)} 
+\underbrace{2+\cdots+2}_{a_2(\lambda)}}^n.
$$
So there exists the corresponding Young diagram to the type $\lambda$.
The number of columns of the Young diagram
is $1$ or $2$.

\begin{lem}\label{lem:2021.9.13.22.13}
Let $l$ be an $\O_{n[t]}$-submodule of $E|_{n[t]}$
with $\mathrm{length}(l)=k$.
Let $\lambda = (\lambda_{n},\lambda_{n-1},\ldots,\lambda_{1})$
be the type of $l$.
If the number of columns of the corresponding Young diagram is $1$, 
then the $\O_{n[t]}$-submodule
 $l$ is generated by one element of $E|_{n[t]}$.
If the number of columns of the corresponding Young diagram
is $2$, 
then the $\O_{n[t]}$-submodule
 $l$ is generated by two elements of $E|_{n[t]}$.
\end{lem}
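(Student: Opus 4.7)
The plan is to work locally in a trivialization $E|_{n[t]} \cong \O_{n[t]}^{\oplus 2}$ with $\O_{n[t]} = \mathbb{C}[f]/(f^n)$. Under this identification, the filtration pieces become $V_{j+1} = f^{n-j}\O_{n[t]}^{\oplus 2}$, so every element $w \in E|_{n[t]}$ carries a well-defined order $\mathrm{ord}(w) \in \{0,1,\ldots,n\}$ (with $\mathrm{ord}(w)=n$ iff $w=0$). Because of the monotonicity $\lambda_n \le \cdots \le \lambda_1 \le 2$ from \eqref{eq:typeofRPS}, the indices with $\lambda_j = 2$ form an initial segment $\{1,\ldots,a_2\}$ and those with $\lambda_j = 1$ form the next segment $\{a_2+1,\ldots,a_2+a_1\}$. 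Summing the defining telescoping identity for $\lambda_j$ then yields two facts I will use repeatedly: $l \subset V_{a_1+a_2+1} = f^{n-a_1-a_2}\O_{n[t]}^{\oplus 2}$, and whenever $a_2 \ge 1$ one has $\dim_{\mathbb{C}}(V_{a_2+1}\cap l) = 2a_2 = \dim_{\mathbb{C}} V_{a_2+1}$, which forces $V_{a_2+1} \subseteq l$.

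For the one-column case ($\lambda_1 = 1$, equivalently $a_2 = 0$), I would pick any $v \in l$ achieving the minimal order $\mathrm{ord}(v) = n-a_1$; such $v$ exists because $\lambda_{a_1}=1$. Then $f^i v$ is nonzero precisely for $0 \le i < a_1$, so the cyclic submodule $\O_{n[t]}\cdot v$ has $\mathbb{C}$-dimension $a_1 = k$. Since $\O_{n[t]}\cdot v \subseteq l$ and $\dim_{\mathbb{C}} l = k$, equality holds and $l$ is cyclic.

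For the two-column case ($\lambda_1 = 2$, i.e.\ $a_2 \ge 1$), I would first pick $v \in l$ with $\mathrm{ord}(v) = n-a_1-a_2$ and write $v = f^{n-a_1-a_2} w$ with $w \in \O_{n[t]}^{\oplus 2}$ primitive modulo $f$. By Nakayama, $w$ extends to an $\O_{n[t]}$-basis $(e_1,e_2)$ of $\O_{n[t]}^{\oplus 2}$ with $e_1 = w$. Then $v = f^{n-a_1-a_2}e_1$, and the containment $V_{a_2+1}\subseteq l$ lets me define $v' := f^{n-a_2} e_2 \in l$. A direct computation in the decomposition $\O_{n[t]}e_1 \oplus \O_{n[t]}e_2$ gives $\dim_{\mathbb{C}}(\O_{n[t]}\cdot v) = a_1+a_2$, $\dim_{\mathbb{C}}(\O_{n[t]}\cdot v') = a_2$, and $\O_{n[t]}\cdot v \cap \O_{n[t]}\cdot v' = 0$. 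Hence $\O_{n[t]}\cdot v + \O_{n[t]}\cdot v' \subseteq l$ has dimension $a_1 + 2a_2 = k = \dim_{\mathbb{C}} l$, and equality proves $l$ is generated by the two elements $v, v'$.

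The main obstacle is the simultaneous choice of basis in the two-column case: one needs $(e_1,e_2)$ both to normalize $v$ to the form $f^{n-a_1-a_2}e_1$ and to ensure that $f^{n-a_2}e_2$ actually lies in $l$. The first requirement is handled by Nakayama once $w$ is primitive modulo $f$; the second is free, since the key inclusion $V_{a_2+1}\subseteq l$ is forced by the equality $\dim(V_{a_2+1}\cap l) = \dim V_{a_2+1}$ arising from $\lambda_1=2$. Everything else is a dimension count.
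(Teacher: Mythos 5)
Your proof is correct, but it follows a genuinely different route from the paper's. The paper argues by element-chasing: it fixes a generator $\boldsymbol{v}$ of maximal order and, for an arbitrary $\boldsymbol{v}'\in l$, iteratively subtracts constant multiples of $f^j\boldsymbol{v}$ (using $\lambda_j\le 1$ at each stage in the one-column case, and a second generator $\boldsymbol{w}$ in the two-column case) to express $\boldsymbol{v}'$ in terms of the proposed generators. You instead exhibit an explicit submodule $\O_{n[t]}\cdot v$ (resp.\ $\O_{n[t]}\cdot v+\O_{n[t]}\cdot v'$) inside $l$ and win by a length count, which removes the induction entirely. The two structural inputs you isolate are both correct and worth noting: summing the telescoping definition of $\lambda_j$ over $j\le a_1+a_2$ gives $l\subseteq V_{a_1+a_2+1}$, and over $j\le a_2$ gives $\length(V_{a_2+1}\cap l)=2a_2=\length(V_{a_2+1})$, hence $V_{a_2+1}\subseteq l$. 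That second containment is only implicit in the paper (where the existence of the second generator $\boldsymbol{w}\in l$ with prescribed image in $V_{a_2+1}/V_{a_2}$ is asserted rather than derived), so your version actually fills in a small amount of detail the paper leaves to the reader, at the cost of fixing a trivialization $E|_{n[t]}\cong\O_{n[t]}^{\oplus 2}$ and invoking Nakayama to complete the primitive vector $w$ to a basis. All the dimension counts ($a_1$ for the cyclic module in the one-column case; $a_1+a_2$, $a_2$, and trivial intersection in the two-column case, summing to $a_1+2a_2=k$) check out.
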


\begin{proof}
Let $a_1(\lambda )$ and $a_2(\lambda )$ be 
the integers defined in \eqref{eq:2021.8.9.16.43}.
We take $\boldsymbol{v}$ such that 
the image of $\boldsymbol{v}$ under the map 
$ V_{a_1(\lambda)+a_2(\lambda)+1} \rightarrow
 V_{a_1(\lambda)+a_2(\lambda)+1}/V_{a_1(\lambda)+a_2(\lambda)}$ is nonzero.

Now we assume that 
the number of columns of the corresponding Young diagram is $1$.
That is, $a_2(\lambda)=0$.
We consider an element $\boldsymbol{v}' \in l$.
There exists an integer $n_{\boldsymbol{v}'}$ such that 
$\boldsymbol{v}' \in V_{n_{\boldsymbol{v}'}+1}$
and the image of $\boldsymbol{v}'$ under the map 
$ V_{n_{\boldsymbol{v}'}+1} \rightarrow
 V_{n_{\boldsymbol{v}'}+1}/V_{n_{\boldsymbol{v}'}}$ is nonzero.
We have $a_1(\lambda) \geq n_{\boldsymbol{v}'}$.
The element $f^{a_1(\lambda)-n_{\boldsymbol{v}'}} \boldsymbol{v}$ is 
in $V_{n_{\boldsymbol{v}'}+1}$.
Since $\lambda_{n_{\boldsymbol{v}'}}=1$,
the images of
 $f^{a_1(\lambda)-n_{\boldsymbol{v}'}} \boldsymbol{v}$ and $\boldsymbol{v}'$
 under the map 
 $ V_{n_{\boldsymbol{v}'}+1} \rightarrow V_{n_{\boldsymbol{v}'}+1}/V_{n_{\boldsymbol{v}'}}$
are linearly dependent. 
Then there exist complex numbers $\alpha_1$ and $\alpha_2$ such that
$\alpha_1 f^{a_1(\lambda)-n_{\boldsymbol{v}'}} \boldsymbol{v} 
+ \alpha_2 \boldsymbol{v}' \in V_{n_{\boldsymbol{v}'}}$.
Moreover,
the images of
 $f^{a_1(\lambda)-n_{\boldsymbol{v}'}+1} \boldsymbol{v}$ and 
 $\alpha_1 f^{a_1(\lambda)-n_{\boldsymbol{v}'}} \boldsymbol{v} 
+ \alpha_2 \boldsymbol{v}'$
 under the map 
 $ V_{n_{\boldsymbol{v}'}} \rightarrow V_{n_{\boldsymbol{v}'}}/V_{n_{\boldsymbol{v}'}-1}$
are linearly dependent,
since $\lambda_{n_{\boldsymbol{v}'}-1}=1$.
By repeating this argument, 
we may write 
the element $\boldsymbol{v}' \in l$ 
as a linear combination of $\boldsymbol{v},f\boldsymbol{v}, f^2 \boldsymbol{v}$, and so on.
Then $l$ is generated by $\boldsymbol{v}$.

Next we assume that 
the number of columns of the corresponding Young diagram is $2$.
That is, $a_2(\lambda)>0$.
There exists an element 
$\boldsymbol{w}$ such that 
the image of $\boldsymbol{w}$ under the map 
$ V_{a_2(\lambda)+1} \rightarrow V_{a_2(\lambda)+1}/V_{a_2(\lambda)}$ is nonzero
and the images of
 $f^{a_1(\lambda)} \boldsymbol{v}$ and $\boldsymbol{w}$
 under the map 
 $ V_{a_2(\lambda)+1} \rightarrow V_{a_2(\lambda)+1}/V_{n_2}$
are linearly independent.
We may show that $l$ is generated by $\boldsymbol{v}$ and $\boldsymbol{w}$
by using the argument as in the case $a_2(\lambda)=0$.
\end{proof}

Let $\boldsymbol{v}$ be an element of $E|_{n[t]}$.
We denote by $\langle \boldsymbol{v} \rangle_{\O_{n[t]}}$ 
the $\O_{n[t]}$-submodule of $E|_{n[t]}$ generated by $\boldsymbol{v}$.
If we take another generator $\boldsymbol{v}'$ of $\langle \boldsymbol{v} \rangle_{\O_{n[t]}}$,
then there is an element $F \in \O_{n[t]}$ 
such that $\boldsymbol{v} =F \cdot \boldsymbol{v}'$.
We assume that $l$ is generated by elements $\boldsymbol{v}_1, \boldsymbol{v}_2$ 
of $E|_{n[t]}$,
where $\length(\langle \boldsymbol{v}_1\rangle_{\O_{n[t]}} )
\geq \length(\langle \boldsymbol{v}_2 \rangle_{\O_{n[t]}} )$.
If we take another generator $\boldsymbol{v}'_1$ and $\boldsymbol{v}'_2$ of $l$,
where $\length(\langle \boldsymbol{v}_1'\rangle_{\O_{n[t]}} )
\geq \length(\langle \boldsymbol{v}_2' \rangle_{\O_{n[t]}} )$,
then there is an element $F_1,F_2,G_1,G_2 \in \O_{n[t]}$ 
such that $\boldsymbol{v}_1 =F_1 \cdot \boldsymbol{v}'_1+ F_2 \cdot  \boldsymbol{v}'_2$
and  $ \boldsymbol{v}_2 = G_1 \cdot f^d \cdot  \boldsymbol{v}'_1 + G_2 \cdot \boldsymbol{v}'_2$,
where $d= \length(\langle \boldsymbol{v}_1'\rangle )- \length(\langle \boldsymbol{v}_2' \rangle )$.

Let
$l_{\bullet} = \{ l_{k}\}_{1 \leq k \leq n}$ be a filtration 
$$
E|_{n [t]} \supset l_{n} \supset l_{n-1} \supset \cdots \supset 
l_{1} \supset 0 
$$
of $\mathcal{O}_{n [t]}$-submodules of $E|_{n [t]}$ where the length of $l_{k}$ is $k$.
Let $\lambda^{(k)} = (\lambda^{(k)}_{n},\lambda^{(k)}_{n-1},\ldots,\lambda^{(k)}_{1})$
be the type of the $\mathcal{O}_{n [t]}$-module $l_k$ 
for $k=1,2,\ldots,n$.
Then we have a sequence of Young diagrams 
$$
\lambda^{(n)}  \supset \lambda^{(n-1)} \supset \cdots  \supset \lambda^{(1)} ,
$$
where the skew diagram $\lambda^{(k)}/\lambda^{(k-1)}$ consists of one box.

\begin{definition}\label{def:StandTab}
Let
$l_{\bullet} = \{ l_{k}\}_{1 \leq k \leq n}$ be a filtration 
$$
E|_{n [t]} \supset l_{n} \supset l_{n-1} \supset \cdots \supset 
l_{1} \supset 0  \quad (\text{$\mathrm{length} (l_{k})=k$ for $k=1,2,\ldots,n$})
$$
of $\mathcal{O}_{n [t]}$-modules.
We denote by $T_{l_{\bullet}}$ the corresponding standard Young tableau to the 
sequence of Young diagrams as above.
We call $T_{l_{\bullet}}$ the {\rm standard tableau of $l_{\bullet}$}.
\end{definition}

\subsection{Parameter space of refined parabolic structures at a multiple point}

Next, we will consider families of refined parabolic structures at a multiple point
and their parameter spaces.
Here, when we consider families of refined parabolic structures,
we fix the first submodule $l_n$ of refined parabolic structures.
We will construct parameter spaces of families of refined parabolic structures.
We will see that there exists a correspondence between 
their irreducible components and 
standard Young tableaus whose shapes are the type of $l_n$.

Let 
$\lambda = (\lambda_{n},\lambda_{n-1},\ldots,\lambda_{1})$ 
be a Young diagram which satisfies the condition \eqref{eq:typeofRPS}.
Let $a_1(\lambda )$ and $a_2(\lambda )$ be 
the integers defined in \eqref{eq:2021.8.9.16.43}.
We may describe the Young diagram $\lambda$ 
as $( 1^{a_1 (\lambda)} , 2^{a_2 (\lambda)} )$.
Let $l$ be an $\mathcal{O}_{n [t]}$-submodule of $E|_{n[t]}$ with type $\lambda$.
First we consider a parameter space of $\mathcal{O}_{n [t]}$-submodules $l'$ of $l$
with $\mathrm{length} (l/l')=1$.
We consider the case $a_2(\lambda ) = 0$. 
In this case, $l$ is generated by an element $\boldsymbol{v} \in E|_{n[t]}$ 
(see Lemma \ref{lem:2021.9.13.22.13}).  
An $\mathcal{O}_{n [t]}$-submodule $l'$ of $l$
with $\mathrm{length} (l/l')=1$ is unique:
$l' = f \cdot l$.
Here $f$ is a generator of the maximal ideal of $\mathcal{O}_{\mathbb{P}^1,t}$.
For the case $a_2(\lambda ) \neq 0$,
we have the following lemma.

\begin{lem}\label{lem:FamMod}
Assume that $a_2(\lambda ) \neq 0$.
By taking generators of $l$,
we may construct a family of $\mathcal{O}_{n [t]}$-submodules $l'$ of $l$
with $\mathrm{length} (l/l')=1$
parametrized by $\mathbb{P}^1$
such that, 
\begin{itemize}
\item when $a_1(\lambda )\neq 0$:
\begin{itemize}
\item the type of the submodule $l'$ corresponding to $[0:1]$ 
is $(1^{a_1(\lambda )-1}, 2^{a_2(\lambda )})$,
and 
\item the type of the submodule $l'$ corresponding to $[\alpha_1:\alpha_2]$,
where $\alpha_1\neq 0$,
is $(1^{a_1(\lambda )+1}, 2^{a_2(\lambda )-1})$;
\end{itemize}
\item when $a_1(\lambda ) =0$:
\begin{itemize}
\item the type of the submodule $l'$ corresponding to any $[\alpha_1:\alpha_2] \in \mathbb{P}^1$
is $(1^{a_1(\lambda )+1}, 2^{a_2(\lambda )-1})$.
\end{itemize}
\end{itemize}
This family gives a bijection
$$
\mathbb{P}^1 \longrightarrow
\{l'  \mid \text{$l'$ is an $\mathcal{O}_{n [t]}$-submodule of $l$
such that $\mathrm{length} (l/l')=1$} \} .
$$
\end{lem}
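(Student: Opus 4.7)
The plan is to parametrize codimension-$1$ $\mathcal{O}_{n[t]}$-submodules of $l$ by $\mathbb{P}((l/\mathfrak{m}l)^*) \cong \mathbb{P}^1$, where $\mathfrak{m} = (f)$, and then read off the type of each submodule from its invariant factor decomposition as an abstract $\mathcal{O}_{n[t]}$-module. By (the proof of) Lemma \ref{lem:2021.9.13.22.13}, since $a_2(\lambda) > 0$, I may choose generators $\boldsymbol{v}_1, \boldsymbol{v}_2$ of $l$ with $\boldsymbol{v}_1$ of length $a_1 + a_2$ and $\boldsymbol{v}_2$ of length $a_2$ (writing $a_i$ for $a_i(\lambda)$). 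A length count gives $\langle \boldsymbol{v}_1\rangle \cap \langle \boldsymbol{v}_2\rangle = 0$, hence $l \simeq \mathcal{O}_{n[t]}/(f^{a_1+a_2}) \oplus \mathcal{O}_{n[t]}/(f^{a_2})$ as abstract modules. In particular, $l$ is minimally generated by two elements and $l/\mathfrak{m}l$ is a $2$-dimensional $\mathbb{C}$-vector space with basis $\bar{\boldsymbol{v}}_1, \bar{\boldsymbol{v}}_2$.

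An $\mathcal{O}_{n[t]}$-submodule $l' \subset l$ with $\mathrm{length}(l/l') = 1$ is precisely the kernel of a nonzero $\mathcal{O}_{n[t]}$-linear map $\pi\colon l \to \mathcal{O}_{n[t]}/\mathfrak{m} = \mathbb{C}$. Any such $\pi$ kills $\mathfrak{m}l$ and hence factors through $l/\mathfrak{m}l$; two such maps share the same kernel iff they are proportional. This gives a canonical bijection between the set of codimension-$1$ submodules and $\mathbb{P}((l/\mathfrak{m}l)^*) \cong \mathbb{P}^1$, proving the required bijectivity. I fix coordinates $[\alpha_1:\alpha_2]$ so that $[0:1]$ corresponds to the unique $\pi$ vanishing on $\boldsymbol{v}_2$, and $[\alpha_1:\alpha_2]$ with $\alpha_1 \neq 0$ corresponds (up to scalar) to $\pi$ sending $\boldsymbol{v}_2 \mapsto \alpha_1,\ \boldsymbol{v}_1 \mapsto \alpha_2$.

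The type of $l'$ is determined by its invariant factors $(b_1, b_2)$ with $b_1 \geq b_2$: the Young diagram has column lengths $(b_1, b_2)$. When $\alpha_1 = 0$ and $a_1 > 0$: $\boldsymbol{v}_2 \in l'$ and $f\boldsymbol{v}_1 \in l'$, so a length count yields $l' = \langle f\boldsymbol{v}_1, \boldsymbol{v}_2\rangle \simeq \mathcal{O}_{n[t]}/(f^{a_1+a_2-1}) \oplus \mathcal{O}_{n[t]}/(f^{a_2})$; since $a_1 \geq 1$ we have $(b_1,b_2) = (a_1+a_2-1, a_2)$, giving type $(1^{a_1-1}, 2^{a_2})$. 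When $\alpha_1 \neq 0$: the element $w := \alpha_1 \boldsymbol{v}_1 - \alpha_2 \boldsymbol{v}_2 \in l'$ has annihilator $(f^{a_1+a_2})$ in the decomposition of $l$ (the $\boldsymbol{v}_1$-coefficient is the nonzero scalar $\alpha_1$, forcing any annihilator into $(f^{a_1+a_2})$); together with $f\boldsymbol{v}_2 \in l'$, this generates $l'$ by a length count. Since $l'$ is annihilated by $f^{a_1+a_2}$ but contains the cyclic submodule $\langle w \rangle$ of length $a_1+a_2$, we get $b_1 = a_1+a_2$, hence $b_2 = a_2-1$, giving type $(1^{a_1+1}, 2^{a_2-1})$. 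The case $a_1 = 0$ is parallel, but both generators now have the common length $a_2$, so the $[0:1]$ point is not distinguished; the same annihilator computation applied to $w$ in the symmetric decomposition gives $(b_1, b_2) = (a_2, a_2-1)$ and type $(1, 2^{a_2-1})$ for every $[\alpha_1:\alpha_2]$.

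The only real step requiring care, and thus the main obstacle, is verifying the annihilator claim for $w = \alpha_1\boldsymbol{v}_1 - \alpha_2 \boldsymbol{v}_2$ when $\alpha_1 \neq 0$, which is what distinguishes the generic type $(1^{a_1+1}, 2^{a_2-1})$ from the special type $(1^{a_1-1}, 2^{a_2})$. This follows directly from the direct-sum decomposition $l \simeq \mathcal{O}_{n[t]}/(f^{a_1+a_2}) \oplus \mathcal{O}_{n[t]}/(f^{a_2})$ established in the first paragraph, together with $a_1 + a_2 \geq a_2$; once this is in place, the invariant-factor determination is forced by the annihilator of $l'$ (an upper bound on $b_1$) and the cyclic submodule $\langle w\rangle$ (a lower bound), and the $a_1 = 0$ degeneration merely collapses the two types into one.
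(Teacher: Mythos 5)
Your proof is correct, and it takes a genuinely different route from the paper's, even though it ends up producing the same family (up to the harmless coordinate change $\alpha_2\mapsto -\alpha_2$, which fixes $[0:1]$ and preserves the locus $\alpha_1\neq 0$). The paper constructs $l'=\langle \alpha_1\boldsymbol{v}_1+\alpha_2\boldsymbol{v}_2,\ f\boldsymbol{v}_1,\ f\boldsymbol{v}_2\rangle$ explicitly and then builds the inverse map by hand, with a case analysis on whether $a_2$ drops; you instead observe that every colength-one submodule contains $\mathfrak{m}l$ (it is the kernel of a map to $\mathcal{O}_{n[t]}/\mathfrak{m}$), so such submodules are canonically the hyperplanes of the two-dimensional space $l/\mathfrak{m}l$, and the bijection with $\mathbb{P}^1$ comes for free in both directions. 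Likewise, where the paper reads off types from explicit generators relative to the filtration $V_\bullet$, you compute the abstract invariant factors $(b_1,b_2)$ of $l'$, using the direct-sum decomposition $l\simeq \mathcal{O}_{n[t]}/(f^{a_1+a_2})\oplus\mathcal{O}_{n[t]}/(f^{a_2})$ (which your length count correctly establishes) and the annihilator of $w=\alpha_1\boldsymbol{v}_1-\alpha_2\boldsymbol{v}_2$; all of these computations check out, including the $a_1=0$ degeneration. The one step you assert without justification is that the type of a submodule $l'\subset E|_{n[t]}$ equals the conjugate partition of its invariant factors, i.e.\ depends only on the abstract module structure; this is true, and the quickest way to see it is that $V_{j+1}=f^{n-j}E|_{n[t]}$ is precisely the $f^j$-torsion of the free module $E|_{n[t]}$, so $V_{j+1}\cap l'$ is the $f^j$-torsion of $l'$, which is intrinsic, and for $l'\simeq \mathcal{O}_{n[t]}/(f^{b_1})\oplus\mathcal{O}_{n[t]}/(f^{b_2})$ one gets $\lambda_j=\#\{i : j\le b_i\}$. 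Adding that one line would make your argument fully self-contained; as it stands it is a clean and arguably more transparent proof than the one in the paper.
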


\begin{proof}
We take generators $\boldsymbol{v}_1$ and $\boldsymbol{v}_2$ of $l$ 
such that 
$\boldsymbol{v}_1$ is nonzero in  
$V_{a_1(\lambda)+a_2(\lambda)+1}/V_{a_1(\lambda)+a_2(\lambda)}$.
Let $[\alpha_1:\alpha_2]$ be a point of $\mathbb{P}^1$. 
We set 
$$
\boldsymbol{v}_0' := \alpha_1 \boldsymbol{v}_1
+\alpha_2 \boldsymbol{v}_2,\quad 
\boldsymbol{v}_1' = f \boldsymbol{v}_1, \quad
 \text{and} \quad  
 \boldsymbol{v}_2' := f  \boldsymbol{v}_2.
$$
Let $l'$ be an $\mathcal{O}_{n [t]}$-submodule of $l$
generated by $\boldsymbol{v}_0',\boldsymbol{v}_1'$, and $\boldsymbol{v}_2'$.
We may check that $\mathrm{length} (l/l')=1$.
Then we have an $\mathcal{O}_{n [t]}$-submodule of $l$ 
corresponding to the point $[\alpha_1:\alpha_2] \in \mathbb{P}^1$.
When $a_1(\lambda) \neq 0$, the type of the submodule $l'$ corresponding to $[0:1]$ 
is $(1^{a_1(\lambda)-1}, 2^{a_2(\lambda)})$,
and the type of the submodule $l'$ corresponding to $[1:\alpha_2/\alpha_1]$
is $(1^{a_1(\lambda)+1}, 2^{a_2(\lambda)-1})$.
When $a_1 (\lambda)= 0$, the type of the submodule $l'$ corresponding to $[\alpha_1:\alpha_2]$
is $(1^{a_1(\lambda)+1}, 2^{a_2(\lambda)-1})$.
So we have a map from $\mathbb{P}^1$ to 
the set of $\mathcal{O}_{n [t]}$-submodules $l'$ of $l$
with $\mathrm{length} (l/l')=1$.

We will construct the inverse map as follows.
Let $l'$ be an 
$\mathcal{O}_{n [t]}$-submodule of $l$
and $\mathrm{length} (l/l')=1$.
Let $\lambda'$ be the type of $l'$.
First we assume that $a_2(\lambda)>a_2(\lambda')$.
We have $a_1(\lambda')=a_1(\lambda)+1$ and $a_2(\lambda')=a_2(\lambda)-1$.
In this case, 
we may take an element $\boldsymbol{w}$ of $l'$ 
such that 
$\boldsymbol{w}$ is nonzero in  
$V_{a_1(\lambda)+a_2(\lambda)+1}/V_{a_1(\lambda)+a_2(\lambda)}$.
Since elements of $l'$ are also elements of $l$, 
there exist complex numbers $\alpha_1$ and $\alpha_2$ 
such that $\boldsymbol{w} - \alpha_1 \boldsymbol{v}_1- \alpha_2 \boldsymbol{v}_2 
\in \langle f\boldsymbol{v}_1, f\boldsymbol{v}_2\rangle_{\O_{n[t]}}$.
Since $(\alpha_1,\alpha_2) \neq (0,0)$,
we have the ratio $[\alpha_1:\alpha_2] \in \mathbb{P}^1$ by $\boldsymbol{w}$.
But we may check that the ratio $[\alpha_1:\alpha_2] \in \mathbb{P}^1$ is unique 
for the choice of such an element $\boldsymbol{w}$.
Then the ratio $[\alpha_1:\alpha_2] \in \mathbb{P}^1$ corresponds to $l'$.
Second we assume that $a_2(\lambda)=a_2(\lambda')$. 
We have $a_1(\lambda')=a_1(\lambda)-1$.
In particular, $a_1(\lambda) \neq 0$.
In this case, 
we may take an element $\boldsymbol{w}$ of $l'$ 
such that 
$\boldsymbol{w}$ is nonzero in  
$V_{a_1(\lambda)+a_2(\lambda)}/V_{a_1(\lambda)+a_2(\lambda)-1}$.
We may check that 
$\boldsymbol{w}
\in \langle f\boldsymbol{v}_1, \boldsymbol{v}_2\rangle_{\O_{n[t]}}$.
Since $\mathrm{length} (l/l')=1$,
there exists a nonzero complex number $\alpha$   
such that $\boldsymbol{w} -  \alpha \boldsymbol{v}_2 
\in \langle f\boldsymbol{v}_1, f\boldsymbol{v}_2\rangle_{\O_{n[t]}}$.
So we have the ratio $[0:\alpha] \in \mathbb{P}^1$ by $\boldsymbol{w}$.
But we may check that the ratio $[0:\alpha] \in \mathbb{P}^1$ is unique 
for the choice of such an element $\boldsymbol{w}$.
Then the ratio $[0:\alpha] \in \mathbb{P}^1$ corresponds to $l'$.
By these arguments, we have the inverse map.
\end{proof}

Let $\lambda^{(n)} = (1^{a_1}, 2^{a_2})$ be a Young diagram
and $l_n$ be an $\mathcal{O}_{n [t]}$-submodule of $E|_{n[t]}$ with type $\lambda^{(n)}$.
Before we consider families and parameter spaces of refined parabolic structures,
we discuss families and parameter spaces of filtrations 
$l_{n} \supset l_{n-1}  \supset \cdots \supset l_{n-m+1}$ $(2\leq  m\leq n)$
with $\length(l_{n-k+1}/l_{n-k})=1$ ($k=1,2,\ldots,m-1$).
If $m=n$, then this filtration is a refined parabolic structure.
We consider examples with $m=3$.
We will consider the following four sequences of Young diagrams:
\begin{align}
\nonumber &T \colon \lambda^{(n)}=(1^{a_1}, 2^{a_2}) \supset (1^{a_1-1}, 2^{a_2})
\supset (1^{a_1-2}, 2^{a_2}) \\ 
\nonumber&T' \colon \lambda^{(n)}=(1^{a_1}, 2^{a_2}) \supset (1^{a_1+1}, 2^{a_2-1})
\supset (1^{a_1}, 2^{a_2-1}) \\
\nonumber&T'' \colon \lambda^{(n)}=(1^{a_1}, 2^{a_2}) \supset (1^{a_1-1}, 2^{a_2})
\supset (1^{a_1}, 2^{a_2-1})\\
\nonumber&T''' \colon \lambda^{(n)}=(1^{a_1}, 2^{a_2}) \supset (1^{a_1+1}, 2^{a_2-1})
\supset (1^{a_1+2}, 2^{a_2-2})  .
\end{align}
Here we assume that 
\begin{itemize}
\item $a_1\geq 2$ when we consider the sequence $T$,
\item $a_2\geq 1$ when we consider the sequences $T'$, 
\item $a_1\geq 1$ and $a_2\geq 1$ when we consider the sequences $T''$, and 
\item $a_2\geq 2$ when we consider the sequence $T'''$.
\end{itemize}
We will construct families of 
filtrations with fixed $l_n$ corresponding to these sequences of Young diagrams.
Here for a sequence of Young diagrams,
the ``corresponding'' means that 
filtrations parametrized by a Zariski open subset of the parameter space 
have this sequence of Young diagrams.
We take generators $\boldsymbol{v}_1$ and $\boldsymbol{v}_2$ of $l_n$ such that 
$\mathrm{length} \left( \langle \boldsymbol{v}_1  \rangle_{\mathcal{O}_{n [t]}} \right) =a_1+a_2$
and
$\mathrm{length} \left( \langle \boldsymbol{v}_2  \rangle_{\mathcal{O}_{n [t]}} \right)=a_2$.

\begin{example}
First we will construct a family corresponding to $T$.
This is the easiest case.
Filtrations with $T$ are unique.
This filtration is 
$$
l_n= \langle  \boldsymbol{v}_1, \ 
 \boldsymbol{v}_2 \rangle_{\mathcal{O}_{n [t]}}
\supset 
 \langle f \boldsymbol{v}_1, \ 
 \boldsymbol{v}_2 \rangle_{\mathcal{O}_{n [t]}}
\supset 
 \langle f^2 \boldsymbol{v}_1, \ 
 \boldsymbol{v}_2 \rangle_{\mathcal{O}_{n [t]}}.
$$
So the parameter space corresponding to $T$ is a point.
\end{example}

\begin{example}
Second we will construct a family corresponding to $T'$ as follows.
We define a family of 
filtrations with fixed $l_n$:
$$
\begin{aligned}
l_{n-1} &= \langle \alpha_1\boldsymbol{v}_1 +
\alpha_2 \boldsymbol{v}_2 ,\ 
f \boldsymbol{v}_1 , \ 
f \boldsymbol{v}_2 \rangle_{\mathcal{O}_{n [t]}} 
\quad  \\
l_{n-2} &= \langle \alpha_1f \boldsymbol{v}_1 +
\alpha_2 f \boldsymbol{v}_2 ,\ 
f \boldsymbol{v}_1 , \ 
f \boldsymbol{v}_2 \rangle_{\mathcal{O}_{n [t]}} 
= \langle f \boldsymbol{v}_1, \ 
 f \boldsymbol{v}_2 \rangle_{\mathcal{O}_{n [t]}},
\end{aligned}
$$
which is parametrized by $[\alpha_1 : \alpha_2] \in \mathbb{P}^1$.
We denote by $C_{l_n,T'}$ this parameter space $\mathbb{P}^{1}$.
If $\alpha_1 \neq 0$, 
the sequences of types of the parametrized filtrations are $T'$.
So filtrations parametrized by the affine line $\mathbb{A}^{1}$
have the sequence $T'$.
If $\alpha_1=0$, 
the sequence of types of the parametrized filtration is $T''$.
So we have the parameter space $C_{l_n,T'}$ corresponding to $T'$.
\end{example}

\begin{example}\label{2022.1.15.10.24}
Third we will construct a family corresponding to $T''$ as follows.
We define a family of 
filtrations with fixed $l_n$:
$$
l_{n-1}' = \langle f \boldsymbol{v}_1 , \ 
 \boldsymbol{v}_2 \rangle_{\mathcal{O}_{n [t]}} 
\quad  \text{and} \quad 
l_{n-2}' = \langle \alpha'_1f \boldsymbol{v}_1 +
\alpha'_2  \boldsymbol{v}_2 , \ 
f^2 \boldsymbol{v}_1 ,\ 
f \boldsymbol{v}_2 \rangle_{\mathcal{O}_{n [t]}} ,
$$
which is parametrized by $[\alpha'_1 : \alpha'_2] \in \mathbb{P}^1$.
So we have the parameter space $C_{l_n,T''}$ corresponding to $T''$.
This parameter space $C_{l_n,T''}$ is $\mathbb{P}^{1}$.

We may check that 
$[0:1] \in C_{l_n,T'}$ and $[1:0] \in C_{l_n,T''}$ parametrize the same filtration:
$$
l_{n-1}' = \langle f \boldsymbol{v}_1 , \ 
 \boldsymbol{v}_2 \rangle_{\mathcal{O}_{n [t]}} 
\quad  \text{and} \quad
l_{n-2}' = \langle f \boldsymbol{v}_1 ,  \ 
 f \boldsymbol{v}_2 \rangle_{\mathcal{O}_{n [t]}} .
$$
So the parameter spaces $C_{l_n,T'}$ and $C_{l_n,T''}$ are the projective lines 
and the intersection $C_{l_n,T'}\, \cap \, C_{l_n,T''}$ is a point.
\end{example}

\begin{prop}
Let $\mathbb{F}_2 \rightarrow \mathbb{P}^1$ be the Hirzebruch surface
of degree two. 
We denote by $s_0$ and $p_{\infty}$ the $0$-section and the fiber 
of $\mathbb{F}_2 \rightarrow \mathbb{P}^1$ at $\infty$, respectively.
We can construct a family of filtrations with fixed $l_n$ 
corresponding to $T'''$.
More precisely,
by this family, 
$s_0$ is identified with $C_{l_n,T'}$,
$p_{\infty}$ is identified with $C_{l_n,T''}$,
and $\mathbb{F}_2 \setminus (s_0 \cup p_{\infty})$ is identified with 
$$
\left\{l_n \supset l_{n-1} \supset l_{n-2} 
\ \middle|\  
\begin{array}{l}
\text{$l_{n-k}$ is an $\mathcal{O}_{n [t]}$-submodule of $l_{n-k+1}$} \\
\text{such that $\length(l_{n-k+1}/l_{n-k})=1$ $(k=1,2)$}\\
\text{and the sequence of Young diagram is $T'''$} 
\end{array}
\right\} .
$$
\end{prop}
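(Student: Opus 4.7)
My plan is to construct the $T'''$-family explicitly on an affine chart $\mathbb{A}^2$, compactify by gluing a second affine chart, and verify via the transition computation that the resulting $\mathbb{P}^1$-bundle over $\mathbb{P}^1$ is the Hirzebruch surface $\mathbb{F}_2$.

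First, I would set up the $T'''$-family on $\mathbb{A}^2$ by iterating Lemma \ref{lem:FamMod}. In the chart with coordinates $(c,d) \in \mathbb{A}^2$, take $l_{n-1} = \langle \boldsymbol{v}_1 + c\boldsymbol{v}_2,\ f\boldsymbol{v}_1,\ f\boldsymbol{v}_2\rangle_{\mathcal{O}_{n[t]}}$ (the $T'$-direction from $l_n$) and, applying the lemma to $l_{n-1}$ with generators $\tilde{\boldsymbol{v}}_1 = \boldsymbol{v}_1 + c\boldsymbol{v}_2$ and $\tilde{\boldsymbol{v}}_2 = f\boldsymbol{v}_2$, compute
\[
l_{n-2} \;=\; \langle \boldsymbol{v}_1 + (c+df)\boldsymbol{v}_2,\ f^2\boldsymbol{v}_2 \rangle_{\mathcal{O}_{n[t]}}.
\]
A direct length computation shows that $l_{n-2}$ has type $(1^{a_1+2},2^{a_2-2})$ for every $(c,d) \in \mathbb{A}^2$, so this is a $T'''$-sequence filtration, and $l_n/l_{n-2}$ is cyclic, generated by $\boldsymbol{v}_2$ with $f^2\boldsymbol{v}_2 = 0$. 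By interchanging the roles of $\boldsymbol{v}_1$ and $\boldsymbol{v}_2$, there is a parallel chart with coordinates $(c',e) \in \mathbb{A}^2$ where $l_{n-2} = \langle \boldsymbol{v}_2 + (c'+ef)\boldsymbol{v}_1,\ f^2\boldsymbol{v}_1\rangle_{\mathcal{O}_{n[t]}}$ and $l_n/l_{n-2}$ is cyclic generated by $\boldsymbol{v}_1$.

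Second, I would glue the two charts. Viewing $l_{n-2}$ as the kernel of a surjection $l_n \twoheadrightarrow \mathcal{O}_{2[t]}$ sending $\boldsymbol{v}_1 \mapsto p_1$, $\boldsymbol{v}_2 \mapsto p_2$, chart~1 corresponds to $(p_1,p_2) = (-(c+df),1)$ and chart~2 to $(p_1,p_2) = (1,-(c'+ef))$. On the overlap, rescaling the first surjection by $-(c+df)^{-1}$ and using $f^2 = 0$ in $\mathcal{O}_{2[t]}$ gives
\[
(c+df)^{-1} \;=\; \tfrac{1}{c} - \tfrac{d}{c^2}\,f,
\]
which yields the transition $(c',e) = (1/c,\ -d/c^2)$ between the two affine $\mathbb{A}^2$-charts. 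This is exactly the transition for $\mathbb{F}_2 = \mathbb{P}(\mathcal{O}_{\mathbb{P}^1} \oplus \mathcal{O}_{\mathbb{P}^1}(-2))$, so the two charts glue to $\mathbb{F}_2$.

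Finally, I would identify the two missing curves. Parametrizing the second $\mathbb{P}^1$ projectively by $[\beta_1:\beta_2]$ and specializing to $\beta_1 = 0$ gives in the limit $l_{n-2} = \langle f\boldsymbol{v}_1, f\boldsymbol{v}_2\rangle_{\mathcal{O}_{n[t]}}$, independent of $c$, while $l_{n-1}$ still varies with $c$: this section is $C_{l_n,T'}$ from the preceding example, and it is the $(-2)$-section $s_0$. Letting $c \to \infty$ gives the fiber over which $l_{n-1} = \langle f\boldsymbol{v}_1, \boldsymbol{v}_2\rangle_{\mathcal{O}_{n[t]}}$ is fixed while $l_{n-2}$ varies; rewritten using the generators of Example \ref{2022.1.15.10.24}, this recovers $C_{l_n,T''}$, which is $p_\infty$. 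By Lemma \ref{lem:FamMod} and the explicit formulas, the complement $\mathbb{F}_2\setminus(s_0\cup p_\infty)$ is exactly the set of filtrations with type sequence $T'''$. The main obstacle is the transition computation: one must carefully verify that inversion in $\mathcal{O}_{2[t]}$ produces precisely the twist $-d/c^2$, so that the $\mathbb{P}^1$-bundle is $\mathbb{F}_2$ and not a different Hirzebruch surface; the two terms $1/c$ and $-(d/c^2)f$ of the expansion $(c+df)^{-1}$ encode respectively the base rescaling and the degree $(-2)$ fiber twist.
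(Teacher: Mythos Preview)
Your argument is correct and follows essentially the same route as the paper: build the family in two charts using the generators $\boldsymbol{v}_1,\boldsymbol{v}_2$ of $l_n$, compute the gluing to recognize $\mathbb{F}_2$, and identify the boundary curves with $C_{l_n,T'}$ and $C_{l_n,T''}$. Your transition computation---reading off the change of coordinates from the quotient $l_n/l_{n-2}\cong\mathcal{O}_{2[t]}$ and inverting $c+df$ there---is a tidier variant of the paper's direct matching of generators (the paper keeps three generators for $l_{n-2}$ rather than your simplified two), but the content is the same.
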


\begin{proof}
Set $U_0:= \{ [1:x_{1}]  \mid x_1 \in \mathbb{C}\}\subset \mathbb{P}^1$ 
and $U_{\infty}:= \{ [y_{1}:1] \mid y_1 \in\mathbb{C} \}\subset \mathbb{P}^1$. 
We consider $\mathbb{F}_2$ as the gluing of $U_0 \times \mathbb{P}^1$
and $U_{\infty} \times \mathbb{P}^1$ as follows:
$$
\begin{aligned}
(U_0 \times \mathbb{P}^1)|_{U_0 \cap U_{\infty}}
&\longrightarrow (U_\infty \times \mathbb{P}^1)|_{U_0 \cap U_{\infty}} \\
([1:x_1], [\alpha_{1,3}:\alpha_{2,3}])  &\longmapsto
([y_1:1], [\beta_{1,3}: \beta_{2,3}])
\end{aligned}
$$
where $y_1=1/x_1$, $\beta_{1,3}=\alpha_{1,3}$,
and $\beta_{2,3}=-x_1^{-2} \alpha_{2,3}$.
We will construct a family of 
filtrations 
$l_{n} \supset l_{n-1} \supset l_{n-2}$ 
with $\length(l_{n-k+1}/l_{n-k})=1$ ($k=1,2$) 
parametrized by $\mathbb{F}_2$ as follows.
We take generators $\boldsymbol{v}_1$ and $\boldsymbol{v}_2$ of $l_n$ such that 
$\mathrm{length} \left( \langle \boldsymbol{v}_1  \rangle_{\mathcal{O}_{n [t]}} \right) =a_1+a_2$
and
$\mathrm{length} \left( \langle \boldsymbol{v}_2  \rangle_{\mathcal{O}_{n [t]}} \right)=a_2$.
For $([1:x_1], [\alpha_{1,3}:\alpha_{2,3}])  \in U_0 \times \mathbb{P}^1$,
we define $l_{n-1}$ and $l_{n-2}$ as
$$
\begin{aligned}
l_{n-1} &= \left\langle \boldsymbol{v}_1+x_1 \boldsymbol{v}_2 ,\  f \boldsymbol{v}_2 \right\rangle   \\
l_{n-2} &= \left\langle \alpha_{1,3}(\boldsymbol{v}_1+x_1 \boldsymbol{v}_2)  
+\alpha_{2,3} f \boldsymbol{v}_2 , \ 
f \boldsymbol{v}_1+x_1 f \boldsymbol{v}_2, \ 
f^2 \boldsymbol{v}_2 \right\rangle .
\end{aligned}
$$
On the other hand, for 
$([y_1:1], [\beta_{1,3}: \beta_{2,3}]) \in U_\infty \times \mathbb{P}^1$
we define $l'_{n-1}$ and $l'_{n-2}$ as
$$
\begin{aligned}
l'_{n-1} &= \left\langle y_1 \boldsymbol{v}_1+ \boldsymbol{v}_2 ,\  f \boldsymbol{v}_1 \right\rangle   \\
l'_{n-2} &= \left\langle \beta_{1,3}(y_1\boldsymbol{v}_1+ \boldsymbol{v}_2)  
+\beta_{2,3} f \boldsymbol{v}_1 , \ 
 y_1f \boldsymbol{v}_1+ f \boldsymbol{v}_2, \ 
f^2 \boldsymbol{v}_1 \right\rangle .
\end{aligned}
$$
On $U_0 \cap U_{\infty}$, we put $y_1=1/x_1$, 
$\beta_{1,3}=\alpha_{1,3}$,
and $\beta_{2,3}=-x_1^{-2} \alpha_{2,3}$.
Then we have that $l_{n-1}|_{U_0 \cap U_{\infty}}=l_{n-1}'|_{U_0 \cap U_{\infty}}$ 
and $l_{n-2}|_{U_0 \cap U_{\infty}}=l_{n-2}'|_{U_0 \cap U_{\infty}}$.
So we obtain a family of filtrations parametrized by $\mathbb{F}_2$.
Points on the Zariski open subset $\alpha_{1,3} \neq 0$ in $U_0 \times \mathbb{P}^1$
parametrize filtrations whose sequences of types are $T'''$.
So we say that the parameter space $\mathbb{F}_2$
corresponds to $T'''$.
The curve $C_{l_n,T'}$ defined above 
is embedded into $\mathbb{F}_2$ as the $0$-section.
Here the 
$0$-section is defined by $\alpha_{1,3} =0$ over $U_0$ and 
$\beta_{1,3} =0$ over $U_\infty$.
The curve $C_{l_n,T''}$ defined above 
is embedded into $\mathbb{F}_2$ as
the fiber of the point $y_1=0$ under $\mathbb{F}_2 \rightarrow \mathbb{P}^1$.
\end{proof}

We observe that the dimension of the parameter space
corresponding to a sequence of Young diagrams
 $\lambda^{(n)} \supset
\lambda^{(n-1)} \supset \lambda^{(n-2)}$ is
$$
\# \left\{ k \ \middle| \ 
 n-1 \leq k \leq n , \  a_2(\lambda^{(k-1)})=a_2(\lambda^{(k)})-1 \right\}.
$$

Next we consider an example with $m=4$.
That is, we will construct
a family of 
filtrations 
$l_{n} \supset l_{n-1} \supset l_{n-2} \supset l_{n-3}$ 
with $\length(l_{n-k+1}/l_{n-k})=1$ ($k=1,2,3$).

\begin{prop}\label{prop:2021.11.18.22.25}
We can construct a $\mathbb{P}^1$-bundle over $\mathbb{F}_2$
such that this bundle has a $0$-section and
we can construct a family corresponding to 
$$
T^4 \colon \lambda^{(n)}=(1^{a_1}, 2^{a_2}) \supset (1^{a_1+1}, 2^{a_2-1})
\supset (1^{a_1+2}, 2^{a_2-2}) \supset (1^{a_1+3}, 2^{a_2-3})
$$
over this $\mathbb{P}^1$-bundle.
Here we assume that $a_2 \geq 3$.
\end{prop}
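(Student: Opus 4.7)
The plan is to iterate the construction of the preceding proposition one further step. Start with the family of filtrations $l_n \supset l_{n-1} \supset l_{n-2}$ parameterised by $\mathbb{F}_2$ that realises the sequence $T'''$. On a Zariski open subset of $\mathbb{F}_2$, the third submodule $l_{n-2}$ has type $(1^{a_1+2}, 2^{a_2-2})$; since $a_2 \geq 3$ we have $a_2-2 \geq 1$, so the second alternative of Lemma \ref{lem:FamMod} is available and produces, fibrewise, a $\mathbb{P}^1$ of submodules $l_{n-3} \subset l_{n-2}$ with $\length(l_{n-2}/l_{n-3})=1$.

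First I would upgrade Lemma \ref{lem:FamMod} to a relative statement. On the chart $U_0 \times \mathbb{P}^1$ of $\mathbb{F}_2$, choose generators $\boldsymbol{w}_1$ and $\boldsymbol{w}_2$ of the sub-$\mathcal{O}_{n[t]}$-module $l_{n-2}$, with $\length(\langle \boldsymbol{w}_1\rangle_{\mathcal{O}_{n[t]}}) > \length(\langle \boldsymbol{w}_2\rangle_{\mathcal{O}_{n[t]}})$, taken from the explicit presentation
$$l_{n-2} = \langle \alpha_{1,3}(\boldsymbol{v}_1+x_1\boldsymbol{v}_2) + \alpha_{2,3}f\boldsymbol{v}_2,\ f\boldsymbol{v}_1 + x_1 f\boldsymbol{v}_2,\ f^2\boldsymbol{v}_2\rangle_{\mathcal{O}_{n[t]}}.$$
Then set
$$l_{n-3} = \langle \alpha_{1,4}\boldsymbol{w}_1 + \alpha_{2,4}\boldsymbol{w}_2,\ f\boldsymbol{w}_1,\ f\boldsymbol{w}_2\rangle_{\mathcal{O}_{n[t]}},$$
parameterised by $[\alpha_{1,4}:\alpha_{2,4}] \in \mathbb{P}^1$. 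Do the analogous construction over $U_\infty \times \mathbb{P}^1$. The generators $\boldsymbol{w}_1,\boldsymbol{w}_2$ are only defined up to the standard ambiguity (replacing $\boldsymbol{w}_1$ by $\boldsymbol{w}_1 + F\boldsymbol{w}_2$ with $F \in \mathcal{O}_{n[t]}$, and by scalar multiples), so the coordinate $[\alpha_{1,4}:\alpha_{2,4}]$ is well-defined only up to an affine transformation fixing $\alpha_{1,4}=0$. Gluing these $\mathbb{P}^1$-fibres over $U_0 \cap U_\infty$, using the transition $y_1=1/x_1$, $\beta_{1,3}=\alpha_{1,3}$, $\beta_{2,3}=-x_1^{-2}\alpha_{2,3}$ inherited from the previous proposition, yields a $\mathbb{P}^1$-bundle $\pi \colon X \to \mathbb{F}_2$ on which the family $\{l_n \supset l_{n-1} \supset l_{n-2} \supset l_{n-3}\}$ is defined.

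The $0$-section of $X$ is the zero-locus $\{\alpha_{1,4} = 0\}$, which is canonically defined (because the ambiguity above fixes this condition) and is a section of $\pi$. On this $0$-section the first summand $\alpha_{1,4}\boldsymbol{w}_1$ drops out, so Lemma \ref{lem:FamMod} yields $l_{n-3}$ of type $(1^{a_1+1}, 2^{a_2-2})$, which is \emph{not} $T^4$. Away from the $0$-section, and over the generic locus of $\mathbb{F}_2$ already parameterising $T'''$, the quadruple $l_n \supset l_{n-1} \supset l_{n-2} \supset l_{n-3}$ has the sequence of types $T^4$; this is the Zariski open subset realising $T^4$.

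The main obstacle is the bookkeeping of the gluing data: one must verify that the transition on the new $\mathbb{P}^1$-factor, arising from replacing the local generators $\boldsymbol{w}_1,\boldsymbol{w}_2$ by their $U_\infty$-versions after applying the transition of $\mathbb{F}_2$, defines an honest $\mathbb{P}^1$-bundle and preserves the locus $\alpha_{1,4}=0$. This amounts to checking linearity in $[\alpha_{1,4}:\alpha_{2,4}]$ of the change of coordinates and is a direct, if tedious, extension of the transition computation already carried out for $\mathbb{F}_2$ in the preceding proposition.
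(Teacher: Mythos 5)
Your proposal is correct and follows essentially the same route as the paper: extend the $T'''$ family by choosing local generators of $\tilde l_{n-2}$, form $l_{n-3}=\langle \alpha_{1,4}\boldsymbol{w}_1+\alpha_{2,4}\boldsymbol{w}_2,\ f\boldsymbol{w}_1,\ f\boldsymbol{w}_2\rangle$, glue to a $\mathbb{P}^1$-bundle, and identify the $0$-section with $\{\alpha_{1,4}=0\}$. The only refinement the paper makes is to choose the generators over the four affine opens $U^i=U_\bullet\times U_\bullet$ covering $\mathbb{F}_2$ rather than over the non-affine charts $U_0\times\mathbb{P}^1$ and $U_\infty\times\mathbb{P}^1$, which is exactly the bookkeeping you flag at the end.
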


\begin{proof}
Let $l_n \supset \tilde l_{n-1} \supset  \tilde l_{n-2}$ be the family of filtrations 
parametrized by $\mathbb{F}_2$ defined in the case corresponding to $T'''$.
We set $U^{1}:= U_0\times U_0$, $U^{2}:=U_0\times U_{\infty}$,
$U^{3}:= U_{\infty}\times U_0$, $U^{4}:= U_{\infty}\times U_{\infty} $.
These products give an affine open covering of $\mathbb{F}_2$.
We consider the restriction $\tilde l_{n-2}|_{U^{i}}$ for $1\leq i \leq 4$.
Since we assume that $a_2 \geq 3$,
we may take two elements
 $\tilde{\boldsymbol{v}}^{i}_1$
and $\tilde{\boldsymbol{v}}^{i}_2$ of $\tilde l_{n-2}|_{U^{i}}$ such that 
 $\tilde l_{n-2}|_{U^{i}}=\langle \tilde{\boldsymbol{v}}^{i}_1 ,
 \tilde{\boldsymbol{v}}^{i}_2\rangle_{\O_{n[t]}}$.
 For each $U^{i}$, 
 we define a family of filtrations 
 $l_n \supset  \tilde l_{n-1}^{i} \supset \tilde l_{n-2}^{i} \supset  \tilde l_{n-3}^{i} $
(so $m=4$) parametrized by $U^{i} \times \mathbb{P}^1$ as follows.
We set $ \tilde l_{n-1}^{i}:=\tilde l_{n-1}|_{U^{i}}$,
$ \tilde l_{n-2}^{i}:=\tilde l_{n-2}|_{U^{i}}$, and
\begin{equation}\label{eq:2021.8.27.15.02}
\tilde l_{n-3}^{i} := 
\left\langle \alpha^{i}_{1,4} \tilde{\boldsymbol{v}}^{i}_1
+\alpha^{i}_{2,4}  \tilde{\boldsymbol{v}}^{i}_2,\ 
f\tilde{\boldsymbol{v}}^{i}_1 ,\ 
f \tilde{\boldsymbol{v}}^{i}_2 \right\rangle_{\O_{n[t]}}.
\end{equation}
where $[\alpha^{i}_{1,4}:\alpha^{i}_{2,4}]  \in  \mathbb{P}^1$.
Then we glue $\{ U^{i} \times \mathbb{P}^1\}_{1\leq i \leq 4}$ so that 
these families patch together.
Then we have a $\mathbb{P}^1$-bundle over $\mathbb{F}_2$.
We denote by $ \mathbb{F}^{(4)}_2 \rightarrow \mathbb{F}_2$ 
this $\mathbb{P}^1$-bundle.
By the gluing the families,
we have a family of filtrations $ l_n \supset \tilde l_{n-1} \supset \tilde l_{n-2} \supset  \tilde l_{n-3}$
parametrized by $ \mathbb{F}^{(4)}_2$.
Points on a Zariski open subset of $ \mathbb{F}^{(4)}_2$
parametrize filtrations whose sequences of types are $T^4$.

We may check that $ \mathbb{F}^{(4)}_2 \rightarrow \mathbb{F}_2$ has the $0$-section.
We may assume that 
$\mathrm{length} ( \langle \tilde{\boldsymbol{v}}^{i}_1  \rangle_{\mathcal{O}_{n [t]}} ) 
\geq \mathrm{length} ( \langle \tilde{\boldsymbol{v}}^{i}_2  \rangle_{\mathcal{O}_{n [t]}} ) $ for each $i$.
Then this $0$-section is defined by $\alpha^{i}_{1,4} =0$ in \eqref{eq:2021.8.27.15.02} for each $i$ 
($1\leq i \leq 4$).
That is, by the patching of the families, the local sections $\alpha^{i}_{1,4} =0$ over $U^{i}$
patch together.
\end{proof}

To describe parameter spaces of refined parabolic structures 
$l_{n} \supset l_{n-1} \supset \cdots \supset l_{1} \supset 0$,
we consider sequences of maps, which are $\mathbb{P}^1$-bundles or identity maps as follows.
For $T^4$,
we define a sequence of maps:
$$
\mathbb{F}_{l_n,T^4}^{(4)} 
\xrightarrow{\ p_{3} \ }\
\mathbb{F}_{l_n,T^4}^{(3)} 
\xrightarrow{\ p_{2} \ }\mathbb{F}_{l_n,T^4}^{(2)} 
\xrightarrow{\ p_{1} \ }\mathbb{F}_{l_n,T^4}^{(1)} = \{ {\rm pt} \}
$$
so that $\mathbb{F}_{l_n,T^4}^{(2)}$ is the projective line,
$p_{2}$ is the Hirzebruch surface
of degree two,
and $p_{3}$ is the $\mathbb{P}^1$-bundle in Proposition \ref{prop:2021.11.18.22.25}.
We identify $\mathbb{F}_{l_n,T^4}^{(4)}$
with the parameter space of this family corresponding to $T^4$.
Let
$$
T^5 \colon
 \lambda=(1^{a_1}, 2^{a_2}) \supset (1^{a_1+1}, 2^{a_2-1})
\supset (1^{a_1+2}, 2^{a_2-2}) \supset (1^{a_1+1}, 2^{a_2-2}).
$$
For $T^5$,
we define 
$$
\mathbb{F}_{l_n,T^5}^{(4)} 
\xrightarrow{\ p_{3} \ }\
\mathbb{F}_{l_n,T^5}^{(3)} 
\xrightarrow{\ p_{2} \ }\mathbb{F}_{l_n,T^5}^{(2)} 
\xrightarrow{\ p_{1} \ }\mathbb{F}_{l_n,T^5}^{(1)} = \{ {\rm pt} \}
$$
so that $\mathbb{F}_{l_n,T^5}^{(2)}$ is the projective line,
$p_{2}$ is the Hirzebruch surface
of degree two, $\mathbb{F}_{l_n,T^5}^{(3)}=\mathbb{F}_{l_n,T^5}^{(4)}$, 
and $p_{3}$ is the identity map.
We can construct a family corresponding to $T^5$
over the $0$-section of the $\mathbb{P}^1$-bundle in Proposition \ref{prop:2021.11.18.22.25}.
So we may identify $\mathbb{F}_{l_n,T^5}^{(4)}$
with the parameter space of this family corresponding to $T^5$.

Now we consider families and parameter spaces of refined parabolic structures.

\begin{prop}\label{2022.1.15.10.25}
We set $\lambda^{(n)}=(1^{a_1},2^{a_2})$.
We fix
an $\O_{n[t]}$-submodule $l_{n}$ of $E|_{n[t]}$
with $\mathrm{length}(l_n)=n$ whose type is 
$\lambda^{(n)}$.
Let $\mathcal{T}_{l_n}$ be the set of
standard Young tableaus whose shapes are
the Young diagram $\lambda^{(n)}$.
For $T=(\lambda^{(n)},\lambda^{(n-1)},\ldots,
\lambda^{(1)}) \in \mathcal{T}_{l_n}$, we can construct 
a sequence of maps:
$$
\mathbb{F}_{l_n,T}^{(n)} \xrightarrow{\ p_{n-1} \ }
\mathbb{F}_{l_n,T}^{(n-1)} \xrightarrow{\ p_{n-2} \ } \cdots 
\xrightarrow{\ p_{2} \ }\mathbb{F}_{l_n,T}^{(2)} 
\xrightarrow{\ p_{1} \ }\mathbb{F}_{l_n,T}^{(1)} = \{ {\rm pt} \}
$$
such that 
\begin{itemize}
\item $\mathbb{F}_{l_n,T}^{(k+1)}  = \mathbb{F}_{l_n,T}^{(k)} $
and $p_k$ is the identity when $a_2(\lambda^{(n-k+1)}) = a_2(\lambda^{(n-k)} )$

\item $p_k$ is a $\mathbb{P}^1$-bundle when $a_2(\lambda^{(n-k+1)}) = a_2(\lambda^{(n-k)} ) +1$

\item there exists a family of refined parabolic structures 
$\tilde l_{\bullet,T} \colon 
E|_{n [t]} \supset l_{n} \supset \tilde l_{n-1,T} \supset \cdots \supset 
\tilde l_{1,T} \supset 0$
with the fixed $l_{n}$ parametrized by $\mathbb{F}_{l_n,T}^{(n)}$ such that 
on a Zariski open subset $\mathbb{C}^{a_2}\subset \mathbb{F}_{l_n,T}^{(n)}$
the restriction $\tilde l_{\bullet,T}|_{\mathbb{C}^{a_2}}$ gives a bijection
$$
\mathbb{C}^{a_2} \longrightarrow
\left\{
 l_{\bullet} \colon 
 l_{n} \supset l_{n-1} \supset \cdots \supset 
l_{1} \supset 0
 \ \middle| \ 
\begin{array}{l} 
 \text{$l_{\bullet}$ is a refined parabolic structure on $n[t]$} \\
 \text{whose standard tableau is $T$}
\end{array}
 \right\} .
$$

\end{itemize}
\end{prop}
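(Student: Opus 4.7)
The plan is to proceed by induction on $k$ from $k=1$ up to $k=n$, constructing both the space $\mathbb{F}_{l_n,T}^{(k)}$ and a family of partial filtrations $l_n \supset \tilde l_{n-1,T} \supset \cdots \supset \tilde l_{n-k+1,T}$ over it, with the map $p_{k-1}$ defined at each stage. The base case $k=1$ is trivial since $l_n$ is prescribed. The key local input is Lemma \ref{lem:FamMod}: for a fixed submodule of type $(1^{a_1}, 2^{a_2})$, its corank-$1$ submodules are parametrized by $\mathbb{P}^1$ (or by a single point when $a_2 = 0$), the point and the resulting type being linked by the formulas recalled there.

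In the inductive step from $k$ to $k+1$ we split according to the position of the $(k{+}1)$-st box of the tableau $T$. If that box lies in column~$1$, equivalently $a_2(\lambda^{(n-k+1)}) = a_2(\lambda^{(n-k)})$, Lemma \ref{lem:FamMod} says the corank-$1$ submodule of type $\lambda^{(n-k)}$ is unique (it is the point $[0:1]$). Hence $\tilde l_{n-k,T}$ is canonically determined by $\tilde l_{n-k+1,T}$, and we set $\mathbb{F}_{l_n,T}^{(k+1)} = \mathbb{F}_{l_n,T}^{(k)}$ with $p_k = \mathrm{id}$. If instead the box lies in column~$2$, i.e.\ $a_2(\lambda^{(n-k+1)}) = a_2(\lambda^{(n-k)}) + 1$, we cover $\mathbb{F}_{l_n,T}^{(k)}$ by affine opens $U_\alpha$ over which $\tilde l_{n-k+1,T}$ admits relative generators of the form required by Lemma \ref{lem:FamMod}, apply that lemma fiberwise to build a local family over $U_\alpha \times \mathbb{P}^1$, and glue. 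The glueing is modeled on the explicit constructions already performed surrounding $T'''$ and in Proposition \ref{prop:2021.11.18.22.25}: a change of relative generators acts on the fiber $\mathbb{P}^1$ by a M\"obius transformation, producing a cocycle with values in $\mathrm{PGL}_2$ whose total space is the desired $\mathbb{P}^1$-bundle $p_k$.

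Having built the tower, the open $\mathbb{C}^{a_2} \subset \mathbb{F}_{l_n,T}^{(n)}$ is obtained by intersecting along the tower the loci on which the fiberwise type of each $\tilde l_{n-k,T}$ equals $\lambda^{(n-k)}$. By Lemma \ref{lem:FamMod}, over each Case-$2$ step this locus is either $\mathbb{A}^1 \subset \mathbb{P}^1$ (when $a_1(\lambda^{(n-k+1)}) \neq 0$) or all of $\mathbb{P}^1$ (when $a_1(\lambda^{(n-k+1)}) = 0$); over a Case-$1$ step it is automatic. Since there are exactly $a_2$ Case-$2$ steps, the intersection is an $\mathbb{A}^{a_2}$-torsor over a point, i.e.\ $\mathbb{C}^{a_2}$. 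The map from this locus to refined parabolic structures with prescribed $l_n$ and tableau $T$ is a bijection: surjectivity and injectivity both descend from the fiberwise bijection $\mathbb{P}^1 \xrightarrow{\sim} \{\text{corank-$1$ submodules}\}$ of Lemma \ref{lem:FamMod}, applied $a_2$ times.

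The main obstacle is the Case-$2$ glueing, namely verifying that the locally built $U_\alpha \times \mathbb{P}^1$ families patch into a genuine $\mathbb{P}^1$-bundle rather than merely a fibration with $\mathbb{P}^1$ fibers. This reduces to a cocycle check for $\mathrm{GL}_2(\mathcal{O}_{n[t]})$-changes of relative generators acting projectively on fibers. The verification is essentially formal because the bijection of Lemma \ref{lem:FamMod} depends \emph{naturally} (not just pointwise) on the chosen generators, as is already visible in the transition formulas $y_1 = 1/x_1$, $\beta_{1,3} = \alpha_{1,3}$, $\beta_{2,3} = -x_1^{-2}\alpha_{2,3}$ used for the construction of $\mathbb{F}_2$ corresponding to $T'''$ and iterated for $T^4$.
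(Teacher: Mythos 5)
Your proposal follows essentially the same route as the paper's own proof: an induction on $k$ that builds the tower $\mathbb{F}_{l_n,T}^{(k)}$ and the family $\tilde l_{\bullet,T}$ simultaneously, choosing local generators of $\tilde l_{n-k+1,T}$ over an affine cover, applying the corank-one classification of Lemma \ref{lem:FamMod} fibrewise, gluing the local families $U_\alpha\times\mathbb{P}^1$ into a $\mathbb{P}^1$-bundle in the column-2 case, and taking the canonical submodule of the prescribed type (the $[0:1]$ point, i.e.\ the $0$-section) in the column-1 case. Your remark that the transition functions are M\"obius transformations, so that the glued object is genuinely a $\mathbb{P}^1$-bundle, is a worthwhile explicit check that the paper leaves implicit.

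The one place you go beyond the paper's written proof is the third bullet, and there your argument is internally inconsistent. You correctly observe that over a column-2 step with $a_1(\lambda^{(n-k+1)})=0$ the locus of fibre points of the correct type is \emph{all} of $\mathbb{P}^1$, yet you then conclude that the total tableau-$T$ locus is an $\mathbb{A}^{a_2}$-torsor, hence $\mathbb{C}^{a_2}$. When such a step occurs the locus contains a complete $\mathbb{P}^1$ factor and is not affine; for instance, for $\lambda^{(n)}=(1^{n-2},2^1)$ and the tableau $T_{n-1}$ of \eqref{eq:2021.8.15.18.46}, whose unique column-2 step goes from $(1^0,2^1)$ to $(1^1,2^0)$, every point of $\mathbb{F}^{(n)}_{l_n,T_{n-1}}\cong\mathbb{P}^1$ parametrizes a filtration with tableau exactly $T_{n-1}$, so no bijection from $\mathbb{C}^1$ onto that set can exist. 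This is really an imprecision in the statement itself (the paper's proof is silent on this bullet), but if you intend to prove it you must either assume $a_1(\lambda^{(j)})>0$ whenever $a_2(\lambda^{(j)})>0$, or weaken the conclusion to an injection from a Zariski open subset whose image contains all filtrations with tableau $T$ outside a proper closed set.
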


\begin{proof}
We will construct a sequence of maps
inductively.
We assume that we have a 
sequence of maps
$$
\mathbb{F}_{l_n,T}^{(k)} \xrightarrow{\ p_{k-1} \ }
\mathbb{F}_{l_n,T}^{(k-1)} \xrightarrow{\ p_{k-2} \ } \cdots 
\xrightarrow{\ p_{2} \ }\mathbb{F}_{l_n,T}^{(2)} 
\xrightarrow{\ p_{1} \ }\mathbb{F}_{l_n,T}^{(1)} = \{ {\rm pt} \}
$$
and such a family $ l_n \supset \tilde l_{n-1,T} \supset \cdots \supset  \tilde l_{n-k+1,T}$
 of filtrations 
parametrized by $\mathbb{F}_{l_n,T}^{(k)}$.

First we consider the case $a_2(\lambda^{(n-k+1)})  \neq 0$.
We take an affine open covering $\{ U^i\}_{i}$ of $\mathbb{F}_{l_n,T}^{(k)}$
so that
we may take two elements
 $\tilde{\boldsymbol{v}}^{i}_1$
and $\tilde{\boldsymbol{v}}^{i}_2$ of $\tilde l_{n-k+1}|_{U^{i}}$ such that 
 $\tilde l_{n-k+1,T}|_{U^{i}}=\langle \tilde{\boldsymbol{v}}^{i}_1 ,
 \tilde{\boldsymbol{v}}^{i}_2\rangle_{\O_{n[t]}}$
 with $\mathrm{length} \langle \tilde{\boldsymbol{v}}^{i}_1 \rangle_{\O_{n[t]}}
\geq \mathrm{length} \langle 
 \tilde{\boldsymbol{v}}^{i}_2\rangle_{\O_{n[t]}}$.
 For each $U^{i}$, 
we set 
$$
\tilde l_{n-k}^{i} := 
\left\langle \alpha^{i}_{1,k+1} \tilde{\boldsymbol{v}}^{i}_1
+\alpha^{i}_{2,k+1}  \tilde{\boldsymbol{v}}^{i}_2,\ 
f\tilde{\boldsymbol{v}}^{i}_1 ,\ 
f \tilde{\boldsymbol{v}}^{i}_2 \right\rangle_{\O_{n[t]}}.
$$
where $[\alpha^{i}_{1,k+1}:\alpha^{i}_{2,k+1}]  \in  \mathbb{P}^1$.
We glue $\{ U^{i} \times \mathbb{P}^1\}_{ i }$ so that 
these families patch together.
Then we have a $\mathbb{P}^1$-bundle over $\mathbb{F}_{l_n,T}^{(k)}$.
If $a_2(\lambda^{(n-k+1)}) = a_2(\lambda^{(n-k)} ) $,
then we define $\mathbb{F}_{l_n,T}^{(k+1)}$ by the $0$-section of this $\mathbb{P}^1$-bundle
and define $\tilde l_{n-k,T}$ by the gluing of
$\langle  \tilde{\boldsymbol{v}}^{i}_2,\ 
f\tilde{\boldsymbol{v}}^{i}_1  \rangle_{\O_{n[t]}}$.
If $a_2(\lambda^{(n-k+1)}) = a_2(\lambda^{(n-k)} ) +1$,
then we define $\mathbb{F}_{l_n,T}^{(k+1)}$ by this $\mathbb{P}^1$-bundle
and define $\tilde l_{n-k,T}$ by the gluing of $\tilde l_{n-k}^{i}$.

Second we consider the case $a_2(\lambda^{(n-k+1)})= 0$.
In particular $a_2(\lambda^{(n-k+1)}) = a_2(\lambda^{(n-k)} ) $.
We take an affine open covering $\{ U^i\}_{i}$ of $\mathbb{F}_{l_n,T}^{(k)}$
so that
we may take an element
 $\tilde{\boldsymbol{v}}^{i}$ such that 
 $\tilde l_{n-k+1,T}|_{U^{i}}=\langle \tilde{\boldsymbol{v}}^{i}\rangle_{\O_{n[t]}}$.
 For each $U^{i}$, 
we set 
$\tilde l_{n-k}^{i} := 
\left\langle f\tilde{\boldsymbol{v}}^{i}_1 \right\rangle_{\O_{n[t]}}$.
We define $\mathbb{F}_{l_n,T}^{(k+1)}$ by $\mathbb{F}_{l_n,T}^{(k+1)}=\mathbb{F}_{l_n,T}^{(k)}$
and define $\tilde l_{n-k,T}$ by the gluing of $\tilde l_{n-k}^{i}$.
\end{proof}

So the parameter space of refined parabolic structures with fixed $l_n$
is 
$\bigcup_{T \in \mathcal{T}_{l_n}} \mathbb{F}_{l_n,T}^{(n)}$.
Remark that $\mathbb{F}_{l_n,T}^{(n)}$ is irreducible 
and the dimension of $\mathbb{F}_{l_n,T}^{(n)}$ is $a_2$ for each $T$.
If $a_2=0$ or $a_2=1$, 
we can describe $\bigcup_{T \in \mathcal{T}_{l_n}} \mathbb{F}_{l_n,T}^{(n)}$ easily.
First if $a_2=0$, that is, $\lambda^{(n)} = (1^n,2^0)$,
then $\# \mathcal{T}_{l_n}=1$ and 
$\mathbb{F}_{l_n,T}^{(n)}$ is a point.
So $\bigcup_{T \in \mathcal{T}_{l_n}} \mathbb{F}_{l_n,T}^{(n)}$ is a point.
This point corresponds to the filtration
$$
l_n \supset f \cdot l_n \supset f^2\cdot l_n \supset \cdots \supset
f^{n-1}\cdot l_{n} \supset 0.
$$
Next we consider the case $a_2=1$,
that is,
$\lambda^{(n)} := (0,1,\ldots,1,2)=(1^{n-2},2^1)$.
We define a standard tableau $T_k$ so that 
the corresponding sequence of Young diagrams is
\begin{equation}\label{eq:2021.8.15.18.46}
\begin{aligned}
\lambda^{(n)}=(1^{n-2},2^1) \supset (1^{n-3},2^1) \supset \cdots
\supset (1^{n-k-1},2^1) \\
\qquad \supset (1^{n-k},2^0) \supset (1^{n-k-1},2^0) \supset \cdots 
\supset (1^1,2^0) 
\end{aligned}
\end{equation}
for $k=1,2,\ldots,n-1$.
By Proposition \ref{2022.1.15.10.25} and the argument in Examlpe \ref{2022.1.15.10.24},
we obtain the following statement:
\begin{cor}\label{2022.1.15.10.27}
The irreducible component $\mathbb{F}^{(n)}_{l_n,T_k}$ corresponding to
$T_{k}$ is the projective line $ \mathbb{P}^1$.
The parameter space of refined parabolic structures with 
fixed $l_n$ is
$$
 \mathbb{F}^{(n)}_{l_n,T_1}
 \, \cup \, \mathbb{F}^{(n)}_{l_n,T_2} 
 \, \cup\, \cdots 
 \, \cup \, \mathbb{F}^{(n)}_{l_n,T_{n-1}},
$$
which is a chain of $(n-1)$ projective lines.
The intersection $\mathbb{F}^{(n)}_{l_n,T_{k_1}} 
\, \cap \, \mathbb{F}^{(n)}_{l_n,T_{k_2}}$
 is a point if $|k_1-k_2|=1$.
When $|k_1-k_2|\neq 1$, this intersection is empty.
\end{cor}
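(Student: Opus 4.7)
The plan is first to identify each irreducible component $\mathbb{F}^{(n)}_{l_n, T_k}$ with $\mathbb{P}^1$, then to describe the unique ``degenerate'' point on each and trace it under the bijection on the adjacent component, and finally to verify that non-adjacent components share no refined parabolic structure.

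First I would identify $\mathbb{F}^{(n)}_{l_n,T_k}$ with $\mathbb{P}^1$. In the sequence \eqref{eq:2021.8.15.18.46} defining $T_k$, the value $a_2$ stays constant at every step except the $k$-th one, where it drops from $1$ to $0$ (going from $(1^{n-k-1},2^1)$ to $(1^{n-k},2^0)$). By Proposition \ref{2022.1.15.10.25}, among the maps $p_1,\dots,p_{n-1}$ only $p_k$ is a non-trivial $\mathbb{P}^1$-bundle; the others are identities. Iterating from $\mathbb{F}^{(1)}_{l_n,T_k}=\{\mathrm{pt}\}$, the space $\mathbb{F}^{(k)}_{l_n,T_k}$ is a point, so $\mathbb{F}^{(k+1)}_{l_n,T_k}\cong\mathbb{P}^1$, and therefore $\mathbb{F}^{(n)}_{l_n,T_k}\cong\mathbb{P}^1$.

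Second I would pinpoint which refined parabolic structure each point of this $\mathbb{P}^1$ carries. By Proposition \ref{2022.1.15.10.25}, the open $\mathbb{A}^1\subset\mathbb{P}^1$ bijects with refined parabolic structures of tableau $T_k$. When $k\le n-2$, so $a_1(\lambda^{(n-k+1)})=n-k-1\ge 1$, Lemma \ref{lem:FamMod} shows that the complementary point $[0:1]$ yields $l_{n-k}$ of type $(1^{n-k-2},2^1)$ rather than $(1^{n-k},2^0)$; specialising the explicit formulas in the proof of Proposition \ref{2022.1.15.10.25} from the $\mathbb{A}^1$-family at its point at infinity gives well-defined $l_{n-k-1},\dots,l_1$, and the resulting sequence of types reads off as the tableau $T_{k+1}$.

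Third I would glue: the degenerate point of $\mathbb{F}^{(n)}_{l_n,T_k}$, carrying a tableau-$T_{k+1}$ structure, corresponds via the bijection for $T_{k+1}$ to a unique point of $\mathbb{F}^{(n)}_{l_n,T_{k+1}}$ lying in its open $\mathbb{A}^1$. Conversely, any common point of $\mathbb{F}^{(n)}_{l_n,T_k}$ and $\mathbb{F}^{(n)}_{l_n,T_{k+1}}$ must fail to be generic on at least one of them, and a short case-check (using that degenerate points always produce tableau $T_{j+1}$ from $T_j$) forces it to be the degenerate point of $T_k$. Hence the intersection is a single point. For $|k_1-k_2|\ge 2$, the tableau carried by any point of $\mathbb{F}^{(n)}_{l_n,T_{k_1}}$ is either $T_{k_1}$ or $T_{k_1+1}$, and neither equals $T_{k_2}$ or $T_{k_2+1}$, so $\mathbb{F}^{(n)}_{l_n,T_{k_1}}\cap\mathbb{F}^{(n)}_{l_n,T_{k_2}}=\emptyset$. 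The main obstacle is the verification in Step 2 that the construction of Proposition \ref{2022.1.15.10.25}, which at $T_k$'s step $k+1$ applies $f\cdot\tilde{\boldsymbol v}$ to a single generator of $l_{n-k}$, extends by continuity across the degenerate point (where $l_{n-k}$ acquires a second generator) and still produces the expected tableau $T_{k+1}$. I would check this directly using generators $\boldsymbol v_1,\boldsymbol v_2$ of $l_n$ with $\mathrm{length}\langle\boldsymbol v_1\rangle=n-1$ and $\mathrm{length}\langle\boldsymbol v_2\rangle=1$.
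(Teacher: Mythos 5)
Your proposal is correct and takes essentially the same route as the paper, which simply invokes Proposition \ref{2022.1.15.10.25} for the identification of each component with $\mathbb{P}^1$ and generalizes the gluing computation of Example \ref{2022.1.15.10.24}: the degenerate point $[0:1]$ of the $T_k$-component carries the filtration $\cdots\supset\langle f^{k}v_1,v_2\rangle\supset\langle f^{k}v_1\rangle\supset\cdots$, which is the point $[\beta_1:\beta_2]=[1:0]$ in the generic locus of the $T_{k+1}$-component, and the tableau count then forces adjacent intersections to be single points and non-adjacent ones to be empty. The one subtlety you rightly flag --- extending the tail $l_{n-k-1}\supset\cdots\supset l_1$ across the degenerate point, where $l_{n-k}$ needs two generators --- does check out, since that tail is the constant family $l_{n-j}=\langle f^{j-1}v_1\rangle$ for $j>k$.
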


\section{Stability of refined parabolic bundles}\label{section:StabilityOfRefined}

In the previous section, we have defined refined parabolic bundles.  
We would like to consider a moduli space of refined parabolic bundles.
In order to obtain a good moduli space, we have to introduce
a stability condition of refined parabolic bundles.
For this purpose, first, we define the stability index as in \cite[Section 2]{LS}.
This stability condition is the same as the stability condition as in \cite{Yoko1}.
In particular, the notion of parabolic bundles is contained in the notion of refined parabolic bundles.
So by the definition of the stability condition for refined parabolic bundles,
we have a stability condition for parabolic bundles.

Let $(E, \{ l_{ i , \bullet} \}_{  i \in I } )$ be
a refined parabolic bundle. 
We fix weights 
$$
\boldsymbol{w} 
= (\boldsymbol{w}_1,\ldots , \boldsymbol{w}_{\nu}), \quad 
\boldsymbol{w}_i= (w_{i,n_i},\ldots ,w_{i,1}) \in [0,1]^{n_i}
$$
such that 
\begin{equation}\label{2021.11.25.14.09}
0 \leq w_{i,n_i} \leq \cdots  \leq w_{i,1} \leq 1
\end{equation}
 for any $i \in I$.

\begin{definition}
Let $L$ be a line subbundle of $E$.
We define the $\boldsymbol{w}$-stability index of $L$ to the real number
$$
\mathrm{Stab}_{\boldsymbol{w} } (L) := 
\deg (E) - 2 \deg(L) 
+  \sum_{i\in I} \sum_{k=1}^{n_i}  w_{i,k} \left(1 -2  \,
\mathrm{length}   
((l_{i,k}\cap L|_{n_i[t_i]})/(l_{i,k-1}\cap L|_{n_i[t_i]}))  \right)
$$
where $l_{i,0}=0$ for any $i$.
\end{definition}

If we set
$\tilde I :=   \{ (i,k) \mid i \in I ,\  k=1,2,\ldots,n_i \}$ and 
$$
\epsilon_{i,k}(L) :=
\begin{cases}
-1 & \text{when } \mathrm{length}   
((l_{i,k}\cap L|_{n_i[t_i]})/(l_{i,k-1}\cap L|_{n_i[t_i]})) \neq 0 \\
1 & \text{when } \mathrm{length}   
((l_{i,k}\cap L|_{n_i[t_i]})/(l_{i,k-1}\cap L|_{n_i[t_i]})) = 0
\end{cases},
$$
then we have the following equality:
$$
\mathrm{Stab}_{\boldsymbol{w} } (L) 
= \deg (E) - 2 \deg(L) 
+\sum_{(i,k) \in \tilde I  } \epsilon_{i,k}(L) \cdot w_{i,k}   .
$$

\begin{definition}\label{2022.3.15.21.27}
A refined parabolic bundle $(E,\boldsymbol{l})$ is {\rm $\boldsymbol{w}$-stable} 
(resp. {\rm $\boldsymbol{w}$-semistable}) if
for any subbundle $L \subset E$,
the following inequality holds:
$$
\mathrm{Stab}_{\boldsymbol{w}}(L)>0 \qquad (\text{resp. $\geq 0$}). 
$$
\end{definition}

Now we discuss on the stability condition by using the parabolic 
degree. 
First, we recall a definition of the parabolic degree.
We fix parabolic weights $\boldsymbol{\alpha} 
= (\boldsymbol{\alpha}_1,\ldots , \boldsymbol{\alpha}_{\nu}) $, where
each $ \boldsymbol{\alpha}_i$ is a tuple of real numbers
$\boldsymbol{\alpha}_i= ( \alpha_{i,n_i+1},\alpha_{i,n_i},\ldots, \alpha_{i,1})$
with the condition 
$0< \alpha_{i,n_i+1}<\alpha_{i,n_i} < \cdots < \alpha_{i,1} <1$.
We define an $\boldsymbol{\alpha}$-\textit{parabolic degree} of a line subbbundle $L$ of $E$ 
as
$$
\text{para-deg}_{\boldsymbol{\alpha} }(L) = \deg (L) 
+ \sum_{i \in I} \sum_{k=1}^{n_i+1} 
\alpha_{i,k} \length ((l_{i,k}\cap L|_{n_i[t_i]})/(l_{i,k-1}\cap L|_{n_i[t_i]})) ,
$$
where $ l_{i,n_i+1} :=E|_{n_i [t_i]}$.
We can define the stability of refined parabolic bundles by 
using the parabolic degree:
A refined parabolic bundle $(E,\boldsymbol{l})$ is stable if
for any subbundle $L \subset E$,
the following inequality holds:
$$
\frac{\text{para-deg}_{\boldsymbol{\alpha} }(E)}{\mathrm{rank} (E)}>
\frac{\text{para-deg}_{\boldsymbol{\alpha} }(L)}{\mathrm{rank} (L)}.
$$
Now we will discuss a relation between 
the stability defined by the parabolic degree
and 
the stability defined by the stability index.
We compute $\text{para-deg}_{\boldsymbol{\alpha} }(E)/\mathrm{rank} (E)-
\text{para-deg}_{\boldsymbol{\alpha} }(L)/\mathrm{rank} (L)$ as follows.
$$
\begin{aligned}
\frac{\text{para-deg}_{\boldsymbol{\alpha} }(E)}{\mathrm{rank} (E)}-
\frac{\text{para-deg}_{\boldsymbol{\alpha} }(L)}{\mathrm{rank} (L)}
&=
\frac{1}{2} \Big(
\deg (E) -2\deg (L) 
+ \sum_{i \in I} \sum_{k=1}^{n_i+1} 
\alpha_{i,k} \length ((l_{i,k})/(l_{i,k-1}))  \\
&\quad  -2 \sum_{i \in I} \sum_{k=1}^{n_i+1} 
\alpha_{i,k} \length ((l_{i,k}\cap L|_{n_i[t_i]})/(l_{i,k-1}\cap L|_{n_i[t_i]}))
\Big) \\
&=\frac{1}{2} \Big(
\deg (E) -2\deg (L) 
+ \sum_{i \in I} 
\left( \alpha_{i,n_i+1} n_{i}
+\sum_{k=1}^{n_i} 
\alpha_{i,k}  \right) \\
&\quad -2 \sum_{i\in I} 
\Big( \alpha_{i,n_i+1} \left(n_i - \sum_{k=1}^{n_i} \length ((l_{i,k}\cap L|_{n_i[t_i]})/(l_{i,k-1}\cap 
L|_{n_i[t_i]})) \right) \\
&\quad \quad+\sum_{k=1}^{n_i} 
\alpha_{i,k} \length ((l_{i,k}\cap L|_{n_i[t_i]})/(l_{i,k-1}\cap L|_{n_i[t_i]})) \Big)
\Big)\\
&=\frac{1}{2} \Big(
\deg (E) -2\deg (L) 
+ \sum_{i \in I} 
\left(  \sum_{k=1}^{n_i} 
(\alpha_{i,k} - \alpha_{i,n_i+1}) \right) \\
& \quad -2 \sum_{i \in I} 
\left(\sum_{k=1}^{n_i} 
(\alpha_{i,k} -\alpha_{i,n_i+1}) \length ((l_{i,k}\cap L|_{n_i[t_i]})/(l_{i,k-1}\cap L|_{n_i[t_i]})) \right)
\Big).
\end{aligned}
$$
So if we set 
$w_{i,k} := \alpha_{i,k} -\alpha_{i,m_i+1}$,
then we have the following equality 
$$
\frac{\text{para-deg}_{\boldsymbol{\alpha} }(E)}{\mathrm{rank} (E)}-
\frac{\text{para-deg}_{\boldsymbol{\alpha} }(L)}{\mathrm{rank} (L)}
=\frac{\mathrm{Stab}_{\boldsymbol{w}}(L)}{2}.
$$
Then the stability defined by the stability index is the same as
the stability defined by the parabolic degree.

\subsection{Tame and undecomposable refined parabolic bundles}\label{sect:2021.12.2.15.32}

In Section \ref{section:ParaBun},
we have defined undecomposable parabolic bundles and 
admissible parabolic bundles.
Here we define undecomposable refined parabolic bundles and 
admissible refined parabolic bundles.

\begin{definition}
Let $(E, \{ l_{ i , \bullet} \}_{ i \in I  } )$ be 
a refined parabolic bundle
of rank $2$ and of degree $d$.
We say that $(E, \{ l_{ i , \bullet} \}_{ i \in I  } )$ is {\rm decomposable} if
there exists a decomposition $E =L_1 \oplus L_2$,
where $L_1$ and $L_2$ are non trivial, and 
$l_{i,k} = l^{(1)}_{i,k} \oplus l^{(2)}_{i,k}$ for any $i=1,\ldots,\nu$ and $k=1,\ldots,n_i$,
where we set $l^{(1)}_{i,k} := l_{i,k}\cap {L_1|_{n_i[t_i]}}$
and $l^{(2)}_{i,k} := l_{i,k}\cap {L_2|_{n_i[t_i]}}$.
We say that $(E, \{ l_{ i , \bullet} \}_{ i \in I  } )$ is {\rm undecomposable} if
$(E, \{ l_{ i , \bullet} \}_{ i \in I } )$ is not decomposable.
\end{definition}

\begin{definition}
Let $(E, \{ l_{ i , \bullet} \}_{i \in I  } )$ be 
a refined parabolic bundle
of rank $2$ and of degree $d$.
We say $(E, \{ l_{ i , \bullet} \}_{ i \in I  } )$ is 
{\rm admissible}
if this refined parabolic bundle satisfies the following condition:
$$
\sum_{i \in I} \sum_{k=1}^{n_i} \mathrm{length}   
((l_{i,k}\cap L|_{n_i[t_i]})/(l_{i,k-1}\cap L|_{n_i[t_i]})) \leq n+\deg (E) -2 \deg(L)-2 
$$
for any line subbundles $L$ such that $\deg(E) \leq 2\deg(L)$.
\end{definition}

If $D$ is a reduced effective divisor,
then we have the fact that 
$(E, \{ l_{ i , \bullet} \}_{i \in I  } )$ 
is stable for a convenient choice of weights $\boldsymbol{w}$
if, and only if, it is undecomposable
(see \cite[Proposition 3.4]{LS}).
On the other hand, when $D$ is not necessarily reduced,
there exist examples of (refined) parabolic bundles $(E, \{ l_{ i , \bullet} \}_{i \in I  } )$ 
such that it is undecomposable and 
it is not stable for any weights $\boldsymbol{w}$.

\begin{example}\label{2021.1.12.12.45}
Let $D$ and $(E,\bold{l})$ be one of the four listed parabolic bundles 
in Lemma \ref{2022.2.2.22.32}.
Then $(E,\bold{l})$ is undecomposable 
and admissible.
But,
by the inequalities \eqref{2021.11.25.14.09},
the parabolic bundle
$(E,\bold{l})$ is not stable for any weights $\boldsymbol{w}$ for any cases 
in Lemma \ref{2022.2.2.22.32}.
In particular, there exist
parabolic bundles $(E, \{ l_{ i } \}_{i \in I  } )$ 
such that it is $\Lambda$-flat and 
that it is not stable for any weights $\boldsymbol{w}$.
(Here $\Lambda$ is defined as in Proposition \ref{prop:2021.11.30.22.07}).
\end{example}

Now we will find a necessary and sufficient condition
of a refined parabolic bundle $(E, \{ l_{ i , \bullet} \}_{i \in I  } )$
for the condition that there exist weights $\boldsymbol{w}$ such that 
$(E, \{ l_{ i , \bullet} \}_{i \in I  } )$ is $\boldsymbol{w}$-stable.
For this purpose, we will define {\it tame} refined parabolic bundles.
For a line subbundle $L \subset E$ and $i \in I$,
we define an integer $N_i (L)$ as 
\begin{equation}\label{eq:2021.12.28.10.22}
N_i (L) = \max_{k' \in \{1,2,\ldots, n_i \}} \left\{ \sum_{k=1}^{k'} \epsilon_{i,k}(L) \right\}.
\end{equation}
We set 
$I^+_{L} = \{ i \in I \mid N_i (L)>0\}$.
Moreover, 
we set
$$
k_{i,\text{max}}(L) :=
\max
\left\{ k' \in \{1,2,\ldots, n_i \} \ \middle| \ 
N_i(L) = \sum_{k=1}^{k'} \epsilon_{i,k}(L)
\right\},
$$
$$
K_{i,\text{max}}(L) :=
\{1,2,\ldots, k_{i,\text{max}}(L)
\}, \quad \text{and} \quad 
K^c_{i,\text{max}}(L) :=
\{1,2,\ldots, n_i\} \setminus K_{i,\text{max}}(L).
$$

\begin{definition}\label{eq:2021.12.31.13.46}
Let $(E, \{ l_{ i , \bullet} \}_{i \in I  } )$ be 
a refined parabolic bundle
of rank $2$ and of degree $d$.
We say $(E, \{ l_{ i , \bullet} \}_{ i \in I  } )$ is 
{\rm tame}
if this refined parabolic bundle satisfies the following condition:
\begin{itemize}
\item $I^+_{L}$ is not empty, and
\item $\displaystyle -\deg (E) +2 \deg(L) +1 \leq \sum_{i \in I^+_{L}} N_i(L) $
\end{itemize}
for any line subbundles $L$ such that $\deg(E) \leq 2\deg(L)$.
\end{definition}

If $E$ has a decomposition $E \cong \O(d_1) \oplus \O(d_2)$ where $d_1<d_2$,
then there exists a unique 
line subbundle $L$ such that $\deg(E) \leq 2\deg(L)$.
This line subbundle is the destabilizing bundle $\O(d_2)$.
When $d_1=d_2$, 
there also exists a 
line subbundle $L$ such that $\deg(E) \leq 2\deg(L)$.
Remark that this line bundle is not unique in this case.
Now, we will show the implication \eqref{eq:2021.12.31.16.07}, that is,
\begin{equation*}
\text{$(E, \{ l_{ i , \bullet} \}_{ i \in I  })$ is undecomposable and tame}
\, \Longrightarrow \, 
\text{$(E,\{ l_{ i , \bullet} \}_{ i \in I  })$ is admissible}.
\end{equation*}
Moreover we will show that, when $(E,\bold{l})$ is a parabolic bundle,
\begin{equation*}
\text{$(E,\bold{l})$ is simple}
\, \Longleftrightarrow \, 
\text{$(E,\bold{l})$ is undecomposable and tame}.
\end{equation*}

\begin{prop}\label{prop:2021.12.31.13.50}
If a refined parabolic bundle $(E, \{ l_{ i , \bullet} \}_{i \in I  } )$ 
is tame and undecomposable, then it is admissible.
\end{prop}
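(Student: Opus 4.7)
The plan is to prove the contrapositive: assuming $(E,\{l_{i,\bullet}\})$ is tame but not admissible, I will exhibit a decomposition of the refined parabolic bundle, contradicting undecomposability. Let $L \subseteq E$ be a line subbundle with $\deg(E) \leq 2\deg(L)$ witnessing the failure of admissibility, set $m := 2\deg(L) - \deg(E) \geq 0$, and $a_i := \length(l_{i,n_i} \cap L|_{n_i[t_i]})$. Rewriting the admissibility inequality, non-admissibility amounts exactly to $\sum_{i \in I}(n_i - a_i) \leq m + 1$.

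I would next combine this with tameness to pin down a rigid combinatorial structure. Since $N_i(L)$ is the maximum prefix sum of the $\pm 1$-sequence $\epsilon_{i,1}(L),\ldots,\epsilon_{i,n_i}(L)$, one has $N_i(L) \leq n_i - a_i$ (the total number of $+1$'s). Chaining with tameness,
$$
m+1 \;\leq\; \sum_{i \in I^+_L} N_i(L) \;\leq\; \sum_{i \in I^+_L}(n_i-a_i) \;\leq\; \sum_{i \in I}(n_i-a_i) \;\leq\; m+1,
$$
so every inequality is an equality. It follows that $n_i = a_i$ for $i \notin I^+_L$ (equivalently $l_{i,k} \subseteq L|_{n_i[t_i]}$ for all $k$), and that $N_i(L) = n_i - a_i$ for each $i \in I^+_L$. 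This latter equality can only be achieved when all $+1$'s precede all $-1$'s in $\epsilon_{i,\bullet}(L)$, so $l_{i,k}\cap L|_{n_i[t_i]} = 0$ for $k \leq n_i - a_i$ and $l_{i,k} \cap L|_{n_i[t_i]}$ strictly grows for $k > n_i - a_i$; in particular $l_{i,k} = l_{i,n_i-a_i} + (l_{i,k} \cap L|_{n_i[t_i]})$ for $k \geq n_i - a_i$.

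I would then construct a compatible complement of $L$ in $E$. Since $m \geq 0$, the sequence $0 \to L \to E \to E/L \to 0$ splits; fix a splitting $E = L \oplus L_0$ with $L_0 \cong E/L$. Every complement of $L$ is the graph of some $\phi \in \Hom(L_0, L) = H^0(\mathbb{P}^1, \mathcal{H})$, where $\mathcal{H} := L \otimes L_0^{-1} \cong \O_{\mathbb{P}^1}(m)$. For each $i \in I^+_L$, the projection $l_{i,n_i-a_i} \to (E/L)|_{n_i[t_i]} \cong \O_{n_i[t_i]}$ is injective (since $l_{i,n_i-a_i}\cap L|_{n_i[t_i]} = 0$), so $l_{i,n_i-a_i}$ is cyclic as an $\O_{n_i[t_i]}$-module. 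Picking a generator adapted to the splitting, the requirement that a complement $L'$ satisfy $l_{i,n_i-a_i} \subseteq L'|_{n_i[t_i]}$ translates into prescribing the image of $\phi$ in $\mathcal{H}|_{(n_i-a_i)[t_i]}$. Setting $D' := \sum_{i \in I^+_L}(n_i - a_i)[t_i]$, the existence of a simultaneously compatible $L'$ amounts to surjectivity of the restriction map
$$
H^0(\mathbb{P}^1, \mathcal{H}) \longrightarrow H^0(\mathbb{P}^1, \mathcal{H}|_{D'}),
$$
a linear map of vector spaces both of dimension $m+1$ (using $\deg D' = m+1$ from the previous step). Surjectivity follows at once from $H^1(\mathbb{P}^1, \mathcal{H}(-D')) = 0$, which holds because $\deg(\mathcal{H}(-D')) = m - (m+1) = -1$.

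With such an $L'$ in hand, a length count in $E|_{n_i[t_i]} = L|_{n_i[t_i]} \oplus L'|_{n_i[t_i]}$ shows $l_{i,k} = (l_{i,k}\cap L|_{n_i[t_i]}) \oplus (l_{i,k} \cap L'|_{n_i[t_i]})$ for every $i$ and $k$: trivial for $i \notin I^+_L$, and for $i \in I^+_L$ because $l_{i,n_i-a_i} \subseteq L'|_{n_i[t_i]}$ combined with $l_{i,k} = l_{i,n_i-a_i} + (l_{i,k}\cap L|_{n_i[t_i]})$ forces $l_{i,k} \cap L'|_{n_i[t_i]} = l_{i,n_i-a_i}$ for $k \geq n_i - a_i$. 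Hence $(E, \{l_{i,\bullet}\})$ decomposes as $(L,\{l_{i,\bullet}\cap L\}) \oplus (L',\{l_{i,\bullet}\cap L'\})$, contradicting undecomposability. The main obstacle is the delicate numerical matching between tameness and cohomological vanishing: tameness is engineered precisely so that $\deg D' = m+1$, the borderline case where $H^1(\mathcal{H}(-D'))$ just vanishes; any weakening of the tameness bound would destroy this balance.
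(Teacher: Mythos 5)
Your proof is correct and follows essentially the same route as the paper's: the identical chain of inequalities $-\deg(E)+2\deg(L)+1\le\sum_{i\in I^+_L}N_i(L)\le\sum_{i\in I^+_L}(n_i-a_i)\le\sum_{i\in I}(n_i-a_i)$ forces equality everywhere exactly when admissibility fails, and the resulting rigid structure of the $\epsilon_{i,k}(L)$ is used to produce a complement of $L$ splitting the refined parabolic structure, contradicting undecomposability. The only difference is in packaging: where the paper exhibits the complement via explicit automorphisms $\begin{pmatrix}1&0\\ G&c\end{pmatrix}$ of $\mathcal{O}(d_1)\oplus\mathcal{O}(d_2)$ with $\deg(G)\le d_2-d_1$ (treating $d_1=d_2$ separately), you carry out the same $(m+1)$-dimensional interpolation uniformly as surjectivity of $H^0(\mathcal{O}(m))\to H^0(\mathcal{O}(m)|_{D'})$, which follows from $H^1(\mathbb{P}^1,\mathcal{O}(-1))=0$.
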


\begin{proof}
For each $i \in I^+_{L}$, we have an inequality 
$$
\begin{aligned}
N_i(L) &\leq  \sum_{k \in\{k\mid \epsilon_{i,k} (L)=1\}} \epsilon_{i,k} (L) 
=n_i - 
\sum_{k=1}^{n_i} \mathrm{length}   
((l_{i,k}\cap L|_{n_i[t_i]})/(l_{i,k-1}\cap L|_{n_i[t_i]})). \\
\end{aligned}
$$
Clearly $n_i - 
\sum_{k=1}^{n_i} \mathrm{length}   
((l_{i,k}\cap L|_{n_i[t_i]})/(l_{i,k-1}\cap L|_{n_i[t_i]})) \geq 0$.
Then we have that for any line subbundles $L$ such that $\deg(E) \leq 2\deg(L)$,
\begin{equation*}
\begin{aligned}
 -\deg (E) +2 \deg(L) +1 &\leq \sum_{i \in I^+_{L}} N_i(L)  \\
&\leq \sum_{i \in I^+_{L}} 
\left( n_i - \sum_{k=1}^{n_i} \mathrm{length}   
((l_{i,k}\cap L|_{n_i[t_i]})/(l_{i,k-1}\cap L|_{n_i[t_i]}))
\right) \\
&\leq \sum_{i \in I} 
\left( n_i - \sum_{k=1}^{n_i} \mathrm{length}   
((l_{i,k}\cap L|_{n_i[t_i]})/(l_{i,k-1}\cap L|_{n_i[t_i]}))
\right)
\end{aligned}
\end{equation*}
We set $E= \O(d_1) \oplus \O(d_2)$ (with $d_1 \leq d_2$) and $L=\O(d_2)$.
We assume that 
\begin{align}
&-\deg (E) +2 \deg(L) +1 = \sum_{i \in I^+_{L}} N_i(L), \label{2021.11.30.12.27} \\
&N_i(L) =  \sum_{k \in\{k\mid \epsilon_{i,k} (L)=1\}} \epsilon_{i,k} (L)  \qquad (\text{for any $i \in I^+_{L}$}),
\label{2021.11.30.12.27_2}
\end{align}
and 
\begin{equation}\label{2021.11.30.12.27_3}
n_i - 
\sum_{k=1}^{n_i} \mathrm{length}   
((l_{i,k}\cap L|_{n_i[t_i]})/(l_{i,k-1}\cap L|_{n_i[t_i]})) = 0 \qquad (\text{for any $i \not\in I^+_{L}$}).
\end{equation}
By the equalities \eqref{2021.11.30.12.27_2} and \eqref{2021.11.30.12.27_3}, we have
\begin{equation}\label{2021.11.30.12.27_4}
\epsilon_{i,k} (L) = 
\begin{cases}
 1 & \text{for any $i \in I^+_{L}$ and any $k \in K_{i,\text{max}} (L)$} \\
-1 & \text{for any $i \in I^+_{L}$ and any $k \in K^c_{i,\text{max}} (L)$} \\
 -1 & \text{for any $i \not\in I^+_{L}$ and any $k \in \{ 1,2,\ldots ,n_i \}$}
\end{cases}.
\end{equation}
By the equality \eqref{2021.11.30.12.27}, we have the 
following equality: 
\begin{equation}\label{2021.11.30.12.27_5}
d_2 - d_1 +1= \sum_{i \in I^+_{L}}\# K_{i,\text{max}} (L) =
\sum_{i \in I^+_{L}}k_{i,\text{max}} (L).
\end{equation}
If $d_1=d_2$, then $\sum_{i \in I^+_{L}}k_{i,\text{max}} (L)=1$.
So $\# I^+_{L} =1$ and $K_{i_0,\text{max}} (L) =\{1\}$ for the $i_0 \in I^+_{L}$.
We have that $\epsilon_{i,k} (L) = 1$ when $(i,k)=(i_0,1)$ (with $i_0 \in I^+_{L}$) and 
$\epsilon_{i,k} (L) = -1$ otherwise. 
This implies that $(E, \{ l_{ i , \bullet} \}_{i \in I  } )$ is decomposable.
It is a contradiction. So $d_1\neq d_2$.
Then the vector bundle $\O(d_1)\oplus \O(d_2)$ has 
the following automorphism:
$$
\begin{pmatrix}
1&0\\ 
G&c
\end{pmatrix},
$$
where $c \in \mathbb{C}^*$ and $G=G(x)$ be a polynomial of degree
less than or equal to $d_2-d_1$.
By \eqref{2021.11.30.12.27_4} and \eqref{2021.11.30.12.27_5},
there exists an automorphism of $\O(d_1)\oplus \O(d_2)$
such that 
$l_{i,k} = l^{(1)}_{i,k} \oplus l^{(2)}_{i,k}$ for any $i=1,\ldots,\nu$ and $k=1,\ldots,n_i$.
Here we set $l^{(1)}_{i,k} := l_{i,k}\cap {\O(d_1)|_{n_i[t_i]}}$
and $l^{(2)}_{i,k} := l_{i,k}\cap {\O(d_2)|_{n_i[t_i]}}$.
Since $(E, \{ l_{ i , \bullet} \}_{i \in I  } )$ is undecomposable,
it is a contradiction.
So we have
$$
-\deg (E) +2 \deg(L) +1 
< \sum_{i \in I} 
\left( n_i - \sum_{k=1}^{n_i} \mathrm{length}   
((l_{i,k}\cap L|_{n_i[t_i]})/(l_{i,k-1}\cap L|_{n_i[t_i]}))
\right).
$$
This inequality implies that $(E, \{ l_{ i , \bullet} \}_{i \in I  } )$ is admissible.
\end{proof}

\begin{prop}\label{prop:2022.2.23.20.13}
Let $(E, \{ l_{ i} \}_{i \in I  } )$ be a parabolic bundle 
(Definition \ref{2022.2.23.22.18})
and $(E, \{ l_{ i , \bullet} \}_{i \in I  } )$ be the corresponding 
refined parabolic bundle.
The parabolic bundle $(E, \{ l_{ i} \}_{i \in I  } )$ is simple if, and only if, 
$(E, \{ l_{ i , \bullet} \}_{i \in I  } )$ is undecomposable and tame.
\end{prop}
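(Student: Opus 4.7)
The plan is to reduce the equivalence to the analysis of nilpotent endomorphisms carried out in the proof of Proposition \ref{Prop:FlatnessImplications}. First I would check that the induced refined structure $l_{i,\bullet}$ is decomposable if and only if the parabolic structure $\{l_i\}$ is decomposable. The forward direction is immediate since $l_{i,k} = f_i^{n_i-k} l_i$ is invariant under any splitting of $l_i$; conversely, if the refined structure splits, then in particular $l_i = l_{i,n_i}$ decomposes, and since $l_i$ is a free rank-one $\mathcal O_{n_i[t_i]}$-module whose nonzero submodules are totally ordered by inclusion, one summand must vanish, so $l_i \subset L_1$ or $l_i \subset L_2$.

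The main technical input is an explicit formula for $N_i(L)$. Fix a line subbundle $L$ of $E$ and set $m_i := \length(l_i \cap L|_{n_i[t_i]})$. A local computation at $t_i$, after choosing coordinates so that $L|_{n_i[t_i]}$ is generated by the first basis vector and $l_i$ is generated either by a transverse vector or by one of the form $\bigl(\begin{smallmatrix} 1 \\ z^{m_i} h\end{smallmatrix}\bigr)$ with $h$ a unit, shows that $\length(l_{i,k} \cap L|_{n_i[t_i]}) = \min(k, m_i)$ for every $k$. Consequently $\epsilon_{i,k}(L) = -1$ for $k \leq m_i$ and $\epsilon_{i,k}(L) = +1$ for $k > m_i$, so the partial sums $\sum_{k=1}^{k'} \epsilon_{i,k}(L)$ are monotone decreasing then monotone increasing, with maximum $n_i - 2 m_i$ attained at $k' = n_i$. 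Hence $N_i(L) = n_i - 2 m_i$, and $\sum_{i \in I^+_L} N_i(L) = \sum_i \max(0, n_i - 2 m_i)$.

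Comparing this with equation \eqref{2022.2.23.20.43}: a nonzero nilpotent endomorphism of $(E, \bold{l})$ with kernel a destabilizing line subbundle $L_0$ (so $2 \deg L_0 \geq \deg E$) corresponds to a nonzero polynomial $f$ of degree $2 \deg(L_0) - \deg(E)$ vanishing at each $t_i$ to order $\geq n_i - 2 m_i$, which exists precisely when $\sum_i \max(0, n_i - 2 m_i) \leq 2 \deg(L_0) - \deg(E)$. Negating this yields the tame inequality for $L_0$, and also forces $I^+_{L_0}$ to be nonempty (otherwise any constant $f$ would produce a forbidden nilpotent). Combined with the observation, used in the proof of Proposition \ref{Prop:FlatnessImplications}, that undecomposability forces every endomorphism to have the form $c \cdot \mathrm{Id} + N$ with $N$ nilpotent, the equivalence follows: simplicity rules out both splittings and nonzero nilpotent endomorphisms, yielding undecomposability and tameness; conversely, undecomposability together with tameness leaves only scalar endomorphisms. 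The only delicate step, which I expect to be the main obstacle, is the local intersection-length computation yielding $N_i(L) = n_i - 2 m_i$; the rest is essentially a rewording of arguments already present in the flatness criterium.
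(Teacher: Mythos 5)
Your proof is correct and follows essentially the same route as the paper's: both reduce tameness to the non-existence of a nonzero nilpotent parabolic endomorphism via the identity $\sum_{i\in I^+_{L_0}}N_i(L_0)=\sum_i\max(0,n_i-2m_i)$ and the vanishing-order criterion \eqref{2022.2.23.20.43}, with undecomposability supplying that every endomorphism is $c\cdot\mathrm{Id}_E+N$; you merely make explicit two points the paper leaves as ``we may check'' (the equivalence of decomposability for the parabolic and refined structures, and the computation $\length(l_{i,k}\cap L|_{n_i[t_i]})=\min(k,m_i)$). The only nitpick is that when $m_i\ge 1$ the maximum of the partial sums is $\max(-1,\,n_i-2m_i)$ rather than always $n_i-2m_i$, but this does not affect $I^+_L$ or the displayed sum, so the argument stands.
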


\begin{proof}
First we will show that 
$(E, \{ l_{ i , \bullet} \}_{i \in I  } )$ is undecomposable and tame
when $(E, \{ l_{ i} \}_{i \in I  } )$ is simple.
If $(E, \{ l_{ i} \}_{i \in I  } )$ is simple,
$(E, \{ l_{ i , \bullet} \}_{i \in I  } )$ is undecomposable
by Proposition \ref{Prop:FlatnessImplications}.
So we will show that 
$(E, \{ l_{ i} \}_{i \in I  } )$ is not simple 
when $(E, \{ l_{ i , \bullet} \}_{i \in I  } )$ is not tame.
We will show that there is a non-zero nilpotent element of $\mathrm{End}(E, \{ l_{ i} \}_{i \in I  } )$.
Since $(E, \{ l_{ i , \bullet} \}_{i \in I  } )$ is not tame, 
there exists a line subbundle $L_0$ with $\deg (E) \leq \deg (L_0)$ 
such that $I^+_{L_0}$ is empty or 
$-\deg (E) +2\deg (L_0) \geq \sum_{ i \in I^+_{L_0}} N_i (L_0) $.
We decompose $E =L \oplus L_0$ with $\deg(L)\leq \deg(L_0)$.
We take a nilpotent element $N \in \mathrm{End}(L \oplus L_0)$ as follows:
$$
N
=\begin{pmatrix}
0 & 0 \\ 
f & 0 
\end{pmatrix}
$$
where $f=f(x)$ is a polynomial where $\deg (f) \leq 2 \deg (L_0) -\deg(E)$.
We set $m_i = \length( l_i \cap \O(L_0)|_{n_i[t_i]})$
and $I^+=\{ i \in I \mid n_i - 2m_i \geq 0 \}$.
Since $l_i$ is free,
we may check that 
$$
\epsilon_{i,k} (L_0)
= \begin{cases}
 1 & ( m_i < k \leq  n_i ) \\
 -1 & ( 1 \leq k \leq  m_i )
\end{cases} 
$$
for any $i \in I$.
So we have that 
$N_i(L_0) = n_i - 2m_i$ and $I^+ = I^+_{L_0}$.
First we assume that $I^+_{L_0}$ is empty.
Then, by using the equalities \eqref{2022.2.23.20.43}, 
we may check that there exists a non-zero polynomial $f$
such that $N$ preserves the parabolic structure 
$\{ l_{ i} \}_{i \in I  }$.
So if $I^+_{L_0}$ is empty, then $(E, \{ l_{ i} \}_{i \in I  } )$ is not simple.
Second we assume that $I^+\neq \emptyset$ and
$-\deg (E) +2\deg (L_0) \geq \sum_{ i \in I^+_{L_0}} N_i (L_0) $. 
By this inequality, we have that 
\begin{equation}\label{2022.2.23.20.59}
\sum_{i \in I^+} (n_i-2 m_i)\le  2\deg (L_0) -\deg (E) .
\end{equation}
Then, by using the equalities \eqref{2022.2.23.20.43}, 
we may check that there exists a non-zero polynomial $f$
such that $N$ preserves the parabolic structure 
$\{ l_{ i} \}_{i \in I  }$.
Finally we obtain that 
$(E, \{ l_{ i} \}_{i \in I  } )$ is not simple 
when $(E, \{ l_{ i , \bullet} \}_{i \in I  } )$ is not tame.

Next we will show that 
$(E, \{ l_{ i} \}_{i \in I  } )$ is simple
when $(E, \{ l_{ i , \bullet} \}_{i \in I  } )$ is undecomposable and tame.
Since $(E, \{ l_{ i , \bullet} \}_{i \in I  } )$ is undecomposable, 
any endomorphism $A$ of the $(E, \{ l_{ i} \}_{i \in I  } )$ takes the form 
$A= c \cdot \mathrm{Id}_E + N$ with $c\in \mathbb{C}$ and $N$ nilpotent.
We assume that there exists a non-zero nilpotent element $N$ of 
$\mathrm{End}(E, \{ l_{ i} \}_{i \in I  } )$.
By using the argument as in the verification of the inequality \eqref{2022.2.23.23.50},
we may check that there exists a line subbundle $L_0$ with $\deg(E)\leq 2\deg(L_0)$
such that $\sum_{i \in I^+}(n_i-2 m_i)\le  2\deg (L_0) -\deg (E)$.
Here, $m_i$ and $I^+$ are defined as above for this $L_0$.
By the argument as above, 
we have that $N_i(L_0)=n_i -2m_i$ and $I^+_{L_0}=I^+$.
So we obtain that $I^+_{L_0}$ is empty or 
 the inequality $-\deg (E) +2\deg (L_0) \geq \sum_{ i \in I^+_{L_0}} N_i (L_0) $ holds.
Since $(E, \{ l_{ i , \bullet} \}_{i \in I  } )$ is tame, it is a contradiction.
So $(E, \{ l_{ i} \}_{i \in I  } )$ is simple. 
\end{proof}

\subsection{Proof of Theorem A}\label{2022.3.1.11.33}
Now, we start to show Theorem A 
(Theorem \ref{thm:2021.10.27.13.44} and Corollary \ref{cor:2022.2.24.0.20} below).

\begin{prop}\label{prop:2021.12.3.12.50_1}
If a refined parabolic bundle $(E, \{ l_{ i , \bullet} \}_{i \in I  } )$ 
is stable for a convenient choice of weights $\boldsymbol{w}$,
then it is tame and undecomposable.
\end{prop}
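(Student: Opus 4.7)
The plan is to prove the contrapositive for each conclusion: if $(E,\{l_{i,\bullet}\}_i)$ is either decomposable or not tame, then no weight $\boldsymbol{w}$ can make it stable.

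For undecomposability, I would suppose $E=L_1\oplus L_2$ with $l_{i,k}=l^{(1)}_{i,k}\oplus l^{(2)}_{i,k}$ for all $i,k$. Since $\mathrm{length}(l_{i,k}/l_{i,k-1})=1$ and this length splits as the sum of the two intersection lengths, exactly one of $\epsilon_{i,k}(L_1)$ and $\epsilon_{i,k}(L_2)$ equals $+1$ and the other equals $-1$; in particular $\epsilon_{i,k}(L_1)+\epsilon_{i,k}(L_2)=0$ for every $(i,k)\in\tilde I$. Combined with $\deg(L_1)+\deg(L_2)=\deg(E)$ this gives
\[
\mathrm{Stab}_{\boldsymbol{w}}(L_1)+\mathrm{Stab}_{\boldsymbol{w}}(L_2)=0
\]
for any weights $\boldsymbol{w}$, so at least one of the two summands is non-positive and strict stability fails.

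For tameness, I will work on a single line subbundle $L\subset E$ with $\deg(E)\le 2\deg(L)$ and estimate the parabolic contribution to $\mathrm{Stab}_{\boldsymbol{w}}(L)$ at each point $t_i$ by an Abel/summation-by-parts argument. Writing $S_{i,k'}(L):=\sum_{k=1}^{k'}\epsilon_{i,k}(L)$ and using the monotonicity $w_{i,1}\ge w_{i,2}\ge\cdots\ge w_{i,n_i}\ge 0$ of the weights, one obtains
\[
\sum_{k=1}^{n_i}\epsilon_{i,k}(L)\,w_{i,k}=\sum_{k'=1}^{n_i-1}S_{i,k'}(L)(w_{i,k'}-w_{i,k'+1})+S_{i,n_i}(L)\,w_{i,n_i}.
\]
Since $S_{i,k'}(L)\le N_i(L)$ by definition of $N_i(L)$ in \eqref{eq:2021.12.28.10.22}, and since $w_{i,1}\le 1$, the right-hand side is bounded above by $N_i(L)$ if $i\in I^+_L$ and by $0$ otherwise (when all $S_{i,k'}(L)\le 0$, every term is $\le 0$). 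Summing over $i$ gives
\[
\sum_{(i,k)\in\tilde I}\epsilon_{i,k}(L)\,w_{i,k}\le\sum_{i\in I^+_L}N_i(L).
\]

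Assume now that $(E,\{l_{i,\bullet}\}_i)$ is $\boldsymbol{w}$-stable. Then for every line subbundle $L$ with $\deg(E)\le 2\deg(L)$,
\[
0<\mathrm{Stab}_{\boldsymbol{w}}(L)\le \deg(E)-2\deg(L)+\sum_{i\in I^+_L}N_i(L),
\]
which forces $I^+_L$ to be non-empty and, since the left-hand inequality is strict and the quantities on the right are integers, yields $\sum_{i\in I^+_L}N_i(L)\ge -\deg(E)+2\deg(L)+1$. This is precisely the tameness condition of Definition \ref{eq:2021.12.31.13.46}, completing the proof. The only subtle point is the Abel summation bookkeeping used to bound the parabolic contribution point-by-point; once that is in hand, both conclusions follow by short, formal computations.
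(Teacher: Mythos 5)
Your proof is correct and follows essentially the same route as the paper's: you argue the contrapositive, noting that a decomposition forces $\mathrm{Stab}_{\boldsymbol{w}}(L_1)+\mathrm{Stab}_{\boldsymbol{w}}(L_2)=0$, and you bound the parabolic contribution of each $i$ by $N_i(L)$ (resp.\ by $0$ when $i\notin I^+_L$) using the monotonicity of the weights. The only cosmetic difference is that you obtain this bound by a single Abel summation over all of $\{1,\dots,n_i\}$, whereas the paper splits the sum at $k_{i,\mathrm{max}}(L)$ into $K_{i,\mathrm{max}}(L)$ and $K^c_{i,\mathrm{max}}(L)$; the resulting estimate is identical.
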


\begin{proof}
Assume that 
$(E, \{ l_{ i , \bullet} \}_{i \in I  } )$ is decomposable.
There exists a decomposition $E \cong L_1 \oplus L_2$ such that 
$\mathrm{Stab}_{\boldsymbol{w}} (L_1) + \mathrm{Stab}_{\boldsymbol{w}} (L_2) =0$
for any weights $\boldsymbol{w}$.
Then $(E, \{ l_{ i , \bullet} \}_{i \in I  } )$ is not stable 
for any weights $\boldsymbol{w}$.
Assume that 
$(E, \{ l_{ i , \bullet} \}_{i \in I  } )$ is not tame.
There exists a line subbundle $L \subset E$ with $\deg(E) \leq 2\deg(L)$ 
such that 
\begin{itemize}
\item $I^+_{L}$ is empty, or
\item $I^+_{L}$ is not empty and 
$ -\deg (E) +2 \deg(L)
\geq  \sum_{i \in I^+_{L}} N_i(L)$.
\end{itemize}
If $I^+_{L}$ is empty, then we have the inequality
$ \sum_{k=1}^{n_i} 
 \epsilon_{i,k} (L)\cdot w_{i,k} \leq 0$
for any $i \in I$,
since we have the inequalities \eqref{2021.11.25.14.09}.
So we have 
$$
\begin{aligned}
\mathrm{Stab}_{\boldsymbol{w}}(L ) &=
 \deg (E) -2 \deg(L)  +\sum_{i \in I} \sum_{k=1}^{n_i} 
 \epsilon_{i,k} (L)\cdot w_{i,k} \\
&\leq \deg (E) -2 \deg(L)  \leq 0.
\end{aligned} 
$$
Then $(E, \{ l_{ i , \bullet} \}_{i \in I  } )$ is not stable 
for any weights $\boldsymbol{w}$.
Now we assume that $I^+_{L}$ is not empty.
By the inequalities \eqref{2021.11.25.14.09}
and the definitions of $K_{i,\text{max}}(L)$ and $K^c_{i,\text{max}}(L)$,
we have the following inequalities for each $i \in I^+_{L}$:
$$
\begin{aligned}
\sum_{k \in K^c_{i,\text{max}}(L)} \epsilon_{i,k} (L)\cdot w_{i,k}\leq 0
\quad \text{and} \quad 
   \sum_{k \in K_{i,\text{max}}(L)} \epsilon_{i,k} (L)\cdot w_{i,k} \leq N_i(L).
\end{aligned}
$$
If $i \not\in I^+_{L}$, then $\sum_{k = 1}^{n_i} \epsilon_{i,k} (L)\cdot w_{i,k}\leq 0$.
By these inequalities, we have
$$
\begin{aligned}
\mathrm{Stab}_{\boldsymbol{w}}(L ) &=
 \deg (E) -2 \deg(L)  +\sum_{i \in I \setminus I^+_{L}} \sum_{k=1}^{n_i} 
 \epsilon_{i,k} (L)\cdot w_{i,k}\\
 &\quad + \sum_{i \in I^+_{L}} \sum_{k \in K^c_{i,\text{max}}(L)} \epsilon_{i,k} (L)\cdot w_{i,k}
+\sum_{i \in I^+_{L}}   \sum_{k \in K_{i,\text{max}}(L)} \epsilon_{i,k} (L)\cdot w_{i,k}  \\
&\leq \deg (E) -2 \deg(L)  +
\sum_{i \in I^+_{L}} N_i(L)  \leq 0.
\end{aligned} 
$$
Then $(E, \{ l_{ i , \bullet} \}_{i \in I  } )$ is not stable 
for any weights $\boldsymbol{w}$.
\end{proof}

\begin{prop}\label{prop:2021.12.3.12.50_2}
Assume that $E=\O(d_1)\oplus \O(d_2)$ with $d_1<d_2$.
If a refined parabolic bundle $(E, \{ l_{ i , \bullet} \}_{i \in I  } )$ 
is tame and undecomposable,
then it is stable for a convenient choice of weights $\boldsymbol{w}$.
\end{prop}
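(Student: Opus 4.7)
Since $d_1 < d_2$, the unique line subbundle $L_0 \subset E$ with $2\deg(L_0) \geq \deg(E)$ is $L_0 = \O(d_2)$, and every other line subbundle $L$ satisfies $\deg(L) \leq d_1$, giving the buffer $\deg(E) - 2\deg(L) \geq d_2 - d_1 \geq 1$. The plan is to construct weights $\boldsymbol{w}$ with $\mathrm{Stab}_{\boldsymbol{w}}(L) > 0$ for every line subbundle $L$ by concentrating weight on positions adapted to $L_0$.

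I define candidate weights as follows. For each $i \in I^+_{L_0}$, set $w_{i,k} = w$ for $1 \leq k \leq k_{i,\max}(L_0)$ and $w_{i,k} = \delta$ for $k > k_{i,\max}(L_0)$; for $i \notin I^+_{L_0}$, set $w_{i,k} = \delta$, where $0 < \delta < w \leq 1$ are to be tuned. Since $K_{i,\max}(L_0)$ is an initial segment $\{1,\dots,k_{i,\max}(L_0)\}$, the ordering constraint $0 \leq w_{i,n_i} \leq \cdots \leq w_{i,1} \leq 1$ is respected. Letting $S$ denote the set of positions $(i,k)$ receiving weight $w$, the identity $\sum_{k=1}^{k_{i,\max}(L_0)} \epsilon_{i,k}(L_0) = N_i(L_0)$ together with the tame inequality $\sum_{i \in I^+_{L_0}} N_i(L_0) \geq d_2 - d_1 + 1$ yields
\[
\mathrm{Stab}_{\boldsymbol{w}}(L_0) \geq -(d_2-d_1) + w(d_2-d_1+1) + O(\delta),
\]
which is strictly positive for $w$ close to $1$ and $\delta$ close to $0$. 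For any other line subbundle $L$ with $\deg(L) \leq d_1$, setting $P(L) := \{(i,k) : \epsilon_{i,k}(L) = -1\}$, one obtains
\[
\mathrm{Stab}_{\boldsymbol{w}}(L) \geq (d_2-d_1) + w\bigl(|S| - 2|S \cap P(L)|\bigr) + O(\delta),
\]
which is plainly positive when $|S \cap P(L)| \leq |S|/2$.

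The main obstacle is the case $|S \cap P(L)| > |S|/2$, and in particular the extreme $|S \cap P(L)| = |S|$, which the naive bound cannot exclude since $|S| \geq d_2 - d_1 + 1$. To rule this out I would invoke undecomposability: if $S \subseteq P(L)$ for some line subbundle $L \neq L_0$ of degree $\leq d_1$, then at each of the at least $d_2 - d_1 + 1$ positions $(i,k) \in S$ with $\epsilon_{i,k}(L_0) = 1$ the parabolic step $l_{i,k}/l_{i,k-1}$ would lie in $L$ rather than in $L_0$. A Higgs-field computation modelled on the argument establishing \eqref{2022.2.23.23.50} in the proof of Proposition~\ref{Prop:FlatnessImplications} should then produce a non-trivial nilpotent endomorphism of $(E, \boldsymbol{l})$ splitting the parabolic structure along $L \oplus L_0$, contradicting undecomposability. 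The hard part will be to upgrade this heuristic into the quantitative bound $|S \cap P(L)| < (|S| + d_2 - d_1)/2$, uniformly over the infinite family of line subbundles $L$; the verification is nonetheless reducible to finitely many cases, since line subbundles of fixed degree form a bounded projective family that produces only finitely many distinct values of $P(L)$. Should the resulting bound have little slack, I would exploit the residual freedom $k_{i,\max}(L_0) \geq N_i(L_0)$ (when strict) by redistributing $w$ asymmetrically across $S$, placing larger weights at positions of $K_{i,\max}(L_0)$ where $\epsilon_{i,k}(L_0) = 1$, so as to enlarge the admissible window for $w$.
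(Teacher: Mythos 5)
There is a genuine gap, and it sits exactly where you flag ``the main obstacle.'' Two things go wrong. First, the normalization ``$w$ close to $1$, $\delta$ close to $0$'' is not the right one, and it demonstrably fails: take $D=[t_1]+\cdots+[t_5]$ reduced, $E=\O\oplus\O(1)$, no parabolic direction on $\O(1)$ (so $N_i(\O(1))=1$ for all $i$, $S$ consists of all five positions, and the bundle is tame), and four of the five directions lying on a single subbundle $\O\subset E$ with the fifth one generic (undecomposable). Then $|S\cap P(\O)|=4$ and $\mathrm{Stab}_{\boldsymbol{w}}(\O)=1-3w<0$ for $w$ near $1$. The paper instead takes $w$ only barely above $(d_2-d_1)/\sum_{i\in I^+_{\O(d_2)}}N_i(\O(d_2))$, so that $\mathrm{Stab}_{\boldsymbol{w}}(\O(d_2))$ is positive but smaller than $2w'$, and this tight calibration is what makes every other subbundle work. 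Second, the quantitative bound you hope to extract from undecomposability, $|S\cap P(L)|<(|S|+d_2-d_1)/2$, is false: in the example above it would force $|S\cap P(L)|\le 2$ while the true value is $4$. Undecomposability only buys a single position where the parabolic step avoids both $\O(d_1)$ and $\O(d_2)$, worth $+2w_{i,k}$ (and that position may carry the small weight); it cannot yield a bound on the overlap that grows with $d_2-d_1$ uniformly in $L$, and no nilpotent-endomorphism argument will produce one.

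What actually controls the subbundles of low degree is not undecomposability but an elementary geometric bound missing from your proposal: writing $L\hookrightarrow\O(d_1)\oplus\O(d_2)$ as a pair of polynomials $(f_1^L,f_2^L)$ with $\deg f_1^L\le d_1-\deg L$, every position $(i,k)$ with $\epsilon_{i,k}(L)=\epsilon_{i,k}(\O(d_2))=-1$ forces $f_1^L$ to vanish at $t_i$, so the total number of such \emph{double-minus} positions is at most $d_1-\deg L$. The relevant overlap is not your $S\cap P(L)$ but only this double-minus set: pointwise one always has $\epsilon_{i,k}(L)\ge-\epsilon_{i,k}(\O(d_2))-2\cdot[\text{both}=-1]$, so the weighted sum for $L$ is at least $(d_1-d_2)-\mathrm{Stab}_{\boldsymbol{w}}(\O(d_2))$ minus $2w$ per double-minus position, while each such position is paid for by $+2$ in $\deg(E)-2\deg(L)$, a net gain of $2(1-w)$ once $\mathrm{Stab}_{\boldsymbol{w}}(\O(d_2))<2w'$ is arranged. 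With the calibrated $w$ this closes the argument for all $L$ with $\deg L\le d_1-1$; the case $\deg L=d_1$ (zero double-minus positions possible) is where undecomposability enters, contributing one double-plus position worth $+2w'$. Your remark that only finitely many sets $P(L)$ occur in each degree is true but does not substitute for this uniform trade-off between overlap and degree.
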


\begin{proof}
We assume that $(E, \{ l_{ i , \bullet} \}_{i \in I  } )$ is tame and undecomposable.
Since $d_1<d_2$, there exists a unique morphism $\varphi_{d_2} \colon \O(d_2) \hookrightarrow E$.
We set 
$$
N^c_i(\O(d_2))
:=
\begin{cases}
\sum_{k \in K^c_{i,\text{max}}(\O(d_2))} \epsilon_{i,k} (\O(d_2)) 
&\text{for $i\in I^+_{\O(d_2)}$} \\
\sum_{k =1}^{n_i} \epsilon_{i,k} (\O(d_2)) 
&\text{for $i \not\in I^+_{\O(d_2)}$}
\end{cases}.
$$
For the unique morphism $\varphi_{d_2} \colon \O(d_2) \hookrightarrow E$,
we define weights $\boldsymbol{w}=(\boldsymbol{w}_1,\ldots , \boldsymbol{w}_n)$ as follows:
$$
w_{i,k} = 
\begin{cases}
w & \text{for $i \in I^+_{\O(d_2)}$ and $k \in K_{i,\text{max}} (\O(d_2))$}  \\
w' &\text{for $i \in I^+_{\O(d_2)}$ and $k \in K^c_{i,\text{max}} (\O(d_2))$} \\
w' &\text{for $i \not\in I^+_{\O(d_2)}$ and $k \in \{1,2,\ldots,n_i\}$}
\end{cases}.
$$
Here $w$ and $w'$ are real numbers such that 
\begin{equation}\label{2021.11.29.20.44}
\begin{cases}
  w  > 
-\frac{\sum_{i \in I} N^c_i(\O(d_2))}{\sum_{i \in I^+_{\O(d_2)}} N_i(\O(d_2))}   \cdot  w' 
+ \frac{d_2 - d_1}{\sum_{i \in I^+_{\O(d_2)}} N_i(\O(d_2)} \\
  w  < 
-\frac{\sum_{i \in I} N^c_i(\O(d_2))-2}{\sum_{i \in I^+_{\O(d_2)}} N_i(\O(d_2))} \cdot  w' 
+ \frac{d_2 - d_1}{\sum_{i \in I^+_{\O(d_2)}} N_i(\O(d_2))}  
\end{cases} \quad \text{and} \quad
\begin{cases}
0<w' <w <1\\
w+w' <1
\end{cases}.
 \end{equation}
Remark that, since $(E, \{ l_{ i , \bullet} \}_{i \in I  } )$ is tame and $d_1<d_2$,
we have the following inequalities:
$$
0<\frac{d_2 - d_1}{\sum_{i \in I^+_{\O(d_2)}} N_i(\O(d_2))}<1
\quad \text{and} \quad 
-\frac{\sum_{i \in I} N^c_i(\O(d_2))}{\sum_{i \in I^+_{\O(d_2)}} N_i(\O(d_2))}<
-\frac{\sum_{i \in I} N^c_i(\O(d_2))-2}{\sum_{i \in I^+_{\O(d_2)}} N_i(\O(d_2))}.
$$
So there exist such real numbers $w$ and $w'$.

We will check that $(E, \{ l_{ i , \bullet} \}_{i \in I  } )$ is $\boldsymbol{w}$-stable
for such a weights $\boldsymbol{w}$.
For the subbundle $\mathcal{O}(d_2) \subset E$, we have the following inequality:
$$
\mathrm{Stab}_{\boldsymbol{w}}(\mathcal{O}(d_2) ) = d_1 -d_2  
+\left( \sum_{i \in I} N^c_i(\O(d_2)) 
\right) w'
+\left( \sum_{i \in I^+_{\O(d_2)}} N_i(\O(d_2)) 
\right)  w >0
$$
by the inequalities \eqref{2021.11.29.20.44}.
Next we calculate the stability index $\mathrm{Stab}_{\boldsymbol{w}}(\mathcal{O}(d_1) )$
for any subbundles $\mathcal{O}(d_1) \subset E$:
$$
\begin{aligned}
\mathrm{Stab}_{\boldsymbol{w}}(\mathcal{O}(d_1) ) &\geq 
d_2 -d_1 
-\left( \sum_{i \in I} N^c_i(\O(d_2)) 
\right) w'
-\left( \sum_{i \in I^+_{\O(d_2)}} N_i(\O(d_2)) 
\right)w +2 w'  >0
\end{aligned}
$$
by the inequalities \eqref{2021.11.29.20.44}.
Here in the first inequality, we used the assumption that $(E, \{ l_{ i , \bullet} \}_{i \in I  } )$
is undecomposable and $w'<w$.
Let $L$ be a subbundle $L \subset E$ such that $\deg(L) \leq d_1-1$.
We will 
calculate the stability index $\mathrm{Stab}_{\boldsymbol{w}}(L )$.
For the subbundle $L$,
we set
$$
\begin{aligned}
K_{\delta_1,\delta_2}^{(i,d_2)}(L) :=  \{ k \mid  k\in\{1,2,\ldots,n_i\} , \text{ and }
\epsilon_{i,k}(L) =\delta_1, \epsilon_{i,k}(\O(d_2)) =\delta_2 \},
\end{aligned}
$$
where $i\in I$ and $\delta_1,\delta_2 \in \{ +,-\}$. 
We set $k_{\delta_1,\delta_2}^{(i,d_2)}(L)=\#
K_{\delta_1,\delta_2}^{(i,d_2)}(L)$ for any $i \in I$, and
$$
\begin{aligned}
 k_{\delta_1,\delta_2}^{(i,d_2)}(L)'=\#
(K_{\delta_1,\delta_2}^{(i,d_2)}(L)  \cap K_{i,{\rm max}} (\mathcal{O} (d_2)))
&&\text{and}&&k_{\delta_1,\delta_2}^{(i,d_2)}(L)''=\#
(K_{\delta_1,\delta_2}^{(i,d_2)}(L)  \cap K^c_{i,{\rm max}} (\mathcal{O} (d_2)))
\end{aligned}
$$
for any $ i \in I^+_L$.
We have the following inequality:
$$
\begin{aligned}
&\sum_{i \in  I^+_{\mathcal{O} (d_2)}}
 \sum_{k=1}^{n_i} \epsilon_{i,k} (L) \cdot w_{i,k} 
+  \sum_{i \in I \setminus I^+_{\mathcal{O} (d_2)}}
 \sum_{k =1}^{n_i}\epsilon_{i,k} (L)\cdot  w' \\
 &= \sum_{i \in  I^+_{\mathcal{O} (d_2)}}
 ( k_{+,+}^{(i,d_2)}(L)' - k_{-,+}^{(i,d_2)}(L)' + k_{+,-}^{(i,d_2)}(L)'-k_{-,-}^{(i,d_2)}(L)' ) \cdot w \\
 &\qquad+ 
 \sum_{i \in  I^+_{\mathcal{O} (d_2)}}
  ( k_{+,+}^{(i,d_2)}(L)'' - k_{-,+}^{(i,d_2)}(L)'' + k_{+,-}^{(i,d_2)}(L)''-k_{-,-}^{(i,d_2)}(L)'' ) \cdot w' \\
&\qquad \quad +  \sum_{i \in I \setminus I^+_{\mathcal{O} (d_2)}}
 ( k_{+,+}^{(i,d_2)}(L) - k_{-,+}^{(i,d_2)}(L)+ k_{+,-}^{(i,d_2)}(L)-k_{-,-}^{(i,d_2)}(L) )\cdot  w' \\
 &\geq \sum_{i \in  I^+_{\mathcal{O} (d_2)}}
 ( -N_i(\O(d_2)) -2 k_{-,-}^{(i,d_2)}(L)' ) \cdot w
+  \sum_{i \in  I^+_{\mathcal{O} (d_2)}}
  ( -N^c_i(\O(d_2)) -2 k_{-,-}^{(i,d_2)}(L)'' ) \cdot w' \\
&\qquad +  \sum_{i \in I \setminus I^+_{\mathcal{O} (d_2)}}
 ( -N^c_i(\O(d_2)) -2k_{-,-}^{(i,d_2)}(L) )\cdot  w'
\end{aligned}
$$
By this inequality and $w' <w$, we have the following inequality:
\begin{equation}\label{eq:2022.1.8.21.07}
\begin{aligned}
\mathrm{Stab}_{\boldsymbol{w}}(L ) 
&\geq \deg(E) -2 \deg(L) \\
&\qquad -  \sum_{i \in I} N^c_i(\O(d_2)) w'  
-  \sum_{i \in I^+_{\O(d_2)}} N_i(\O(d_2)) w 
 - 2 \left(  \sum_{i \in I} k_{-,-}^{(i,d_2)}(L) \right)  w  
\end{aligned}
\end{equation}
We describe the subbundle $L \hookrightarrow \O(d_1) \oplus \O(d_2)$
by a vector
$(f_1^L(x) ,f_2^L(x))$,
where $f_1^L(x)$ and $f_2^L(x)$ are polynomials in $x$
such that $\deg(f_1^L(x))\leq d_1-\deg(L)$ and $\deg(f_2^L(x))\leq d_2-\deg(L)$.
We have
\begin{equation*}\label{eq:2021.12.26.15.32}
\left[ \text{the zero order of $f_1^L(x)$ at $t_i$} \right]  \geq \# 
\{  k\in\{1,2,\ldots,n_i\} \mid
\epsilon_{i,k}(L) =\epsilon_{i,k}(\O(d_2)) =-1 \}
\end{equation*}
for each $i\in I$.
By this inequality,
we have $d_1 - \deg(L) \geq   \sum_{i \in I} k_{-,-}^{(i,d_2)}(L)$.
When $\sum_{i \in I} k_{-,-}^{(i,d_2)}(L) \neq 0$
for the subbundle $L \subset E$, we have the following inequality:
$$
\begin{aligned}
\mathrm{Stab}_{\boldsymbol{w}}(L ) 
&\geq d_2 - d_1 
-  \sum_{i \in I} N^c_i(\O(d_2)) w'
-  \sum_{i \in I^+_{\O(d_2)}} N_i(\O(d_2)) w 
 +\left(  \sum_{i \in I} k_{-,-}^{(i,d_2)}(L) \right)  \cdot 2 (1-w)   \\
&\geq d_2 - d_1 
- \left( \sum_{i \in I} N^c_i(\O(d_2))\right) w'
- \left( \sum_{i \in I^+_{\O(d_2)}} N_i(\O(d_2))\right) w+2 (1-w) \\
&> 2(1- w -w')>0.
\end{aligned}
$$
Here in the first inequality, we used the inequality $d_1 - \deg(L) \geq
\sum_{i \in I} k_{-,-}^{(i,d_2)}(L)$ and the inequality \eqref{eq:2022.1.8.21.07}.
In the second inequality, we used the inequalities 
$\sum_{i \in I} k_{-,-}^{(i,d_2)}(L)\geq 1$ and $w<1$.
In the third inequality and in the last inequality, we used 
the inequalities in \eqref{2021.11.29.20.44}.
When $\sum_{i \in I} k_{-,-}^{(i,d_2)}(L)= 0$,
for the subbundle $L \subset E$, we have the following inequality:
$$
\begin{aligned}
\mathrm{Stab}_{\boldsymbol{w}}(L ) 
&\geq d_1 +d_2 -2 \deg(L)
- \left( \sum_{i \in I} N^c_i(\O(d_2))\right) w'
- \left( \sum_{i \in I^+_{\O(d_2)}} N_i(\O(d_2))\right) w  \\
&\geq d_2 - d_1 
- \left( \sum_{i \in I} N^c_i(\O(d_2))\right) w'
- \left( \sum_{i \in I^+_{\O(d_2)}} N_i(\O(d_2))\right) w+1 \\
& > 1-2w'>0.
\end{aligned}
$$
Here we used the inequality $\deg(L) \leq d_1-1$ in the second inequality.
Then we obtain that $(E, \{ l_{ i , \bullet} \}_{i \in I  } )$ is $\boldsymbol{w}$-stable
for the weights $\boldsymbol{w}$.
\end{proof}

\begin{prop}\label{prop:2021.12.3.12.50_3}
Assume that $E=\O(d)\oplus \O(d)$.
If a refined parabolic bundle $(E, \{ l_{ i , \bullet} \}_{i \in I  } )$ 
is tame and undecomposable,
then it is stable for a convenient choice of weights $\boldsymbol{w}$.
\end{prop}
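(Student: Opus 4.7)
The strategy is to adapt the proof of Proposition \ref{prop:2021.12.3.12.50_2}, the essential new difficulty being that line subbundles of maximal degree $d$ are no longer unique: after fixing a splitting $E = \O(d) e_1 \oplus \O(d) e_2$, they form a $\mathbb{P}^1$-family $L_{[\alpha_1:\alpha_2]} := \O(d) \cdot (\alpha_1 e_1 + \alpha_2 e_2)$. Each member of this family is a candidate destabilizing subbundle, so we must find weights $\boldsymbol{w}$ making $\mathrm{Stab}_{\boldsymbol{w}}(L_{[\alpha_1:\alpha_2]}) > 0$ simultaneously for all $[\alpha_1 : \alpha_2] \in \mathbb{P}^1$, as well as for every subbundle of smaller degree.

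First I would observe that for each $(i,k)$ the length $\length(l_{i,k} \cap L|_{n_i[t_i]})$ is upper semicontinuous as $L$ varies in $\mathbb{P}^1$; hence there is a cofinite Zariski-open subset $U \subset \mathbb{P}^1$ on which the sign sequence $(\epsilon_{i,k}(L))_{(i,k)}$ takes a constant generic value, and only finitely many exceptional members $L^{(1)}, \ldots, L^{(m)}$ give different sign sequences. Pick $L_0 \in U$ and, exactly as in the proof of Proposition \ref{prop:2021.12.3.12.50_2} but with $d_2 - d_1$ set to $0$, define weights $\boldsymbol{w}$ from a chamber analogous to \eqref{2021.11.29.20.44} (whose non-vacuity follows from tameness at $L_0$, which supplies $\sum_{i \in I^+_{L_0}} N_i(L_0) \geq 1$). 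The calculations of Proposition \ref{prop:2021.12.3.12.50_2} then yield $\mathrm{Stab}_{\boldsymbol{w}}(L_0) > 0$ and $\mathrm{Stab}_{\boldsymbol{w}}(L) > 0$ for every line subbundle $L \subset E$ with $\deg(L) < d$; since on $U$ the stability index depends only on the constant generic sign sequence, the same weights give $\mathrm{Stab}_{\boldsymbol{w}}(L) > 0$ for every $L \in U$.

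The main obstacle is to ensure that $\boldsymbol{w}$ can also be chosen to satisfy $\mathrm{Stab}_{\boldsymbol{w}}(L^{(j)}) > 0$ for each exceptional $j$. Tameness at $L^{(j)}$ provides $1 \leq \sum_{i \in I^+_{L^{(j)}}} N_i(L^{(j)})$, which defines an open chamber $H_j$ of weights on which $\mathrm{Stab}_{\boldsymbol{w}}(L^{(j)}) > 0$, and the task is to verify that $H_1 \cap \cdots \cap H_m$ meets the chamber already chosen for $L_0$. The key input is undecomposability: were this intersection empty, two exceptional subbundles $L^{(j)}, L^{(j')}$ with $L^{(j)} \oplus L^{(j')} = E$ would be forced into a configuration in which every $l_{i,k}$ splits as $(l_{i,k} \cap L^{(j)}|_{n_i[t_i]}) \oplus (l_{i,k} \cap L^{(j')}|_{n_i[t_i]})$, contradicting the undecomposability hypothesis. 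Translating this dichotomy into a quantitative perturbation of $(w, w')$ that lands in the required open chamber, by balancing the finite collection of linear inequalities and using undecomposability as the structural obstruction to their simultaneous failure, is the heart of the argument.
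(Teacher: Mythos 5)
Your reduction of the problem is sound as far as it goes: for $E=\O(d)\oplus\O(d)$ the maximal subbundles form a $\mathbb{P}^1$-family, a generic member $L$ has $\epsilon_{i,k}(L)=+1$ for all $(i,k)$ and hence $\mathrm{Stab}_{\boldsymbol{w}}(L)=\sum_{i,k}w_{i,k}>0$ automatically, subbundles of degree $<d$ are handled by $\deg(E)-2\deg(L)\ge 2$ provided $\sum_{i,k}w_{i,k}<2$, and only finitely many exceptional maximal subbundles remain. But the proposal stops exactly where the proof has to begin. You defer the construction of weights lying in $H_1\cap\cdots\cap H_m$ to ``the heart of the argument,'' and the two mechanisms you gesture at do not close it. First, the chamber \eqref{2021.11.29.20.44} from Proposition \ref{prop:2021.12.3.12.50_2} with ``$d_2-d_1$ set to $0$'' is generally \emph{empty}: with $d_2=d_1$ the second inequality forces $w\sum_{i\in I^+}N_i(\O(d_2))<(2-\sum_i N_i^c(\O(d_2)))\,w'$, which is incompatible with the requirement $0<w'<w$ except in degenerate situations; this is precisely why the paper needs a separate proposition with a different construction for the balanced case. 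Second, the claim that emptiness of $H_1\cap\cdots\cap H_m$ would force some pair $L^{(j)}\oplus L^{(j')}=E$ to split every $l_{i,k}$ is asserted, not proved; with three or more exceptional subbundles it is not clear which pair should decompose the structure, and the deduction from ``finitely many linear chambers fail to meet'' to ``the parabolic structure decomposes'' is exactly the content of the proposition.

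The paper's actual argument uses tameness and undecomposability in a much more local way, and its complexity is driven by a constraint your proposal never engages with: the monotonicity $w_{i,n_i}\le\cdots\le w_{i,1}$ of \eqref{2021.11.25.14.09}. The paper picks $L_1$ \emph{non}-generically, with $\epsilon_{i_1,1}(L_1)=-1$, uses tameness to find $i_2$ with $N_{i_2}(L_1)\ge 1$ to compensate, then (in the nontrivial case) picks the unique worst competitor $L_2$ and uses undecomposability only to produce a single index $(i_3,k_3)$ with $\epsilon_{i_3,k_3}(L_1)=\epsilon_{i_3,k_3}(L_2)=1$, so that a weight placed there rewards both $L_1$ and $L_2$ simultaneously; every other maximal subbundle is controlled by the elementary fact that two distinct maximal subbundles cannot both satisfy $\epsilon_{i,k}=-1$ at the same $(i,k)$ (equation \eqref{2021.11.29.12.08}). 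The seven sub-cases (A-i)--(B-vii) exist precisely to order $w_1,w_2,w_3,w_4$ consistently with \eqref{2021.11.25.14.09} when the points $t_{i_1},t_{i_2},t_{i_3}$ coincide. None of this bookkeeping is present in your sketch, so as written the proposal identifies the obstacle but does not overcome it.
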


\begin{proof}
We take an element $i_1 \in I$.
Since $E =\O(d) \oplus \O(d)$ and $\length(l_{i_1,1}) =1$,
we can take a line subbundle $L_1 \subset E$ 
with $\deg(L_1)=d$
such that $\epsilon_{i_1,1}(L_1) =-1$.
Since $(E, \{ l_{ i , \bullet} \}_{i \in I  } )$ is tame, 
there exists an element $i_2 \in I$
such that 
$$
N_{i_2}(L_1) =\sum_{k \in K_{i_2,\text{max}}(L_1)} \epsilon_{i_2,k} \geq 1.
$$
We divide into the following two cases:
\begin{itemize}
\item[(A)] the set $\{ k \in K_{i_2,\text{max}}(L_1) \mid
\epsilon_{i_2,k}( L_1) =1 , \epsilon_{i_2,k}( L) =-1 \}$
is empty for any line subbundles $L \subset E$ with $\deg (L) =d$, 
\item[(B)] the set $\{ k \in K_{i_2,\text{max}}(L_1) \mid
\epsilon_{i_2,k}( L_1) =1 , \epsilon_{i_2,k}( L) =-1 \}$
is not empty for some line subbundle $L \subset E$ with $\deg (L) =d$.
\end{itemize}
For the case (A),
we determine weights $\boldsymbol{w}$ as follows:
\begin{itemize}
\item[(A-i)] When $i_1\neq i_2$, 
\begin{equation*}
w_{i,k}:=
\begin{cases}
w_1 & \text{when $i=i_1$ and $k=1$}\\
w_2 & \text{when $i=i_2$ and $k\in K_{i_2,\text{max}}(L_1)$}\\
 0 & \text{otherwise} 
\end{cases}.
\end{equation*}
\item[(A-ii)] When $i_1= i_2$, 
\begin{equation*}
w_{i,k}:=
\begin{cases}
w_1 & \text{when $i=i_1$ and $k=1$}\\
w_2 & \text{when $i=i_1$ and $k\in K_{i_1,\text{max}}(L_1) \setminus\{1\}$}\\
 0 & \text{otherwise} 
\end{cases}
\end{equation*}
with $0<w_2 < w_1<1$.
\end{itemize}
For the case (B),
we determine weights $\boldsymbol{w}$ as follows.
We take a line subbundle $L_2$ of $E$ 
with $\deg(L_2)=d$
so that 
\begin{equation}\label{2021.11.29.10.45}
\# \{ k \in K_{i_2,\text{max}}(L_1) \mid
\epsilon_{i_2,k}( L_1) =1 , \epsilon_{i_2,k}( L_2) =-1 \}
\end{equation}
is maximized. 
In particular $L_1 \neq L_2$ in $E=\mathcal{O}(d) \oplus \mathcal{O}(d)$.
So it is impossible that $\epsilon_{i,k}(L_1) = \epsilon_{i,k}(L_2) =-1$
for any $(i,k) \in \tilde I$.
Since $(E, \{ l_{ i , \bullet} \}_{i \in I  } )$ is undecomposable, 
the set
\begin{equation}\label{2021.11.28.13.16}
\{ (i,k) \in \tilde I \mid \epsilon_{i,k}(L_1) = \epsilon_{i,k}(L_2) =1
\}
\end{equation}
is not empty.
We take an element $(i_3,k_3)$ of the set \eqref{2021.11.28.13.16}
such that the pair satisfies the equality 
 $k_3=\min \{ k \in \{1,2,\ldots,n_{i_3} \} \mid 
\epsilon_{i_3,k}(L_1) = \epsilon_{i_3,k}(L_2) =1 \}$.
If the subset 
\begin{equation}\label{2021.11.28.13.16_2}
\{ k \in \{1,2,\ldots, n_i \}  \mid 
\epsilon_{i_2,k}(L_1) = \epsilon_{i_2,k}(L_2) =1 \} \, \cap \, 
K_{i_2,\text{max}}(L_1)
\end{equation}
is not empty, we take the element $(i_3,k_3)$
from this subset \eqref{2021.11.28.13.16_2}.
So,
if $i_2\neq i_3$, we have that 
\begin{equation}\label{2022.1.8.22.49}
\epsilon_{i_2,k} (L_1)+
\epsilon_{i_2,k} (L_2)=0 \qquad (k \in K_{i_2,\text{max}}(L_1)).
\end{equation}
So, when $i_2\neq i_3$, or
when $i_2=i_3$ and $ k_3>k_{i_2,\text{max}}(L_1)$,
we have that 
\begin{equation}\label{2021.11.29.12.08}
\epsilon_{i_2,k} (L)=1 \qquad (k \in K_{i_2,\text{max}}(L_1))
\end{equation}
for any line subbundles $L$ such that $\deg(L) = d$, $L\neq L_1$, and $L\neq L_2$.

\begin{itemize}
\item[(B-i)] When $i_1,i_2,i_3$
are distinct from each other, we determine weights $\boldsymbol{w}$ as follows:
\begin{equation*}
w_{i,k}:=
\begin{cases}
w_1 & \text{when $i=i_1$ and $k=1$}\\
w_2 & \text{when $i=i_2$ and $k \in K_{i_2,\text{max}}(L_1)$}\\
w_3 & \text{when $i=i_3$ and $k=1,2,\ldots,k_3 -1$}\\
w_4 & \text{when $i=i_3$ and $k=k_3$}\\
 0 & \text{otherwise} 
\end{cases}
\end{equation*}
with $0<w_1,w_2<1$ and $0<w_4<w_3 <1$.

\item[(B-ii)] 
When $i_1\neq i_2=i_3$ and $k_{i_2,\text{max}}(L_1) \geq k_3$, 
we determine weights $\boldsymbol{w}$ as follows:
\begin{equation*}
w_{i,k}:=
\begin{cases}
w_1 & \text{when $i=i_1$ and $k=1$}\\
w_2 & \text{when $i=i_2$ and $k \in K_{i_2,\text{max}}(L_1)$}\\
 0 & \text{otherwise} 
\end{cases}
\end{equation*}
with $0<w_1,w_2<1$.

\item[(B-iii)] When $i_1\neq i_2=i_3$ and $ k_3>k_{i_1,\text{max}}(L_1)$, 
we determine weights $\boldsymbol{w}$ as follows:
\begin{equation*}
w_{i,k}:=
\begin{cases}
w_1 & \text{when $i=i_1$ and $k=1$}\\
w_2 & \text{when $i=i_2$ and $k\in K_{i_2,\text{max}}(L_1)$}\\
w_3 & \text{when $i=i_2$ and $k=k_{i_2,\text{max}}(L_1)+1,\ldots,k_3-1$}\\
w_4 & \text{when $i=i_2$ and $k=k_3$}\\
 0 & \text{otherwise} 
\end{cases}
\end{equation*}
with $0<w_4<w_3<w_2<1$ and $0<w_1<1$.

\item[(B-iv)]
When $i_1=i_3\neq i_2$, we determine weights $\boldsymbol{w}$ as follows:
\begin{equation*}
w_{i,k}:=
\begin{cases}
w_1 & \text{when $i=i_1$ and $k=1$}\\
w_2 & \text{when $i=i_2$ and $k\in K_{i_2,\text{max}}(L_1)$}\\
w_3 & \text{when $i=i_1$ and $k=2,3,\ldots,k_3 -1$}\\
w_4 & \text{when $i=i_1$ and $k=k_3$}\\
 0 & \text{otherwise} 
\end{cases}
\end{equation*}
with $0<w_2<1$ and $0<w_4<w_3<w_1 <1$.

\item[(B-v)] 
When $i_1= i_2\neq i_3$, 
we determine weights $\boldsymbol{w}$ as follows:
\begin{equation*}
w_{i,k}:=
\begin{cases}
w_1 & \text{when $i=i_1$ and $k=1$}\\
w_2 & \text{when $i=i_1$ and $k\in K_{i_1,\text{max}}(L_1) \setminus\{1\}$}\\
w_3 & \text{when $i=i_3$ and $k=1,\ldots,k_3-1$}\\
w_4 & \text{when $i=i_3$ and $k=k_3$}\\
 0 & \text{otherwise} 
\end{cases}
\end{equation*}
with $0<w_2<w_1<1$ and $0<w_4<w_3<1$.

\item[(B-vi)] 
When $i_1= i_2=i_3$ and $k_{i_1,\text{max}}(L_1) \geq k_3$, 
we determine weights $\boldsymbol{w}$ as follows:
\begin{equation*}
w_{i,k}:=
\begin{cases}
w_1 & \text{when $i=i_1$ and $k=1$}\\
w_2 & \text{when $i=i_1$ and $k\in K_{i_1,\text{max}}(L_1) \setminus\{1\}$}\\
 0 & \text{otherwise} 
\end{cases}
\end{equation*}
with $0<w_2<w_1<1$.

\item[(B-vii)]
When $i_1= i_2=i_3$ and $ k_3>k_{i_1,\text{max}}(L_1)$, 
we determine weights $\boldsymbol{w}$ as follows:
\begin{equation*}
w_{i,k}:=
\begin{cases}
w_1 & \text{when $i=i_1$ and $k=1$}\\
w_2 & \text{when $i=i_1$ and $k\in K_{i_1,\text{max}}(L_1) \setminus\{1\}$}\\
w_3 & \text{when $i=i_1$ and $k=k_{i_1,\text{max}}(L_1)+1,\ldots,k_3-1$}\\
w_4 & \text{when $i=i_1$ and $k=k_3$}\\
 0 & \text{otherwise} 
\end{cases}
\end{equation*}
with $0<w_4<w_3<w_2<w_1<1$.
\end{itemize}
If we take a convenient choice of $w_1,w_2,w_3,w_4$ (or $w_1,w_2$),
we may check that $(E, \{ l_{ i , \bullet} \}_{i \in I  } )$ is $\boldsymbol{w}$-stable
for the weights $\boldsymbol{w}$.
Now we check this claim for the 3 cases: (A-i); (B-i); and (B-vi).
For other cases, we will check $(E, \{ l_{ i , \bullet} \}_{i \in I  } )$ is $\boldsymbol{w}$-stable
for some weights $\boldsymbol{w}$ by the same argument as in the three cases.

We consider the case (A-i).
We assume that 
$$
 -w_1 + N_{i_2} (L_1) \cdot w_2  >0 .
$$
Since $N_{i_2} (L_1) \geq 1$, there exists a pair $w_1,w_2$ satisfies 
this inequality and $0<w_2,w_1 <1$.
We have the following inequality for $\mathrm{Stab}_{\boldsymbol{w}}(L_1)$:
$$
\begin{aligned}
\mathrm{Stab}_{\boldsymbol{w}}(L_1)
&= -w_1 + N_{i_2} (L_1)\cdot w_2  >0 .
\end{aligned}
$$
For any line subbundles $L \subset E$ with $L_1\neq L$,  
we have the following inequality:
$$
\begin{aligned}
\mathrm{Stab}_{\boldsymbol{w}}(L)
&= \deg(E) -2 \deg(L) + w_1  +  k_{i_2,\text{max}} (L_1) \cdot w_2 \\ 
&\geq  w_1  +  k_{i_2,\text{max}} (L_1) \cdot w_2 >0 .
\end{aligned}
$$
Then we obtain that $(E, \{ l_{ i , \bullet} \}_{i \in I  } )$ is $\boldsymbol{w}$-stable
for the weights $\boldsymbol{w}$.

We consider the case (B-i).
We assume that 
$$
\begin{cases}
-w_1 + N_{i_2} (L_1) w_2 + \left( \sum_{k'=1}^{k_3-1} \epsilon_{i_3,k'} (L_1) \right) w_3 +w_4 >0\\
-w_1 + N_{i_2} (L_1) w_2 + \left( \sum_{k'=1}^{k_3-1} \epsilon_{i_3,k'} (L_1) \right) w_3 - w_4 <0\\
w_1 + k_{i_2,\text{max}} (L_1) \cdot  w_2 + (k_3-1) w_3 -w_4  >0\\
w_1 + k_{i_2,\text{max}} (L_1) \cdot  w_2 + (k_3-1) w_3 + w_4 <2
\end{cases}.
$$
We may check that 
there exist such real numbers $w_1,w_2,w_3,w_4$.
We have the following inequalities for $\mathrm{Stab}_{\boldsymbol{w}}(L_1)$ 
and $\mathrm{Stab}_{\boldsymbol{w}}(L_2)$:
$$
\begin{aligned}
\mathrm{Stab}_{\boldsymbol{w}}(L_1)
&= -w_1 + N_{i_2} (L_1) w_2 + \left( \sum_{k'=1}^{k_3-1} \epsilon_{i_3,k'} (L_1) \right) w_3 +w_4 >0 \\
\mathrm{Stab}_{\boldsymbol{w}}(L_2)
&= w_1 - N_{i_2} (L_1) w_2 - \left( \sum_{k'=1}^{k_3-1} \epsilon_{i_3,k'} (L_1) \right) w_3 +w_4>0.
\end{aligned}
$$
Here for the equality in the computation of $\mathrm{Stab}_{\boldsymbol{w}}(L_2)$,
we used the equality
\eqref{2022.1.8.22.49} and the equality $\epsilon_{i_3,k}(L_1)+\epsilon_{i_3,k}(L_2)=0$
for $k=1,2,\ldots,k_3-1$.
This equality is given by the equality  $k_3=\min \{ k \in \{1,2,\ldots,n_{i_3} \} \mid 
\epsilon_{i_3,k}(L_1) = \epsilon_{i_3,k}(L_2) =1 \}$.
For any line subbundles $L$ such that $\deg(L) = d$, $L\neq L_1$, and $L\neq L_2$,
we have the following inequalities:
$$
\begin{aligned}
\mathrm{Stab}_{\boldsymbol{w}}(L)
&\geq
w_1 + k_{i_2,\text{max}} (L_1) \cdot  w_2 + (k_3-1) w_3 -w_4  >0.
\end{aligned}
$$
by the equality \eqref{2021.11.29.12.08}.
For any line subbundles $L$ with $\deg(L) \leq d-1$,
we have the following inequalities 
$$
\begin{aligned}
\mathrm{Stab}_{\boldsymbol{w}}(L)
&\geq \deg(E) -2 \deg(L) 
-w_1 -k_{i_2,\text{max}} (L_1) \cdot  w_2 - (k_3-1) w_3 -w_4  \\
&\geq 2 
-w_1 - k_{i_2,\text{max}} (L_1) \cdot  w_2 - (k_3-1) w_3 -w_4 >0.
\end{aligned}
$$
Then we obtain that $(E, \{ l_{ i , \bullet} \}_{i \in I  } )$ is $\boldsymbol{w}$-stable
for the weights $\boldsymbol{w}$.

We consider the case (B-vi).
We assume that 
$$
\begin{cases}
 -w_1 + (N_{i_2}(L_1) +1)  \cdot w_2  >0 \\
 w_1  -  (N_{i_2}(L_1) -1)\cdot  w_2 >0 \\
 w_1  + (k_{i_2,\text{max}} (L_1) -1) \cdot  w_2 <2
 \end{cases}.
$$
Since $N_{i_2} (L_1) \geq 1$, there exists a pair $w_1,w_2$ satisfies 
this inequality and $0<w_2,w_1 <1$.
We have the following inequality for $\mathrm{Stab}_{\boldsymbol{w}}(L_1)$:
$$
\begin{aligned}
\mathrm{Stab}_{\boldsymbol{w}}(L_1)
&= -w_1 + (N_{i_2}(L_1) +1)\cdot w_2  >0 ,
\end{aligned}
$$
since $\epsilon_{i_1,1} (L_1) =-1$ and 
$N_{i_2}(L_1)= \sum_{k \in K_{i_2 , \text{max}} (L_1)} \epsilon_{i_2,k} (L_1)$.
For any line subbundles $L \subset E$ with $\deg(L) =d$ and $L\neq L_1$,  
we have the following inequality:
$$
\begin{aligned}
\mathrm{Stab}_{\boldsymbol{w}}(L) \geq \mathrm{Stab}_{\boldsymbol{w}}(L_2)
&\geq \deg(E) -2 \deg(L_2) +w_1  +  (-N_{i_2}(L_1) +1)\cdot  w_2  \\ 
&=  w_1  -  (N_{i_2}(L_1) -1)\cdot  w_2 >0 .
\end{aligned}
$$
Here for the first inequality, we used the fact that $L_2$ maximizes
$\# \{ k \in K_{i_2,\text{max}}(L_1) \mid
\epsilon_{i_2,k}( L_1) =1 , \epsilon_{i_2,k}( L_2) =-1 \}$ and 
for the second inequality we used $k_{i_1,\text{max}}(L_1) \geq k_3$.
For any line subbundles $L \subset E$ with $\deg(L) \leq d-1$,  
we have the following inequality:
$$
\begin{aligned}
\mathrm{Stab}_{\boldsymbol{w}}(L) 
&\geq \deg(E) -2 \deg(L) -w_1  - (k_{i_2,\text{max}} (L_1) -1) \cdot  w_2  \\ 
&\geq 2 -w_1  - (k_{i_2,\text{max}} (L_1) -1) \cdot  w_2 >0 .
\end{aligned}
$$
Then we obtain that $(E, \{ l_{ i , \bullet} \}_{i \in I  } )$ is $\boldsymbol{w}$-stable
for the weights $\boldsymbol{w}$.
\end{proof}

By Proposition \ref{prop:2021.12.3.12.50_1}, 
Proposition \ref{prop:2021.12.3.12.50_2},
and Proposition \ref{prop:2021.12.3.12.50_3}, 
we obtain the following theorem:

\begin{theorem}\label{thm:2021.10.27.13.44}
A refined parabolic bundle $(E, \{ l_{ i , \bullet} \}_{i \in I  } )$ is tame and undecomposable if, 
and only if, it is stable for a convenient choice of weights $\boldsymbol{w}$.
\end{theorem}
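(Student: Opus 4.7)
The plan is to derive the theorem as an immediate consequence of the three preceding propositions, which together cover every possibility. The forward implication, that $\boldsymbol{w}$-stability for some weights implies tameness and undecomposability, is precisely the content of Proposition \ref{prop:2021.12.3.12.50_1}; no new argument is required. It was proved there by the observation that a compatible decomposition $E = L_1 \oplus L_2$ forces $\mathrm{Stab}_{\boldsymbol{w}}(L_1)+\mathrm{Stab}_{\boldsymbol{w}}(L_2)=0$ for every $\boldsymbol{w}$, while failure of tameness produces a subbundle whose stability index is bounded above by $0$ for all admissible $\boldsymbol{w}$ (using the ordering $w_{i,n_i}\le\cdots\le w_{i,1}$ and the optimality built into $N_i(L)$ and $K_{i,\max}(L)$).

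For the converse, I would invoke Birkhoff's theorem to write $E\cong \mathcal{O}(d_1)\oplus \mathcal{O}(d_2)$ with $d_1\le d_2$ uniquely determined, and then bifurcate according to whether $d_1<d_2$ or $d_1=d_2$. When $d_1<d_2$, the destabilizing line subbundle $\mathcal{O}(d_2)$ is unique, and Proposition \ref{prop:2021.12.3.12.50_2} constructs an explicit two-parameter family of weights — large value $w$ on the indices in $K_{i,\max}(\mathcal{O}(d_2))$ for $i\in I^+_{\mathcal{O}(d_2)}$ and small value $w'$ elsewhere — with $(w,w')$ satisfying two linear inequalities made solvable by the tameness assumption. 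When $d_1=d_2$, the set of maximal-degree line subbundles is a $\mathbb{P}^1$-family, and all of them must be simultaneously defeated; this is Proposition \ref{prop:2021.12.3.12.50_3}. These two sub-cases exhaust all possibilities, so combining the three propositions proves the theorem.

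The main obstacle, already navigated in Proposition \ref{prop:2021.12.3.12.50_3}, is the balanced case $d_1=d_2$: one must exhibit an explicit weight vector making $\mathrm{Stab}_{\boldsymbol{w}}(L)>0$ for every line subbundle of maximal degree. The approach is to pick $L_1$ with $\epsilon_{i_1,1}(L_1)=-1$ at some $i_1$ (always possible since $l_{i_1,1}$ has length one), use tameness to find $i_2$ with $N_{i_2}(L_1)\ge 1$, and, when needed, introduce a second maximal subbundle $L_2\ne L_1$ together with a distinguishing index $(i_3,k_3)$ produced by undecomposability. The ensuing book-keeping of the cases (A-i), (A-ii) and (B-i)--(B-vii) reflects the possible coincidences among $i_1,i_2,i_3$, but each reduces to solving a small system of linear inequalities. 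Once Proposition \ref{prop:2021.12.3.12.50_3} is in hand, the passage to the final theorem is a one-line case split on the splitting type of $E$.
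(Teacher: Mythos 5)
Your proposal is correct and matches the paper's own argument exactly: the theorem is deduced by combining Proposition \ref{prop:2021.12.3.12.50_1} (the forward implication) with Propositions \ref{prop:2021.12.3.12.50_2} and \ref{prop:2021.12.3.12.50_3}, which handle the converse according to whether the Birkhoff splitting type satisfies $d_1<d_2$ or $d_1=d_2$. Your summaries of the weight constructions inside those propositions are also accurate, so nothing further is needed.
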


By Proposition \ref{prop:2022.2.23.20.13},
we have the following corollary:
\begin{corollary}\label{cor:2022.2.24.0.20}
Let $(E, \{ l_{ i} \}_{i \in I  } )$ be a parabolic bundle 
(Definition \ref{2022.2.23.22.18})
and $(E, \{ l_{ i , \bullet} \}_{i \in I  } )$ be the corresponding 
refined parabolic bundle.
The parabolic bundle $(E, \{ l_{ i} \}_{i \in I  } )$ is simple if, and only if, 
$(E, \{ l_{ i , \bullet} \}_{i \in I  } )$ is 
stable for a convenient choice of weights $\boldsymbol{w}$
\end{corollary}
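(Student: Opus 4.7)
The plan is to obtain this corollary as an immediate two-step composition of equivalences already proved. Namely, Proposition \ref{prop:2022.2.23.20.13} supplies the bridge between the language of parabolic bundles and the language of refined parabolic bundles:
\[
\text{$(E,\{l_i\}_{i \in I})$ is simple} \ \Longleftrightarrow\ \text{$(E,\{l_{i,\bullet}\}_{i \in I})$ is undecomposable and tame,}
\]
while Theorem \ref{thm:2021.10.27.13.44} gives the criterion for the existence of stabilizing weights:
\[
\text{$(E,\{l_{i,\bullet}\}_{i \in I})$ is undecomposable and tame} \ \Longleftrightarrow\ \text{$\boldsymbol{w}$-stable for a convenient choice of $\boldsymbol{w}$.}
\]
Chaining these two biconditionals yields exactly the statement of the corollary.

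The strategy I would follow in the write-up is therefore very short. First, I would recall that, starting from a parabolic bundle $(E,\{l_i\}_{i \in I})$, the associated refined parabolic structure is canonically defined by $l_{i,k}=f_i^{\,n_i-k}\cdot l_i$ (with $l_{i,n_i}=l_i$), so that both cited results speak about the same refined object. Second, I would apply Proposition \ref{prop:2022.2.23.20.13} to translate the simplicity hypothesis/conclusion into the property ``undecomposable and tame'' for the refined bundle. Third, I would invoke Theorem \ref{thm:2021.10.27.13.44} to translate this in turn into $\boldsymbol{w}$-stability for some $\boldsymbol{w}$. Both directions of the iff are handled symmetrically.

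There is no genuine obstacle here: all the work has been done in the two referenced statements, the corollary simply records their combination. The only thing to be a bit careful about is making sure the reader sees that the refined parabolic bundle appearing in Proposition \ref{prop:2022.2.23.20.13} is the very one to which Theorem \ref{thm:2021.10.27.13.44} is applied, but this is immediate from the canonical construction of $l_{i,\bullet}$ from $l_i$ recalled in the definition of refined parabolic bundles. Hence the proof reduces to the single sentence ``combine Proposition \ref{prop:2022.2.23.20.13} with Theorem \ref{thm:2021.10.27.13.44}.''
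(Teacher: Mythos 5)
Your proposal is correct and coincides with the paper's own argument: the corollary is stated immediately after Theorem \ref{thm:2021.10.27.13.44} with the remark that it follows ``by Proposition \ref{prop:2022.2.23.20.13}'', i.e.\ exactly the two-step chaining of equivalences you describe. Your additional care in noting that the canonical refined structure $l_{i,k}=f_i^{\,n_i-k}\cdot l_i$ is the same object in both cited results is a reasonable (if implicit in the paper) clarification.
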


\section{Elementary transformation of refined parabolic bundles}

For studying parabolic connections or parabolic bundles (where $D$ is a reduced effective divisor),
the elementary transformations play an important role.
In this section, we will define elementary transformations for refined parabolic bundles
and will show some properties of the elementary transformations.

\subsection{Elementary transformations for $\Lambda$-connections}
First we recall the elementary transformations for $\Lambda$-connections.
Let $(E,\nabla)$ be a $\Lambda$-connection.
For $(E,\nabla)$ we may define transformations as follows.
Let $\bold{l} = \{ l_i \}_{i \in I}$ be the corresponding parabolic structure of $(E,\nabla)$.
That is, $l_i$ is a free submodule of $E|_{n_i[t_i]}$.
We take an integer $k$ where $1\leq k \leq n_i$.
Let 
$\varphi_{k} \colon E|_{n_i[t_i]} \rightarrow E|_{k[t_i]}$
be the natural morphism.
For the free submodule $\varphi_{k}(l_i)$ of $E|_{k[t_i]}$, 
the vector bundle $E'_{\varphi_{k}(l_i)}$ is defined by the exact sequence of sheaves
$$
0 \longrightarrow  E'_{\varphi_{k}(l_i)} \longrightarrow E \longrightarrow 
E|_{k [t_i]}/\varphi_{k}(l_i) \longrightarrow 0.
$$
Note that $E|_{k [t_i]}/\varphi_{k}(l_i)$ is a skyscraper sheaf supported on $t_i$.
The degree of $E'_{\varphi_{k}(l_i)}$ is $\deg(E) - k$.
Let $\nabla_{\varphi_{k}(l_i)}$ be the induced connection by
the morphism $E'_{\varphi_{k}(l_i)} \rightarrow E$.
So we have a transformation $(E'_{\varphi_{k}(l_i)},\nabla_{\varphi_{k}(l_i)})$.
Let $\Lambda'_{i,k}$ be the formal data 
of $(E'_{\varphi_{k}(l_i)},\nabla_{\varphi_{k}(l_i)})$ (see Definition \ref{2022_9_29_12_06}
for the definition of the formal data).
By this transformation, we have a map 
$\CON_{\Lambda}(D) \rightarrow \CON_{\Lambda'_{i,k}}(D)$.

\subsection{Elementary transformations for parabolic bundles}
Second we recall the elementary transformations for parabolic bundles.
Let $(E,\{ l_i \}_{i \in I})$ be a parabolic bundle.
For $(E,\{ l_i \}_{i \in I})$ we may define transformations as follows.
For the free submodule $l_i$ of $E|_{n_i[t_i]}$, 
the vector bundle $E'_i$ is defined by the exact sequence of sheaves
$$
0 \longrightarrow  E'_{i} \longrightarrow E \longrightarrow 
E|_{n_i [t_i]}/l_i \longrightarrow 0.
$$
The degree of $E'_i$ is $\deg(E) - n_i$.
We have a filtration of sheaves
$$
\xymatrix{
E'_i \otimes \mathcal{O}(-n_i[t_i]) \ar[r]^-{\subset} 
&E\otimes \mathcal{O}(-n_i[t_i]) \ar[r]^-{\subset} & E'_i \ar[r]^-{\subset} & E. 
}
$$
We define a parabolic structure $l_{i}'$ on $E'_i$ by   
$$
l_{i}' = (E\otimes \mathcal{O}(-n_i[t_i]))/(E'_i \otimes \mathcal{O}(-n_i[t_i])) .
$$
So we have a transformation 
$(E'_{i},\{ l_{j}\}_{j \in I\setminus \{ i\}} \cup \{ l_{i}'\} )$.
We call this transformation the {\it elementary transformation} 
of a parabolic bundles $(E,\{ l_i \}_{i \in I})$ at $n_i[t_i]$.
We have the elementary transformations for $\Lambda$-connections and parabolic bundles: 
$$ 
(E,\nabla) \mapsto (E'_{\varphi_{k}(l_i)},\nabla_{\varphi_{k}(l_i)}) \quad
\text{and}
\quad  (E,\{ l_i \}_{i \in I}) \mapsto (E'_{i},\{ l_{j}\}_{j \in I\setminus \{ i\}} \cup \{ l_{i}'\} ),
$$
respectively.
On the other hand, we have a correspondence 
$(E,\nabla) \mapsto (E,\{ l_i \}_{i \in I})$.
If $k=n_i$, then these transformations are compatible with this correspondence.

\subsection{Definition of elementary transformations 
for refined parabolic bundles}\label{section:2021.12.31.15.32}
Now we consider the extension of the elementary transformation 
of parabolic bundles.
That is, we will define the elementary transformation 
for refined parabolic bundles.
For this purpose, we rephrase refined parabolic structures 
as filtrations of sheaves.
By this rephrasing, we may define the elementary transformation 
for refined parabolic bundles simply. 
Let $(E , \bold{l})$ be a refined parabolic bundle,
where $\bold{l} =\{ l_{i,\bullet } \}_{i \in I}$.
We take $i_0 \in I$
and consider the filtration $l_{i_0 ,\bullet}\colon 
E|_{n_{i_0} [t_{i_0}]} \supset l_{i_0,n_{i_0}} \supset l_{i_0,n_{i_0}-1} \supset \cdots \supset 
l_{i_0,1} \supset 0 $.
We set 
$$
E_{i_0}^{(k)} := \ker (E \longrightarrow E|_{n_{i_0}[t_{i_0}]} / l_{i_0,k})
$$
for $k=0,1,2, \ldots,n_{i_0}$.
Then we have a filtration of sheaves 
\begin{equation}\label{eq:FiltSheavesATt_i0}
E \supset E^{(n_{i_0})}_{i_0} \supset
 E^{(n_{i_0}-1)}_{i_0} \supset \cdots \supset 
E_{i_0}^{(1)} \supset E_{i_0}^{(0)} =  E(-n_{i_0}[t_{i_0}])
\end{equation}
where $\mathrm{length} (E / E_{i_0}^{(n_{i_0})})= n_{i_0}$ and 
$\mathrm{length} (E_{i_0}^{(k)} / E_{i_0}^{(k-1)})= 1$ for $k=1,2,\ldots, n_{i_0}$.
Conversely, 
if we set $l_{i_0,k} := E_{i_0}^{(k)} /  E_{i_0}^{(0)}$ for $k=0,1,2,\ldots,n_{i_0}$, 
then we have 
a filtration 
$$
E / E_{i_0}^{(0)} = E|_{n_i [t_i]} \supset l_{i_0,n_{i_0}} \supset 
l_{i_0,n_{i_0}-1} \supset \cdots \supset 
l_{i_0,1} \supset 0 
$$
of $\mathcal{O}_{n_{i_0} [t_{i_0}]}$-modules where the length of $l_{i_0,k}$ is $k$.

Now we will transform the filtration \eqref{eq:FiltSheavesATt_i0}
into a new filtration of sheaves.
We set $E_{i_0}'= E^{(n_{i_0})}_{i_0}$.
Since $\mathrm{length} (E_{i_0}^{(k)} / E_{i_0}^{(k-1)})= 1$ for $k=1,2,\ldots, n_{i_0}$, 
we may check that $E_{i_0}^{(k-1)} \supset E_{i_0}^{(k)} (-[t_{i_0}])$ for $k=1,2,\ldots, n_{i_0}$.
So we can define a filtration of sheaves as 
$$
\begin{aligned}
E'_{i_0}&=E^{(n_{i_0})}_{i_0} 
\supset E_{i_0}^{(0)} \supset E_{i_0}^{(1)}(-[t_{i_0}])  
\supset E_{i_0}^{(2)}(-2[t_{i_0}])  \supset 
\cdots \\
&\quad \cdots   \supset 
E_{i_0}^{(n_{i_0}-1)} (-(n_{i_0}-1)[t_{i_0}]) 
\supset E_{i_0}^{(n_{i_0})}(-n_{i_0}[t_{i_0}]).
\end{aligned}
$$
This is a new filtration of sheaves.
For $k=0,1,\ldots, n_{i_0}$,
we set 
$$
(E'_{i_0})^{(n_{i_0}-k)} := E^{(k)} (-k [t_{i_0}]) .
$$
We will check that $\mathrm{length} (E'_{i_0} / (E_{i_0}')^{(n_{i_0})})= n_{i_0}$ and 
$\mathrm{length} ((E_{i_0}')^{(k)} / (E_{i_0}')^{(k-1)})= 1$ for $k=1,2,\ldots, n_{i_0}$
as follows.
For $k=1,2,\ldots, n_{i_0}$, 
we have the following equalities:
$$
\begin{aligned}
&\mathrm{length} (E_{i_0}^{(k)}/E_{i_0}^{(k-1)})+
\mathrm{length} ((E_{i_0}')^{(n_{i_0}-k+1)}/(E_{i_0}')^{(n_{i_0}-k)} ) \\
&=\mathrm{length} (E_{i_0}^{(k)}/E_{i_0}^{(k-1)})+
\mathrm{length} (E_{i_0}^{(k-1)}(-(k-1) [t_{i_0}])/E_{i_0}^{(k)}(-k [t_{i_0}]) ) \\
& =
\mathrm{length} (E_{i_0}^{(k)}/E_{i_0}^{(k)}(-[t_{i_0}]))=2.
\end{aligned}
$$
Since $\mathrm{length} (E_{i_0}^{(k)}/E_{i_0}^{(k-1)})=1$ for $k=1,2,\ldots, n_{i_0}$,
we have 
$$
\mathrm{length} ((E_{i_0}')^{(n_{i_0}-k+1)}/(E_{i_0}')^{(n_{i_0}-k)} ) =1
$$
for $k=1,2,\ldots, n_{i_0}$.
Moreover we may check that 
$\mathrm{length} (E_{i_0}'/(E_{i_0}')^{(n_{i_0})} ) =n_{i_0}$.
Finally, 
if we set $l_{i_0,k}' := (E_{i_0}')^{(k)} /  (E_{i_0}')^{(0)}$ for $k=0,1,2,\ldots,n_{i_0}$, 
then we have 
a filtration 
\begin{equation}\label{eq:ElmOfRefinedStr}
 E|_{n_i [t_i]} \supset l'_{i_0,n_{i_0}} \supset 
l'_{i_0,n_{i_0}-1} \supset \cdots \supset 
l'_{i_0,1} \supset 0 
\end{equation}
of $\mathcal{O}_{n_{i_0} [t_{i_0}]}$-modules where the length of $l'_{i_0,k}$ is $k$.

\begin{definition}
We fix $i_0$ where $ i_0 \in I$.
We define the {\rm elementary transformation} of 
$(E, \{ l_{ i,\bullet} \}_{ i \in I  })$ at $n_{i_0}[t_{i_0}]$ by
$$
\elm^-_{i_0}(E, \{ l_{i,\bullet} \}_{ i \in I} )
=(E'_{i_0},\{ l_{i,\bullet}\}_{i \in I\setminus \{ i_0\}} \cup \{ l_{i_0,\bullet}'\} ).
$$
The degree of $E'_{i_0}$ is $d-n_{i_0}$.
\end{definition}

\subsection{Some properties of elementary transformations 
for refined parabolic bundles}\label{section:2021.12.31.15.34}
We take $i \in I$.
Let $l_{i,\bullet} \colon l_{i,n_i} \supset \cdots \supset l_{i,1} \supset 0$ be a refined parabolic structure 
at $n_i[t_i]$
and $l_{i,\bullet}'$ be its transformation. 
For $l_{i,\bullet}$ and $l_{i,\bullet}'$, we can define 
standard tableaus $T$ and $T'$, respectively 
(see Definition \ref{def:StandTab}).
We will consider the relation between these standard tableaus $T$ and $T'$. 
Let $\lambda_{i}^{(k)}$ be the Young diagram
which is the type of $l_{i,k}$.
We define integers $a^k_1$ and $a^k_2$ so that
$\lambda_{i}^{(k)}=(1^{a^k_1},2^{a^k_2})$
for $k=1,2,\ldots,n_i$.
We set 
$$
X_1(k):=  a^{n_{i}}_1+a^{n_{i}}_2 -a^{k}_1-a^{k}_2 
\quad \text{and}
\quad X_2(k):=  a^{n_{i}}_2  -a^{k}_2 .
$$

\begin{definition}
We say a refined parabolic structure $l_{i,\bullet}$ is {\rm generic} if 
\begin{itemize}
\item $a_2^{n_i} =0$, or 

\item $a_2^{n_i} \neq 0$ and we can take generators $\boldsymbol{v}_1, \boldsymbol{v}_2 \in l_{i,n_i}$
such that
$\length (\langle \boldsymbol{v}_1 \rangle)=a^{n_{i}}_1+a^{n_{i}}_2$,
$\length (\langle \boldsymbol{v}_2 \rangle)=a^{n_{i}}_2$, 
and 
$l_{i,\bullet}$ satisfies
$$
l_{i,k}/ \left\langle f_i^{X_1(k)}\boldsymbol{v}_1,\ 
f_i^{X_2(k)} \boldsymbol{v}_2 \right\rangle  \neq 0
$$
for any $k$ with $a_2^{k+1} -a_2^{k} > 0$.
\end{itemize}
\end{definition}

Now we consider examples of generic $l_{i,\bullet}$.
Set $\lambda_i^{(n_i)}= (1^{n_i-2},2^1)$.
So $a_1^{n_i}=n_i-2$ and $a_2^{n_i}=1$.
Let $T_{k}$ be the standard tableau defined in \eqref{eq:2021.8.15.18.46}
for $k=1,2,\ldots,n-1$.
We fix $k_0$ ($1\leq k_0 \leq n-1$).
If the standard tableau of a refined parabolic bundle is $T_{k_0}$,
then $a_2^{n-k_0+1}-a_2^{n-k_0}=1>0$ and $a_2^{k+1}-a_2^{k}=0$ when $k\neq n- k_0$.
We take elements $\boldsymbol{v}_1, \boldsymbol{v}_2 \in E|_{n[t]}$
such that
$\length (\langle \boldsymbol{v}_1 \rangle)=n_i-1$, 
and $\length (\langle \boldsymbol{v}_2 \rangle)=1$,
and $l_{i,n_i}=\langle \boldsymbol{v}_1, \boldsymbol{v}_2 \rangle_{\mathcal{O}_{n [t]}}$.
In particular, $f_i^{n_i-1} \boldsymbol{v}_1=f_i \boldsymbol{v}_2=0$.
We can describe 
a refined parabolic structure as follows:
$$
\begin{aligned}
l_{i,n_i}=&\langle \boldsymbol{v}_1 , \boldsymbol{v}_2 \rangle 
\supset \langle f_i \boldsymbol{v}_1 , \boldsymbol{v}_2 \rangle
\supset \langle f_i^2 \boldsymbol{v}_1 , \boldsymbol{v}_2 \rangle \supset \cdots 
\supset \langle f_i^{k_0-1} \boldsymbol{v}_1 , \boldsymbol{v}_2 \rangle  \\
&\qquad
\supset \langle  f_i^{k_0-1} \boldsymbol{v}_1 +\alpha \boldsymbol{v}_2 \rangle 
\supset  \langle  f_i^{k_0} \boldsymbol{v}_1  \rangle \supset \cdots
\supset \langle  f_i^{n_i-2} \boldsymbol{v}_1 \rangle,
\end{aligned}
$$
where $\alpha \in \mathbb{C}$.
This refined parabolic structure is generic if, and only if, 
$\alpha \neq 0$.

To describe the standard tableau of 
the transformation $l_{i,\bullet}'$,
first, we define integers $b_{n_i},b_{n_i-1},\ldots,b_1$ by
$b_{n_i}= a_1^{n_i}$
and $b_k = \min \{ b_{k+1}, a_{1}^k \}$
for $k=n_i-1,\ldots,2,1$.
Second, we modify $X_1(k)$ and $X_2(k)$ by using $b_{k}$
as follows:
$$
X^b_1(k):=  X_1(k) +a^{k}_1-b_{k}  \quad
\text{and}
\quad X^b_2(k):=  X_2(k) -a^{k}_1+b_{k}  .
$$
We claim that $X^b_2(k)\geq 0$.
Indeed, $X^b_2(k)=a_2^{n_i}-a_1^k -a_2^k + b_{k}$.
When $a_1^k +a_2^k>a_2^{n_i}$, 
we have $\min\{ a^{n-1}_1,a_1^{n-2},\ldots,a_1^k\} \geq a_1^k +a_2^k - a_2^{n_i}$
since we have the inclusions 
$l_{i,n_{i}}\supset l_{i,n_{i}-1} \supset l_{i,n_{i}-2} \supset \cdots \supset l_{i,k} $.
Then we obtain $X^b_2(k)\geq 0$.

\begin{prop}\label{prop:2021.9.9.9.53}
Let $l'_{i,\bullet}$ be the transformation \eqref{eq:ElmOfRefinedStr} of $l_{i,\bullet}$.
Assume that $l_{i,\bullet}$ is generic.
Then the type of $l'_{i,n_i-k}$ is the following Young diagram:
$$
\begin{cases}
(1^{X_2^b(k)-X_1^b(k)},2^{X_1^b(k)}) & \text{when $X^b_2(k)-X^b_1(k)\geq 0$}\\
(1^{X_1^b(k)-X_2^b(k)},2^{X_2^b(k)}) & \text{when $X^b_2(k)-X^b_1(k)<0$}.
\end{cases}
$$
\end{prop}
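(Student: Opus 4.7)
My plan is to work locally at $t_i$, tracking the elementary transformation generator by generator on the completed local ring $\widehat{\mathcal{O}}_{\mathbb{P}^1,t_i}$. By genericity, I first choose $\boldsymbol{v}_1,\boldsymbol{v}_2\in l_{i,n_i}$ with $\length\langle\boldsymbol{v}_1\rangle=a_1^{n_i}+a_2^{n_i}$ and $\length\langle\boldsymbol{v}_2\rangle=a_2^{n_i}$ satisfying the defining property of genericity, and lift them to a local frame $\tilde{\boldsymbol{v}}_1,\tilde{\boldsymbol{v}}_2$ of $E$ near $t_i$ (extending $\tilde{\boldsymbol{v}}_2$ arbitrarily when $a_2^{n_i}=0$). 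In this frame, $E(-n_i[t_i])$ is the submodule $f_i^{n_i}\tilde{\boldsymbol{v}}_1\mathcal{O}+f_i^{n_i}\tilde{\boldsymbol{v}}_2\mathcal{O}$.

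Next I would describe $E_i^{(k)}$ locally as the subsheaf $E(-n_i[t_i])+\widetilde{l_{i,k}}$, where $\widetilde{l_{i,k}}$ is the local preimage of $l_{i,k}$. A downward induction on $k$ from $n_i$, using the genericity hypothesis precisely at each step $k$ where $a_2^{k+1}-a_2^k>0$, produces explicit generators for $\widetilde{l_{i,k}}$ of the form $f_i^{p(k)}\tilde{\boldsymbol{v}}_1+\alpha_k f_i^{q(k)}\tilde{\boldsymbol{v}}_2,\ f_i^{p(k)+1}\tilde{\boldsymbol{v}}_1,\ f_i^{q(k)}\tilde{\boldsymbol{v}}_2$ (with $\alpha_k\ne0$), or just $f_i^{p(k)}\tilde{\boldsymbol{v}}_1,\,f_i^{q(k)}\tilde{\boldsymbol{v}}_2$ when no shear is needed. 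The integers $p(k),q(k)$ will turn out to be determined by the running minimum $b_k$: specifically $p(k)=n_i-b_k$ and $q(k)=n_i-(a_1^k+a_2^k-b_k+a_2^{n_i}-a_2^k)$, reflecting the fact that once $a_1^\bullet$ drops below the maximum it reached earlier in the filtration, the $\boldsymbol{v}_1$-generator cannot simply be reused — its "excess" $a_1^k-b_k$ must be rerouted through $\boldsymbol{v}_2$.

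Twisting by $\mathcal{O}(-k[t_i])$ then shifts both exponents by $k$, and quotienting $E_i^{(k)}(-k[t_i])$ by $E'_i(-n_i[t_i])=f_i^{n_i}\tilde{\boldsymbol{v}}_1\mathcal{O}+f_i^{n_i}\tilde{\boldsymbol{v}}_2\mathcal{O}$ kills the $E(-n_i[t_i])$ part and leaves $l'_{i,n_i-k}$ generated, in the new local frame of $E'_i$, by two classes whose orders of vanishing with respect to the filtration $V_\bullet$ on $E'_i|_{n_i[t_i]}$ are exactly $n_i-X_1^b(k)$ and $n_i-X_2^b(k)$. Reading off the Young diagram from these two generators via Lemma \ref{lem:2021.9.13.22.13} gives the stated case split: the generator of larger length contributes a column of height $\min(X_1^b(k),X_2^b(k))$ of width two, and the difference $|X_1^b(k)-X_2^b(k)|$ contributes the single-column part.

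The main obstacle will be verifying the inductive step across indices $k$ where $a_2^{k+1}-a_2^k>0$: here the generic hypothesis supplies the nonzero scalar $\alpha_k$ that is needed so that, after the twist, the corresponding generator picks up exactly the right order of vanishing rather than one more. It is also at these jumps that the correction $a_1^k-b_k$ enters, because the shear from a previous jump may still be present in $\widetilde{l_{i,k}}$ and must be propagated through the transformation. Once the local computation is stabilized, the Young diagram description is immediate.
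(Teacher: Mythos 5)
Your overall strategy coincides with the paper's: fix generators $\boldsymbol{v}_1,\boldsymbol{v}_2$ of $l_{i,n_i}$ adapted to the genericity hypothesis, establish by downward induction a normal form for every $l_{i,k}$ relative to this fixed pair, then twist by $f_i^{k}$, pass to the quotient by $E'_i(-n_i[t_i])$, and read off the type from the annihilator orders $X_1^b(k)$ and $X_2^b(k)$ of two generating classes. You also correctly locate where genericity is consumed (the indices with $a_2^{k+1}-a_2^k>0$) and why the running minimum $b_k$ must enter.

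The step that would fail is the normal form itself. The three proposed generators $f_i^{p(k)}\tilde{\boldsymbol{v}}_1+\alpha_k f_i^{q(k)}\tilde{\boldsymbol{v}}_2$, $f_i^{p(k)+1}\tilde{\boldsymbol{v}}_1$, $f_i^{q(k)}\tilde{\boldsymbol{v}}_2$ generate exactly the same module as $f_i^{p(k)}\tilde{\boldsymbol{v}}_1$ and $f_i^{q(k)}\tilde{\boldsymbol{v}}_2$: subtracting $\alpha_k$ times the third from the first recovers $f_i^{p(k)}\tilde{\boldsymbol{v}}_1$, so $\alpha_k$ is invisible and your ``shear'' and ``no shear'' cases describe the same (split) module. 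Since for a generic filtration the $l_{i,k}$ are genuinely non-split with respect to the fixed pair $\boldsymbol{v}_1,\boldsymbol{v}_2$ --- that is the entire point of the genericity condition --- this normal form cannot carry the induction. The form that works (and that the paper proves) is
$$
l_{i,k}=\left\langle f_i^{X_1(k)}\boldsymbol{v}_1+f_i^{X_2^b(k)}F(f_i)\boldsymbol{v}_2,\ f_i^{X_2(k)}\boldsymbol{v}_2\right\rangle
$$
with $F$ a unit; the decisive feature is the inequality $X_2^b(k)\le X_2(k)$, i.e.\ the $\boldsymbol{v}_2$-component of the sheared generator vanishes to \emph{lower} order than the pure $\boldsymbol{v}_2$-generator, which is exactly why the shear cannot be cancelled and why, after twisting, the combination $f_i^{X_1^b(k)-X_1(k)}\tilde{\boldsymbol{v}}'_1-F(f_i)\tilde{\boldsymbol{v}}'_2$ yields the class of order $X_2^b(k)$. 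Two further points: a single scalar $\alpha_k$ does not survive the induction across several jumps (the shears compose), so one must allow a unit power series $F(f_i)$; and your exponents already fail at the base case, since at $k=n_i$ the first generator must reduce to $\boldsymbol{v}_1$ while $p(n_i)=n_i-b_{n_i}=2a_2^{n_i}$. Finally, to pass from ``two generating classes of lengths $X_1^b(k)$ and $X_2^b(k)$'' to the stated Young diagram you should invoke the length count $X_1^b(k)+X_2^b(k)=n_i-k=\length(l'_{i,n_i-k})$, which forces the sum to be direct.
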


\begin{proof}
When $a_2^{n_i}=0$, it is clear.
So we consider the case where $a_2^{n_i}\neq 0$.
We take generators $\boldsymbol{v}_1, \boldsymbol{v}_2$ of $l_{i,n_i}$
as in the definition of the genericness of $l_{i,\bullet}$.
We can take $F(f_i) \in \O_{n_i[t_i]}$ (where $F(0)\neq0$)
so that 
\begin{equation}\label{eq:2021.9.8.17.54}
l_{i,k} = \left\langle 
f_i^{X_1(k)} \boldsymbol{v}_1 + f_i^{X_2^b(k)}  F(f_i) \boldsymbol{v}_2, \ 
f_i^{X_2(k)}  \boldsymbol{v}_2
\right\rangle.
\end{equation}
Indeed, if $k=n_i$, then
$$
\left\langle 
f_i^{X_1(k)} \boldsymbol{v}_1 + f_i^{X_2^b(k)}  F(f_i) \boldsymbol{v}_2, \ 
f_i^{X_2(k)}  \boldsymbol{v}_2
\right\rangle = \langle \boldsymbol{v}_1, \ \boldsymbol{v}_2  \rangle
$$
since $X_1(n_i) =X_2(n_i)=0$ and $b_{n_i} =a_1^{n_i}$.
We fix $k$ ($1\leq k \leq n_i$).
We assume that $l_{i,k}$ has the description \eqref{eq:2021.9.8.17.54}
for the fixed $k$.
Since $X_2^b(k) \leq X_2(k)$ and $F(0)\neq0$, we have an inequality
$$
\length
\left(f_i^{X_1(k)} \boldsymbol{v}_1 + f_i^{X_2^b(k)}  F(f_i) \boldsymbol{v}_2\right)
\geq \length\left(f_i^{X_2(k)}  \boldsymbol{v}_2\right).
$$
First, we consider the case $a_1^{k}=a_1^{k-1}+1$ and $a_2^{k}=a_2^{k-1}$
for the fixed $k$.
In this case,
the submodule $l_{i,k-1}$ has the following descriptions:
$$
l_{i,k-1} =
\left\langle 
f_i \cdot 
\left (f_i^{X_1(k)} \boldsymbol{v}_1 + f_i^{X_2^b(k)} F(f_i) \boldsymbol{v}_2 \right), \ 
f_i^{X_2(k)}  \boldsymbol{v}_2
\right\rangle .
$$
If $b_k > b_{k-1}$, then $b_k = b_{k-1}+1$ and $b_{k-1}=a_1^{k-1}$.
So $X_2^b(k) = X_2^b(k-1) =X_2(k-1)$.
Since $X_1(k)+1=X_{1}(k-1)$, $X_2(k) =X_2(k-1)$ and $X_2^b(k)+1 > X_2(k-1)$, 
there exists 
$$
\begin{aligned}
l_{i,k-1} &=
\left\langle 
f_i^{X_1(k)+1} \boldsymbol{v}_1 + f_i^{X_2^b(k)+1} F(f_i) \boldsymbol{v}_2, \ 
f_i^{X_2(k)}  \boldsymbol{v}_2
\right\rangle \\
&=
\left\langle 
f_i^{X_1(k-1)} \boldsymbol{v}_1 , \ 
f_i^{X_2(k-1)}  \boldsymbol{v}_2
\right\rangle .
\end{aligned}
$$
Here remark that $X_2^b(k-1) =X_2(k-1)$.
If $b_k = b_{k-1}$, then $X_2^b(k)+1  = X_2^b(k-1)$.
Since $X_2(k)=X_2(k-1)$, we have
$$
\begin{aligned}
l_{i,k-1} &=
\left\langle 
f_i^{X_1(k)+1} \boldsymbol{v}_1+ f_i^{X_2^b(k)+1} F(f_i) \boldsymbol{v}_2, \ 
f_i^{X_2(k)}  \boldsymbol{v}_2
\right\rangle \\
&=
\left\langle 
f_i^{X_1(k-1)} \boldsymbol{v}_1 + f_i^{X_2^b(k-1)} F(f_i) \boldsymbol{v}_2, \ 
f_i^{X_2(k-1)}  \boldsymbol{v}_2
\right\rangle .
\end{aligned}
$$
Second, we consider the case  $a_1^{k}=a_1^{k-1}-1$ and $a_2^{k}=a_2^{k-1}+1$
for the fixed $k$.
In this case,
the submodule $l_{i,k-1}$ has the following descriptions:
$$
l_{i,k-1} =
\left\langle 
f_i^{X_1(k)} \boldsymbol{v}_1 + f_i^{X_2^b(k)}  \tilde{F}(f_i) \boldsymbol{v}_2, \ 
f_i^{X_2(k)+1}  \boldsymbol{v}_2
\right\rangle,
$$
since $\boldsymbol{v}_1$
and $\boldsymbol{v}_2$ satisfy the condition of the genericness of $l_{i,\bullet}$.
Here remark that $X_2(k)\geq X^b_2(k)$.
Since $a_1^{k}=a_1^{k-1}-1$, $a_1^k \geq b_k$, and $b_{k-1} = \min \{b_k,a_1^{k-1}\}$, 
we have $b_k = b_{k-1}$. 
We may check that $X_1(k)  = X_1(k-1)$,
$X_2^b(k)  = X_2^b(k-1)$ and $X_2(k)=X_2(k-1)-1$.
So
$$
\begin{aligned}
l_{i,k-1} &=
\left\langle 
f_i^{X_1(k)} \boldsymbol{v}_1 + f_i^{X_2^b(k)}  \tilde{F}(f_i) \boldsymbol{v}_2, \ 
f_i^{X_2(k)+1}  \boldsymbol{v}_2
\right\rangle \\
&=
\left\langle 
f_i^{X_1(k-1)} \boldsymbol{v}_1 + f_i^{X_2^b(k-1)}  \tilde{F}(f_i) \boldsymbol{v}_2, \ 
f_i^{X_2(k-1)}  \boldsymbol{v}_2
\right\rangle.
\end{aligned}
$$
Finally, we have 
the claim that we 
may take
 $F(f_i) \in \O_{n_i[t_i]}$ (where $F(0)\neq0$)
so that \eqref{eq:2021.9.8.17.54}.

We take $\tilde{\boldsymbol{v}}_1, \tilde{\boldsymbol{v}}_2 \in E \otimes \O_{\mathbb{P}^1,t_{i}}$
such that
$\tilde{\boldsymbol{v}}_1|_{n_i[t_i]} =\boldsymbol{v}_1 $
and $\tilde{\boldsymbol{v}}_2|_{n_i[t_i]} =\boldsymbol{v}_2$.
We define $\tilde{\boldsymbol{v}}'_1,
\tilde{\boldsymbol{v}}'_2,
\tilde{\boldsymbol{v}}''_1,
\tilde{\boldsymbol{v}}''_2 \in E\otimes \O_{\mathbb{P}^1,t_i}$ as  
$$
\begin{aligned}
\tilde{\boldsymbol{v}}'_1 =
f_i^{a_1^{k}+2 a_2^{k}} (f_i^{X_1(k)} \tilde{\boldsymbol{v}}_1 + 
f_i^{X_2^b(k)}  F(f_i) \tilde{\boldsymbol{v}}_2), & \quad&
\tilde{\boldsymbol{v}}'_2 =f_i^{a_1^{k}+2 a_2^{k}}f_i^{X_2(k)}  \tilde{\boldsymbol{v}}_2, \\
\tilde{\boldsymbol{v}}''_1 =
f_i^{a_1^{n_{i}}+2 a_2^{n_{i}}} \tilde{\boldsymbol{v}}_1, & \quad&
\tilde{\boldsymbol{v}}''_2 =f_i^{a_1^{n_{i}}+2 a_2^{n_{i}}} \tilde{\boldsymbol{v}}_2  ,
\end{aligned}
$$
respectively. 
Then we have 
$(E_i')^{(k)} \otimes \O_{\mathbb{P}^1,t_i}\cong
\langle  \tilde{\boldsymbol{v}}'_1  \rangle
\oplus 
\langle \tilde{\boldsymbol{v}}'_2  \rangle$ 
and 
$(E_i')^{(n_i)} \otimes \O_{\mathbb{P}^1,t_i} \cong
 \langle \tilde{\boldsymbol{v}}''_1 \rangle
\oplus 
\langle \tilde{\boldsymbol{v}}''_2 \rangle$.
Now we compute the quotient $l_{i,k}'=(E_i')^{(k)}/(E_i')^{(n_i)}$.
We may check the following equalities
$$
\tilde{\boldsymbol{v}}'_1 = f_i^{-X_2(k)} \tilde{\boldsymbol{v}}''_1+
f_i^{-X_1^b(k)} F(f_i) \tilde{\boldsymbol{v}}''_2, \qquad 
\tilde{\boldsymbol{v}}'_2
= f_i^{-X_1(k)}   \tilde{\boldsymbol{v}}_2'' ,
$$
and
$$
f_i^{X_1^b(k)-X_1(k)}\tilde{\boldsymbol{v}}'_1 - F(f_i) \tilde{\boldsymbol{v}}'_2
=f_i^{-X^b_2(k)}\tilde{\boldsymbol{v}}_1''.
$$
We set $\tilde{\boldsymbol{v}}'_3:=f_i^{X_1^b(k)-X_1(k)}\tilde{\boldsymbol{v}}'_1 - F(f_i) \tilde{\boldsymbol{v}}'_2$,
which is an element of $l_{i,k}'$. Here remark that $X_1^b(k) - X_1(k)=a_1^k -b_k\geq 0$.
We have 
$$
f_i^{X_1^b(k)} \tilde{\boldsymbol{v}}'_1 = 
f_i^{X_1(k)} \tilde{\boldsymbol{v}}'_2=
f_i^{X_2^b(k)} \tilde{\boldsymbol{v}}'_3= 0
$$
in $l_{i,k}'$.
Here remark that $X_1^b(k) - X_2(k)=a_1^{n_i} -b_k\geq 0$.
So, when $X^b_2(k)-X^b_1(k)\geq 0$, the type of $l_{i,k}'$ is $(1^{X_2^b(k)-X_1^b(k)},2^{X_1^b(k)})$.
When $X^b_2(k)-X^b_1(k)<0$
the type of $l_{i,k}'$ is
$(1^{X_1^b(k)-X_2^b(k)},2^{X_2^b(k)})$.
\end{proof}

If $a_2^{n_i}=0$ or $a_2^{n_i}=1$, then we have a simple description of the standard tableau of 
the transformation $l'_{i,\bullet}$.
If $a_2^{n_i}=0$, the standard tableau of $l_{i,\bullet}$ is unique. 
We denote by $T_0$ this standard tableau.
Let $T'$ be the standard tableau of the transformation $l'_{i,\bullet}$ of $l_{i,\bullet}$.
Then we have $T'= T_0$ by the proposition above. 
That is, if $(E, \{ l_{i,\bullet} \}_{ i \in I} )$
is an ordinary parabolic bundle, then
$\elm^-_{i_0}(E, \{ l_{i,\bullet} \}_{ i \in I} )$
is also an ordinary parabolic bundle.
When $a_2^{n_i}=1$, 
description of the standard tableau of 
the transformation $l'_{i,\bullet}$ is as follows:

\begin{cor}\label{cor:2021.10.22.14.33}
We assume that $\lambda^{(n)}=(1^{n-2},2^1)$.
Let $T_{k}$ be the standard tableau defined in \eqref{eq:2021.8.15.18.46}
for $k=1,2,\ldots,n-1$.
Let $l_{i,\bullet}$ be a generic refined parabolic structure with 
the standard tableau $T_k$.
Let $l'_{i,\bullet}$ be the transformation \eqref{eq:ElmOfRefinedStr} of $l_{i,\bullet}$
and $T'$ be the standard tableau of $l'_{i,\bullet}$.
Then $T'= T_{n-k}.$
\end{cor}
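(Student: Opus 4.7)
The plan is to deduce the corollary as a direct application of Proposition \ref{prop:2021.9.9.9.53}, which already describes the type of each $l'_{i,n_i-k}$ in terms of the integers $X_1^b(k)$ and $X_2^b(k)$. Since $\lambda_{i}^{(n)}=(1^{n-2},2^{1})$, we have $a_1^{n}=n-2$ and $a_2^{n}=1$, and the genericness assumption on $l_{i,\bullet}$ matches the hypothesis of that proposition. So what remains is a computational verification using the specific sequence of Young diagrams that defines $T_{k_0}$.

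First I would tabulate $(a_1^{n-j},a_2^{n-j})$ along the tableau $T_{k_0}$: namely $(n-2-j,1)$ for $0\le j\le k_0-1$, then $(n-k_0,0)$ at $j=k_0$, and $(n-j,0)$ for $k_0\le j\le n-1$. From this, the recursion $b_n=a_1^{n}$ and $b_{n-j}=\min\{b_{n-j+1},a_1^{n-j}\}$ yields $b_{n-j}=n-2-j$ for $0\le j\le k_0-1$ and $b_{n-j}=n-j-[j\ge k_0+1]$ for $j\ge k_0$; more concretely, $b_{n-k_0}=b_{n-k_0-1}=n-k_0-1$ and $b_{n-j}=n-j$ for $j\ge k_0+1$. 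Plugging into the definitions of $X_1,X_2,X_1^b,X_2^b$, I expect to find
\[
(X_1^b(n-j),X_2^b(n-j))=\begin{cases}(j,0)&0\le j\le k_0-1,\\ (k_0,0)&j=k_0,\\ (j-1,1)&k_0+1\le j\le n-1.\end{cases}
\]

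Setting $j':=n-j$ (so that $l'_{i,j'}$ is the object whose type we want) and invoking Proposition \ref{prop:2021.9.9.9.53} case by case, the type of $l'_{i,j'}$ comes out as $(1^{n-j'-2},2^{1})$ for $j'\ge k_0+1$, as $(1^{k_0},2^{0})$ for $j'=k_0$, and as $(1^{j'},2^{0})$ for $j'\le k_0-1$. Reading this filtration downward from $j'=n$, the sequence of Young diagrams is exactly the one defining $T_{n-k_0}$, and the corollary follows.

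The only potentially delicate step is the bookkeeping for $b_{n-j}$ and $X_i^b$ right at the jump $j=k_0$, where $a_2$ drops from $1$ to $0$ and $a_1$ simultaneously increases; but this is a finite case-check rather than a conceptual obstacle. No new geometric input beyond Proposition \ref{prop:2021.9.9.9.53} is needed.
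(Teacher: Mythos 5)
Your proposal follows the paper's own proof exactly: tabulate $(a_1^{k'},a_2^{k'})$ along $T_{k_0}$, compute $b_{k'}$ and then $X_1^b,X_2^b$ (your table agrees with the paper's), and read off the new types from Proposition \ref{prop:2021.9.9.9.53}. There is one bookkeeping slip in the final translation: by Proposition \ref{prop:2021.9.9.9.53} the object whose type is governed by $X^b(n-j)$ is $l'_{i,j}$ (so you should set $j'=j$, not $j'=n-j$), and accordingly the type for $j'\ge k_0+1$ is $(1^{j'-2},2^1)$ rather than $(1^{n-j'-2},2^1)$ --- the latter has $n-j'$ boxes where a length-$j'$ module needs $j'$ --- after which the resulting sequence of diagrams is indeed $T_{n-k_0}$.
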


\begin{proof}
By direct computation, we may check that
$$
a_1^{k'} =
\begin{cases}
k'-2 & \text{when $n-k+1 \leq k'\leq n$} \\
k' & \text{when $1 \leq k'\leq n-k$}
\end{cases}
\quad \text{and} \quad
a_2^{k'} =
\begin{cases}
1 & \text{when $n-k+1 \leq k'\leq n$} \\
0 & \text{when $1 \leq k'\leq n-k$}
\end{cases}
$$
and
$$
b_{k'} =
\begin{cases}
a_1^{k'} & \text{when $k'\neq n-k$} \\
a_1^{k'} -1 & \text{when $k'= n-k$}.
\end{cases}
$$
Then we have the following equalities
$$
X_1^b(k') =
\begin{cases}
n-k' & \text{when $n-k \leq k'\leq n-1$} \\
n-k'-1 & \text{when $1 \leq k'\leq n-k-1$}
\end{cases}
\quad \text{and} \quad
X_2^b(k') =
\begin{cases}
0 & \text{when $n-k \leq k'\leq n-1$} \\
1 & \text{when $1 \leq k'\leq n-k-1$}.
\end{cases}
$$
By Proposition \ref{prop:2021.9.9.9.53},
we have the claim.
\end{proof}

Now,
we discuss a relation between 
the elementary transformations and 
the stability condition.

\begin{prop}\label{prop:2021.10.21.15.22}
We take $i_0 \in I$.
Let $(E'_{i_0}, \bold{l}'_{i_0} )=\elm^-_{n_{i_0}[t_{i_0}]}(E, \{ l_{i,\bullet} \}_{ i \in I} )$.
Let $L$ be a line subbundle of $E$ 
and $L'$ be a induced line subbundle of $E'_{i_0}$ via 
$E'_{i_0} \subset  E$.
We set 
$$
w'_{i, k} = 
\begin{cases}
1- w_{i,n_{i_0} -k+1} & \text{for $i=i_0$ and $k=1,\ldots,n_{i_0}$} \\
w_{i,k} & \text{for $i\neq i_0$ and $k=1,\ldots,n_{i}$}.
\end{cases}
$$
Let $\mathrm{Stab}_{\boldsymbol{w}'} (L')$ be the stability index of $L'$
with respect to $(E'_{i_0}, \bold{l}'_{i_0} )$ and the weights $\boldsymbol{w}'=\{ w_{i,k}'\}$.
Then
$$
\mathrm{Stab}_{\boldsymbol{w}} (L)
=\mathrm{Stab}_{\boldsymbol{w}'} (L').
$$
\end{prop}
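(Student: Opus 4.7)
The plan is to establish the identity by direct computation, exploiting the filtered description of the elementary transformation given in Section \ref{section:2021.12.31.15.32}. Since $l'_{i,\bullet} = l_{i,\bullet}$ and $L'|_{n_i[t_i]} = L|_{n_i[t_i]}$ for every $i \neq i_0$, the summands $\epsilon_{i,k}(L) w_{i,k}$ and $\epsilon_{i,k}(L') w'_{i,k}$ agree for all such $i$, so the problem reduces to a local computation at $t_{i_0}$ combined with the comparison of the ambient degrees.

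To organize the local analysis, I would introduce the integers
\[
s_k := \length\bigl(L/(L \cap E_{i_0}^{(k)})\bigr) \qquad (k=0,1,\ldots, n_{i_0}),
\]
with $E_{i_0}^{(k)}$ as in \eqref{eq:FiltSheavesATt_i0}. Locally one has $L \cap E_{i_0}^{(k)} = L(-s_k[t_{i_0}])$, and the exact sequence $0 \to L' \to L \to L/(L \cap E_{i_0}^{(n_{i_0})}) \to 0$ yields $\deg(L') = \deg(L) - s_{n_{i_0}}$. Moreover the one-box jumps $m_k := \length\bigl((l_{i_0,k}\cap L|_{n_{i_0}[t_{i_0}]})/(l_{i_0,k-1}\cap L|_{n_{i_0}[t_{i_0}]})\bigr)$ satisfy $m_k = s_{k-1} - s_k$, so $\epsilon_{i_0,k}(L) = 1 - 2(s_{k-1}-s_k)$.

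The key step is the sign-reversal identity $\epsilon'_{i_0, n_{i_0}-k+1}(L') = -\epsilon_{i_0, k}(L)$. For this I would use the explicit formula $(E'_{i_0})^{(n_{i_0}-k)} = E_{i_0}^{(k)}(-k[t_{i_0}])$ of Section \ref{section:2021.12.31.15.32}. Since $E_{i_0}^{(k)}(-k[t_{i_0}]) \subset E'_{i_0}$ and $L' = L \cap E'_{i_0}$, one obtains
\[
L' \cap (E'_{i_0})^{(n_{i_0}-k)} = L \cap E_{i_0}^{(k)}(-k[t_{i_0}]),
\]
and a direct local calculation (using that $L$ is a line bundle) identifies the right-hand side with $L(-(s_k+k)[t_{i_0}])$. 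Consequently $s'_{n_{i_0}-k} := \length\bigl(L'/(L' \cap (E'_{i_0})^{(n_{i_0}-k)})\bigr) = s_k + k - s_{n_{i_0}}$, and subtracting consecutive values yields $m'_{n_{i_0}-k+1} = 1 - m_k$, i.e.\ $\epsilon'_{i_0, n_{i_0}-k+1}(L') = -\epsilon_{i_0,k}(L)$.

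Finally, substituting $w'_{i_0,k} = 1 - w_{i_0,n_{i_0}-k+1}$ in $\mathrm{Stab}_{\boldsymbol{w}'}(L')$ and reindexing via $j = n_{i_0}-k+1$, the parabolic contribution at $t_{i_0}$ becomes $-\sum_j \epsilon_{i_0,j}(L) + \sum_j \epsilon_{i_0,j}(L) w_{i_0,j}$; the leftover constant $-\sum_j \epsilon_{i_0,j}(L) = 2 s_{n_{i_0}} - n_{i_0}$ cancels exactly against $\bigl(\deg(E'_{i_0}) - 2\deg(L')\bigr) - \bigl(\deg(E) - 2\deg(L)\bigr) = -n_{i_0} + 2 s_{n_{i_0}}$, recovering $\mathrm{Stab}_{\boldsymbol{w}}(L)$. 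The only real bookkeeping obstacle is the index flip $k \leftrightarrow n_{i_0}-k+1$ dual to the weight symmetry $w \mapsto 1-w$; conceptually, elementary transformation reverses the local filtration at $t_{i_0}$, and the transformation $w \mapsto 1-w$ on the weights precisely compensates for this reversal.
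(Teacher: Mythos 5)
Your argument follows essentially the same route as the paper's proof: both rest on the degree shifts $\deg(E'_{i_0})=\deg(E)-n_{i_0}$ and $\deg(L')=\deg(L)-s_{n_{i_0}}$, the complementarity identity $m_k+m'_{n_{i_0}-k+1}=1$ (equivalently $\epsilon'_{i_0,\,n_{i_0}-k+1}(L')=-\epsilon_{i_0,k}(L)$, which the paper obtains from the chain $E^{(k)}_{i_0}\supset E^{(k-1)}_{i_0}\supset E^{(k)}_{i_0}(-[t_{i_0}])$ and you obtain from the explicit formula $(E'_{i_0})^{(n_{i_0}-k)}=E^{(k)}_{i_0}(-k[t_{i_0}])$ together with the $s_k$-bookkeeping), and the index/weight flip $k\mapsto n_{i_0}-k+1$, $w\mapsto 1-w$. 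One sign slip in your last step: since $\sum_j m_j=n_{i_0}-s_{n_{i_0}}$, one gets $-\sum_j\epsilon_{i_0,j}(L)=n_{i_0}-2s_{n_{i_0}}$ rather than $2s_{n_{i_0}}-n_{i_0}$; with this correction the constant is the exact negative of the degree-term difference $2s_{n_{i_0}}-n_{i_0}$, so the cancellation you invoke does go through (as written, the two quantities would add rather than cancel).
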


\begin{proof}
Since we have a short exact sequence 
$$
0 \longrightarrow E^{(n_{i_0})} 
\longrightarrow E
\longrightarrow E/E^{(n_{i_0})}
\longrightarrow 0
$$
and $\mathrm{length} (E/E^{(n_{i_0})}) =n_{i_0}$, we have
$\deg(E) = \deg (E_{i_0}')  + n_{i_0}$.
We have a short exact sequence 
$$
0 \longrightarrow L' 
\longrightarrow L
\longrightarrow E|_{L}/E^{(n_{i_0})} |_{L}
\longrightarrow 0.
$$
Since
$$
\begin{aligned}
\mathrm{length} (E|_{L}/E^{(n_{i_0})} |_{L})
&= \mathrm{length} (E|_{L}/E(-n_{i_0}[t_{i_0}])|_{L}) -
  \sum_{k=1}^{n_{i_0}} 
\mathrm{length} (E^{(k)}|_{L}/E^{(k-1)}|_{L}) \\
&=n_{i_0} -   \sum_{k=1}^{n_{i_0}} 
\mathrm{length} (E^{(k)}|_{L}/E^{(k-1)}|_{L}),
\end{aligned}
$$
we have an equality:
$$
\deg(L) = \deg (L')  + n_{i_0}-   \sum_{k=1}^{n_{i_0}} 
\mathrm{length} (E^{(k)}|_{L}/E^{(k-1)}|_{L}).
$$
Moreover, we have the following equalities
$$
\begin{aligned}
&\mathrm{length} (E^{(k)}|_{L}/E^{(k-1)}|_{L})+
\mathrm{length} ((E_{i_0}')^{(n_{i_0}-k+1)}|_{L'}/(E_{i_0}')^{(n_{i_0}-k)} |_{L'}) \\
&=\mathrm{length} (E^{(k)}|_{L}/E^{(k-1)}|_{L})+
\mathrm{length} (E^{(k-1)}(-k [t_{i_0}])|_{L}/E^{(k)}(-(k+1) [t_{i_0}]) |_{L}) \\
& =
\mathrm{length} (E^{(k)}|_{L}/E^{(k)}(-[0])|_{L})=1.
\end{aligned}
$$
Then we obtain that 
$$
\begin{aligned}
\mathrm{Stab}_{\boldsymbol{w}} (L) &=
\deg (E) - 2 \deg(L) 
+  \sum_{k=1}^{n_{i_0}}  w_{i_0,k} (1 -2  \cdot 
\mathrm{length} (E^{(k)}|_{L}/E^{(k-1)}|_{L}))
) +\sum_{i \in I \setminus\{i_0\}} \sum_k \epsilon_{i,k}(L) w_{i,k} \\
&=\deg (E_{i_0}') +n_{i_0} - 2 \left(\deg(L')+ n_{i_0}-  \sum_{k=1}^{n_{i_0}} 
\mathrm{length} (E^{(k)}|_{L}/E^{(k-1)}|_{L}))   \right)\\
&\qquad +  \sum_{k=1}^{n_{i_0}}  w_{i_0,k}  \left(2  \cdot 
\mathrm{length} ((E_{i_0}')^{(n_{i_0}-k+1)}|_{L'}/(E_{i_0}')^{(n_{i_0}-k)} |_{L'}) -1 \right) 
+\sum_{i \in I \setminus\{i_0\}} \sum_k \epsilon_{i,k}(L) w_{i,k} \\
&=\deg (E_{i_0}') - 2 \deg(L') +  \sum_{k'=1}^{n_{i_0}} 
(1- 2  \cdot 
\mathrm{length} ((E_{i_0}')^{(k')}|_{L'}/(E_{i_0}')^{(k'-1)} |_{L'}) ) \\
&\qquad +  \sum_{k'=1}^{n_{i_0}} ( -  w_{i_0,n_{i_0}-k'+1}) (1- 2  \cdot 
\mathrm{length} ((E_{i_0}')^{(k')}|_{L'}/(E_{i_0}')^{(k'-1)} |_{L'}) ) 
+\sum_{i \in I \setminus\{i_0\}} \sum_k \epsilon_{i,k}(L) w_{i,k} \\
&=\mathrm{Stab}_{\boldsymbol{w}'} (L').
\end{aligned}
$$

\end{proof}

\begin{cor}
If $(E, \{ l_{i,\bullet} \}_{ i \in I} )$ is tame and undecomposable, 
then 
$\elm^-_{n_{i_0}[t_{i_0}]}(E, \{ l_{i,\bullet} \}_{ i \in I} )$
is also tame and undecomposable.
\end{cor}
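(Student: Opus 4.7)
The plan is to reduce ``tame and undecomposable'' to the weight-stability characterization given by Theorem~\ref{thm:2021.10.27.13.44}, transport stability across the elementary transformation via Proposition~\ref{prop:2021.10.21.15.22}, and then translate back. First, since $(E, \{l_{i,\bullet}\}_{i \in I})$ is tame and undecomposable, Theorem~\ref{thm:2021.10.27.13.44} supplies weights $\boldsymbol{w}$ (satisfying the monotonicity conditions \eqref{2021.11.25.14.09}) for which $(E, \{l_{i,\bullet}\}_{i \in I})$ is $\boldsymbol{w}$-stable. I would then define transformed weights $\boldsymbol{w}'$ by
$$w'_{i_0,k} = 1 - w_{i_0, n_{i_0}-k+1} \quad (k = 1,\dots,n_{i_0}), \qquad w'_{i,k} = w_{i,k} \quad (i \neq i_0),$$
as prescribed in Proposition~\ref{prop:2021.10.21.15.22}, and check that $\boldsymbol{w}'$ also satisfies \eqref{2021.11.25.14.09}: the chain $0 \leq w_{i_0,n_{i_0}} \leq \cdots \leq w_{i_0,1} \leq 1$ translates under $x \mapsto 1-x$ into $0 \leq 1 - w_{i_0,1} \leq \cdots \leq 1 - w_{i_0,n_{i_0}} \leq 1$, which is exactly $0 \leq w'_{i_0,n_{i_0}} \leq \cdots \leq w'_{i_0,1} \leq 1$.

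Next I would set up the bijective correspondence between line subbundles $L \subset E$ and $L' \subset E'_{i_0}$ needed to invoke Proposition~\ref{prop:2021.10.21.15.22} universally. Since $E'_{i_0} \hookrightarrow E$ is an isomorphism away from $t_{i_0}$, any line subbundle $L \subset E$ produces a line subbundle $L' \subset E'_{i_0}$ by taking the saturation of $L \cap E'_{i_0}$ in $E'_{i_0}$; conversely, a line subbundle $L' \subset E'_{i_0}$ determines its saturation $L \subset E$. These operations are mutually inverse, and by Proposition~\ref{prop:2021.10.21.15.22} the identity
$$\mathrm{Stab}_{\boldsymbol{w}}(L) = \mathrm{Stab}_{\boldsymbol{w}'}(L')$$
holds for every corresponding pair. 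Since $\mathrm{Stab}_{\boldsymbol{w}}(L) > 0$ for all line subbundles $L$ by $\boldsymbol{w}$-stability of $(E, \{l_{i,\bullet}\}_{i \in I})$, it follows that $\mathrm{Stab}_{\boldsymbol{w}'}(L') > 0$ for every line subbundle $L' \subset E'_{i_0}$, i.e.\ $\elm^-_{i_0}(E, \{l_{i,\bullet}\}_{i \in I})$ is $\boldsymbol{w}'$-stable. A final application of Theorem~\ref{thm:2021.10.27.13.44} in the reverse direction then concludes that the transformed refined parabolic bundle is tame and undecomposable.

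The only step that requires real care is verifying that the correspondence $L \leftrightarrow L'$ is bijective on line subbundles and coincides with the implicit one in Proposition~\ref{prop:2021.10.21.15.22}; everything else is bookkeeping. This is where I expect the main obstacle: one must track how the saturation near $t_{i_0}$ interacts with the filtration \eqref{eq:FiltSheavesATt_i0} defining $E'_{i_0}$, but since $E$ and $E'_{i_0}$ have the same generic fiber, no line subbundle is lost or duplicated, and the assignment is truly a bijection.
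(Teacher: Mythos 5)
Your proposal is correct and follows essentially the same route as the paper's proof: apply Theorem~\ref{thm:2021.10.27.13.44} to get $\boldsymbol{w}$-stability, transport it through the elementary transformation via Proposition~\ref{prop:2021.10.21.15.22} with the reflected weights $\boldsymbol{w}'$, and apply Theorem~\ref{thm:2021.10.27.13.44} again in reverse. The two points you flag for care (that $\boldsymbol{w}'$ still satisfies \eqref{2021.11.25.14.09}, and that $L\mapsto L\cap E'_{i_0}$ is a bijection on saturated line subbundles — note $L\cap E'_{i_0}$ is automatically saturated since $E'_{i_0}/(L\cap E'_{i_0})$ injects into the locally free sheaf $E/L$) are exactly the details the paper leaves implicit, and your verifications of both are sound.
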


\begin{proof}
Since $(E, \{ l_{i,\bullet} \}_{ i \in I} )$ is tame and undecomposable,
$(E, \{ l_{i,\bullet} \}_{ i \in I} )$ is stable for some weights $\boldsymbol{w}$
(Theorem \ref{thm:2021.10.27.13.44}).
By Proposition \ref{prop:2021.10.21.15.22}, 
the elementary transformation
$\elm^-_{n_{i_0}[t_{i_0}]}(E, \{ l_{i,\bullet} \}_{ i \in I} )$ 
stable for some weights $\boldsymbol{w}'$.
By Theorem \ref{thm:2021.10.27.13.44}, 
$\elm^-_{n_{i_0}[t_{i_0}]}(E, \{ l_{i,\bullet} \}_{ i \in I} )$ 
is also tame and undecomposable.
\end{proof}

\section{Geometry of moduli spaces and weak del Pezzo surfaces}

Let
$\overline{\BUN}(D,1)$
be the moduli space of tame and undecomposable refined parabolic bundles on $(\mathbb{P}^1,D)$ whose degree are one. 
To construct a good moduli space, we need the stability condition with 
the $\boldsymbol{w}$-stability index (or, equivalently, with 
the $\boldsymbol{\alpha}$-parabolic degree). 
Then for weights $\boldsymbol{w}$, we may construct  
the coarse moduli space $\overline{\BUN}_{\boldsymbol{w}} (D,1)$
 of $\boldsymbol{w}$-semistable refined parabolic bundles on
$(\mathbb{P}^1,D)$ whose degrees are one as a scheme.
Moreover, this coarse moduli scheme is projective \cite[Corollary 5.13]{Yoko1}.
Remark that the open subset of $\boldsymbol{w}$-stable parabolic bundles is smooth \cite{Yoko2}.
By Theorem \ref{2022_9_29_12_23}, 
for a tame and undecomposable refined parabolic bundle,
there exist weights $\boldsymbol{w}$ such that 
this refined parabolic bundle is $\boldsymbol{w}$-stable.
So $\overline{\BUN}(D,1)$ is covered by the projective schemes 
$\overline{\BUN}_{\boldsymbol{w}} (D,1)$ when $\boldsymbol{w}$ runs.

The purpose of this section is to give a proof of Theorem \ref{2022.2.2.23.11}.
That is, we will give detail description of the moduli spaces 
$\overline{\BUN}_{\boldsymbol{w}} (D,1)$ with democratic weights for $n=5$. 
If we take generic weights $\boldsymbol{w}$, 
then $\boldsymbol{w}$-semistable = $\boldsymbol{w}$-stable. 
So $\overline{\BUN}_{\boldsymbol{w}} (D,1)$ is a smooth projective scheme in this case.
We will consider $\overline{\BUN}_{\boldsymbol{w}} (D,1)$ only for 
generic democratic weights $\boldsymbol{w}$.
If $n=5$, then the dimension of $\overline{\BUN}_{\boldsymbol{w}} (D,1)$ is two.
We will give description of these two-dimensional smooth projective schemes,
and see that these schemes are smooth projective surfaces.
We investigate the change of the smooth projective surfaces
when the democratic weights vary. 
Moreover we will investigate the automorphism groups of 
the smooth projective surfaces 
when all the democratic weights are $1/2$.
Remark that, in this case, 
the elementary transformations for refined parabolic bundles
give automorphisms on $\overline{\BUN}_{\frac{1}{2}} (D,1)$
by Proposition \ref{prop:2021.10.21.15.22}.

A free $\mathcal{O}_{n_i [t_i]}$-submodule $l_{ i }$ of $E|_{n_i [t_i]}$
induces a one dimensional subspace $l_i^{\mathrm{red}}$ of $E|_{t_i}$, that is the restriction
of $l_i$ to $t_i$ (without multiplicity).

\begin{definition}
Assume that $E=\O \oplus \O(1)$.
Let $l_{ i }$ be a free $\mathcal{O}_{n_i [t_i]}$-submodule of $E|_{n_i [t_i]}$.
We say $l_{ i }$ is {\rm in general position} if 
$l_i^{\mathrm{red}} \notin \O(1) \subset  \mathbb{P} (E) $ 
for each for $i\in I$.
\end{definition}

We set $\tilde I=\{ (i,k) \mid i \in I, \, k \in \{1,2,\ldots,n_i\} \} $.

\begin{prop}\label{prop:2021.10.21.14.37}
We set $w_{i,k}=w$ 
(for any $(i,k) \in \tilde I$)
with 
$\frac{1}{n} < w < \frac{1}{n-2}$.
A refined parabolic bundle $(E,\bold{l})$ of degree $1$ is $\boldsymbol{w}$-stable if,
and only if, $(E,\bold{l})$ satisfies the following conditions:
\begin{itemize}
\item $E\cong \O\oplus \O(1)$;
\item $(E,\bold{l})$ is an undecomposable parabolic bundle 
(in particular, $l_i$ is free for any $i \in I$);
\item  $\bold{l}$ is in general position.
\end{itemize}
\end{prop}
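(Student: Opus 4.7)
The plan is to prove both implications by comparing the stability index $\mathrm{Stab}_{\boldsymbol{w}}(L)$ against the bounds coming from $\frac{1}{n}<w<\frac{1}{n-2}$, using the identity
\[
\sum_{k=1}^{n_i}\epsilon_{i,k}(L)=n_i-2\,\length(l_{i,n_i}\cap L|_{n_i[t_i]})
\]
which holds because each factor $(l_{i,k}\cap L|_{n_i[t_i]})/(l_{i,k-1}\cap L|_{n_i[t_i]})$ injects into $l_{i,k}/l_{i,k-1}$ and so has length $0$ or $1$. Under democratic weights this reduces every stability computation to controlling the single integer $\sum_i\length(l_{i,n_i}\cap L|_{n_i[t_i]})$.

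For the forward implication, I would first rule out $E\cong\O(a)\oplus\O(b)$ with $b\geq 2$ by taking $L=\O(b)$: the term $\deg(E)-2\deg(L)\leq -3$ is not compensated by $w\sum\epsilon_{i,k}(L)\leq wn<n/(n-2)$, which is $<3$ for $n\geq 4$. Hence $E\cong\O\oplus\O(1)$. Next, for $L=\O(1)$, setting $m=\sum_i\length(l_{i,n_i}\cap\O(1)|_{n_i[t_i]})$, the identity above gives $\mathrm{Stab}_{\boldsymbol{w}}(\O(1))=-1+w(n-2m)$, and $w<1/(n-2)$ forces $m=0$. A local analysis at each $t_i$ then yields both (b) and (c): if the type of $l_{i,n_i}$ had $a_2\geq 1$, then by the convention of Section~\ref{section:2021.12.31.15.59} one has $V_2\subset l_{i,n_i}$, and since $V_2$ contains $f_i^{n_i-1}\O(1)|_{n_i[t_i]}\neq 0$ this forces $l_{i,n_i}\cap\O(1)|_{n_i[t_i]}\neq 0$, contradiction; hence $a_2=0$ and $l_{i,n_i}$ is free. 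Writing a generator of $l_{i,n_i}$ as $\binom{a}{b}$ in a local trivialization in which $\O(1)$ is the second coordinate, the vanishing of the intersection with $\O(1)|_{n_i[t_i]}$ translates into $a$ being a unit, i.e.\ $l_i^{\mathrm{red}}\neq\O(1)|_{t_i}$. Finally, undecomposability is the standard consequence of the identity $\mathrm{Stab}_{\boldsymbol{w}}(L_1)+\mathrm{Stab}_{\boldsymbol{w}}(L_2)=0$ for any splitting of $(E,\bold{l})$.

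For the converse, assume (a), (b), (c) and verify positivity of $\mathrm{Stab}_{\boldsymbol{w}}$ on every line subbundle. The case $L=\O(1)$ follows from (c) via the same local calculation, giving $\sum\epsilon_{i,k}(\O(1))=n$ and $\mathrm{Stab}_{\boldsymbol{w}}=-1+wn>0$. For a degree-$0$ subbundle $L\cong\O$, the splitting $E=L\oplus\O(1)$ respects $\bold{l}$ precisely when $l_i=L|_{n_i[t_i]}$ for every $i$ (the $\O(1)$-component vanishes automatically by (c)); undecomposability (b) excludes this, so $\sum_i\length(l_i\cap L|_{n_i[t_i]})\leq n-1$ and $\mathrm{Stab}_{\boldsymbol{w}}(L)\geq 1-w(n-2)>0$. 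For $\deg L\leq -1$, the numerical term $\deg(E)-2\deg(L)\geq 3$ dominates the weight contribution $\geq -wn>-n/(n-2)$, giving positivity for $n\geq 4$. The main obstacle is the local step in the forward direction: converting the single scalar vanishing $l_{i,n_i}\cap\O(1)|_{n_i[t_i]}=0$ into the two structural conclusions (freeness of $l_{i,n_i}$ and general position of $l_i^{\mathrm{red}}$) requires the combinatorial analysis of the type of a refined parabolic structure developed in Section~\ref{section:2021.12.31.15.59}, specifically the fact that any non-free type forces the socle $V_2$ to lie inside $l_{i,n_i}$.
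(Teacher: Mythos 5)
Your proof is correct and follows essentially the same route as the paper: a case analysis on line subbundles $\O(d_0)\subset E$ using the democratic-weight bounds $\frac1n<w<\frac1{n-2}$, with the key step being that stability forces $\sum_i\length(l_{i,n_i}\cap\O(1)|_{n_i[t_i]})=0$. Your explicit local argument (that a non-free $l_{i,n_i}$ would contain the socle $V_2$ and hence meet $\O(1)|_{n_i[t_i]}$) fills in a step the paper only asserts, and your bound $1-w(n-2)$ for degree-zero subbundles is the correct form of what the paper writes as $1+w(n-1)-w$.
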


\begin{proof}
We assume that $E \cong \mathcal{O}(-d_0+1)\oplus \mathcal{O}(d_0)$ where $d_0 \geq 2$.
We have the following inequality:
$$
\mathrm{Stab}_{\boldsymbol{w}}(\mathcal{O}(d_0))\leq 1-2d_0  + n w < -3 + \frac{n}{n-2} <0.
$$
So this refined parabolic bundle is not stable.
We assume that $E=\mathcal{O}\oplus \mathcal{O}(1)$ and
there is a refined parabolic structure which intersects 
$\mathcal{O}(1)$.
Then 
$$
\mathrm{Stab}_{\boldsymbol{w}}(\mathcal{O}(1))\leq -1 +(n-1) w - w =-1 + (n-2) w <0.
$$
So this refined parabolic bundle is not stable.
This implies that $\bold{l}$ is a (ordinary) parabolic structure and 
$\bold{l}$ is in general position.
We assume that $E=\mathcal{O}\oplus \mathcal{O}(1)$,
 the parabolic bundle is decomposable,
and there is no parabolic structure which intersects 
$\mathcal{O}(1)$.
Then there is a subbundle $\mathcal{O} \subset E$ such that 
any parabolic structures are contained in $\mathcal{O}$.
Then 
$$
\mathrm{Stab}_{\boldsymbol{w}}(\mathcal{O})\leq 1 - n w<0.
$$
Finally we have the following claim:
If $(E,\bold{l})$ is $\boldsymbol{w}$-stable, then
$E \cong \mathcal{O}\oplus \mathcal{O}(1)$, 
$(E,\bold{l})$ is an undecomposable parabolic bundle, and
$\bold{l}$ is in general position.

We assume that $(\mathcal{O}\oplus \mathcal{O}(1),\bold{l})$ is undecomposable 
and $\bold{l}$ is in generic position.
Since there is no refined parabolic structure which intersects 
$\mathcal{O}(1)$, we have 
$$
\mathrm{Stab}_{\boldsymbol{w}}(\mathcal{O}(1))= -1 + n w > 0 .
$$
Since $(\mathcal{O}\oplus \mathcal{O}(1),\boldsymbol{l})$ is undecomposable,
$$
\mathrm{Stab}_{\boldsymbol{w}}(\mathcal{O})\geq  1 + w (n-1) -w =1+w(n-2) >0 .
$$
For any subbundle $\mathcal{O}(d_0)$ of $\mathcal{O}\oplus \mathcal{O}(1)$ where 
$d_0 \leqq -1$,
we have 
$$
\mathrm{Stab}_{\boldsymbol{w}}(\mathcal{O}(d_0))
> 1 - 2d_0 -  n w \geq 3- n w > 3-  \frac{n}{n-2}>0 .
$$
Then $(\mathcal{O}\oplus \mathcal{O}(1),\bold{l})$ is $\boldsymbol{w}$-stable.
\end{proof}

\begin{prop}\label{prop:2021.7.28.14.36}
We set $w_{i,k}=w$ 
(for any $(i,k) \in \tilde I$)
with 
$\frac{1}{n} < w < \frac{1}{n-2}$.
We have an isomorphism 
$\overline{\BUN}_{\boldsymbol{w}} (D,1)\cong \mathbb{P}^{n-3}$.
\end{prop}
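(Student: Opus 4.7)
The plan is to combine Proposition \ref{prop:2021.10.21.14.37} with an explicit presentation of the moduli space as a geometric quotient of a vector space by the automorphism group of $\O \oplus \O(1)$.

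First, I will use Proposition \ref{prop:2021.10.21.14.37} to reduce to parabolic bundles $(E, \bold{l})$ with $E = \O \oplus \O(1)$, undecomposable, and with each $l_i$ in general position. Fixing the splitting, the general position condition means that the composition $l_i \hookrightarrow E|_{n_i[t_i]} \twoheadrightarrow \O|_{n_i[t_i]}$ is an isomorphism, so $l_i$ is the graph of a unique $\O_{n_i[t_i]}$-linear morphism $h_i \in H^0(n_i[t_i], \O(1) \otimes \O_{n_i[t_i]})$. Collecting these gives an element $\sigma = (h_i) \in V := H^0(D, \O(1)|_D)$, a vector space of dimension $n$, which parametrizes all choices of parabolic structure in general position for the fixed splitting.

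Next, I will determine the action of $\Aut(E)$ on $V$. Since $\Hom(\O(1), \O) = 0$, the group $\Aut(E)$ consists of matrices $\bigl(\begin{smallmatrix} a & 0 \\ b & c \end{smallmatrix}\bigr)$ with $a, c \in \mathbb{C}^*$ and $b \in H^0(\O(1)) \cong \mathbb{C}^2$; it acts on $\sigma$ by $\sigma \mapsto a^{-1}(c\sigma + \bar b)$, where $\bar b \in V$ denotes the image of $b$ under the restriction map $H^0(\O(1)) \hookrightarrow V$, which is injective because $H^0(\O(1-n)) = 0$ for $n \ge 2$. Modulo scalar automorphisms (normalizing $a = 1$), the effective $3$-dimensional group acts on $V$ by affine transformations: a $\mathbb{C}^*$-scaling $\sigma \mapsto \lambda \sigma$ together with translations by the $2$-dimensional subspace $H^0(\O(1)) \subset V$.

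The decomposable parabolic bundles correspond precisely to orbits whose $\sigma$ lies in $H^0(\O(1)) \subset V$ (equivalently, all $l_i$ are cut out by a common global subbundle $\O \hookrightarrow E$). Thus, after translating out $H^0(\O(1))$ and projectivizing the residual $\mathbb{C}^*$-action, the stable locus should be identified with $\mathbb{P}^{n-3} = \mathbb{P}(V / H^0(\O(1)))$. The main obstacle, and the step requiring the most care, is upgrading this set-theoretic bijection to an isomorphism of schemes: I will verify that the residual $3$-dimensional group acts freely on the preimage of the stable locus (if $(c, b)$ fixes some $\sigma$, then $c = 1$ forces $b = 0$ by injectivity of $H^0(\O(1)) \hookrightarrow V$, whereas $c \neq 1$ forces $\sigma$ into the decomposable locus). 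Once freeness is established, the geometric quotient represents the moduli functor restricted to the stable locus, and combining with the universal family over $V$ yields the desired isomorphism $\overline{\BUN}_{\boldsymbol{w}}(D,1) \cong \mathbb{P}^{n-3}$.
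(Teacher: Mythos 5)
Your argument is correct, and it reaches the same answer by a different (more explicit) route than the paper. The paper's proof, after invoking Proposition \ref{prop:2021.10.21.14.37} exactly as you do, views a stable bundle as a non-trivial extension $0\to(\O(1),\emptyset)\to(E,\bold{l})\to(\O,\bold{l})\to 0$ and identifies the moduli space with the projectivized extension group $\mathbb P H^1(\mathbb P^1,\O(1)\otimes\O(-D))\cong\mathbb P H^0(\mathbb P^1,\O(-1)\otimes\Omega^1_{\mathbb P^1}(D))^*\cong\mathbb P^{n-3}$. You instead fix the splitting $E=\O\oplus\O(1)$, parametrize general-position structures by the graph datum $\sigma\in V=H^0(D,\O(1)|_D)$, and take the quotient by $\mathrm{Aut}(E)$. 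The two are linked by the long exact sequence of $0\to\O(1)(-D)\to\O(1)\to\O(1)|_D\to 0$: since $H^0(\O(1-n))=H^1(\O(1))=0$, it gives exactly $0\to H^0(\O(1))\to V\to H^1(\O(1)(-D))\to 0$, so your $\mathbb P(V/H^0(\O(1)))$ is canonically the paper's projectivized extension space, and your identification of the decomposable locus with the image of $H^0(\O(1))$ corresponds to the paper's exclusion of the split extension. What your version buys is an explicit affine chart with a visibly free action of the reduced automorphism group, which makes the scheme-theoretic (not merely set-theoretic) identification cleaner than the paper's one-line assertion that proportional cocycles give isomorphic bundles; what the paper's version buys is brevity and a coordinate-free description of the hyperplane class via Serre duality. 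One small point worth making explicit in your write-up: the claim that $l_i$ is the graph of a unique $h_i$ uses that $l_i$ is a \emph{free} $\O_{n_i[t_i]}$-module, which is guaranteed by the second bullet of Proposition \ref{prop:2021.10.21.14.37}, not by general position alone.
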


\begin{proof}
By Proposition \ref{prop:2021.10.21.14.37}, 
a $\boldsymbol{w}$-stable refined parabolic bundle $(E,\bold{l})$ of degree $1$ is 
a non trivial extension
$$
0\to (\O(1),\emptyset) \to (E,\bold{l}) \to (\O,\bold{l})\to 0.
$$
The obstruction to split the extension is measured by an element of
$$
H^1(\mathbb{P}^1,\mathcal{H}om (\O(D) ,\O(1))) \cong 
H^1(\mathbb{P}^1,\O(1)\otimes \O(-D)) \cong 
H^0(\mathbb{P}^1,\O(-1)\otimes \Omega_{\mathbb{P}^1}^1(D))^*.
$$
Any two non trivial extensions define isomorphic parabolic bundles if, and only if,
the corresponding cocycles are proportional.
So the moduli space of extension is parametrized by 
$\mathbb{P}H^0(\mathbb{P}^1,\O(-1)\otimes \Omega_{\mathbb{P}^1}^1(D))^*
\cong \mathbb{P}^{n-3}$.
\end{proof}

\subsection{Moduli of stable refined parabolic bundles 
with democratic weights for $n=5$
(Proof of Theorem \ref{2022.2.2.23.11})}\label{section:2021.12.31.15.25}

We assume that $n=5$.
Let $D$ be one of the effective divisors in the following list: 
$$
\begin{aligned}
D_{2111}:=2[t_1] +[t_2]+[t_3] +[t_4], & &D_{221}:=2[t_1] +2[t_2]+[t_3], &&
D_{311}:=3[t_1] +[t_2]+[t_3],\\
D_{32}:=3[t_1] +2[t_2],&&
D_{41}:=4[t_1] +[t_2], && 
D_5:=5[t_1].
\end{aligned}
$$
Our purpose here is to describe moduli spaces of 
stable refined parabolic bundles with democratic weights 
$\boldsymbol{w}=(\boldsymbol{w}_1,\ldots, \boldsymbol{w}_{\nu})$ for $n=\deg(D) =5$.
Here the democratic weights means that 
$$
\boldsymbol{w}_i = (w_{i,n_i}, \ldots,w_{i,2},w_{i,1}) =(w,\ldots,w,w)
$$
for $0<w<1$.
If $\frac{1}{5}< w< \frac{1}{3}$, then 
this moduli space is $\mathbb{P}^2$ 
(see Proposition \ref{prop:2021.7.28.14.36}).
We would like to describe the moduli spaces for another democratic weights $w$.
If $w$ is moved and $w$ crosses walls, then 
some ``special bundles'' (which are stable for $\frac{1}{5}< w< \frac{1}{3}$) become unstable
and some ``special bundles'' (which are unstable for $\frac{1}{5}< w< \frac{1}{3}$)
become stable.
Here, the ``special bundles'' are defined in the appendix below.
To describe the moduli spaces, 
we consider such a wall-crossing.

\begin{definition}
We define {\rm special bundles} and the {\rm types of special bundles} as in the appendix below.
There are 6 types: type A, type B,$\ldots$, type F.
We say a tame and undecomposable refined parabolic bundle is {\rm generic}
if this refined parabolic bundle is not a special bundle.
\end{definition}

We may check that the following claims:
\begin{itemize}

\item If a tame and undecomposable refined parabolic bundle is generic, 
then this refined parabolic bundle is stable for any $w$ ($\frac{1}{5}<w<1$);

\item 
If $\frac{1}{5} < w<\frac{1}{3}$, 
then special bundles with type A, type B, and type C are stable.
On the other hand, special bundles with type D and type F are unstable;

\item 
If $\frac{1}{3} < w <\frac{3}{5}$, 
then special bundles with type C become unstable.
Instead of it, special bundles with type D become stable;

\item 
If $\frac{3}{5} < w <1$, 
then special bundles with type A become unstable.
Instead of it, special bundles with type F become stable;

\item
Special bundles with type E are unstable for any democratic weights $\boldsymbol{w}$.

\end{itemize}
These can be summarized as Table \ref{table:2021.10.21.23.12}.

\begin{table}[hbtp]
  \caption{Table of stability of special bundles}
  \label{table:2021.10.21.23.12}
\begin{center}
  \begin{tabular}{|c||c|c|c|c|c|c|}
  \hline
  & type A& type B&type C&type D&type E&type F\\
    \hline\hline
$0 < w <\frac{1}{5}$&  unstable  & unstable  & unstable  & unstable  & unstable  & unstable     \\
    \hline
$\frac{1}{5} < w <\frac{1}{3}$&  stable  & stable  & stable  & unstable  & unstable  & unstable     \\
    \hline
    $\frac{1}{3} < w <\frac{3}{5}$&    stable  & stable  & unstable  & stable  & unstable  & unstable     \\
    \hline
    $\frac{3}{5} < w <1$&     unstable  & stable  & unstable  & stable  & unstable  & stable    \\
    \hline
  \end{tabular}
\end{center}
\end{table}

When $\frac{1}{5} <w <\frac{1}{3}$,
$\overline{\BUN}_{\boldsymbol{w}} (D,1)\cong \mathbb{P}^{2}$.
So there are loci on $\mathbb{P}^2$ corresponding to 
special bundles with type A, type B, and type C.
The locus corresponding special bundles with type A is a conic on $\mathbb{P}^2$.
The loci corresponding special bundles with type B are lines on $\mathbb{P}^2$.
The loci corresponding special bundles with type C are intersections 
the conic and the lines.
We may define an embedding 
\begin{equation}\label{2022.3.16.10.23}
\Pi \colon \mathbb{P}^1 
\longrightarrow \overline{\BUN}_{\boldsymbol{w}} (D,1) \cong \mathbb{P}^2
\end{equation}
such that the image is the conic corresponding to special bundles with type A
and the images of the points $t_1,t_2,\ldots,t_{\nu}$ 
are points corresponding to special bundles with type C.
We may describe the configuration of the loci of special bundles of type A, type B, and type C
as follows.
\begin{itemize}
\item When $D=D_{2111}$,
the configuration of the loci of special bundles is as follows: $P_{[t_i]} = \Pi(t_i)$ ($i=1,2,3,4$), 
the line $L_{2[t_1]}$ is tangent to the conic $C$ at $P_{[t_1]}$, and
the line $L_{[t_i],[t_j]}$ is intersect to the conic $C$ at $P_{[t_i]}$ and $P_{[t_j]}$
for $i,j \in \{ 1,2,3,4\}$ with $i\neq j$.

\item When $D=D_{221}$,
the configuration of the loci of special bundles is as follows: $P_{[t_i]} = \Pi(t_i)$ ($i=1,2,3$), 
the line $L_{2[t_{i'}]}$ is tangent to the conic $C$ at $P_{[t_{i'}]}$ ($i'=1,2$), and
the line $L_{[t_i],[t_j]}$ is intersect to the conic $C$ at $P_{[t_i]}$ and $P_{[t_j]}$
for $i,j \in \{ 1,2,3\}$ with $i\neq j$.

\item When $D=D_{311}$,
the configuration of the loci of special bundles is as follows: $P_{[t_i]} = \Pi(t_i)$ ($i=1,2,3$), 
the line $L_{2[t_{1}]}$ is tangent to the conic $C$ at $P_{[t_{1}]}$, and
the line $L_{[t_i],[t_j]}$ is intersect to the conic $C$ at $P_{[t_i]}$ and $P_{[t_j]}$
for $i,j \in \{ 1,2,3\}$ with $i\neq j$.

\item When $D=D_{32}$,
the configuration of the loci of special bundles is as follows: $P_{[t_i]} = \Pi(t_i)$ ($i=1,2$), 
the line $L_{2[t_{i}]}$ is tangent to the conic $C$ at $P_{[t_{i}]}$ ($i=1,2$), and
the line $L_{[t_1],[t_2]}$ is intersect to the conic $C$ at $P_{[t_1]}$ and $P_{[t_2]}$.

\item When $D=D_{41}$,
the configuration of the loci of special bundles is as follows: $P_{[t_i]} = \Pi(t_i)$ ($i=1,2$), 
the line $L_{2[t_{1}]}$ is tangent to the conic $C$ at $P_{[t_{1}]}$, and
the line $L_{[t_1],[t_2]}$ is intersect to the conic $C$ at $P_{[t_1]}$ and $P_{[t_2]}$.

\item When $D=D_{5}$,
the configuration of the loci of special bundles is as follows: $P_{[t_1]} = \Pi(t_1)$, 
the line $L_{2[t_{1}]}$ is tangent to the conic $C$ at $P_{[t_{1}]}$.
\end{itemize}

Now,
we will describe the moduli space $\overline{\BUN}_{\boldsymbol{w}} (D,1)$
with $\frac{1}{3}<w<\frac{3}{5}$.
Let $\boldsymbol{w}'$ be democratic weights with $\frac{1}{5} < w' <\frac{1}{3}$.
We identify $\overline{\BUN}_{\boldsymbol{w}'} (D,1)$ with $\mathbb{P}^2$.
We consider the sequence of blowing ups
$$
X_5(D) \xrightarrow{\pi_5}
X_4(D) \xrightarrow{\pi_4}
X_3(D) \xrightarrow{\pi_3}
X_2(D) \xrightarrow{\pi_2}
X_1(D) \xrightarrow{\pi_1} \mathbb{P}^2 .
$$
of $\overline{\BUN}_{\boldsymbol{w}'} (D,1)$ at points which have the following position:
\begin{itemize}

\item $P_{[t_1]}^{(0)} \succ_C P_{[t_1]}$, $P_{[t_2]}$, $P_{[t_3]}$, and $P_{[t_4]}$
when $D=D_{2111}$;

\item $P_{[t_1]}^{(0)} \succ_C P_{[t_1]}$, $P_{[t_2]}^{(0)} \succ_C P_{[t_2]}$, and $P_{[t_3]}$
when $D=D_{221}$;

\item $P_{[t_1]}^{(0)} \succ_C P_{[t_1]}^{(1)} \succ_C P_{[t_1]}$, 
$P_{[t_2]}$, and $P_{[t_3]}$
when $D=D_{311}$;

\item $P_{[t_1]}^{(0)} \succ_C P_{[t_1]}^{(1)} \succ_C P_{[t_1]}$, 
and $P^{(0)}_{[t_2]} \succ_C P_{[t_2]}$
when $D=D_{32}$;

\item $P_{[t_1]}^{(0)} \succ_C P_{[t_1]}^{(1)} \succ_C P_{[t_1]}^{(2)} \succ_C P_{[t_1]}$, 
and $P_{[t_2]}$
when $D=D_{41}$;

\item $P_{[t_1]}^{(0)} \succ_C P_{[t_1]}^{(1)} \succ_C P_{[t_1]}^{(2)} \succ_C
P_{[t_1]}^{(3)} \succ_C P_{[t_1]}$
when $D=D_{5}$.

\end{itemize}
Here $P' \succ_C P$ means 
$P'$ is the intersection of the exceptional divisor of the blowing up at $P$ 
and the strict transformation of the conic $C$.
Here $P$ is a point on $C$ or a strict transformation of $C$.
We denote by ${\rm wdP}^{(n)}_{X}$ 
the weak del Pezzo surface of degree $n$ whose configuration of $(-2)$-curves is $X$.
We have that 
$$
\begin{aligned}
X_5(D_{2111})={\rm wdP}^{(4)}_{A_1},
 &&X_5(D_{221})={\rm wdP}^{(4)}_{2A_1},
 &&X_5(D_{311})={\rm wdP}^{(4)}_{A_2}, \\ 
X_5(D_{32})={\rm wdP}^{(4)}_{A_2+A_1}, 
&&X_5(D_{41})={\rm wdP}^{(4)}_{A_3},
&&X_5(D_{5})={\rm wdP}^{(4)}_{A_4},
\end{aligned}
$$
and
$$
\overline{\BUN}_{\boldsymbol{w}} (D,1)\cong X_5(D), \qquad 
\frac{1}{3}<w<\frac{3}{5}.
$$
Let $E^{(k)}_{[t_i]}$ ($k=0,1,2,\ldots, n_i-1$)
be the locus of special bundles with type D on
the moduli space $\overline{\BUN}_{\boldsymbol{w}} (D,1)$.
By using Corollary \ref{2022.1.15.10.27}, 
$E^{(k)}_{[t_i]}$ ($k=1,2,\ldots, n_i-1$) compose a chain of $(n_i-1)$ projective lines.
We may check that
$E^{(k)}_{[t_i]}$ is the exceptional divisor of the blowing up at $P^{(k)}_{[t_i]}$ 
($k=0,1,2,\ldots, n_i-1$).
Here we set $P^{(n_i-1)}_{[t_i]}=P_{[t_i]}$.

Now,
we will describe the moduli space $\overline{\BUN}_{\boldsymbol{w}} (D,1)$
with $\frac{3}{5}<w<1$.
We may check that the strict transformation of the conic $C$ on $X_5(D)$
is a $(-1)$-curve.
We contract this $(-1)$-curve on $X_5(D)$:
$$
\pi_5' \colon X_5(D) \longrightarrow X_5'(D).
$$
Then we have weak del Pezzo surfaces of degree $5$:
$$
\begin{aligned}
X_5'(D_{2111})={\rm wdP}^{(5)}_{A_1},
 &&X_5'(D_{221})={\rm wdP}^{(5)}_{2A_1},
 &&X_5'(D_{311})={\rm wdP}^{(5)}_{A_2}, \\ 
X_5'(D_{32})={\rm wdP}^{(5)}_{A_2+A_1}, 
&&X_5'(D_{41})={\rm wdP}^{(5)}_{A_3},
&&X_5'(D_{5})={\rm wdP}^{(5)}_{A_4}.
\end{aligned}
$$
We have that
$$
\overline{\BUN}_{\boldsymbol{w}} (D,1)\cong X'_5(D), \qquad 
\frac{3}{5}<w<1.
$$
Let $P_C$
be the locus of special bundles with type F on
the moduli space $\overline{\BUN}_{\boldsymbol{w}} (D,1)$.
We may check that $P_C = \pi_5' (C)$.
These can be summarized as Table \ref{table:2021.10.21.23.13}.

\begin{table}[hbtp]
  \caption{Table of moduli spaces $ \overline{\BUN}_{\boldsymbol{w}} (D,1) $}
  \label{table:2021.10.21.23.13}
\begin{center}
  \begin{tabular}{|c||c|c|c|c|c|c|}
  \hline
  & $D=D_{2111}$& $D=D_{221}$&$D=D_{311}$&$D=D_{32}$&$D=D_{41}$&$D=D_{5}$\\
    \hline\hline
$0 < w <\frac{1}{5}$&  $\emptyset$  & $\emptyset$  & $\emptyset$  & $\emptyset$  & $\emptyset$  & $\emptyset$     \\
    \hline
$\frac{1}{5} < w <\frac{1}{3}$&  $\mathbb{P}^2$  & $\mathbb{P}^2$ & $\mathbb{P}^2$  & $\mathbb{P}^2$  & $\mathbb{P}^2$
  & $\mathbb{P}^2$     \\
    \hline
    $\frac{1}{3} < w <\frac{3}{5}$&   ${\rm wdP}^{(4)}_{A_1}$  & ${\rm wdP}^{(4)}_{2A_1}$  
    & ${\rm wdP}^{(4)}_{A_2}$  & ${\rm wdP}^{(4)}_{A_1+A_2}$  & ${\rm wdP}^{(4)}_{A_3}$  & ${\rm wdP}^{(4)}_{A_4}$     \\
    \hline
    $\frac{3}{5} < w <1$&   ${\rm wdP}^{(5)}_{A_1}$  & ${\rm wdP}^{(5)}_{2A_1}$  
    & ${\rm wdP}^{(5)}_{A_2}$  & ${\rm wdP}^{(5)}_{A_1+A_2}$  & ${\rm wdP}^{(5)}_{A_3}$  & ${\rm wdP}^{(5)}_{A_4}$     \\
    \hline
  \end{tabular}
\end{center}
\end{table}

Now we check the assertion (ii) in Theorem B.
Let $\Pi_{-2} (D)$ be the effective divisors consisting of all $(-2)$-curves on
$\overline{\BUN}_{\boldsymbol{w}} (D,1)$ with $\frac{1}{3} < w <\frac{3}{5}$.
We have that
$$
\Pi_{-2} (D) = \sum_{\{ i \in I \mid n_i>1 \} } (E^{(1)}_{[t_i]} + E^{(2)}_{[t_i]} +\cdots + 
E^{(n_i-1)}_{[t_i]}).
$$
By the list in the appendix (Section \ref{section:2022.1.3.0.11}),
we may check the following claim:
If $(E,\{ l_{i,\bullet }\}_{i} )$ be a refined parabolic bundle
corresponding to a point on $\Pi_{-2}(D)$,
then there exists $i$ $(1\leq i\leq \nu)$ such that $l_{i,n_i}$ is not free,
that is, $l_{i,\bullet }$ is not a parabolic structure for this $i$.

Next we check the assertion (iii) in Theorem B.
Let $(E,\{ l_{i,\bullet }\}_{i} )$ be a refined parabolic bundle
corresponding 
to a point on $\overline{\mathfrak{Bun}}_{\boldsymbol{w}}(D,1)$ 
with $\frac{1}{5}<w <\frac{1}{3}$.
By Proposition \ref{prop:2021.10.21.14.37}, $(E,\{ l_{i,\bullet }\}_{i} )$ is an ordinary parabolic bundle.
By Lemma \ref{2022.2.2.22.32} (or Proposition \ref{prop:2022.2.23.20.13}), 
we have that 
$(E,\{ l_{i,n_i}\}_i )$ is a simple parabolic bundle.
Let $\Pi_{-2} (D)'$ be the effective divisor on $\overline{\BUN}_{\boldsymbol{w}} (D,1)$ with $\frac{3}{5} < w <1$
induced by $\Pi_{-2} (D)$.
Let $(E,\{ l_{i,\bullet }\}_{i} )$ be a refined parabolic bundle
corresponding 
to a point on
$\overline{\mathfrak{Bun}}_{\boldsymbol{w}}(D,1) \setminus \Pi_{-2}(D)$ 
with $\frac{1}{3}<w <\frac{3}{5}$, or 
$\overline{\mathfrak{Bun}}_{\boldsymbol{w}}(D,1)\setminus \Pi_{-2}'(D)$ 
with $\frac{3}{5}<w <1$.
By the argument of the effect on the moduli space of varying the weights and
the list in the appendix (Section \ref{section:2022.1.3.0.11}),
we may check that $(E,\{ l_{i,\bullet }\}_{i} )$ is an ordinary parabolic bundle.
Moreover, by Lemma \ref{2022.2.2.22.32} (or Proposition \ref{prop:2022.2.23.20.13}), 
we have that 
$(E,\{ l_{i,n_i}\}_i )$ is a simple parabolic bundle.
Finally, we obtain the statements in Theorem B.

\subsection{Automorphisms of the moduli spaces
for the democratic weights with $w=\frac{1}{2}$}\label{section:2021.12.31.15.42}

Now we introduce some automorphisms of the moduli space 
$\overline{\BUN}_{\boldsymbol{w}} (D,1)$
for the democratic weights with $w=\frac{1}{2}$.
By Proposition \ref{prop:2021.10.21.15.22}, we have the following claim:
If a refined parabolic bundle $(E,\bold{l})$ is stable for democratic weights 
for $w$ ($0<w<1$), then 
the elementary transformation of $(E,\bold{l})$ at $t_{i_0}$ is stable for 
the weights $\boldsymbol{w}'=(\boldsymbol{w}'_i)_{i\in I}$, where
$$
\boldsymbol{w}'_i = 
\begin{cases}
(1-w,\ldots,1-w,1-w) & \text{when $i= i_0$} \\
(w,\ldots,w,w) & \text{when $i\neq i_0$}.
\end{cases}
$$
So we have an isomorphism between moduli spaces:
$$
\elm^-_{n_{i_0}[t_{i_0}]} \colon 
\overline{\BUN}_{\boldsymbol{w}} (D,1) \longrightarrow \overline{\BUN}_{\boldsymbol{w}'} (D,1-n_{i_0}).
$$
Note that $\boldsymbol{w}'_i=\boldsymbol{w}_i$ when $w=\frac{1}{2}$.
If $w=\frac{1}{2}$ and $n_{i_0}$ is even, 
we set 
$$
\elm_{n_{i_0}[t_{i_0}]}:=
\O((n_{i_0}/2) \cdot t_{i_0})
\otimes \elm^-_{n_{i_0}[t_{i_0}]}.
$$
Then $\elm_{n_{i_0}[t_{i_0}]}$ induces an automorphism of
$\overline{\BUN}_{\frac{1}{2}} (D,1)$.
By Corollary \ref{cor:2021.10.22.14.33},
we have the claim that
the image of $(-2)$-curves under the automorphism 
$\O((n_{i_0}/2) \cdot t_{i_0})
\otimes \elm^-_{n_{i_0}[t_{i_0}]}$ 
(where $n_{i_0} \geq 2$ and $n_{i_0}$ is even)
is the following:
$$
\O((n_{i_0}/2) \cdot t_{i_0})
\otimes \elm^-_{n_{i_0}[t_{i_0}]} \colon E^{(k)}_{[t_{i_0}]} \longrightarrow  
E^{(n_{i_0}-k)}_{[t_{i_0}]}
$$
for $k=1,2,\ldots,n_{i_0}-1$.

We take $i_1 \in I \setminus \{ i_0\}$ 
so that $n_{i_0}+ n_{i_1}$ is even.
We set
$$
\elm_{n_{i_0}[t_{i_0}]+n_{i_1}[t_{i_1}]} 
:= 
\mathcal{O}(\frac{n_{i_0}+n_{i_1}}{2}) \otimes 
\left( \elm^-_{n_{i_0}[t_{i_0}]} \circ \elm^-_{n_{i_1}[t_{i_1}]} \right),
$$
which is an automorphism of  
$\overline{\BUN}_{\frac{1}{2}} (D,1)$.
Since the automorphisms 
 $\elm_{n_{i_0}[t_{i_0}]}$ and $\elm_{n_{i_1}[t_{i_1}]}$ 
are commutative, 
the automorphism $\elm_{n_{i_0}[t_{i_0}]+n_{i_1}[t_{i_1}]}$ 
is independent of the order of $n_{i_0}[t_{i_0}]$ and $n_{i_1}[t_{i_1}]$.

\subsection{Geometry of the weak del Pezzo surfaces from the modular point of view}\label{section:2021.12.31.15.25_1}
Now we impose that $n=5$ and $w_{i,1}=w_{i,2}=\cdots =w_{i,n_i}=\frac{1}{2}$ for any $i \in I$.
Then the moduli space $\overline{\mathfrak{Bun}}_{\frac{1}{2}}(D,1)$
is a weak del Pezzo surface ${\rm wdP}^{(4)}_X$.
Here $X$ means 
the configuration of $(-2)$-curves on the weak del Pezzo surface.
The purposes of this section is to recover the geometry of the weak del Pezzo surfaces
of degree 4 from the modular point of view.
First we will describe 
the graph of negative curves on 
the surface ${\rm wdP}^{(4)}_X$ for each $D$.
Second we will 
reconstruct all automorphisms of the surface ${\rm wdP}^{(4)}_X$
via elementary transformations of refined parabolic bundles
and automorphisms of the base curve with $D$.

Now we set $D=D_{2111}$.
The left-hand-side of Figure \ref{figure1}
is the projective plane $\mathbb P^2$ with the locus of special bundles.
Here the projective plane $\mathbb P^2$ is identified
with the moduli space $\overline{\BUN}_{\boldsymbol{w}} (D_{2111},1)$ with $\frac{1}{5}<w<\frac{1}{3}$.
The right-hand-side of Figure \ref{figure1} is
the graph of negative curves on
 the weak del Pezzo surface obtained by blowing-up of the left-hand-side.
This weak del Pezzo surface 
$\mathrm{wdP}^{(4)}_{A_1}$
is just the moduli space $\overline{\BUN}_{\frac{1}{2}} (D_{2111},1)$.
Remark that some lines are repeated
in order to see all intersections. 
Below, the curve $C$ is identified with the base curve of the parabolic bundle for simplicity
by \eqref{2022.3.16.10.23}.
Note that the one thick line is $(-2)$-curves.

\begin{figure}[h]
\begin{pspicture}(0,0)(16,6)


\psellipse[linecolor=green,linewidth=1pt](3.5,2.5)(1.9,1.3)
\psline[linecolor=brown,showpoints=false,linewidth=1pt](1.6,0)(1.6,5.4)
\psline[linecolor=orange,showpoints=false,linewidth=1pt](0,3.4)(6,0)
\psline[linecolor=orange,showpoints=false,linewidth=1pt](0,1.6)(6,5)
\psline[linecolor=orange,showpoints=false,linewidth=1pt](0,2.25)(6,3.15)
\psline[linecolor=cyan,showpoints=false,linewidth=1pt](1.2,5.2)(6,2.6)
\psline[linecolor=cyan,showpoints=false,linewidth=1pt](2.8,0)(6,4)
\psline[linecolor=cyan,showpoints=false,linewidth=1pt](3.8,0)(3.8,5)
\psline[linecolor=magenta,showpoints=false,linewidth=2pt]{->}(1.6,2.5)(1.6,3)
\psdot[linecolor=blue,linewidth=2pt](3.8,1.2)
\psdot[linecolor=blue,linewidth=2pt](3.8,3.8)
\psdot[linecolor=blue,linewidth=2pt](5.2,3)
\psdot[linecolor=red,linewidth=2pt](1.6,2.5)
\uput[l](1.7,0.8){\color{brown}$L_{2[t_1]}$}
\uput[r](-0.2,3.6){\color{orange}$L_{[t_1],[t_4]}$}
\uput[r](-0.2,2.6){\color{orange}$L_{[t_1],[t_3]}$}
\uput[r](-0.2,1.4){\color{orange}$L_{[t_1],[t_2]}$}
\uput[r](1.6,4.2){\color{cyan}$L_{[t_2],[t_3]}$}
\uput[r](3.7,4.8){\color{cyan}$L_{[t_2],[t_4]}$}
\uput[r](5.6,4.2){\color{cyan}$L_{[t_3],[t_4]}$}
\uput[l](1.7,3.1){\color{red}$P_{[t_1]}$}
\uput[r](4.1,3.8){\color{blue}$P_{[t_2]}$}
\uput[r](5.3,2.4){\color{blue}$P_{[t_3]}$}
\uput[r](4.1,1.1){\color{blue}$P_{[t_4]}$}
\uput[r](5,1.6){\color{green}$C$}

\psline[linecolor=red,showpoints=false,linewidth=2pt](7.6,0)(7.6,5)
\psline[linecolor=magenta,showpoints=false,linewidth=1pt](7.2,4)(12,4)
\psline[linecolor=orange,showpoints=false,linewidth=1pt](7.2,3)(12,3)
\psline[linecolor=orange,showpoints=false,linewidth=1pt](7.2,2)(12,2)
\psline[linecolor=orange,showpoints=false,linewidth=1pt](7.2,1)(12,1)

\psline[linecolor=brown,showpoints=false,linewidth=1pt](9,3.8)(12.8,5)
\psline[linecolor=brown,linestyle=dashed,showpoints=false,linewidth=0.5pt](12.8,5)(14,5.4)

\psline[linecolor=green,showpoints=false,linewidth=1pt](11,4.2)(15,2.6)

\psline[linecolor=blue,showpoints=false,linewidth=1pt](9,2.8)(15,4.7)
\psline[linecolor=cyan,showpoints=false,linewidth=1pt](11,3.2)(15,1.6)
\psline[linecolor=blue,showpoints=false,linewidth=1pt](9,1.8)(15,3.7)
\psline[linecolor=cyan,showpoints=false,linewidth=1pt](11,2.2)(14,1)
\psline[linecolor=cyan,linestyle=dashed,showpoints=false,linewidth=0.5pt](14,1)(15,0.6)

\psline[linecolor=blue,showpoints=false,linewidth=1pt](9,0.8)(15,2.7)
\psline[linecolor=cyan,showpoints=false,linewidth=1pt](11,1.2)(11.8,0.9)
\psline[linecolor=cyan,linestyle=dashed,showpoints=false,linewidth=0.5pt](11.8,0.9)(13,0.4)

\psline[linecolor=brown,linestyle=dashed,showpoints=false,linewidth=0.5pt](11,0.4)(12.8,1)
\psline[linecolor=brown,showpoints=false,linewidth=1pt](12.8,1)(15,1.7)

\psline[linecolor=cyan,linestyle=dashed,showpoints=false,linewidth=0.5pt](11,5.2)(11.8,4.9)
\psline[linecolor=cyan,showpoints=false,linewidth=1pt](11.8,4.9)(15,3.6)

\psline[linecolor=cyan,linestyle=dashed,showpoints=false,linewidth=0.5pt](13,5.4)(14,5)
\psline[linecolor=cyan,showpoints=false,linewidth=1pt](14,5)(15,4.6)

\uput[r](7.6,0.3){\red$E_{[t_1]}^{(1)}$}
\uput[r](7.6,4.3){\color{magenta}$E_{[t_1]}^{(0)}$}
\uput[r](7.6,3.2){\color{orange}$L_{[t_1],[t_2]}$}
\uput[r](7.6,2.2){\color{orange}$L_{[t_1],[t_3]}$}
\uput[r](7.6,1.2){\color{orange}$L_{[t_1],[t_4]}$}
\uput[r](9.6,4.46){\color{brown}$L_{2[t_1]}$}
\uput[r](9.6,3.46){\color{blue}$L_{[t_2]}$}
\uput[r](9.6,2.46){\color{blue}$L_{[t_3]}$}
\uput[r](9.6,1.46){\color{blue}$L_{[t_4]}$}
\uput[r](14.2,3){\color{green}$C$}
\uput[r](14.2,2){\color{cyan}$L_{[t_3],[t_4]}$}
\uput[r](14.2,1){\color{cyan}$L_{[t_2],[t_4]}$}
\uput[r](12.8,0.6){\color{cyan}$L_{[t_2],[t_3]}$}
\uput[r](14.2,5){\color{cyan}$L_{[t_2],[t_4]}$}
\uput[r](14.2,4){\color{cyan}$L_{[t_2],[t_3]}$}

\end{pspicture}

\caption{$\mathbb{P}^2$ and $\mathrm{wdP}^{(4)}_{A_1}$}
\label{figure1}
\end{figure}

In the previous section, we had seen that
some elementary transformations of refined parabolic bundles induce
automorphisms 
on the moduli space $\overline{\BUN}_{\boldsymbol{w}} (D_{2111},1)\cong \mathrm{wdP}^{(4)}_{A_1}$.
Now we reconstruct all automorphisms of
the weak del Pezzo surface ${\rm wdP}^{(4)}_{A_1}$
via these elementary transformations.
Let $\mathrm{Aut}(C,D_{2111})$ be the automorphism of $C$ sending $D_{2111}$ to itself.
Remark that any automorphisms in $\mathrm{Aut}(C,D_{2111})$
fix the point $t_1$.
If the points $t_1,t_2,t_3$, and $t_4$ have a generic cross-ratio,
then the non trivial automorphisms in $\mathrm{Aut} (C,[t_1]+[t_2]+[t_3]+[t_4])$
act on the 4 points by double-transposition:
That is,
$$
(t_1\leftrightarrow t_2,t_3\leftrightarrow t_4 ), \quad
(t_1\leftrightarrow t_3,t_2\leftrightarrow t_4 ), \quad \text{and} \quad
(t_1\leftrightarrow t_4,t_2\leftrightarrow t_3 ).
$$
So $\mathrm{Aut}(C,D_{2111})= \langle \mathrm{id} \rangle$.

\begin{prop}If 
$\mathrm{Aut}(C,D_{2111})= \langle \mathrm{id} \rangle$,
then the automorphism group 
of the weak del Pezzo surface 
is $\mathbb Z/2\times\mathbb Z/2\times\mathbb Z/2$, generated by elementary transformations:
$$\mathrm{Aut}\left(\mathrm{wdP}^{(4)}_{A_1}\right)=\langle \elm_{[t_2]+[t_3]},
\elm_{[t_3]+[t_4]},\elm_{2[t_1]}  \rangle.$$
\end{prop}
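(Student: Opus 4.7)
The plan is to proceed in four steps: first establish that the three stated elementary transformations really do define commuting involutions of $\mathrm{wdP}^{(4)}_{A_1}$, then show they generate a subgroup of order $8$ inside the automorphism group, and finally prove that any automorphism is a product of these times an element of $\mathrm{Aut}(C,D_{2111})$, which by hypothesis is trivial.

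First I would verify that $\elm_{2[t_1]}$, $\elm_{[t_2]+[t_3]}$ and $\elm_{[t_3]+[t_4]}$ are involutions on $\overline{\BUN}_{\frac{1}{2}}(D_{2111},1)$. By Proposition \ref{prop:2021.10.21.15.22} applied with $w=1/2$, each $\elm^-$ preserves the democratic stability condition, so after tensoring back with a line bundle of the right degree one obtains an automorphism of the moduli space. That these are of order $2$ follows from the standard identity $\elm^-_{n_i[t_i]}\circ\elm^-_{n_i[t_i]}=\cdot\otimes\O(-n_i[t_i])$ for the elementary transformation at a free parabolic direction; combined with the compensating tensor factor this squares to the identity. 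Commutativity is clear since elementary transformations supported at disjoint subschemes commute, and tensoring by line bundles commutes with everything.

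Second, I would show the three involutions are linearly independent in $\mathrm{Aut}(\mathrm{wdP}^{(4)}_{A_1})$ by computing their action on the graph of negative curves of Figure \ref{figure1}. Using Corollary \ref{cor:2021.10.22.14.33}, $\elm_{2[t_1]}$ exchanges $E^{(0)}_{[t_1]}$ with the strict transform of $L_{2[t_1]}$ while fixing the $(-2)$-curve $E^{(1)}_{[t_1]}$; the transformations $\elm_{[t_2]+[t_3]}$ and $\elm_{[t_3]+[t_4]}$ exchange exceptional divisors over $P_{[t_i]},P_{[t_j]}$ with the strict transform of the chord $L_{[t_i],[t_j]}$ (and of the conic $C$) following the rules of a Cremona involution centered at those two points. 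Tabulating these three permutations of the negative-curve set shows that no non-trivial product equals the identity, so $\langle\elm_{[t_2]+[t_3]},\elm_{[t_3]+[t_4]},\elm_{2[t_1]}\rangle\cong(\mathbb Z/2)^3$.

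Third, for the reverse inclusion I would argue as follows. Any automorphism $\phi$ of $\mathrm{wdP}^{(4)}_{A_1}$ permutes the set of negative curves; in particular it fixes (setwise) the unique $(-2)$-curve $E^{(1)}_{[t_1]}$, hence it fixes its unique neighbour $E^{(0)}_{[t_1]}$ among $(-1)$-curves meeting $E^{(1)}_{[t_1]}$, or possibly sends it into another $(-1)$-curve meeting $E^{(1)}_{[t_1]}$, namely the strict transform of $L_{2[t_1]}$. After composing with $\elm_{2[t_1]}$ if necessary we may therefore assume $\phi$ fixes both $E^{(0)}_{[t_1]}$ and $E^{(1)}_{[t_1]}$. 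The remaining four exceptional $(-1)$-curves $E^{(0)}_{[t_1]}$ (already fixed), $L_{[t_2]}$, $L_{[t_3]}$, $L_{[t_4]}$ are then permuted among themselves while fixing $E^{(0)}_{[t_1]}$; by composing with $\elm_{[t_2]+[t_3]}$ and $\elm_{[t_3]+[t_4]}$ we may further arrange that $\phi$ fixes each of the five points $P^{(0)}_{[t_1]},P_{[t_1]},P_{[t_2]},P_{[t_3]},P_{[t_4]}$. Then $\phi$ descends to an automorphism of $\mathbb P^2$ fixing the conic $C$ and these five points (one with multiplicity along $C$), hence it induces an automorphism of $C\cong \mathbb P^1$ fixing the divisor $D_{2111}$, i.e.\ an element of $\mathrm{Aut}(C,D_{2111})=\langle\mathrm{id}\rangle$. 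Thus $\phi$ is trivial.

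The main obstacle is the third step, particularly the rigidity argument: one must rule out the possibility that $\phi$ permutes the $(-1)$-curves in a way not realized by any composition of the three given involutions. This is handled by the observation that the three $\elm$'s act transitively enough on the configuration of the five infinitely-near blow-up centres lying on $C$ to reduce the stabilizer to $\mathrm{Aut}(C,D_{2111})$; making this transitivity precise, via an explicit combinatorial description of how each $\elm$ permutes the fibres of $\mathrm{wdP}^{(4)}_{A_1}\to\mathbb P^2$ using Corollary \ref{cor:2021.10.22.14.33}, is the computational heart of the proof.
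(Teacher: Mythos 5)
Your overall strategy coincides with the paper's (compute the action of the three involutions on the negative curves, show they give an order $8$ group, reduce an arbitrary automorphism to one fixing every negative curve, blow down and use $\mathrm{Aut}(C,D_{2111})=\langle \mathrm{id}\rangle$), but your execution rests on incorrect intersection combinatorics, and this breaks the reduction step. In $\mathrm{wdP}^{(4)}_{A_1}$ the $(-1)$-curves meeting the unique $(-2)$-curve $E^{(1)}_{[t_1]}$ are the \emph{four} curves $E^{(0)}_{[t_1]}$, $L_{[t_1],[t_2]}$, $L_{[t_1],[t_3]}$, $L_{[t_1],[t_4]}$; the strict transform of $L_{2[t_1]}$ is tangent to $C$ at $P_{[t_1]}$, hence passes through the infinitely near point $P^{(0)}_{[t_1]}$ and meets $E^{(0)}_{[t_1]}$ but \emph{not} $E^{(1)}_{[t_1]}$. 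Consequently $\elm_{2[t_1]}$ cannot exchange $E^{(0)}_{[t_1]}$ with $L_{2[t_1]}$ (one meets the $(-2)$-curve, the other does not); it in fact stabilizes all four curves above and permutes $C$ with $L_{2[t_1]}$. Your step three, which assumes that the only $(-1)$-curves meeting $E^{(1)}_{[t_1]}$ are $E^{(0)}_{[t_1]}$ and $L_{2[t_1]}$ and uses $\elm_{2[t_1]}$ to fix $E^{(0)}_{[t_1]}$, therefore does not go through: an automorphism may a priori send $E^{(0)}_{[t_1]}$ to any of the three chords $L_{[t_1],[t_j]}$, and indeed $\elm_{[t_2]+[t_3]}$ does exactly that (it swaps $E^{(0)}_{[t_1]}\leftrightarrow L_{[t_1],[t_4]}$ as well as $L_{[t_1],[t_2]}\leftrightarrow L_{[t_1],[t_3]}$).

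The missing idea is the one that actually uses the hypothesis: any automorphism preserves $E^{(1)}_{[t_1]}$ and hence induces an automorphism of $E^{(1)}_{[t_1]}\cong\mathbb P^1$ preserving the four intersection points with the curves above; since their cross-ratio equals that of $(C,[t_1]+\cdots+[t_4])$ and is generic (this is where $\mathrm{Aut}(C,D_{2111})=\langle\mathrm{id}\rangle$ enters), the induced permutation of the four curves is necessarily a double-transposition. The two transformations $\elm_{[t_2]+[t_3]}$, $\elm_{[t_3]+[t_4]}$ realize the full Klein four-group of double-transpositions, so one reduces to an automorphism fixing all four curves; then $E^{(0)}_{[t_1]}$ meets only $E^{(1)}_{[t_1]}$, $C$ and $L_{2[t_1]}$, so after composing with $\elm_{2[t_1]}$ one fixes $C$ and $L_{2[t_1]}$ as well, the combinatorics forces all negative curves to be fixed, and the blow-down is a linear automorphism of $\mathbb P^2$ preserving the conic and fixing four of its points, hence the identity. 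Without the cross-ratio argument your appeal to ``transitivity'' of the three involutions does not bound the possible permutations of the negative curves, so the containment $\mathrm{Aut}(\mathrm{wdP}^{(4)}_{A_1})\subset\langle\elm_{[t_2]+[t_3]},\elm_{[t_3]+[t_4]},\elm_{2[t_1]}\rangle$ is not established. Your first step (involutivity and commutativity via $\elm^-_{n_i[t_i]}\circ\elm^-_{n_i[t_i]}=\,\cdot\otimes\O(-n_i[t_i])$) is fine.
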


\begin{proof} These three elementary transformations act as follows on the negative curves:
\begin{itemize}
\item all of them stabilize the unique $(-2)$-curve $E^{(1)}_{[t_1]}$ (and so does any automorphism),
and therefore permute 
the $4$ intersecting $(-1)$-curves $E^{(0)}_{[t_1]}$, $L_{[t_1],[t_2]}$, $L_{[t_1],[t_3]}$, $L_{[t_1],[t_4]}$;
\item the action on these $4$ curves is by double-transposition, since the crossratio of their interection point
with the $(-2)$-curve $E^{(1)}_{[t_1]}$ should be preserved and is generic 
(coinciding with the cross-ratio of $(C,[t_1]+[t_2]+[t_3]+[t_4])$);
\item $\elm_{[t_2]+[t_3]}$ permutes $L_{[t_1],[t_2]}$ with $L_{[t_1],[t_3]}$;
\item $\elm_{[t_3]+[t_4]}$ permutes $L_{[t_1],[t_3]}$ with $L_{[t_1],[t_4]}$;
\item $\elm_{2[t_1]}$ stabilizes all $4$ curves $E^{(0)}_{[t_1]}$, $L_{[t_1],[t_2]}$, $L_{[t_1],[t_3]}$, $L_{[t_1],[t_4]}$,
but permutes $C$ and $L_{2[t_1]}$.
\end{itemize}
One easily check these properties by studying the action of elementary transformations on parabolic structure
and special line bundles. One also deduce that the $3$ elementary transformations (which commute)
indeed generate a order $8$ group of automorphisms isomorphic to $\mathbb Z/2\times\mathbb Z/2\times\mathbb Z/2$.

It remains to check that any automorphism $\phi$ belongs to that $8$ order group. 
One can assume, after composition by an element of the $4$ order group
$\langle \elm_{[t_2]+[t_3]},\elm_{[t_3]+[t_4]}\rangle$, that $\phi$ stabilizes
all $4$ curves $E^{(0)}_{[t_1]}$, $L_{[t_1],[t_2]}$, $L_{[t_1],[t_3]}$, $L_{[t_1],[t_4]}$.
Since $E^{(0)}_{[t_1]}$ intersects only $3$ negative curves, namely $C$ and $L_{2[t_1]}$ 
and the $(-2)$-curve $E^{(1)}_{[t_1]}$,
it has to permute 
the two first ones.
By composing with $\elm_{2[t_1]}$,
we can assume that $\phi$ also stabilizes each of the two curves $C$ and $L_{2[t_1]}$. 
Then, the intersection combinatorics shows that $\phi$ eventually stabilizes all negative curves.
Therefore, we can blow-down back to $\mathbb P^2$ and get a linear automorphism $\bar\phi$ 
which fixes the $4$ points supporting the divisor on the conic. 
There exists an one-to-one correspondence between
linear automorphisms on $\mathbb{P}^2$ which preserve the conic 
and automorphisms on the conic.
Therefore $\bar\phi$ is the identity,
$\phi$ as well, obviously belonging to the $8$ order group.
\end{proof}

If $D$ has special configuration, 
then we might have non trivial automorphism $\varphi\in\mathrm{Aut}(C,D_{2111})$.
For the non trivial automorphism $\varphi$, we can
pull-back a refined parabolic bundle by $\varphi$.
Since we impose that the weights are democratic,
the stability condition of this pull-back is the same as the  
stability condition of the refined parabolic bundle.
So, in that case, the automorphism $\varphi$ induces an extra automorphism for the weak del Pezzo surface.


Now we set $D=D_{221}$.
The left-hand-side of Figure \ref{figure2}
is the projective plane $\mathbb P^2$ with the locus of special bundles.
The right-hand-side of Figure \ref{figure2} is
the graph of negative curves on
 the weak del Pezzo surface obtained by blowing-up of the left-hand-side.
This weak del Pezzo surface $\mathrm{wdP}^{(4)}_{2A_1}$
 is just the moduli space $\overline{\BUN}_{\frac{1}{2}} (D_{221},1)$.
Note that $C$ and $L_{[t_3]}$ intersect each other,
although we don't see on the picture.

\begin{figure}[h]
\begin{pspicture}(0,0)(15,5)


\psellipse[linecolor=green,linewidth=1pt](3.37,1.87)(1.87,1.25)
\psline[linecolor=brown,showpoints=false,linewidth=1pt](0,0)(3.5,5)
\psline[linecolor=brown,showpoints=false,linewidth=1pt](6.75,0)(3.25,5)
\psline[linecolor=cyan,showpoints=false,linewidth=1pt](0,2.31)(6.75,2.31)
\psline[linecolor=orange,showpoints=false,linewidth=1pt](0.25,3.75)(4,0)%
\psline[linecolor=orange,showpoints=false,linewidth=1pt](6.5,3.75)(2.75,0)
\psline[linecolor=magenta,showpoints=false,linewidth=2pt]{->}(1.65,2.31)(2,2.87)
\psline[linecolor=magenta,showpoints=false,linewidth=2pt]{->}(5.1,2.31)(4.71,2.87)
\psdot[linecolor=blue,linewidth=2pt](3.37,0.62)
\psdot[linecolor=red,linewidth=2pt](1.65,2.31)
\psdot[linecolor=red,linewidth=2pt](5.06,2.31)
\uput[u](1.62,2.62){\red$P_{[t_1]}$}
\uput[u](5.12,2.62){\red$P_{[t_2]}$}
\uput[d](3.37,0.44){\blue$P_{[t_3]}$}
\uput[u](3.37,3.12){\green$C$}
\uput[u](3.5,2.3){\cyan$L_{[t_1],[t_2]}$}
\uput[u](1,3.5){\color{orange}$L_{[t_1],[t_3]}$}%
\uput[u](5.75,3.5){\color{orange}$L_{[t_2],[t_3]}$}
\uput[u](0.56,1.25){\color{brown} $L_{2[t_1]}$}
\uput[u](6.19,1.25){\color{brown} $L_{2[t_2]}$}



\psline[linecolor=red,showpoints=false,linewidth=2pt](8,2)(10.5,5)
\psline[linecolor=red,showpoints=false,linewidth=2pt](15,2)(12.5,5)
\psline[linecolor=cyan,showpoints=false,linewidth=1pt](9.5,4.5)(13.5,4.5)
\psline[linecolor=magenta,showpoints=false,linewidth=1pt](9,4)(10.5,1.5)
\psline[linecolor=magenta,showpoints=false,linewidth=1pt](14,4)(12.5,1.5)
\psline[linecolor=orange,showpoints=false,linewidth=1pt](8,3)(10,0)
\psline[linecolor=orange,showpoints=false,linewidth=1pt](15,3)(13,0)
\psline[linecolor=blue,showpoints=false,linewidth=1pt](9,0.5)(14,0.5)
\psline[linecolor=green,showpoints=false,linewidth=1pt](9.5,2.75)(13.5,2.75)
\psline[linecolor=brown,showpoints=false,linewidth=1pt](10,2)(14,0.75)
\psline[linecolor=brown,showpoints=false,linewidth=1pt](13,2)(9,0.75)
\uput[r](9.75,4){\red$E_{[t_1]}^{(1)}$}
\uput[l](13.25,4){\red$E_{[t_2]}^{(1)}$}
\uput[d](11.5,0.5){\blue$L_{[t_3]}$}
\uput[u](11.5,2.75){\green$C$}
\uput[d](11.5,5.2){\cyan$L_{[t_1],[t_2]}$}
\uput[u](8.4,1){\color{orange}$L_{[t_1],[t_3]}$}
\uput[u](14.6,1){\color{orange}$L_{[t_2],[t_3]}$}
\uput[d](10.5,1.2){\color{brown} $L_{2[t_1]}$}
\uput[d](12.5,1.2){\color{brown} $L_{2[t_2]}$}
\uput[ur](9.5,2.8){\color{magenta} $E_{[t_1]}^{(0)}$}
\uput[ur](12.5,2.8){\color{magenta} $E_{[t_2]}^{(0)}$}
\end{pspicture}

\caption{$\mathbb{P}^2$ and $\mathrm{wdP}^{(4)}_{2A_1}$}
\label{figure2}
\end{figure}

We take the automorphism $\varphi \in \mathrm{Aut}(C,D_{221})$ 
permuting $t_1$ and $t_2$, fixing $t_3$.
Let $\varphi^*$ be the automorphism of $\mathrm{wdP}^{(4)}_{2A_1}$
induced by the pull-backs of refined parabolic bundles by $\varphi$.

\begin{prop}The automorphism group of the weak del Pezzo
surface is $\mathbb Z/2\ltimes(\mathbb Z/2\times\mathbb Z/2)$, generated by:
$$\mathrm{Aut}\left(\mathrm{wdP}^{(4)}_{2A_1}\right)=\langle \varphi^*, \elm_{2[t_1]},\elm_{2[t_2]}  \rangle.$$
\end{prop}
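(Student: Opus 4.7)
The plan is to imitate the proof of the previous proposition for $\mathrm{wdP}^{(4)}_{A_1}$. First I would verify that the three proposed elements really are automorphisms of $\mathrm{wdP}^{(4)}_{2A_1}$ (immediate from Proposition \ref{prop:2021.10.21.15.22} and the construction in Section \ref{section:2021.12.31.15.42}) and compute their action on the configuration of negative curves drawn on the right side of Figure \ref{figure2}. Since $\varphi$ exchanges $t_1$ and $t_2$ while fixing $t_3$, the induced automorphism $\varphi^*$ exchanges the two $(-2)$-curves $E^{(1)}_{[t_1]}$ and $E^{(1)}_{[t_2]}$ and globally flips the picture. By the analogue of the computation carried out in the proof of the preceding proposition (an instance of Corollary \ref{cor:2021.10.22.14.33}), each $\elm_{2[t_i]}$ fixes both $(-2)$-curves and every $(-1)$-curve except for the pair $\{C, L_{2[t_i]}\}$, which it exchanges. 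It follows that $\elm_{2[t_1]}$ and $\elm_{2[t_2]}$ are commuting involutions (they are supported at disjoint points) generating a subgroup $\mathbb Z/2\times\mathbb Z/2$, and that $\varphi^*\,\elm_{2[t_1]}\,(\varphi^*)^{-1}=\elm_{2[t_2]}$, so the three generators realize the non-abelian group $\mathbb Z/2\ltimes(\mathbb Z/2\times\mathbb Z/2)$ of order $8$.

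The main point is the reverse inclusion $\mathrm{Aut}(\mathrm{wdP}^{(4)}_{2A_1})\subset \langle \varphi^*, \elm_{2[t_1]},\elm_{2[t_2]}\rangle$. Let $\phi$ be an arbitrary automorphism. Since $\phi$ must permute the set of $(-2)$-curves, after composing with $\varphi^*$ if necessary we may assume $\phi$ stabilizes both $E^{(1)}_{[t_1]}$ and $E^{(1)}_{[t_2]}$. Then $\phi$ permutes the three $(-1)$-curves meeting $E^{(1)}_{[t_1]}$, namely $E^{(0)}_{[t_1]}$, $L_{[t_1],[t_2]}$, $L_{[t_1],[t_3]}$, and similarly on the $t_2$ side. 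Among these, $L_{[t_1],[t_2]}$ is distinguished as the unique $(-1)$-curve meeting the other $(-2)$-curve $E^{(1)}_{[t_2]}$, so $\phi$ fixes it; and the remaining two are distinguished from each other by their intersection pattern with the rest of the graph (only $L_{[t_1],[t_3]}$ meets the exceptional $L_{[t_3]}$ over $t_3$, whereas $E^{(0)}_{[t_1]}$ is adjacent to both $C$ and $L_{2[t_1]}$). Consequently $\phi$ fixes $E^{(0)}_{[t_1]}$ and $L_{[t_1],[t_3]}$ individually, and symmetrically on the $t_2$ side; the only remaining freedom is the swap $C\leftrightarrow L_{2[t_i]}$, realized precisely by $\elm_{2[t_i]}$. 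After composing with $\elm_{2[t_1]}$ and $\elm_{2[t_2]}$ as appropriate, we may therefore assume $\phi$ stabilizes every negative curve.

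At this point, blowing down the successive $(-1)$-curves $E^{(0)}_{[t_1]}$, $E^{(0)}_{[t_2]}$ and their images (equivalently, inverting the chain $\pi_5\circ\cdots\circ\pi_1$) brings $\phi$ back to a linear automorphism $\bar\phi$ of $\mathbb P^2$ which preserves the conic $C$, the three marked points $t_1,t_2,t_3$ on it, and the tangency data at $t_1$ and $t_2$. As in the $A_1$ case, the restriction $\bar\phi|_C$ is an automorphism of $\mathbb P^1$ fixing three points, hence the identity; so $\bar\phi$ is the identity and $\phi$ belongs to the $8$-element subgroup. The main obstacle is the combinatorial case-analysis distinguishing the $(-1)$-curves adjacent to each $(-2)$-curve, but once this is pinned down the argument closes exactly as in the proof of the preceding proposition.
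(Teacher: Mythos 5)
Your overall architecture matches the paper's (verify the generators, normalize an arbitrary automorphism by them until it fixes all negative curves, blow down to a linear automorphism of $\mathbb P^2$ fixing the conic and the support of $D$), but the combinatorial core is wrong. The claimed action of $\elm_{2[t_i]}$ --- ``fixes both $(-2)$-curves and every $(-1)$-curve except for the pair $\{C,L_{2[t_i]}\}$, which it exchanges'' --- cannot be correct, and you can see this without any modular computation: $\elm_{2[t_1]}$ and $\elm_{2[t_2]}$ commute, yet two commuting involutions of the curve graph cannot send $C$ to the two distinct curves $L_{2[t_1]}$ and $L_{2[t_2]}$ while fixing everything else ($\elm_{2[t_1]}\elm_{2[t_2]}$ and $\elm_{2[t_2]}\elm_{2[t_1]}$ would then disagree on $C$). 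Tracking the distinguished sub-line-bundles through the transformation (degree and intersection multiplicities transform as in Proposition \ref{prop:2021.10.21.15.22}) shows that $\elm_{2[t_1]}$ in fact also exchanges $E^{(0)}_{[t_2]}\leftrightarrow L_{[t_2],[t_3]}$ and $L_{2[t_2]}\leftrightarrow L_{[t_3]}$, in addition to $C\leftrightarrow L_{2[t_1]}$; this is exactly the part of the action the paper records and uses.

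This error propagates into your reverse-inclusion argument and breaks it. You assert that any $\phi$ stabilizing both $(-2)$-curves must fix $E^{(0)}_{[t_1]}$ and $L_{[t_1],[t_3]}$ individually because ``only $L_{[t_1],[t_3]}$ meets $L_{[t_3]}$, whereas $E^{(0)}_{[t_1]}$ is adjacent to $C$ and $L_{2[t_1]}$.'' That criterion presupposes that $L_{[t_3]}$, $C$ and $L_{2[t_1]}$ are themselves already pinned down, which they are not at that stage; and indeed $\elm_{2[t_2]}$ is an automorphism fixing both $(-2)$-curves that swaps $E^{(0)}_{[t_1]}$ with $L_{[t_1],[t_3]}$ (consistently permuting their neighbours: $C\leftrightarrow L_{2[t_2]}$, $L_{2[t_1]}\leftrightarrow L_{[t_3]}$). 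Were your ``distinguishing'' argument valid, it would show the stabilizer of the two $(-2)$-curves is too small to contain $\langle\elm_{2[t_1]},\elm_{2[t_2]}\rangle$, contradicting the statement you are trying to prove. The repair is the one the paper makes: after $\varphi^*$ fixes the two $(-2)$-curves, use $\elm_{2[t_1]}$ and $\elm_{2[t_2]}$ to force $\phi$ to fix the seven curves $E^{(i)}_{[t_j]}$ and $L_{[t_i],[t_j]}$ pointwise, and only then invoke the intersection combinatorics to conclude that all remaining negative curves (including $C$, $L_{2[t_1]}$, $L_{2[t_2]}$, $L_{[t_3]}$) are fixed before blowing down.
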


\begin{proof} It is similar to the previous one. One can check:
\begin{itemize}
\item any automorphism permutes the two $(-2)$-curves $E^{(1)}_{[t_1]}$ and $E^{(1)}_{[t_2]}$,
and therefore stabilize the unique $(-1)$-curve intersecting them, namely $L_{[t_1],[t_2]}$;
\item $\varphi$ permutes non trivially $E^{(1)}_{[t_1]}$ and $E^{(1)}_{[t_2]}$;
\item $\elm_{2[t_1]}$ stabilizes $E^{(1)}_{[t_1]}$, $E^{(0)}_{[t_1]}$, $L_{[t_1],[t_3]}$, 
$E^{(1)}_{[t_2]}$ but permutes $E^{(0)}_{[t_2]}$ and $L_{[t_2],[t_3]}$ non trivially;
\item in a similar way, $\elm_{2[t_2]}$ stabilizes $E^{(1)}_{[t_1]}$, $E^{(1)}_{[t_2]}$, 
$E^{(0)}_{[t_2]}$ and $L_{[t_2],[t_3]}$, but permutes $E^{(0)}_{[t_1]}$ and  $L_{[t_1],[t_3]}$
non trivially.
\end{itemize}
Given an automorphism $\phi$, one can assume, after composition by an element of the $8$ order group
of the statement, that is stabilizes all $7$ curves $E^{(i)}_{[t_j]}$, $L_{[t_i],[t_j]}$; but combinatorics imply
that $\phi$ stabilizes all negative curves: its blow-down $\bar\phi$ on $\mathbb P^2$ has to preserve 
the conic and fix support of $D_{221}$. It is the identity.
\end{proof}

Now we set $D=D_{311}$.
The left-hand-side of Figure \ref{figure3}
is the projective plane $\mathbb P^2$ with the locus of special bundles.
The right-hand-side of Figure \ref{figure3} is
the graph of negative curves on
 the weak del Pezzo surface obtained by blowing-up of the left-hand-side.
This weak del Pezzo surface $\mathrm{wdP}^{(4)}_{A_2}$
 is just the moduli space $\overline{\BUN}_{\frac{1}{2}} (D_{311},1)$.
Note that the two thick lines are $(-2)$-curves.

\begin{figure}[h]

\begin{pspicture}(0,0)(15,5)


\psellipse[linecolor=green,linewidth=1pt](3.5,2.5)(1.9,1.3)
\psline[linecolor=brown,showpoints=false,linewidth=1pt](1,3.8)(6,3.8)
\psline[linecolor=orange,showpoints=false,linewidth=1pt](1,1.2)(4.1,4.4)
\psline[linecolor=orange,showpoints=false,linewidth=1pt](6,1.2)(2.9,4.4)
\psline[linecolor=cyan,showpoints=false,linewidth=1pt](1,2)(6,2)
\psline[linecolor=magenta,showpoints=false,linewidth=2pt]{->}(3.8,3.8)(4.4,3.63)
\psline[linecolor=purple,showpoints=false,linewidth=2pt]{->}(3.5,3.8)(4.1,3.8)
\psdot[linecolor=blue,linewidth=2pt](1.8,2)
\psdot[linecolor=red,linewidth=2pt](3.5,3.8)
\psdot[linecolor=blue,linewidth=2pt](5.2,2)
\uput[u](3.5,4){\red$P_{[t_1]}$}
\uput[r](5.2,2.3){\blue$P_{[t_2]}$}
\uput[l](1.8,2.3){\blue$P_{[t_3]}$}
\uput[u](3.5,1.1){\green$C$}
\uput[u](3.5,1.9){\cyan$L_{[t_2],[t_3]}$}
\uput[u](0.7,1.3){\color{orange} $L_{[t_1],[t_3]}$}
\uput[u](6.2,1.3){\color{orange} $L_{[t_1],[t_2]}$}
\uput[u](5.4,3.7){\color{brown}$L_{2[t_1]}$}



\psline[linecolor=red,showpoints=false,linewidth=2pt](8,3)(12,5)
\psline[linecolor=purple,showpoints=false,linewidth=2pt](15,3)(11,5)
\psline[linecolor=orange,showpoints=false,linewidth=1pt](8.6,3.6)(8.6,1)
\psline[linecolor=orange,showpoints=false,linewidth=1pt](10,4.6)(10,2)
\psline[linecolor=magenta,showpoints=false,linewidth=1pt](14.4,3.6)(14.4,1)
\psline[linecolor=brown,showpoints=false,linewidth=1pt](13,4.6)(13,2)

\psline[linecolor=cyan,showpoints=false,linewidth=1pt](14,3)(9,1)
\psline[linecolor=blue,showpoints=false,linewidth=1pt](8,2)(13,0)
\psline[linecolor=blue,showpoints=false,linewidth=1pt](9,3)(14,1)
\psline[linecolor=green,showpoints=false,linewidth=1pt](15,2)(10,0)

\uput[r](8.6,4.1){\red$E_{[t_1]}^{(2)}$}
\uput[l](14.25,4){\color{purple}$E_{[t_1]}^{(1)}$}
\uput[d](12.2,1.8){\blue$L_{[t_2]}$}
\uput[d](10.5,1.1){\blue$L_{[t_3]}$}
\uput[u](12.5,0.5){\green$C$}
\uput[d](12,3){\cyan$L_{[t_2],[t_3]}$}
\uput[u](10.6,3){\color{orange}$L_{[t_1],[t_3]}$}
\uput[u](9.2,2){\color{orange}$L_{[t_1],[t_2]}$}
\uput[d](12.6,3.8){\color{brown} $L_{2[t_1]}$}
\uput[ur](13.5,2){\color{magenta} $E_{[t_1]}^{(0)}$}
\end{pspicture}

\caption{$\mathbb{P}^2$ and $\mathrm{wdP}^{(4)}_{A_2}$}
\label{figure3}
\end{figure}

We take the automorphism $\varphi \in \mathrm{Aut}(C,D_{311})$ 
permuting $t_2$ and $t_3$, fixing $t_1$.
Let $\varphi^*$ be the automorphism of $\mathrm{wdP}^{(4)}_{A_2}$
induced by the pull-backs of refined parabolic bundles by $\varphi$.

\begin{prop}The automorphism group of the weak del Pezzo
surface is $\mathbb Z/2\ltimes(\mathbb Z/2\times\mathbb Z/2)$, generated by:
$$\mathrm{Aut}\left(\mathrm{wdP}^{(4)}_{A_2}\right)=\langle \varphi^*, \elm_{3[t_1]+[t_2]},\elm_{[t_2]+[t_3]}  \rangle.$$
\end{prop}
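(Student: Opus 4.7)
The plan is to follow the same strategy as the two previous propositions. First, I would work out how each proposed generator acts on the negative curves of $\mathrm{wdP}^{(4)}_{A_2}$, paying particular attention to the $A_2$ chain $E^{(2)}_{[t_1]}\!-\!E^{(1)}_{[t_1]}$ of $(-2)$-curves, which any automorphism must globally preserve. Since $\varphi^*$ is induced by an automorphism of $(C,D_{311})$ fixing $t_1$, and since $\elm_{[t_2]+[t_3]}$ only modifies parabolic data at $t_2$ and $t_3$, both stabilize each $(-2)$-curve individually. On the other hand, Corollary \ref{cor:2021.10.22.14.33} tells us that the ``$3[t_1]$''-part of $\elm_{3[t_1]+[t_2]}$ interchanges $E^{(1)}_{[t_1]}$ and $E^{(2)}_{[t_1]}$ while fixing $E^{(0)}_{[t_1]}$; so among the three generators, only $\elm_{3[t_1]+[t_2]}$ realizes the nontrivial symmetry of the $A_2$ chain.

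Second, the group-theoretic structure should fall out as follows. Composing elementary transformations at disjoint supports, while at a common point two consecutive elementary transformations amount to tensoring by a line bundle, yields the identity $\elm_{3[t_1]+[t_2]}\circ\elm_{[t_2]+[t_3]}=\elm_{3[t_1]+[t_3]}$ (up to a line-bundle twist incorporated in the definition of $\elm$ when $w=\tfrac12$). Since $\varphi^*$ conjugates $\elm_{3[t_1]+[t_2]}$ to $\elm_{3[t_1]+[t_3]}$ by its action on the support of the divisor, one obtains the relation $\varphi^*\,\elm_{3[t_1]+[t_2]}\,{\varphi^*}^{-1}=\elm_{3[t_1]+[t_2]}\cdot\elm_{[t_2]+[t_3]}$, whereas $\varphi^*$ commutes with $\elm_{[t_2]+[t_3]}$ because $\varphi$ preserves the unordered pair $\{t_2,t_3\}$. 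These relations assemble into the announced semi-direct product $\mathbb Z/2\ltimes(\mathbb Z/2\times\mathbb Z/2)$ of order $8$; injectivity is confirmed by the fact that $\varphi^*$ alone acts nontrivially on the ordered pair $\{L_{[t_1],[t_2]},L_{[t_1],[t_3]}\}$, that $\elm_{3[t_1]+[t_2]}$ alone swaps the two $(-2)$-curves, and that $\elm_{[t_2]+[t_3]}$ alone permutes $C$ with $L_{[t_2],[t_3]}$.

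Third, for the converse inclusion, I would argue as before: given any automorphism $\phi$ of $\mathrm{wdP}^{(4)}_{A_2}$, after composing with a suitable element of the listed order-$8$ group one may assume that $\phi$ stabilizes every one of the negative curves; intersection combinatorics then force $\phi$ to descend, after contracting $C$ and the $(-1)$-curves above $t_1$, to a linear $\bar\phi\in\mathrm{PGL}_3(\mathbb C)$ that preserves the conic $C$ and fixes $P_{[t_1]},P_{[t_2]},P_{[t_3]}$ together with the tangent direction at $P_{[t_1]}$ prescribed by $L_{2[t_1]}$. The classical bijection between conic-preserving automorphisms of $\mathbb P^2$ and $\mathrm{Aut}(C)\cong\mathrm{PGL}_2(\mathbb C)$ then forces $\bar\phi=\mathrm{id}$, so $\phi$ lies in the order-$8$ group. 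The main technical obstacle will be the careful bookkeeping of the action of $\elm_{3[t_1]+[t_2]}$ on curves meeting the infinitely near tower above $t_1$: Corollary \ref{cor:2021.10.22.14.33} settles the exceptional chain combinatorially, but to identify the action on $L_{2[t_1]},L_{[t_1],[t_2]},L_{[t_1],[t_3]}$ and $C$ one must work through the explicit description of refined parabolic structures from the appendix, in a calculation analogous to (but slightly heavier than) those carried out in the $D_{2111}$ and $D_{221}$ cases.
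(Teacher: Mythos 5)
Your overall strategy coincides with the paper's: determine how each generator permutes the negative curves (in particular how it acts on the $A_2$-chain of $(-2)$-curves and on the pairs of $(-1)$-curves attached to it), deduce that the three generators produce a group of order $8$ with the stated semidirect-product structure, and then show that an arbitrary automorphism, after correction by an element of this group, fixes every negative curve, hence blows down to a linear automorphism of $\mathbb P^2$ preserving the conic and fixing the support of $D_{311}$, which forces it to be the identity. Your explicit derivation of the relation $\varphi^*\,\elm_{3[t_1]+[t_2]}\,(\varphi^*)^{-1}=\elm_{3[t_1]+[t_3]}=\elm_{3[t_1]+[t_2]}\cdot\elm_{[t_2]+[t_3]}$ is a worthwhile addition that the paper leaves implicit.

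One of your stated facts is, however, false (though not load-bearing): $\elm_{3[t_1]+[t_2]}$ cannot fix $E^{(0)}_{[t_1]}$. Since it interchanges the two $(-2)$-curves $E^{(1)}_{[t_1]}$ and $E^{(2)}_{[t_1]}$, and since the $(-1)$-curves meeting $E^{(1)}_{[t_1]}$ are exactly $E^{(0)}_{[t_1]}$ and $L_{2[t_1]}$ while those meeting $E^{(2)}_{[t_1]}$ are exactly $L_{[t_1],[t_2]}$ and $L_{[t_1],[t_3]}$, it must exchange these two pairs; in particular $E^{(0)}_{[t_1]}$ is sent to one of $L_{[t_1],[t_2]}$, $L_{[t_1],[t_3]}$. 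Corollary \ref{cor:2021.10.22.14.33} only governs the curves $E^{(k)}_{[t_1]}$ with $k\ge 1$ (those parametrizing non-free $l_{1,3}$); $E^{(0)}_{[t_1]}$ parametrizes ordinary parabolic bundles and is not covered by it. A second small slip: to descend to $\mathbb P^2$ at the end one contracts $E^{(0)}_{[t_1]}$, $E^{(1)}_{[t_1]}$, $E^{(2)}_{[t_1]}$, $L_{[t_2]}$, $L_{[t_3]}$ in that order, not $C$; contracting $C$ produces the degree-$5$ surface instead. Neither correction affects the validity of your argument, since the reduction step only uses that the group acts transitively on the possible permutation patterns of the six curves $E^{(i)}_{[t_1]}$, $L_{2[t_1]}$, $L_{[t_1],[t_j]}$.
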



\begin{proof} It is similar to the previous ones. One can check:
\begin{itemize}
\item any automorphism permutes the two $(-2)$-curves $E^{(2)}_{[t_1]}$ and $E^{(1)}_{[t_1]}$;
\item $\elm_{3[t_1]+[t_2]}$ permutes non trivially $E^{(2)}_{[t_1]}$ and $E^{(1)}_{[t_1]}$;
\item $\varphi$ stabilizes $E^{(2)}_{[t_1]}$, $E^{(1)}_{[t_1]}$, $E^{(0)}_{[t_1]}$ and $L_{2[t_1]}$,
but permutes non trivially the two lines $L_{[t_1],[t_2]}$ and $L_{[t_1],[t_3]}$;
\item $\elm_{[t_2]+[t_3]}$ stabilizes $E^{(2)}_{[t_1]}$ and $E^{(1)}_{[t_1]}$,
but permutes non trivially the two lines $E^{(0)}_{[t_1]}$ and $L_{2[t_1]}$.
\end{itemize}
Given an automorphism $\phi$, one can assume, after composition by an element of the $8$ order group
of the statement, that is stabilizes all $6$ curves $E^{(i)}_{[t_1]}$, $L_{[t_1],[t_j]}$ and $L_{2[t_1]}$; but combinatorics imply
that $\phi$ stabilizes all negative curves: its blow-down $\bar\phi$ on $\mathbb P^2$ has to preserve 
the conic and fix support of $D_{311}$. It is the identity.
\end{proof}

Now we set $D=D_{32}$.
The left-hand-side of Figure \ref{figure4}
is the projective plane $\mathbb P^2$ with the locus of special bundles.
The right-hand-side of Figure \ref{figure4} is
the graph of negative curves on
 the weak del Pezzo surface obtained by blowing-up of the left-hand-side.
This weak del Pezzo surface $\mathrm{wdP}^{(4)}_{A_2+A_1}$
 is just the moduli space $\overline{\BUN}_{\frac{1}{2}} (D_{32},1)$.
 Note that the three thick lines are $(-2)$-curves.

\begin{figure}[h]
\begin{pspicture}(0,0)(15,5)


\psellipse[linecolor=green,linewidth=1pt](3.37,1.87)(1.87,1.25)
\psline[linecolor=brown,showpoints=false,linewidth=1pt](0,0)(3.5,5)
\psline[linecolor=brown,showpoints=false,linewidth=1pt](6.75,0)(3.25,5)
\psline[linecolor=orange,showpoints=false,linewidth=1pt](0,2.31)(6.75,2.31)
\psline[linecolor=magenta,showpoints=false,linewidth=2pt]{->}(1.76,2.5)(2.3,2.9)
\psline[linecolor=purple,showpoints=false,linewidth=2pt]{->}(1.65,2.31)(2,2.87)

\psline[linecolor=magenta,showpoints=false,linewidth=2pt]{->}(5.1,2.31)(4.71,2.87)
\psdot[linecolor=red,linewidth=2pt](1.65,2.31)
\psdot[linecolor=red,linewidth=2pt](5.06,2.31)
\uput[u](1.62,2.62){\red$P_{[t_1]}$}
\uput[u](5.12,2.62){\red$P_{[t_2]}$}
\uput[u](3.37,3.12){\green$C$}
\uput[u](3.5,2.2){\color{orange}$L_{[t_1],[t_2]}$}
\uput[u](0.56,1.25){\color{brown} $L_{2[t_1]}$}
\uput[u](6.19,1.25){\color{brown} $L_{2[t_2]}$}



\psline[linecolor=orange,showpoints=false,linewidth=1pt](8.5,0.5)(14.5,0.5)
\psline[linecolor=brown,showpoints=false,linewidth=1pt](8,3)(12,5)
\psline[linecolor=brown,showpoints=false,linewidth=1pt](15,3)(11,5)
\psline[linecolor=green,showpoints=false,linewidth=1pt](15,2)(11,4)
\psline[linecolor=magenta,showpoints=false,linewidth=1pt](8,2)(12,4)
\psline[linecolor=red,showpoints=false,linewidth=2pt](9,0)(11,2)
\psline[linecolor=red,showpoints=false,linewidth=2pt](12,2)(14,0)
\psline[linecolor=magenta,showpoints=false,linewidth=1pt](12,1)(15,4)
\psline[linecolor=purple,showpoints=false,linewidth=2pt](11,1)(8,4)
\uput[u](11.5,0.4){\color{orange}$L_{[t_1],[t_2]}$}
\uput[u](9.3,0.6){\red$E_{[t_1]}^{(2)}$}
\uput[u](9.5,1.5){\color{purple}$E_{[t_1]}^{(1)}$}
\uput[u](10.5,2.4){\color{magenta}$E_{[t_1]}^{(0)}$}
\uput[u](12.5,2.7){\color{green}$C$}
\uput[u](13.5,0.6){\red$E_{[t_2]}^{(1)}$}
\uput[u](13.5,1.5){\color{magenta}$E_{[t_2]}^{(0)}$}
\uput[u](9.5,3.8){\color{brown} $L_{2[t_1]}$}
\uput[u](13.5,3.8){\color{brown} $L_{2[t_2]}$}
\end{pspicture}

\caption{$\mathbb{P}^2$ and $\mathrm{wdP}^{(4)}_{A_2+A_1}$}
\label{figure4}
\end{figure}


Let $\mathrm{Aut}(C,D_{32})$ be the group of automorphisms  fixing $t_1$ and $t_2$.
We have an isomorphism $\mathrm{Aut}(C,D_{32})\cong \mathbb{C}^*$.

\begin{prop}The automorphism group of the weak del Pezzo
surface is $\mathbb Z/2\times\mathbb C^*$, generated by:
$$\mathrm{Aut}\left(\mathrm{wdP}^{(4)}_{A_1+A_2}\right)=\langle \elm_{2[t_1]}, \mathrm{Aut}(C,D_{32})  \rangle.$$
\end{prop}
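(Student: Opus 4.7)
The plan is to adapt the strategy of the three preceding propositions. First I would read off from Figure \ref{figure4} the action of the generators on the negative curves. The configuration of $(-2)$-curves forms $A_2+A_1$: the chain $E^{(2)}_{[t_1]}\cdot E^{(1)}_{[t_1]}$ together with the isolated $E^{(1)}_{[t_2]}$. Any automorphism $\phi$ must preserve this Dynkin type, hence stabilize $E^{(1)}_{[t_2]}$ and act on the ordered chain $(E^{(2)}_{[t_1]}, E^{(1)}_{[t_1]})$ either trivially or by reversing it.

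Next I would verify the action of the stated generators. The elementary transformation $\elm_{2[t_1]}$ (interpreted as in Section \ref{section:2021.12.31.15.42}) squares to the identity and swaps the two ends of the $A_2$ chain, while additionally permuting the two $(-1)$-curves meeting each end (for instance $C$ with $L_{2[t_1]}$). The action of $\mathrm{Aut}(C,D_{32})\cong \mathbb{C}^*$ is via pull-back of refined parabolic bundles; since such automorphisms pointwise fix both $t_1$ and $t_2$, each negative curve is stabilized. A direct check on parabolic structures shows that $\elm_{2[t_1]}$ commutes with the $\mathbb{C}^*$-action, so the subgroup generated is a direct product $\mathbb{Z}/2\times \mathbb{C}^*$ sitting inside $\mathrm{Aut}(\mathrm{wdP}^{(4)}_{A_1+A_2})$.

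For the converse, given an arbitrary automorphism $\phi$, I would compose with $\elm_{2[t_1]}$ if necessary so that $\phi$ stabilizes each $(-2)$-curve. Combinatorial analysis of intersections in Figure \ref{figure4} then forces $\phi$ to stabilize every negative curve: for instance $E^{(0)}_{[t_1]}$ is the unique $(-1)$-curve meeting only $E^{(1)}_{[t_1]}$ and $L_{2[t_1]}$, and similar incidence arguments pin down the remaining curves. Blowing down the chain of $(-1)$-curves lying over $t_1$ and over $t_2$, one obtains an induced linear automorphism $\bar\phi$ of $\mathbb{P}^2$ preserving the conic $C$ together with the infinitely near data prescribed at $P_{[t_1]}$ (order two along $C$) and $P_{[t_2]}$ (order one along $C$). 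By the standard correspondence between linear automorphisms of $\mathbb{P}^2$ preserving a conic and automorphisms of that conic, $\bar\phi$ is determined by its restriction to $C$, which is an automorphism of $C\cong \mathbb{P}^1$ fixing $t_1$ and $t_2$, hence an element of $\mathrm{Aut}(C,D_{32})$.

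The main obstacle, in contrast to the finite-group cases handled previously, is the continuous $\mathbb{C}^*$ factor: after stabilizing all negative curves we are not reduced to the identity but retain a full one-parameter family of freedom. The delicate point is to show that all such residual automorphisms genuinely arise from pullback under $\mathrm{Aut}(C,D_{32})$. This requires tracking the infinitesimal data at the multiple points $t_1$ (with $n_1=3$) and $t_2$ (with $n_2=2$) through the blow-downs, and confirming that the $\mathbb{C}^*$ of linear automorphisms of $\mathbb{P}^2$ preserving the conic and pointwise fixing $\{P_{[t_1]},P_{[t_2]}\}$ precisely exhausts the lift of $\mathrm{Aut}(C,D_{32})$ to the moduli space.
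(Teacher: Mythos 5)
Your overall architecture --- incidence combinatorics of negative curves, composition with an elementary-transformation involution, blow-down to a linear automorphism of $\mathbb P^2$ preserving the conic, identification with $\mathrm{Aut}(C,D_{32})$, and the commutativity check --- is the same as the paper's, but the combinatorial core of your argument is wrong in a way that breaks the proof. The two ends of the $A_2$-chain $E^{(2)}_{[t_1]}\,$--$\,E^{(1)}_{[t_1]}$ are \emph{not} interchangeable: $E^{(2)}_{[t_1]}$ meets exactly one $(-1)$-curve (namely $L_{[t_1],[t_2]}$), whereas $E^{(1)}_{[t_1]}$ meets two ($E^{(0)}_{[t_1]}$ and $L_{2[t_1]}$). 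Hence \emph{every} automorphism stabilizes each $(-2)$-curve individually --- this is exactly the paper's first bullet --- and no automorphism, in particular not the elementary transformation, ``swaps the two ends of the $A_2$ chain'' as you claim. Likewise $C$ and $L_{2[t_1]}$ can never be permuted: $L_{2[t_1]}$ meets the $(-2)$-curve $E^{(1)}_{[t_1]}$, while the strict transform of $C$ meets no $(-2)$-curve at all (only $E^{(0)}_{[t_1]}$ and $E^{(0)}_{[t_2]}$). The involution in question (which must in fact be $\elm_{2[t_2]}$ in the notation of Section \ref{section:2021.12.31.15.42}, since the construction $\O((n_{i_0}/2)\cdot t_{i_0})\otimes\elm^-_{n_{i_0}[t_{i_0}]}$ requires $n_{i_0}$ even and here $n_1=3$) fixes all three $(-2)$-curves and swaps $L_{2[t_1]}\leftrightarrow E^{(0)}_{[t_1]}$ and $L_{2[t_2]}\leftrightarrow C$.

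This matters for the converse direction. You reduce to a $\phi$ stabilizing each $(-2)$-curve and then assert that intersection combinatorics forces $\phi$ to stabilize every negative curve; that assertion is false --- the involution itself stabilizes all $(-2)$-curves while permuting $(-1)$-curves nontrivially --- and if it were true your argument would yield $\mathrm{Aut}(\mathrm{wdP}^{(4)}_{A_1+A_2})\cong\mathbb C^*$ with no $\mathbb Z/2$ factor. (Your sample deduction is also off: $E^{(0)}_{[t_1]}$ meets $E^{(1)}_{[t_1]}$ and $C$, not $L_{2[t_1]}$.) The correct reduction is: every $\phi$ already fixes the $(-2)$-curves, the only nontrivial permutation of the $(-1)$-curves compatible with the incidence graph is the double swap above, and composing with $\elm_{2[t_2]}$ when necessary eliminates it; after that, your blow-down step (a linear automorphism of $\mathbb P^2$ preserving $C$ and fixing $P_{[t_1]}$, $P_{[t_2]}$ is determined by its restriction to the conic and is realized by pull-back under an element of $\mathrm{Aut}(C,D_{32})$) and the commutation argument agree with the paper.
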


\begin{proof} It is similar to the previous ones. One can check:
\begin{itemize}
\item any automorphism stabilizes $E^{(i)}_{[t_1]}$, $i=1,2$, $E^{(i)}_{[t_2]}$, $i=0,1$, and  $L_{[t_1],[t_2]}$
by intersection combinatorics between $(-2)$ and $(-1)$-curves;
\item $\elm_{2[t_2]}$ permutes $L_{2[t_1]}$ with $E^{(0)}_{[t_1]}$, and $L_{2[t_2]}$ with $C$.
\end{itemize}
Given an automorphism $\phi$, one can assume, after composition by $\elm_{2[t_2]}$
that is stabilizes all  negative curves and blow-down as a linear automorphism of $\mathbb P^2$ stabilizing the conic $C$
and fixing the two points $P_{[t_1]}$ and $P_{[t_2]}$. It is determined by its action on the conic;
there is an element $\varphi\in\mathrm{Aut}(C,D_{32})$ inducing the same action on the conic: 
$\phi$ coincides with the natural action of $\varphi$ on refined parabolic bundles.
So $\mathrm{Aut}\left(\mathrm{wdP}^{(4)}_{A_1+A_2}\right)$
is generated by $\elm_{2[t_1]}$ and $\mathrm{Aut}(C,D_{32})$.

Since $\varphi \in \mathrm{Aut}(C,D_{32})$ fixes $t_1$ and $t_2$, 
we may check that 
$$
\elm_{2[t_1]} (\varphi^* E, \varphi^* \bold{l}) = 
\varphi^*\left( \elm_{2[t_1]} (E,  \bold{l}) \right)
$$
for any $(E,  \bold{l}) \in \overline{\BUN}_{\frac{1}{2}} (D_{32},1)$.
That is, the automorphisms $\varphi^*$ and $\elm_{2[t_1]}$ commute.
So the automorphism group of the weak del Pezzo
surface is isomorphic to $\mathbb Z/2\times\mathbb C^*$.
\end{proof}

Now we set $D=D_{41}$.
The left-hand-side of Figure \ref{figure5}
is the projective plane $\mathbb P^2$ with the locus of special bundles.
The right-hand-side of Figure \ref{figure5} is
the graph of negative curves on
 the weak del Pezzo surface obtained by blowing-up of the left-hand-side.
This weak del Pezzo surface $\mathrm{wdP}^{(4)}_{A_3}$
 is just the moduli space $\overline{\BUN}_{\frac{1}{2}} (D_{41},1)$.
 Note that the three thick lines are $(-2)$-curves.

\begin{figure}[h]

\begin{pspicture}(0,0)(15,5)


\psellipse[linecolor=green,linewidth=1pt](3.37,1.87)(1.87,1.25)
\psline[linecolor=brown,showpoints=false,linewidth=1pt](0,0)(3.5,5)
\psline[linecolor=orange,showpoints=false,linewidth=1pt](0,2.31)(6.75,2.31)
\psline[linecolor=magenta,showpoints=false,linewidth=2pt]{->}(2,2.7)(2.6,3.1)
\psline[linecolor=purple,showpoints=false,linewidth=2pt]{->}(1.76,2.5)(2.3,3)
\psline[linecolor=violet,showpoints=false,linewidth=2pt]{->}(1.65,2.31)(2,2.87)
\psdot[linecolor=red,linewidth=2pt](1.65,2.31)
\psdot[linecolor=blue,linewidth=2pt](5.06,2.31)
\uput[u](1.42,2.42){\red$P_{[t_1]}$}
\uput[u](5.32,2.42){\blue$P_{[t_2]}$}
\uput[u](3.37,3.12){\green$C$}
\uput[u](3.5,2.2){\color{orange}$L_{[t_1],[t_2]}$}
\uput[u](0.56,1.25){\color{brown} $L_{2[t_1]}$}




\psline[linecolor=violet,showpoints=false,linewidth=2pt](9.5,4.5)(13.5,4.5)
\psline[linecolor=red,showpoints=false,linewidth=2pt](8,3)(11,5)
\psline[linecolor=purple,showpoints=false,linewidth=2pt](15,3)(12,5)
\psline[linecolor=brown,showpoints=false,linewidth=1pt](11.5,5)(11.5,2)
\psline[linecolor=orange,showpoints=false,linewidth=1pt](9,1)(9,4)
\psline[linecolor=magenta,showpoints=false,linewidth=1pt](14,1)(14,4)
\psline[linecolor=green,showpoints=false,linewidth=1pt](15,2)(11,0)
\psline[linecolor=blue,showpoints=false,linewidth=1pt](8,2)(12,0)
\uput[u](12,2.5){\color{brown} $L_{2[t_1]}$}
\uput[u](9.7,3.2){\color{red}$E_{[t_1]}^{(3)}$}
\uput[u](10.7,3.7){\color{violet}$E_{[t_1]}^{(2)}$}
\uput[u](13.1,3.2){\color{purple}$E_{[t_1]}^{(1)}$}
\uput[u](13.6,2){\color{magenta}$E_{[t_1]}^{(0)}$}
\uput[u](9.6,2){\color{orange}$L_{[t_1],[t_2]}$}
\uput[u](10.3,0.9){\color{blue}$E_{[t_2]}^{(0)}$}
\uput[u](13,1){\color{green}$C$}
\end{pspicture}

\caption{$\mathbb{P}^2$ and $\mathrm{wdP}^{(4)}_{A_3}$}
\label{figure5}
\end{figure}

Let $\mathrm{Aut}(C,D_{41})$ be the group of automorphisms fixing $t_1$ and $t_2$.

\begin{prop}The automorphism group of the weak del Pezzo
surface is $\mathbb Z/2\times\mathbb C^*$, generated by:
$$\mathrm{Aut}\left(\mathrm{wdP}^{(4)}_{A_3}\right)=\langle \elm_{4[t_1]}, \mathrm{Aut}(C,D_{41})  \rangle.$$
\end{prop}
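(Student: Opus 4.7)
The plan is to follow closely the argument already given for $\mathrm{wdP}^{(4)}_{A_1+A_2}$, since both weak del Pezzo surfaces have automorphism groups of the expected form $\mathbb{Z}/2\times\mathbb{C}^*$, with the discrete factor coming from an elementary transformation at the unique multiple point (even multiplicity) and the continuous factor coming from automorphisms of $(\mathbb{P}^1,D)$ preserving the support pointwise. Concretely, $\mathrm{Aut}(C,D_{41})\cong\mathbb{C}^*$ consists of all automorphisms of $\mathbb{P}^1$ fixing $t_1$ and $t_2$.

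First I would determine how the two candidate generators act on the negative curves drawn in Figure \ref{figure5}. By Corollary \ref{cor:2021.10.22.14.33}, the elementary transformation $\elm_{4[t_1]}$ reverses the $A_3$ chain: it swaps $E^{(1)}_{[t_1]}$ with $E^{(3)}_{[t_1]}$ and fixes $E^{(2)}_{[t_1]}$. A direct computation with parabolic structures (analogous to the $D_{32}$ case) then shows that $\elm_{4[t_1]}$ permutes non trivially $E^{(0)}_{[t_1]}$ with $L_{2[t_1]}$, while stabilizing $L_{[t_1],[t_2]}$ and permuting $E^{(0)}_{[t_2]}$ with $C$. On the other hand, any $\varphi^*\in\mathrm{Aut}(C,D_{41})$ stabilizes every single negative curve, because $\varphi$ fixes $t_1$ and $t_2$ individually and preserves the multiplicities of the divisor.

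Next I would show that any automorphism $\phi$ of $\mathrm{wdP}^{(4)}_{A_3}$ belongs to the group generated by $\elm_{4[t_1]}$ and $\mathrm{Aut}(C,D_{41})$. Since $\phi$ must preserve the Dynkin diagram of $(-2)$-curves, it either fixes each of $E^{(1)}_{[t_1]},E^{(2)}_{[t_1]},E^{(3)}_{[t_1]}$ or reverses the chain; after composing with $\elm_{4[t_1]}$ if necessary, we may assume $\phi$ fixes each $(-2)$-curve. Intersection combinatorics then force $\phi$ to stabilize every negative curve: $E^{(0)}_{[t_1]}$ is the only $(-1)$-curve meeting $E^{(1)}_{[t_1]}$ alone, $L_{2[t_1]}$ is the only $(-1)$-curve meeting $E^{(3)}_{[t_1]}$ alone, $L_{[t_1],[t_2]}$ is the only $(-1)$-curve meeting $E^{(2)}_{[t_1]}$, and once these are fixed the pair $\{C, E^{(0)}_{[t_2]}\}$ is also stabilized (in fact each individually, since only one of them meets $L_{2[t_1]}$). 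Blowing down along $\pi_1,\ldots,\pi_5$ yields a linear automorphism $\bar\phi$ of $\mathbb{P}^2$ stabilizing the conic $C$ and fixing $P_{[t_1]}$ and $P_{[t_2]}$. By the one-to-one correspondence between conic-preserving linear automorphisms and automorphisms of the conic, $\bar\phi$ is determined by its restriction to $C$, which coincides with some $\varphi\in\mathrm{Aut}(C,D_{41})$ fixing $t_1$ and $t_2$; thus $\phi=\varphi^*$.

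Finally, since every $\varphi\in\mathrm{Aut}(C,D_{41})$ fixes $t_1$, the identity
\[
\elm_{4[t_1]}(\varphi^*E,\varphi^*\bold{l})=\varphi^*\bigl(\elm_{4[t_1]}(E,\bold{l})\bigr)
\]
holds for every $(E,\bold{l})$, so $\elm_{4[t_1]}$ and $\mathrm{Aut}(C,D_{41})$ commute; together with $\elm_{4[t_1]}^2=\mathrm{id}$ this yields the direct product $\mathbb{Z}/2\times\mathbb{C}^*$. The main obstacle I anticipate is the bookkeeping in the second step: one must rule out that some unexpected combinatorial symmetry of the non-$(-2)$-curves (after fixing the $A_3$ chain) could produce an automorphism not descending to an element of $\mathrm{Aut}(C,D_{41})$, which in turn requires a careful case-by-case check of incidence relations in Figure \ref{figure5}.
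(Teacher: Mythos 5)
Your proof follows the same route as the paper's own: show that $\elm_{4[t_1]}$ reverses the $A_3$ chain, reduce an arbitrary automorphism modulo this involution to one fixing all negative curves, blow down to a linear automorphism of $\mathbb P^2$ preserving the conic and fixing $P_{[t_1]}$ and $P_{[t_2]}$, and identify it with an element of $\mathrm{Aut}(C,D_{41})$; the commutation argument giving the direct product is also the one the paper uses. However, your incidence bookkeeping is wrong, and in a way that is internally inconsistent with your own (correct) assertion that $\elm_{4[t_1]}$ swaps $E^{(1)}_{[t_1]}$ and $E^{(3)}_{[t_1]}$ while fixing $E^{(2)}_{[t_1]}$. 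From the blow-up description (the line $L_{2[t_1]}$ is tangent to $C$ at $P_{[t_1]}$ with multiplicity two, so it separates from $C$ only after two blow-ups, while $L_{[t_1],[t_2]}$ meets $C$ transversally at $P_{[t_1]}$), the actual configuration in Figure \ref{figure5} is: $L_{2[t_1]}$ meets only the \emph{middle} curve $E^{(2)}_{[t_1]}$; $L_{[t_1],[t_2]}$ meets the end $E^{(3)}_{[t_1]}$ and $E^{(0)}_{[t_2]}$; $E^{(0)}_{[t_1]}$ meets the other end $E^{(1)}_{[t_1]}$ and $C$; and $C$ meets $E^{(0)}_{[t_1]}$ and $E^{(0)}_{[t_2]}$. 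Hence $\elm_{4[t_1]}$ \emph{fixes} $L_{2[t_1]}$ and swaps $E^{(0)}_{[t_1]}$ with $L_{[t_1],[t_2]}$ (and $C$ with $E^{(0)}_{[t_2]}$) --- not the permutation you describe, which would violate invariance of intersection numbers. Likewise, neither $C$ nor $E^{(0)}_{[t_2]}$ meets $L_{2[t_1]}$; they are distinguished by which of $E^{(0)}_{[t_1]}$ and $L_{[t_1],[t_2]}$ they meet. None of this affects the conclusion of your second step --- once the three $(-2)$-curves are individually fixed, each $(-1)$-curve is still pinned down by its incidences --- so after correcting these identifications your argument coincides with the paper's and is complete.
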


\begin{proof} One can check:
\begin{itemize}
\item any automorphism stabilizes $E^{(2)}_{[t_1]}$ and $L_{2[t_1]}$;
\item $\elm_{4[t_1]}$ permutes $E^{(3)}_{[t_1]}$ with $E^{(1)}_{[t_1]}$.
\end{itemize}
Given an automorphism $\phi$, one can assume, after composition by $\elm_{4[t_1]}$
that is stabilizes all  negative curves, and therefore blow-down as a linear automorphism of $\mathbb P^2$ 
stabilizing the conic $C$ and fixing the two points $P_{[t_1]}$ and $P_{[t_2]}$. 
So $\mathrm{Aut}\left(\mathrm{wdP}^{(4)}_{A_3}\right)$
is generated by $\elm_{4[t_1]}$ and $\mathrm{Aut}(C,D_{41})$.
Since 
$\varphi \in \mathrm{Aut}(C,D_{41})$ fixes $t_1$ and $t_2$,
the automorphisms $\elm_{4[t_1]}$ and $\varphi^*$ commute.
\end{proof}

Now we set $D=D_{5}$.
The left-hand-side of Figure \ref{figure6}
is the projective plane $\mathbb P^2$ with the locus of special bundles.
The right-hand-side of Figure \ref{figure6} is
the graph of negative curves on
 the weak del Pezzo surface obtained by blowing-up of the left-hand-side.
This weak del Pezzo surface $\mathrm{wdP}^{(4)}_{A_4}$
 is just the moduli space $\overline{\BUN}_{\frac{1}{2}} (D_{5},1)$.
 Note that the three four lines are $(-2)$-curves.

\begin{figure}[h]
\begin{pspicture}(0,0)(15,5)


\psellipse[linecolor=green,linewidth=1pt](3.37,1.87)(1.87,1.25)
\psline[linecolor=brown,showpoints=false,linewidth=1pt](0,0)(3.5,5)
\psline[linecolor=pink,showpoints=false,linewidth=2pt]{->}(2.2,2.85)(2.9,3.1)
\psline[linecolor=magenta,showpoints=false,linewidth=2pt]{->}(2,2.7)(2.6,3.1)
\psline[linecolor=purple,showpoints=false,linewidth=2pt]{->}(1.76,2.5)(2.3,3)
\psline[linecolor=violet,showpoints=false,linewidth=2pt]{->}(1.65,2.31)(2,2.87)
\psdot[linecolor=red,linewidth=2pt](1.65,2.31)
\uput[u](1.42,2.42){\red$P_{[t_1]}$}
\uput[u](3.37,3.12){\green$C$}
\uput[u](0.56,1.25){\color{brown} $L_{2[t_1]}$}




\psline[linecolor=violet,showpoints=false,linewidth=2pt](9.5,4.5)(13.5,4.5)
\psline[linecolor=red,showpoints=false,linewidth=2pt](8,3)(11,5)
\psline[linecolor=purple,showpoints=false,linewidth=2pt](15,3)(12,5)
\psline[linecolor=brown,showpoints=false,linewidth=1pt](11.5,5)(11.5,2)
\psline[linecolor=magenta,showpoints=false,linewidth=2pt](14,1)(14,4)
\psline[linecolor=pink,showpoints=false,linewidth=1pt](15,2)(11,0)
\psline[linecolor=green,showpoints=false,linewidth=1pt](8,2)(12,0)
\uput[u](12,2.5){\color{brown} $L_{2[t_1]}$}
\uput[u](9.7,3.2){\color{red}$E_{[t_1]}^{(4)}$}
\uput[u](10.7,3.7){\color{violet}$E_{[t_1]}^{(3)}$}
\uput[u](13.1,3.2){\color{purple}$E_{[t_1]}^{(2)}$}
\uput[u](13.6,2){\color{magenta}$E_{[t_1]}^{(1)}$}
\uput[u](10.3,0.9){\color{green}$C$}
\uput[u](13,1){\color{pink}$E_{[t_1]}^{(0)}$}
\end{pspicture}

\caption{$\mathbb{P}^2$ and $\mathrm{wdP}^{(4)}_{A_4}$}
\label{figure6}
\end{figure}

Let $\mathrm{Aut}(C,D_5)$ be the group of automorphisms fixing $t_1$.

\begin{prop}The automorphism group of the weak del Pezzo
surface is:
$$\mathrm{Aut}\left(\mathrm{wdP}^{(4)}_{A_4}\right)=\mathrm{Aut}(C,D_5).$$
\end{prop}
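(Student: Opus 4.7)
The plan is to follow the same template as the preceding propositions in this section: classify the negative curves on $X := \overline{\BUN}_{\frac{1}{2}}(D_5,1) \cong \mathrm{wdP}^{(4)}_{A_4}$, use the intersection combinatorics to constrain how any automorphism permutes them, blow down to $\mathbb{P}^2$ to extract a linear automorphism preserving the conic, and identify that linear automorphism with an element of $\mathrm{Aut}(C,D_5)$ acting by pull-back. The essential new feature here is that no elementary-transformation automorphisms of the moduli space are available to compose with, since $n_1=5$ is odd and $D_5$ is supported on a single point: neither $\elm_{5[t_1]}$ nor any $\elm_{n_{i_0}[t_{i_0}]+n_{i_1}[t_{i_1}]}$ is defined in Section \ref{section:2021.12.31.15.42}. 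So the full automorphism group must already arise from $\mathrm{Aut}(C,D_5)$.

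First I would read off the intersection graph from Figure \ref{figure6} and from the sequence of blow-ups defining $X_5(D_5)$: the four $(-2)$-curves $E^{(4)}_{[t_1]}, E^{(3)}_{[t_1]}, E^{(2)}_{[t_1]}, E^{(1)}_{[t_1]}$ form the $A_4$ chain in the order written, and the three $(-1)$-curves are $E^{(0)}_{[t_1]}$ (meeting $E^{(1)}_{[t_1]}$ and $C$), $L_{2[t_1]}$ (meeting only $E^{(3)}_{[t_1]}$ among negative curves, due to the tangency of $L_{2[t_1]}$ and $C$ at $P_{[t_1]}$ separating precisely after the blow-up of $P^{(3)}_{[t_1]}$), and the strict transform of $C$ (meeting only $E^{(0)}_{[t_1]}$). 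Next, any automorphism $\phi$ of $X$ must permute the $(-2)$-curves; the only nontrivial symmetry of the $A_4$ Dynkin diagram is the chain reversal $E^{(4)}_{[t_1]}\!\leftrightarrow\! E^{(1)}_{[t_1]}$, $E^{(3)}_{[t_1]}\!\leftrightarrow\! E^{(2)}_{[t_1]}$, but this would send $E^{(3)}_{[t_1]}$, which meets the $(-1)$-curve $L_{2[t_1]}$, to $E^{(2)}_{[t_1]}$, which meets no $(-1)$-curve---a contradiction. Hence $\phi$ fixes each of the four $(-2)$-curves individually, and then by uniqueness of the $(-1)$-curve meeting a prescribed $(-2)$- or $(-1)$-curve it also fixes $E^{(0)}_{[t_1]}$, $L_{2[t_1]}$, and $C$ in turn.

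Finally, since all negative curves are $\phi$-invariant, $\phi$ descends via the contractions of $C, E^{(0)}_{[t_1]}, E^{(1)}_{[t_1]}, \ldots, E^{(4)}_{[t_1]}$ to a linear automorphism $\bar\phi$ of $\mathbb{P}^2$ that preserves the conic $C$ and fixes $P_{[t_1]}$. Using the standard bijection between linear automorphisms of $\mathbb{P}^2$ preserving $C$ and automorphisms of $C\cong\mathbb{P}^1$ (already invoked in the proof for $D_{2111}$), $\bar\phi$ corresponds uniquely to some $\varphi\in\mathrm{Aut}(C,D_5)$, and the natural pull-back action of $\varphi$ on refined parabolic bundles realizes $\phi$; this yields the equality $\mathrm{Aut}(\mathrm{wdP}^{(4)}_{A_4})=\mathrm{Aut}(C,D_5)$. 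The main point of the argument, and the only step requiring care, is the asymmetric attachment of the $(-1)$-curves $L_{2[t_1]}$ (at $E^{(3)}_{[t_1]}$) and $E^{(0)}_{[t_1]}$ (at $E^{(1)}_{[t_1]}$) on the $A_4$ chain, which breaks the potential chain-reversal symmetry; in all previous cases an elementary transformation was available to realize the ``hidden'' symmetry, whereas here the combinatorics must already be rigid.
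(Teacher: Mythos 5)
Your proposal is correct and follows the same route as the paper's (much terser) proof: rigidity of the negative-curve configuration forces any automorphism to stabilize every negative curve, after which one blows down to a linear automorphism of $\mathbb{P}^2$ preserving the conic and fixing $P_{[t_1]}$, hence an element of $\mathrm{Aut}(C,D_5)$. Your expansion of the ``intersection combinatorics'' step (the asymmetric attachment of $L_{2[t_1]}$ at $E^{(3)}_{[t_1]}$ versus $E^{(0)}_{[t_1]}$ at $E^{(1)}_{[t_1]}$ killing the $A_4$ chain reversal) is accurate; only note that in the final blow-down $C$ is not contracted but maps onto the conic in $\mathbb{P}^2$ — the curves contracted are $E^{(0)}_{[t_1]},\ldots,E^{(4)}_{[t_1]}$ in that order.
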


\begin{proof} By intersection combinatorics, any automorphism $\phi$ must stabilize 
all negative curves, and therefore blow-down as a linear automorphism of $\mathbb P^2$ 
stabilizing the conic $C$ and fixing the point $P_{[t_1]}$. 
\end{proof}

\section{Appendix: List of special bundles}\label{section:2022.1.3.0.11}

In this appendix, we will give a list of special bundles for $n=5$.
First, we give a list of standard tableaus
of refined parabolic structures, which appear in the cases where $n=5$.
If the multiplicity of a point is 2, 
then the standard tableau of a refined parabolic structure
at this point has two possibility:
$$
T^{(2)}_{I} \colon  (1,1) \supset (1) 
\qquad T^{(2)}_{II} \colon  (2) \supset (1) .
$$
If the multiplicity of a point is 3, 
then the standard tableau of a refined parabolic structure
at this point has three possibility:
$$
\begin{aligned}
&T^{(3)}_{I} \colon (1,1,1) \supset T^{(2)}_{I} &
&T^{(3)}_{II} \colon (1,2) \supset T^{(2)}_{II} &
&T^{(3)}_{III} \colon (1,2) \supset T^{(2)}_{I} .
\end{aligned}
$$
If the multiplicity of a point is 4, 
then the standard tableau of a refined parabolic structure
at this point has six possibility:
$$
\begin{aligned}
&T^{(4)}_{I} \colon (1,1,1,1) \supset T^{(3)}_{I} &
&T^{(4)}_{II} \colon (1,1,2) \supset T^{(3)}_{II} &
&T^{(4)}_{III} \colon (1,1,2) \supset T^{(3)}_{III}\\
&T^{(4)}_{IV} \colon (1,1,2) \supset T^{(3)}_{I} &
&T^{(4)}_{V} \colon (2,2) \supset T^{(3)}_{II}&
&T^{(4)}_{VI} \colon (2,2) \supset T^{(3)}_{III}.
\end{aligned}
$$
If the multiplicity of a point is 5, 
then the standard tableau of a refined parabolic structure
at this point has ten possibility:
$$
\begin{aligned}
&T^{(5)}_{I} \colon (1,1,1,1,1) \supset T^{(4)}_{I} &
&T^{(5)}_{II} \colon (1,1,1,2) \supset T^{(4)}_{II} &
&T^{(5)}_{III} \colon (1,1,1,2) \supset T^{(4)}_{III} \\
&T^{(5)}_{IV} \colon (1,1,1,2) \supset T^{(4)}_{IV} &
&T^{(5)}_{V} \colon (1,1,1,2) \supset T^{(4)}_{I}&
&T^{(5)}_{VI} \colon (1,2,2) \supset T^{(4)}_{V} \\
&T^{(5)}_{VII} \colon (1,2,2) \supset   T^{(4)}_{VI} &
&T^{(5)}_{VIII} \colon (1,2,2) \supset   T^{(4)}_{II} &
&T^{(5)}_{IX} \colon (1,2,2) \supset   T^{(4)}_{III} \\
&T^{(5)}_{X} \colon (1,2,2) \supset   T^{(4)}_{IV}.
\end{aligned}
$$
For giving a list of special bundles for $n=5$, now, 
we discuss some condition for refined parabolic structures.
For a subbundle $L \subset E$, we define an integer 
$m_{t_i, L}$ by
$$
m_{t_i, L} =   \sum_{k=1}^{n_i} \mathrm{length}   
((l_{i,k}\cap L|_{n_i[t_i]})/(l_{i,k-1}\cap L|_{n_i[t_i]})).
$$
We assume that $(E, \bold{l})$ is tame and undecomposable and $\deg(E)=1$. 
In particular, $(E, \bold{l})$ is admissible.
So we have the inequality
$$
\sum_{i \in I} m_{t_i, L} \leq 4 -2 \deg(L)
$$
for any line subbundles $L$ such that $\deg(E) \leq 2\deg(L)$.
So we have the following facts:
\begin{itemize}
\item $(E, \bold{l})$ with $E \cong \O(-k) \oplus \O(1+k)$ (where $k\geq 2$)
disappears;
\item if $E=\O\oplus \O(1)$ and $L=\O(1)$,
then $\sum_{i \in I} m_{t_i, \O(1)} \leq 2$;
\item if $E=\O(-1)\oplus \O(2)$ and $L=\O(2)$,
then $ m_{t_1, \O(2)}=\cdots= m_{t_n, \O(2)}=0$.
\end{itemize}
If $l_{i,n_i}$ is not free,
then $m_{t_i, L} \geqq 1$
for any line subbundles $L$ such that $\deg(E) \leq 2\deg(L)$.
So if $E=\O(-1)\oplus \O(2)$, then 
$l_{i,n_i}$ are free for any $i$.

Now we consider the cases where $E=\mathcal{O}\oplus \mathcal{O}(1)$.
We impose $m_{t_i, \mathcal{O}(1)} \leq 2$ for $i\in I$.
We set $\epsilon_{i,k} := \epsilon_{i,k}(\mathcal{O}(1))$
for $i \in I$ and $k \in \{1,2,\ldots,n_i\}$.
When we will check the tameness of refined parabolic bundles,
we will calculate $N_i(\mathcal{O}(1))$, which is defined in \eqref{eq:2021.12.28.10.22}.
Now the tameness means that 
$$
2 \leq \sum_{i \in I^+_{\mathcal{O}(1)}}N_i(\mathcal{O}(1)).
$$
To calculate $N_i(\mathcal{O}(1))$, we consider the tuple 
$(\epsilon_{i,n_i},\ldots,\epsilon_{i,1})$.
The tuple $(\epsilon_{i,n_i},\ldots,\epsilon_{i,1})$
is generically determined by the pair $(T_{l_{i,\bullet}},m_{t_i,\mathcal{O}(1)})$.
Here $T_{l_{i,\bullet}}$ is the standard tableau of the refined parabolic structure $l_{i,\bullet}$ at $t_i$.
Now we will describe the correspondence between 
$(\epsilon_{i,n_i},\ldots,\epsilon_{i,1})$ 
and $(T_{l_{i,\bullet}},m_{t_i,\mathcal{O}(1)})$ in 
Table \ref{tab:2021.12.28_1},
Table \ref{tab:2021.12.28_2},
Table \ref{tab:2021.12.28_3}, and
Table \ref{tab:2021.12.28_4}.
\begin{table}[hbtp]
  \caption{The correspondence with $n_i=2$}
  \label{tab:2021.12.28_1}
  \centering
  \begin{tabular}{|c|c|c|}
    \hline
   $(T_{l_{i,\bullet}},m_{t_i,\mathcal{O}(1)})$
   & $(\epsilon_{i,2},\epsilon_{i,1})$ 
    &  $N_i(\mathcal{O}(1))$\\
    \hline\hline 
    $ (T_I^{(2)}, 1)$
     &$ (+,-)$
     & $0$  \\
    \hline
     $(T_{II}^{(2)}, 1)$
     & $(-,+)$
     & $1$  \\
         \hline
     $(T_{II}^{(2)}, 2)$
     & $(-,-)$
     & $-2$  \\
    \hline
  \end{tabular}
\end{table}
\begin{table}[hbtp]
  \caption{The correspondence with $n_i=3$}
  \label{tab:2021.12.28_2}
  \centering
  \begin{tabular}{|c|c|c||c|c|c|}
    \hline
   $(T_{l_{i,\bullet}},m_{t_i,\mathcal{O}(1)})$
   & $(\epsilon_{i,3},\epsilon_{i,2},\epsilon_{i,1})$ 
    &  $N_i(\mathcal{O}(1))$
    & $(T_{l_{i,\bullet}},m_{t_i,\mathcal{O}(1)})$
   & $(\epsilon_{i,3},\epsilon_{i,2},\epsilon_{i,1})$ 
    &  $N_i(\mathcal{O}(1))$ \\
    \hline\hline 
    $ (T_I^{(3)}, 1)$
     &$ (+,+,-)$
     & $1$ &     $(T_{I}^{(3)}, 2)$
     & $(+,-,-)$
     & $-1$  \\
    \hline
     $(T_{II}^{(3)}, 1)$
     & $(+,-,+)$
     & $1$ &$(T_{II}^{(3)}, 2)$
     & $(-,-,+)$
     & $1$  \\
         \hline
     $(T_{III}^{(3)}, 1)$
     & $(-,+,+)$
     & $2$ &         $(T_{III}^{(3)}, 2)$
     & $(-,+,-)$
     & $0$   \\
    \hline
      \end{tabular}
\end{table}
\begin{table}[hbtp]
  \caption{The correspondence with $n_i=4$}
  \label{tab:2021.12.28_3}
  \centering
    \begin{tabular}{|c|c|c||c|c|c|}
    \hline
   $(T_{l_{i,\bullet}},m_{t_i,\mathcal{O}(1)})$
   & $(\epsilon_{i,4},\epsilon_{i,3},\epsilon_{i,2},\epsilon_{i,1})$ 
    &  $N_i(\mathcal{O}(1))$
    & $(T_{l_{i,\bullet}},m_{t_i,\mathcal{O}(1)})$
   & $(\epsilon_{i,4},\epsilon_{i,3},\epsilon_{i,2},\epsilon_{i,1})$ 
    &  $N_i(\mathcal{O}(1))$ \\
    \hline\hline 
    $ (T_I^{(4)}, 1)$
     &$ (+,+,+,-)$
     & $2$ &     $(T_{I}^{(4)}, 2)$
     & $(+,+,-,-)$
     & $0$  \\
    \hline
     $(T_{II}^{(4)}, 1)$
     & $(+,+,-,+)$
     & $2$ &$(T_{II}^{(4)}, 2)$
     & $(+,-,-,+)$
     & $1$  \\
         \hline
     $(T_{III}^{(4)}, 1)$
     & $(+,-,+,+)$
     & $2$ &         $(T_{III}^{(4)}, 2)$
     & $(+,-,+,-)$
     & $0$   \\
    \hline
     $(T_{IV}^{(4)}, 1)$
     & $(-,+,+,+)$
     & $3$ &         $(T_{IV}^{(4)}, 2)$
     & $(-,+,+,-)$
     & $1$   \\
    \hline
         $(T_{V}^{(4)}, 2)$
     & $(-,+,-,+)$
     & $1$ &         $(T_{VI}^{(4)}, 2)$
     & $(-,-,+,+)$
     & $2$   \\
    \hline
     \end{tabular}
\end{table}
\begin{table}[hbtp]
  \caption{The correspondence with $n_i=5$}
  \label{tab:2021.12.28_4}
  \centering
    \begin{tabular}{|c|c|c||c|c|c|}
    \hline
   $(T_{l_{i,\bullet}},m_{t_i,\mathcal{O}(1)})$
   & $(\epsilon_{i,5},\epsilon_{i,4},\epsilon_{i,3},\epsilon_{i,2},\epsilon_{i,1})$ 
    &  $N_i(\mathcal{O}(1))$
    & $(T_{l_{i,\bullet}},m_{t_i,\mathcal{O}(1)})$
   & $(\epsilon_{i,5},\epsilon_{i,4},\epsilon_{i,3},\epsilon_{i,2},\epsilon_{i,1})$ 
    &  $N_i(\mathcal{O}(1))$ \\
    \hline\hline 
    $ (T_I^{(5)}, 1)$
     &$ (+,+,+,+,-)$
     & $3$ &     $(T_{I}^{(5)}, 2)$
     & $(+,+,+,-,-)$
     & $1$  \\
    \hline
     $(T_{II}^{(5)}, 1)$
     & $(+,+,+,-,+)$
     & $3$ &$(T_{II}^{(5)}, 2)$
     & $(+,+,-,-,+)$
     & $1$  \\
         \hline
     $(T_{III}^{(5)}, 1)$
     & $(+,+,-,+,+)$
     & $3$ &         $(T_{III}^{(5)}, 2)$
     & $(+,+,-,+,-)$
     & $1$   \\
    \hline
     $(T_{IV}^{(5)}, 1)$
     & $(+,-,+,+,+)$
     & $3$ &         $(T_{IV}^{(5)}, 2)$
     & $(+,-,+,+,-)$
     & $1$   \\
    \hline
          $(T_{V}^{(5)}, 1)$
     & $(-,+,+,+,+)$
     & $4$ &         $(T_{V}^{(5)}, 2)$
     & $(-,+,+,+,-)$
     & $2$   \\
    \hline
         $(T_{VI}^{(5)}, 2)$
     & $(+,-,+,-,+)$
     & $1$ &         $(T_{VII}^{(5)}, 2)$
     & $(+,-,-,+,+)$
     & $2$   \\
    \hline
         $(T_{VIII}^{(5)}, 2)$
     & $(-,+,+,-,+)$
     & $2$ &$(T_{IX}^{(5)}, 2)$
     & $(-,+,-,+,+)$
     & $2$   \\
    \hline
       $(T_{X}^{(5)}, 2)$
     & $(-,-,+,+,+)$
     & $3$   &&& \\
    \hline
     \end{tabular}
\end{table}

\pagebreak

\subsection{Case $D=D_{2111}$}

Now we describe special refined parabolic bundles which are undecomposable and tame
when $D=D_{2111}$.

\begin{itemize}
\item[] {\bf [Type A].} $E = \O \oplus \O(1)$ and $m_{t_1,\O(1)}=\cdots=m_{t_4,\O(1)}=0$.
\begin{center}
  \begin{tabular}{|c||c|c|}
  \hline
  & $T_{l_{1,\bullet}}$& 
  there exists a subundle $\O(-1) \subset E$ as below:\\
    \hline\hline
    $C$ &$T^{(2)}_{I}$
    & $\begin{array}{l}
\text{$m_{t_1, \O(-1)} =2,\ m_{t_2, \O(-1)}=m_{t_3, \O(-1)}=m_{t_4, \O(-1)}=1$}
\end{array}$  \\
    \hline
  \end{tabular}
\end{center}

\item[] {\bf [Type B].} $E = \O \oplus \O(1)$ and $m_{t_1,\O(1)}=\cdots=m_{t_4,\O(1)}=0$.
\begin{center}
  \begin{tabular}{|c||c|c|}
  \hline
  & $T_{l_{1,\bullet}}$& 
  there exists a subundle $\O \subset E$ as below:\\
    \hline\hline
    $L_{[t_i],[t_j]}$ &$T^{(2)}_{I}$
     & $\begin{array}{l}
\text{$m_{t_1, \O} =2$, $m_{t_k, \O} =1$ where $k \in \{2,3,4\} \setminus \{i,j\}$.}
\end{array}$ \\
    \hline
    $L_{[t_1],[t_i]}$  &$T^{(2)}_{I}$
    & $\begin{array}{l}
\text{$m_{t_1, \O} =m_{t_{j}, \O}=m_{t_{k}, \O} =1$ where $\{j,k\}  = \{ 2,3,4\} \setminus \{ i\}$.}
\end{array}$ \\
    \hline
    $L_{2[t_1]}$  &$T^{(2)}_{I}$
    & $\begin{array}{l}
\text{$m_{t_2, \O}=m_{t_3, \O} = m_{t_4, \O}=1$}
\end{array}$ \\
    \hline
  \end{tabular}
\end{center}
for $i,j \in \{ 2,3,4\}$ with $i \neq j$.

\item[] {\bf [Type C].} 
$$
P_{[t_1]} := C \cap L_{2[t_1]},\qquad 
P_{[t_i]} := C \cap \left( \bigcap_{j \in \{ 2,3,4\} \setminus \{i\}} L_{[t_i],[t_j]} \right)
\quad i \in \{ 2,3,4\}.
$$

\item[] {\bf [Type D].} $E = \O \oplus \O(1)$.
\begin{center}
  \begin{tabular}{|c||c|c|}
  \hline
  & $T_{l_{1,\bullet}}$& 
  there exists a subundle $\O(1) \subset E$ as below:\\
    \hline\hline
 $E^{(0)}_{[t_i]}$ &$T^{(2)}_{I} $
 &$\begin{array}{l} \text{$m_{t_i, \O(1)}=1$ }
 \end{array}$\\
\hline
  $E^{(1)}_{[t_1]}$ &$T^{(2)}_{II} $ &
 $\begin{array}{l} \text{$m_{t_1, \O(1)}=1$}
 \end{array}$\\
     \hline 
  \end{tabular}
\end{center}
for $i,j \in \{ 1,2,3,4\}$.

\item[] {\bf [Type E].} $E = \O \oplus \O(1)$.
\begin{center}
  \begin{tabular}{|c||c|c|}
  \hline
  & $T_{l_{1,\bullet}}$& 
  there exists a subundle $\O(1) \subset E$ as below:\\
    \hline\hline
   $Q^{(0)}_{[t_i],[t_j]}$ &$T^{(2)}_{I} $ &
 $\begin{array}{l} \text{$m_{t_i, \O(1)}=m_{t_{j}, \O(1)}=1$.}
 \end{array}$\\
 \hline
   $Q^{(0)}_{2[t_1]}$ &$T^{(2)}_{I} $ &
 $\begin{array}{l} \text{$m_{t_1, \O(1)}=2$}
 \end{array}$\\
 \hline
    $Q^{(1)}_{[t_1],[t_i']}$ &$T^{(2)}_{II} $ &
 $\begin{array}{l} \text{$m_{t_1, \O(1)}=m_{t_{i'}, \O(1)}=1$.}
 \end{array}$\\
 \hline
  \end{tabular}
\end{center}
for $i,j \in \{ 1,2,3,4\}$ with $i \neq j$ and for $i' \in \{2,3,4\}$.

\item[] {\bf [Type F].}
$E = \O(-1) \oplus \O(2)$ and $T_{l_{t_1,\bullet}}\colon (1,1) \supset (1)$.

\end{itemize}

\subsection{Case $D=D_{221}$}

Now we describe special refined parabolic bundles which are undecomposable and tame
when $D=D_{221}$.

\begin{itemize}
\item[] {\bf [Type A].} 
$E = \O \oplus \O(1)$ and $m_{t_1,\O(1)}=m_{t_2,\O(1)}=m_{t_3,\O(1)}=0$.
\begin{center}
  \begin{tabular}{|c||c|c|c|}
    \hline
     & $T_{l_{1,\bullet}}$ & $T_{l_{2,\bullet}}$ 
    &  there exists a subundle $\O(-1) \subset E$ as below:\\
    \hline\hline 
 $C$  &$T^{(2)}_{I} $ &$T^{(2)}_{I} $
    & $\begin{array}{l}
\text{$m_{t_1, \O(-1)} =m_{t_{2}, \O(-1)} =2,m_{t_{3}, \O(-1)} =1$.}
\end{array}$ \\
    \hline
  \end{tabular}
\end{center}

\item[] {\bf [Type B].} 
$E = \O \oplus \O(1)$ and $m_{t_1,\O(1)}=m_{t_2,\O(1)}=m_{t_3,\O(1)}=0$.
\begin{center}
  \begin{tabular}{|c||c|c|c|}
    \hline
    & $T_{l_{1,\bullet}}$ & $T_{l_{2,\bullet}}$ 
    &  there exists a subundle $\O \subset E$ as below:\\
    \hline\hline 
   $L_{[t_1],[t_2]}$  &$T^{(2)}_{I} $ &$T^{(2)}_{I} $
    & $\begin{array}{l}
\text{$m_{t_1, \O} =m_{t_{2}, \O}=m_{t_{3}, \O} =1$.}
\end{array}$ \\
    \hline
    $L_{[t_i],[t_3]}$  &$T^{(2)}_{I} $ &$T^{(2)}_{I} $
    & $\begin{array}{l}
\text{$m_{t_{i'}, \O} =2, m_{t_{3}, \O} =1$ where $i'  \in \{ 1,2\} \setminus \{ i\}$.}
\end{array}$ \\
    \hline
    $L_{2[t_i]}$  &$T^{(2)}_{I} $ &$T^{(2)}_{I} $
    & $\begin{array}{l}
\text{$m_{t_{i'}, \O}=2, m_{t_3, \O} = 1$ where $i' \in \{1,2 \} \setminus \{i\}$.}
\end{array}$ \\
    \hline
  \end{tabular}
\end{center}
for $i \in \{ 1,2\}$.

\item[] {\bf [Type C].}
$$
P_{[t_1]} := C \cap L_{2[t_1]},\qquad P_{[t_2]} := C \cap L_{2[t_2]},\qquad 
P_{[t_3]} := C \cap L_{[t_1],[t_3]} \cap L_{[t_2],[t_3]}.
$$

\item[] {\bf [Type D].} $E = \O \oplus \O(1)$.
\begin{center}
  \begin{tabular}{|c||c|c|c|}
    \hline
    & $T_{l_{1,\bullet}}$ & $T_{l_{2,\bullet}}$ 
    &  there exists a subundle $\O(1) \subset E$ as below:\\
    \hline\hline
 $E^{(0)}_{[t_i]}$ &$T^{(2)}_{I} $ &$T^{(2)}_{I} $ &
 $\begin{array}{l} \text{$m_{t_i, \O(1)}=1$ }
 \end{array}$\\
 \hline 
   $E^{(1)}_{[t_1]}$ &$T^{(2)}_{II} $ &$T^{(2)}_{I} $ &
 $\begin{array}{l} \text{$m_{t_1, \O(1)}=1$}
 \end{array}$\\
 \hline 
   $E^{(1)}_{[t_2]}$ &$T^{(2)}_{I} $ &$T^{(2)}_{II} $ &
 $\begin{array}{l} \text{$m_{t_2, \O(1)}=1$ }
 \end{array}$\\
 \hline  
  \end{tabular}
\end{center}
for $i \in \{ 1,2,3\}$.

\item[] {\bf [Type E].} $E = \O \oplus \O(1)$.
\begin{center}
  \begin{tabular}{|c||c|c|c|}
    \hline
    & $T_{l_{1,\bullet}}$ & $T_{l_{2,\bullet}}$ 
    &  there exists a subundle $\O(1) \subset E$ as below: \\
    \hline\hline
    $Q^{(0)}_{[t_i],[t_3]}$ &$T^{(2)}_{I} $ &$T^{(2)}_{I} $&
 $\begin{array}{l} \text{$m_{t_i, \O(1)}=m_{t_{3}, \O(1)}=1$.}
 \end{array}$   \\
 \hline
    $Q^{(1)}_{[t_{i'}],[t_2]}$ &$T^{(2)}_{I} $ &$T^{(2)}_{II} $&
 $\begin{array}{l} \text{$m_{t_{i'}, \O(1)}=m_{t_{2}, \O(1)}=1$.}
 \end{array}$ \\
 \hline
    $Q^{(2)}_{[t_1],[t_{j'}]}$ &$T^{(2)}_{II} $ &$T^{(2)}_{I} $&
 $\begin{array}{l} \text{$m_{t_1, \O(1)}=m_{t_{j'}, \O(1)}=1$.}
 \end{array}$\\
 \hline
  $Q^{(3)}_{[t_1],[t_2]}$ &$T^{(2)}_{II} $ &$T^{(2)}_{II} $ &
 $\begin{array}{l} \text{$m_{t_1, \O(1)}=m_{t_2, \O(1)}=1$}
 \end{array}$ \\
 \hline
    $Q^{(0)}_{2[t_i]}$ &$T^{(2)}_{I} $ &$T^{(2)}_{I} $&
 $\begin{array}{l} \text{$m_{t_i, \O(1)}=2$.}
 \end{array}$\\
  \hline
  \end{tabular}
\end{center}
for $i\in \{ 1,2\}$, and $i' \in \{ 1,3\}$, $j' \in \{ 2,3\}$.

\item[] {\bf [Type F].} 
$E = \O(-1) \oplus \O(2)$ and $T_{l_{t_1,\bullet}}\colon (1,1) \supset (1)$.

\end{itemize}

\subsection{Case $D=D_{311}$}

Now we describe special refined parabolic bundles which are undecomposable and tame
when $D=D_{311}$.

\begin{itemize}

\item[] {\bf [Type A].} 
$E = \O \oplus \O(1)$ and $m_{t_1,\O(1)}=m_{t_2,\O(1)}=m_{t_3,\O(1)}=0$.
\begin{center}
  \begin{tabular}{|c||c|c|}
    \hline
     & $T_{l_{1,\bullet}}$ 
    &  there exists a subundle $\O(-1) \subset E$ as below:\\
    \hline\hline 
    $C$ &$T^{(3)}_{I} $ 
    & $\begin{array}{l}
\text{$m_{t_1, \O(-1)} =3, m_{t_2, \O(-1)}=m_{t_3, \O(-1)}=1$}
\end{array}$  \\
    \hline
  \end{tabular}
\end{center}

\item[] {\bf [Type B].} 
$E = \O \oplus \O(1)$ and $m_{t_1,\O(1)}=m_{t_2,\O(1)}=m_{t_3,\O(1)}=0$.
\begin{center}
  \begin{tabular}{|c||c|c|}
    \hline
     & $T_{l_{1,\bullet}}$ 
    &  there exists a subundle $\O \subset E$ as below:\\
    \hline\hline 
    $L_{[t_2],[t_3]}$ &$T^{(3)}_{I} $
     & $\begin{array}{l}
\text{$m_{t_1, \O} =3$.}
\end{array}$ \\
    \hline
    $L_{[t_1],[t_i]}$  &$T^{(3)}_{I} $
    & $\begin{array}{l}
\text{$m_{t_1, \O} =2, m_{t_{i'}, \O} =1$ where $i' \in \{ 2,3\} \setminus \{ i\}$.}
\end{array}$ \\
    \hline
    $L_{2[t_1]}$  &$T^{(3)}_{I} $ 
    & $\begin{array}{l}
\text{$m_{t_1, \O}=m_{t_2, \O} = m_{t_3, \O}=1$}
\end{array}$ \\
    \hline
  \end{tabular}
\end{center}

\item[] {\bf [Type C].}
$$
P_{[t_1]} := C \cap L_{2[t_1]},\quad 
P_{[t_2]} := C \cap L_{[t_1],[t_2]}\cap L_{[t_2],[t_3]},\quad 
P_{[t_3]} := C \cap L_{[t_1],[t_3]}\cap L_{[t_2],[t_3]}.
$$

\item[] {\bf [Type D].} $E = \O \oplus \O(1)$.
\begin{center}
  \begin{tabular}{|c||c|c|}
    \hline
     & $T_{l_{1,\bullet}}$
    &  there exists a subundle $\O(1) \subset E$ as below:\\
    \hline\hline
 $E^{(0)}_{[t_i]}$ &$T^{(3)}_{I} $ &
 $\begin{array}{l} \text{$m_{t_i, \O(1)}=1$ }
 \end{array}$\\
 \hline 
   $E^{(1)}_{[t_1]}$ &$T^{(3)}_{II} $ &
 $\begin{array}{l} \text{$m_{t_1, \O(1)}=1$}
 \end{array}$\\
 \hline 
   $E^{(2)}_{[t_1]}$ &$T^{(3)}_{III} $ &
 $\begin{array}{l} \text{$m_{t_1, \O(1)}=1$ }
 \end{array}$\\
 \hline 
  \end{tabular}
\end{center}
for $i \in \{ 1,2,3\}$.

\item[] {\bf [Type E].} $E = \O \oplus \O(1)$.
\begin{center}
  \begin{tabular}{|c||c|c|}
    \hline
     & $T_{l_{1,\bullet}}$
    &  there exists a subundle $\O(1) \subset E$ as below:\\
    \hline\hline
   $Q^{(0)}_{[t_i],[t_j]}$ &$T^{(3)}_{I} $ &
 $\begin{array}{l} \text{$m_{t_i, \O(1)}=m_{t_{j}, \O(1)}=1$.}
 \end{array}$\\
 \hline
    $Q^{(0)}_{[t_1],[t_{j'}]}$ &$T^{(3)}_{II} $ &
 $\begin{array}{l} \text{$m_{t_1, \O(1)}=m_{t_{j'}, \O(1)}=1$.}
 \end{array}$\\
  \hline
    $Q^{(0)}_{[t_1],[t_{j'}]}$ &$T^{(3)}_{III} $ &
 $\begin{array}{l} \text{$m_{t_1, \O(1)}=m_{t_{j'}, \O(1)}=1$.}
 \end{array}$\\
 \hline
  $Q^{(0)}_{2[t_1]}$ &$T^{(3)}_{I} $ &
 $\begin{array}{l} \text{$m_{t_1, \O(1)}=2$}
 \end{array}$\\
 \hline
 $Q^{(1)}_{2[t_1]}$ &$T^{(3)}_{II} $ &
 $\begin{array}{l} \text{$m_{t_1, \O(1)}=2$}
 \end{array}$\\
 \hline
 $Q^{(2)}_{2[t_1]}$ &$T^{(3)}_{III} $ &
 $\begin{array}{l} \text{$m_{t_1, \O(1)}=2$ }
 \end{array}$\\
 \hline
  \end{tabular}
\end{center}
for $i ,j \in \{ 1,2,3\}$ with $i\neq j$ and $j' \in \{2,3\}$.

\item[] {\bf [Type F].}
 $E = \O(-1) \oplus \O(2)$ and $T_{l_{1,\bullet}}\colon (1,1,1) \supset (1,1) \supset (1)$.

\end{itemize}

\subsection{Case $D=D_{32}$}

Now we describe special refined parabolic bundles which are undecomposable and tame
when $D=D_{32}$.

\begin{itemize}

\item[] {\bf [Type A].} $E = \O \oplus \O(1)$ and $m_{t_1,\O(1)}=m_{t_2,\O(1)}=0$.
\begin{center}
  \begin{tabular}{|c||c|c|c|}
    \hline
     & $T_{l_{1,\bullet}}$     & $T_{l_{2,\bullet}}$
    &  there exists a subundle $\O(-1) \subset E$ as below:\\
    \hline\hline 
    $C$ &$T^{(3)}_{I} $ &$T^{(2)}_{I} $ 
    & $\begin{array}{l}
\text{$m_{t_1, \O(-1)} =3, m_{t_2, \O(-1)}=2$}
\end{array}$  \\
    \hline
  \end{tabular}
\end{center}

\item[] {\bf [Type B].} $E = \O \oplus \O(1)$ and $m_{t_1,\O(1)}=m_{t_2,\O(1)}=0$.
\begin{center}
  \begin{tabular}{|c||c|c|c|}
    \hline
     & $T_{l_{1,\bullet}}$ & $T_{l_{2,\bullet}}$ 
    &  there exists a subundle $\O \subset E$ as below:\\
    \hline\hline 
    $L_{2[t_2]}$ &$T^{(3)}_{I} $&$T^{(2)}_{I} $
     & $\begin{array}{l}
\text{$m_{t_1, \O} =3$.}
\end{array}$ \\
    \hline
    $L_{[t_1],[t_2]}$  &$T^{(3)}_{I} $&$T^{(2)}_{I} $
    & $\begin{array}{l}
\text{$m_{t_1, \O} =2, m_{t_{2}, \O} =1$.}
\end{array}$ \\
    \hline
    $L_{2[t_1]}$  &$T^{(3)}_{I} $ &$T^{(2)}_{I} $
    & $\begin{array}{l}
\text{$m_{t_1, \O}=1,m_{t_2, \O} =2$.}
\end{array}$ \\
    \hline
  \end{tabular}
\end{center}

\item[] {\bf [Type C].}
$$
P_{[t_1]} := C \cap L_{2[t_1]},\qquad P_{[t_2]} := C \cap L_{2[t_2]}.
$$

\item[] {\bf [Type D].} $E = \O \oplus \O(1)$.
\begin{center}
  \begin{tabular}{|c||c|c|c|}
    \hline
     & $T_{l_{1,\bullet}}$&$T_{l_{2,\bullet}} $ 
    &  there exists a subundle $\O(1) \subset E$ as below:\\
    \hline\hline
 $E^{(0)}_{[t_i]}$ &$T^{(3)}_{I} $ &$T^{(2)}_{I} $ &
 $\begin{array}{l} \text{$m_{t_i, \O(1)}=1$ }
 \end{array}$\\
 \hline 
   $E^{(1)}_{[t_1]}$ &$T^{(3)}_{II} $ &$T^{(2)}_{I} $ &
 $\begin{array}{l} \text{$m_{t_1, \O(1)}=1$}
 \end{array}$\\
 \hline 
    $E^{(1)}_{[t_2]}$ &$T^{(3)}_{I} $ &$T^{(2)}_{II} $ &
 $\begin{array}{l} \text{$m_{t_2, \O(1)}=1$}
 \end{array}$\\
 \hline
   $E^{(2)}_{[t_1]}$ &$T^{(3)}_{III} $ &$T^{(2)}_{I} $ &
 $\begin{array}{l} \text{$m_{t_1, \O(1)}=1$ }
 \end{array}$\\
 \hline 
  \end{tabular}
\end{center}
for $i \in \{1,2\}$.

\item[] {\bf [Type E].} $E = \O \oplus \O(1)$.
\begin{center}
  \begin{tabular}{|c||c|c|c|}
    \hline
     & $T_{l_{1,\bullet}}$&$T_{l_{2,\bullet}} $ 
    &  there exists a subundle $\O(1) \subset E$ as below:\\
 \hline  \hline 
   $Q^{(0)}_{[t_1],[t_2]}$ &$T^{(3)}_{II} $ &$T^{(2)}_{I} $ &
 $\begin{array}{l} \text{$m_{t_1, \O(1)}=m_{t_2, \O(1)}=1$}
 \end{array}$\\
 \hline 
  $Q^{(1)}_{[t_1],[t_2]}$ &$T^{(3)}_{III} $ &$T^{(2)}_{I} $ &
 $\begin{array}{l} \text{$m_{t_1, \O(1)}=m_{t_2, \O(1)}=1$ }
 \end{array}$\\
 \hline
 $Q^{(2)}_{[t_1],[t_2]}$ &$T^{(3)}_{I} $ &$T^{(2)}_{II} $ &
 $\begin{array}{l} \text{$m_{t_1, \O(1)}=m_{t_2, \O(1)}=1$ }
 \end{array}$\\
 \hline 
  $Q^{(3)}_{[t_1],[t_2]}$ &$T^{(3)}_{II} $ &$T^{(2)}_{II} $ &
 $\begin{array}{l} \text{$m_{t_1, \O(1)}=m_{t_2, \O(1)}=1$}
 \end{array}$\\
 \hline 
  $Q^{(4)}_{[t_1],[t_2]}$ &$T^{(3)}_{III} $ &$T^{(2)}_{II} $ &
 $\begin{array}{l} \text{$m_{t_1, \O(1)}=m_{t_2, \O(1)}=1$ }
 \end{array}$\\
 \hline 
  \end{tabular}
\end{center}
and 
\begin{center}
  \begin{tabular}{|c||c|c|c|}
    \hline
     & $T_{l_{1,\bullet}}$&$T_{l_{2,\bullet}} $ 
    &  there exists a subundle $\O(1) \subset E$ as below:\\
 \hline  \hline 
  $Q^{(0)}_{2[t_1]}$ &$T^{(3)}_{I} $ &$T^{(2)}_{I} $ &
 $\begin{array}{l} \text{$m_{t_1, \O(1)}=2$}
 \end{array}$\\
 \hline
 $Q^{(1)}_{2[t_1]}$ &$T^{(3)}_{II} $ &$T^{(2)}_{I} $ &
 $\begin{array}{l} \text{$m_{t_1, \O(1)}=2$}
 \end{array}$\\
 \hline
 $Q^{(2)}_{2[t_1]}$ &$T^{(3)}_{III} $ &$T^{(2)}_{I} $ &
 $\begin{array}{l} \text{$m_{t_1, \O(1)}=2$ }
 \end{array}$\\
    \hline
       $Q^{(0)}_{2[t_2]}$ &$T^{(3)}_{I} $ &$T^{(2)}_{I} $ &
 $\begin{array}{l} \text{$m_{t_2, \O(1)}=2$}
 \end{array}$\\
 \hline
     $Q^{(1)}_{2[t_2]}$ &$T^{(3)}_{I} $ &$T^{(2)}_{II} $ &
 $\begin{array}{l} \text{$m_{t_2, \O(1)}=2$}
 \end{array}$\\
 \hline 
  \end{tabular}
\end{center}

\item[] {\bf [Type F].} 
$E = \O(-1) \oplus \O(2)$ and $T_{l_{1,\bullet}}\colon (1,1,1) \supset (1,1) \supset (1)$.

\end{itemize}

\subsection{Case $D=D_{41}$}

Now we describe special refined parabolic bundles which are undecomposable and tame
when $D=D_{41}$.

\begin{itemize}

\item[] {\bf [Type A].} $E = \O \oplus \O(1)$ and $m_{t_1,\O(1)}=m_{t_2,\O(1)}=0$.
\begin{center}
  \begin{tabular}{|c||c|c|}
    \hline
     & $T_{l_{1,\bullet}}$ 
    &  there exists a subundle $\O(-1) \subset E$ as below:\\
    \hline\hline 
    $C$ &$T^{(4)}_{I} $ 
    & $\begin{array}{l}
\text{$m_{t_1, \O(-1)} =4, m_{t_2, \O(-1)}=1$}
\end{array}$  \\
    \hline
  \end{tabular}
\end{center}

\item[] {\bf [Type B].} $E = \O \oplus \O(1)$ and $m_{t_1,\O(1)}=m_{t_2,\O(1)}=0$.
\begin{center}
  \begin{tabular}{|c||c|c|}
    \hline
     & $T_{l_{1,\bullet}}$ 
    &  there exists a subundle $\O \subset E$ as below:\\
    \hline\hline 
    $L_{[t_1],[t_2]}$ &$T^{(4)}_{I} $
     & $\begin{array}{l}
\text{$m_{t_1, \O} =3$.}
\end{array}$ \\
    \hline
    $L_{2[t_1]}$  &$T^{(4)}_{I} $
    & $\begin{array}{l}
\text{$m_{t_1, \O} =2, m_{t_{2}, \O} =1$.}
\end{array}$ \\
    \hline
  \end{tabular}
\end{center}

\item[] {\bf [Type C].}
$$
P_{[t_1]} := C \cap L_{2[t_1]},\qquad P_{[t_2]} := (C \cap L_{[t_1],[t_2]} ) \setminus P_{[t_1]}.
$$

\item[] {\bf [Type D].} $E = \O \oplus \O(1)$.
\begin{center}
  \begin{tabular}{|c||c|c|}
    \hline
     & $T_{l_{1,\bullet}}$
    &  there exists a subundle $\O(1) \subset E$ as below:\\
    \hline\hline
 $E^{(0)}_{[t_i]}$ &$T^{(4)}_{I} $ &
 $\begin{array}{l} \text{$m_{t_i, \O(1)}=1$ }
 \end{array}$\\
 \hline 
   $E^{(1)}_{[t_1]}$ &$T^{(4)}_{II} $ &
 $\begin{array}{l} \text{$m_{t_1, \O(1)}=1$}
 \end{array}$\\
 \hline 
   $E^{(2)}_{[t_1]}$ &$T^{(4)}_{III} $ &
 $\begin{array}{l} \text{$m_{t_1, \O(1)}=1$ }
 \end{array}$\\
  \hline 
   $E^{(3)}_{[t_1]}$ &$T^{(4)}_{IV} $ &
 $\begin{array}{l} \text{$m_{t_1, \O(1)}=1$ }
 \end{array}$\\
 \hline 
  \end{tabular}
\end{center}

\item[] {\bf [Type E].} $E = \O \oplus \O(1)$.
\begin{center}
  \begin{tabular}{|c||c|c|}
    \hline
     & $T_{l_{1,\bullet}}$
    &  there exists a subundle $\O(1) \subset E$ as below:\\
 \hline  \hline 
   $Q^{(0)}_{[t_1],[t_2]}$ &$T^{(4)}_{I} $ &
 $\begin{array}{l} \text{$m_{t_1, \O(1)}=m_{t_{2}, \O(1)}=1$.}
 \end{array}$\\
 \hline
  $Q^{(1)}_{[t_1],[t_2]}$ &$T^{(4)}_{II} $ &
 $\begin{array}{l} \text{$m_{t_1, \O(1)}=m_{t_{2}, \O(1)}=1$.}
 \end{array}$\\
 \hline
   $Q^{(2)}_{[t_1],[t_2]}$ &$T^{(4)}_{III} $ &
 $\begin{array}{l} \text{$m_{t_1, \O(1)}=m_{t_{2}, \O(1)}=1$.}
 \end{array}$\\
 \hline
   $Q^{(3)}_{[t_1],[t_2]}$ &$T^{(4)}_{IV} $ &
 $\begin{array}{l} \text{$m_{t_1, \O(1)}=m_{t_{2}, \O(1)}=1$.}
 \end{array}$\\
 \hline
  \end{tabular}
\end{center}
and
\begin{center}
  \begin{tabular}{|c||c|c|}
    \hline
     & $T_{l_{1,\bullet}}$
    &  there exists a subundle $\O(1) \subset E$ as below:\\
 \hline  \hline 
 $Q^{(0)}_{2[t_1]}$ &$T^{(4)}_{II} $ &
 $\begin{array}{l} \text{$m_{t_1, \O(1)}=2$}
 \end{array}$\\
 \hline
 $Q^{(1)}_{2[t_1]}$ &$T^{(4)}_{IV} $ &
 $\begin{array}{l} \text{$m_{t_1, \O(1)}=2$}
 \end{array}$\\
 \hline
   $Q^{(2)}_{2[t_1]}$ &$T^{(4)}_{V} $ &
 $\begin{array}{l} \text{$m_{t_1, \O(1)}=2$ }
 \end{array}$\\
 \hline
  $Q^{(3)}_{2[t_1]}$ &$T^{(4)}_{VI} $ &
 $\begin{array}{l} \text{$m_{t_1, \O(1)}=2$}
 \end{array}$\\
 \hline
  \end{tabular}
\end{center}

\item[] {\bf [Type F].}
 $E = \O(-1) \oplus \O(2)$ and $T_{l_{1,\bullet}}\colon (1,1,1) \supset (1,1) \supset (1)$.

\end{itemize}

\subsection{Case $D=D_{5}$}

Now we describe special refined parabolic bundles which are undecomposable and tame
when $D=D_{5}$.

\begin{itemize}

\item[] {\bf [Type A].} $E = \O \oplus \O(1)$ and $m_{t_1,\O(1)}=0$.
\begin{center}
  \begin{tabular}{|c||c|c|}
    \hline
     & $T_{l_{1,\bullet}}$ 
    &  there exists a subundle $\O(-1) \subset E$ as below:\\
    \hline\hline 
    $C$ &$T^{(5)}_{I} $ 
    & $\begin{array}{l}
\text{$m_{t_1, \O(-1)} =5$}
\end{array}$  \\
    \hline
  \end{tabular}
\end{center}

\item[] {\bf [Type B].} $E = \O \oplus \O(1)$ and $m_{t_1,\O(1)}=0$.
\begin{center}
  \begin{tabular}{|c||c|c|}
    \hline
     & $T_{l_{1,\bullet}}$ 
    &  there exists a subundle $\O \subset E$ as below:\\
    \hline\hline 
    $L_{2[t_1]}$ &$T^{(5)}_{I} $
     & $\begin{array}{l}
\text{$m_{t_1, \O} =3$.}
\end{array}$ \\
    \hline
  \end{tabular}
\end{center}

\item[] {\bf [Type C].} 
$$
P_{[t_1]} := C \cap L_{2[t_1]}
$$

\item[] {\bf [Type D].} $E = \O \oplus \O(1)$.
\begin{center}
  \begin{tabular}{|c||c|c|}
    \hline
     & $T_{l_{1,\bullet}}$
    &  there exists a subundle $\O(1) \subset E$ as below:\\
    \hline\hline
 $E^{(0)}_{[t_1]}$ &$T^{(5)}_{I} $ &
 $\begin{array}{l} \text{$m_{t_1, \O(1)}=1$ }
 \end{array}$\\
 \hline 
   $E^{(1)}_{[t_1]}$ &$T^{(5)}_{II} $ &
 $\begin{array}{l} \text{$m_{t_1, \O(1)}=1$}
 \end{array}$\\
  \hline 
  $\vdots$ &$\vdots$ &
 $\vdots$\\
 \hline  
    $E^{(4)}_{[t_1]}$ &$T^{(5)}_{V} $ &
 $\begin{array}{l} \text{$m_{t_1, \O(1)}=1$ }
 \end{array}$\\
 \hline  
  \end{tabular}
\end{center}

\item[] {\bf [Type E].} $E = \O \oplus \O(1)$.
\begin{center}
  \begin{tabular}{|c||c|c|}
    \hline
     & $T_{l_{1,\bullet}}$
    &  there exists a subundle $\O(1) \subset E$ as below:\\
    \hline\hline
 $Q^{(0)}_{2[t_1]}$ &$T^{(5)}_{V} $ &
 $\begin{array}{l} \text{$m_{t_1, \O(1)}=2$}
 \end{array}$\\
 \hline
 $Q^{(1)}_{2[t_1]}$ &$T^{(5)}_{VII} $ &
 $\begin{array}{l} \text{$m_{t_1, \O(1)}=2$}
 \end{array}$\\
 \hline
  $Q^{(2)}_{2[t_1]}$ &$T^{(5)}_{VIII} $ &
 $\begin{array}{l} \text{$m_{t_1, \O(1)}=2$}
 \end{array}$\\
 \hline
  $Q^{(3)}_{2[t_1]}$ &$T^{(5)}_{IX} $ &
 $\begin{array}{l} \text{$m_{t_1, \O(1)}=2$}
 \end{array}$\\
 \hline
    $Q^{(4)}_{2[t_1]}$ &$T^{(5)}_{X} $ &
 $\begin{array}{l} \text{$m_{t_1, \O(1)}=2$ }
 \end{array}$\\
 \hline
  \end{tabular}
\end{center}

\item[] {\bf [Type F].}
$E = \O(-1) \oplus \O(2)$ and $T_{l_{1,\bullet}}\colon (1,1,1) \supset (1,1) \supset (1)$.

\end{itemize}

\end{document}